\newtheorem{theorem}{Theorem}[section]
\newtheorem{definition}[theorem] {Definition}
\newtheorem{lemma}[theorem]{Lemma}
\newtheorem{corollary}[theorem]{Corollary}
\newtheorem{remark}[theorem]{Remark}
\newtheorem{proposition}[theorem]{Proposition}
\theoremstyle{definition}
\newtheorem{example}[theorem]{Example}
\newcommand{\Relu}{\textrm{ReLU}}
\begin{document} 

\title{Functional Dimension of Feedfoward ReLU neural networks}

\author[J.E. Grigsby]{J. Elisenda Grigsby}
\thanks{JEG was partially supported by Simons grant 635578 and NSF grant number DMS - 2133822.}
\address{Boston College; Department of Mathematics; 522 Maloney Hall; Chestnut Hill, MA 02467}
\email{grigsbyj@bc.edu}

\author[K. Lindsey]{Kathryn Lindsey}
\thanks{KL was partially supported by NSF grant numbers DMS-1901247 and DMS - 2133822.}
\address{Boston College; Department of Mathematics; 567 Maloney Hall; Chestnut Hill, MA 02467}
\email{lindseka@bc.edu}

\author[R. Meyerhoff]{Robert Meyerhoff}
\thanks{RM was partially supported by NSF grant number DMS-1308642.}
\address{Boston College; Department of Mathematics; 569 Maloney Hall; Chestnut Hill, MA 02467}
\email{robert.meyerhoff@bc.edu}

\author[C. Wu]{Chenxi Wu}
\address{University of Wisconsin; Department of Mathematics;  517 Van Vleck Hall; Madison, WI 53706 }
\email{cwu367@math.wisc.edu}

\begin{abstract}  It is well-known that the parameterized family of functions representable by fully-connected feedforward neural networks with ReLU activation function is precisely the class of piecewise linear functions with finitely many pieces. It is less well-known that for every fixed architecture of ReLU neural network, the parameter space admits positive-dimensional spaces of symmetries, and hence the local functional dimension near any given parameter is lower than the parametric dimension. In this work we carefully define the notion of functional dimension, show that it is inhomogeneous across the parameter space of ReLU neural network functions, and continue an investigation -- initiated in \cite{RolnickKording} and \cite{PhuongLampert} -- into when the functional dimension achieves its theoretical maximum. We also study the quotient space and fibers of the realization map from parameter space to function space, supplying examples of fibers that are disconnected, fibers upon which functional dimension is non-constant, and fibers upon which the symmetry group acts non-transitively. 
\end{abstract}

\maketitle

\section{Introduction}
Given any parameterized family of mathematical objects, it is natural to ask how well the parameter space models the family. This can be viewed as a question about the realization map \[\rho: \{\mbox{Parameters}\} \longrightarrow \{\mbox{Objects}\}.\] What are the image and fibers of $\rho$?
In the current work, we examine this question for the class of feedforward ReLU neural network functions with domain $\mathbb{R}^{n_0}$ and codomain $\mathbb{R}^{n_m}$, for fixed $n_0, n_m \in \mathbb{N}$. It has been established \cite{arora2018} that this class coincides with the class of finite piecewise linear (PL) functions from $\mathbb{R}^{n_0}$ to $\mathbb{R}^{n_m}$. Recall that a feedforward ReLU neural network of architecture $(n_0, n_1, \ldots, n_m)$ is parameterized by $\mathbb{R}^D$, where $D = \sum_{i=1}^{m} n_{i}(n_{i-1} + 1)$. It follows that if one considers the direct sum, $\Omega$, of all parameter spaces $\mathbb{R}^D$, over all network architectures of feedforward ReLU neural networks with input dimension $n_0$ and output dimension $n_m$, the realization map \[\rho: \Omega \rightarrow \{\mbox{Finite PL functions } \mathbb{R}^{n_0} \rightarrow \mathbb{R}^{n_m}\}\] is surjective. 

The question of {\em reverse-engineering} the network architecture and parameters from a given finite PL function $F: \mathbb{R}^{n_0} \rightarrow \mathbb{R}^{n_m}$ is trickier. The idea of examining the quotient space of the parameter space of a multilayer perceptron by the realization map $\rho$ goes back at least as far as \cite{Amari1}, which refers to this quotient as the {\em neuromanifold}, and -- in follow-on work -- studies the generalization behavior and dynamics of gradient descent near its singularities.  For neural networks with sigmoidal activation functions, it was shown independently by \cite{FeffermanMarkel} for the $\tanh$ activation function and by \cite{AlbertiniSontag} for slightly more general sigmoidal activation functions\footnote{Specifically, \cite{AlbertiniSontag} requires that the activation function be smooth and have Taylor expansion around $0$ that agrees with $\tanh$ to second order.} that the architecture and defining parameter are uniquely determined by the function, up to a finite group of obvious symmetries.\footnote{These are the discrete symmetries: complementary sign flips and permutation of the neurons in a layer.} 

When the activation function is ReLU, it is well-known to the experts (cf. \cite{RolnickKording, PhuongLampert}) that $\rho$ is farther from injective. Indeed, there is an additional {\em positive-dimensional} space of symmetries coming from scaling/inverse-scaling the input/output to each neuron in the non-input layers. In \cite{RolnickKording}, Kording-Rolnick show that for most parameters in each depth $2$ architecture, reverse-engineering is still possible--modulo these known symmetries--by examining the geometry of the decomposition of the domain into linear regions. In \cite{PhuongLampert}, Phuong-Lampert use similar techniques to show that there exists a parameter in every non-widening architecture for which reverse-engineering is possible in the above sense.

The cornerstone observation of this paper is that the degree to which the realization map $\rho$ fails to be injective for ReLU neural network functions with fixed architecture is inhomogeneous across parameter space.  That is, in some regions of parameter space, a comparatively high-dimensional set of parameters all yield the same function, while in other regions of parameter space,  the set of parameters that correspond to a single function is lower-dimensional.   We introduce the notion of the \emph{functional dimension} of a parameter, $\theta$, in parameter space, $\textrm{dim}_{\textrm{fun}}(\theta)$; roughly speaking, $\textrm{dim}_{\textrm{fun}}(\theta)$ is the number of degrees of freedom attained in the space of functions by $\rho(\theta + \epsilon)$ under all infinitesimally small perturbations $\epsilon$ in parameter space.  Equivalently, if $D$ denotes the dimension of the parameter space, the difference $D- \textrm{dim}_{fun}(\theta)$ is a measure of the \emph{functional redundancy} at $\theta$, the number of linearly independent directions  $\epsilon$ in parameter space along which $\rho(\theta+\epsilon)$ is constant. 
 
We motivate the notion of functional dimension with a simple example.
 
 \begin{example}
Consider the architecture $(1,2,1)$.  The corresponding parameter space is $\mathcal{P} =\mathbb{R}^7$, and the realization map $\rho$ is given by 
$$\rho(a,b,c,d,e,f,g) \coloneqq x \mapsto \sigma \left( e \sigma(ax+b)+f \sigma (cx+d) + g \right) $$
Consider any parameter $\theta_0 = (a,\ldots,g) \in \mathcal{P}$ for which $a,b,c,e,f,g>0$ and  $d<0$.  Then $\rho(\theta_0)$ is the continuous piecewise affine-linear function that is the constant function $g$ on the interval  $(-\infty,-b/a]$, 
 has slope $ea$ on $[-b/a,-d/c]$, and slope $ea+fc$ on  $[-d/c,\infty)$.  Such functions are determined by 5 algebraically-independent scalars: two  non-differentiable bend points ($-b/a$ and $-d/c$), two slopes ($ea$ and $ea+fc$), and the vertical offset parameter ($g$).  We will say that $\textrm{dim}_{\textrm{fun}}(\theta_0) = 5$.  Correspondingly, in a small neighborhood $U \subset P$ of $\theta_0$, the \emph{fiber} 
 $$\rho^{-1} (\rho(\theta_0)) \coloneqq \{\theta \in \mathcal{P} \mid \rho(\theta) = \rho(\theta_0)\}$$
has dimension $\textrm{dim}(\mathcal{P}_{1,2,1}) - \textrm{dim}_{\textrm{fun}}(\theta_0) = 7-5 = 2$.

 In contrast, consider a parameter $\theta_1 \in \mathcal{P}$ with $a,b,d,e,g < 0$ and $c,f>0$.  Then $\rho(\theta_1)$ is the continuous function that is $0$ on the interval $(-\infty,  (-g-fd)/(fc)]$ and has slope $fc$ on the interval $[(-g-fd)/(fc),\infty)$.  Such a function is determined by 2 scalars: the  bend point $(-g-fd)/(fc)$ and the slope $fc$.  Thus we will say that $\textrm{dim}_{\textrm{fun}}(\theta_1) = 2$; there is a $5$-dimensional fiber consisting of parameters that all determine the function $\rho(\theta_1)$ passing through $\theta_1$. 
 \end{example}
 
\noindent After offering a precise definition of functional dimension (Definition \ref{def:functionaldimension})--well-defined at smooth points of the parametric family--we establish the existence of an upper bound on the functional dimension that is (except in the depth $1$ case) 
strictly less than $D$, the parametric dimension. This is precisely the upper bound that comes from incorporating the known positive-dimensional symmetries appearing in \cite{RolnickKording, PhuongLampert}. 
 
The gap between parametric and functional dimension -- as well as the variance of this gap across parameter space -- should have implications for training and convergence of feedforward ReLU networks via gradient descent \cite{TragerKohnBruna}, particularly for an overparameterized network (one for which the parametric dimension exceeds the size of the training set). Note that the functional dimension is a maximum over what we call the {\em batch} functional dimension relative to a finite data set (batch) in the domain. The maximum is attained when the batch is {\em decisive} (Definition \ref{d:decisive}) -- informally, when it covers a sufficiently large portion of the domain -- and this is unlikely to happen when our batch is small compared to the number of parameters. In the heavily overparameterized setting the parametric dimension far exceeds the batch functional dimension, which is bounded above by the product of the output dimension and the batch size. Moreover, each gradient descent update during training is confined to a subspace of the tangent space of parameter space that is bounded above by the batch functional dimension, where the batch is the training set, cf. Definition \ref{def:batchfundim} and the remarks immediately following. Note that this subspace, and its dimension, also evolves during training as the parameters are updated.

We also note that it has been established (see \cite{Belkin} and \cite{Cooper}) that for an overparameterized network with a smooth activation function, the global minimum of the MSE loss function is $0$ and the locus of global minima is positive dimensional (and indeed often nonconvex). Moreover, in \cite{Belkin}, the authors describe a local convexity condition they call the P\L$^*$ condition\footnote{Here ``P\L" stands for ``Polyak-{\L}ojasiewicz" and not ``piecewise-linear". We hope this will not cause too much confusion.} that guarantees convergence of gradient descent, for a training batch of size $N$, to a global minimum in a ball of radius $\mathcal{O}(1/N)$ around a parameter $\theta \in \mathbb{R}^D$ when a version of the neural tangent kernel at $\theta$  that we refer to here as the {\em batch neural tangent kernel}  has no zero eigenvalues. Since the  batch  neural tangent kernel is always a positive semi-definite matrix by construction, this occurs  in the overparameterized setting  precisely when the  batch  neural tangent kernel at $\theta$  relative to a training set (batch) of size $N$ has (maximal possible) rank $N$. 

We note that the results in \cite{Belkin} require a differentiable activation function in order to ensure that the neural tangent kernel is well-defined for all parameters. Here we study ReLU, which is non-differentiable at $0$. This implies that for almost all parameters, ReLU neural network functions are differentiable only in the complement of a codimension $1$ set \cite{GrigsbyLindsey}. In the current work, we define the notion of an {\em ordinary} parameter (Definition \ref{def:ordinary}) for a ReLU neural network architecture and a {\em parametrically smooth} point (Definition \ref{def:parametricallysmooth}) of the associated function. 
For ordinary parameters and finite data sets consisting of parametrically smooth points for these parameters, the neural tangent kernel is well-defined.  We define a generalization of the neural tangent kernel, the batch neural tangent kernel (Definition \ref{def:batchNTK}).  It follows immediately that batch functional dimension is equal to the rank of the batch neural tangent kernels, and  nearly immediately (Theorem \ref{t:ntkDim}) that functional dimension is the supremum of the ranks of the batch neural tangent kernels.

In \cite{Belkin}, the authors ask whether some version of the relationship between the P\L$^*$ condition and local convergence of gradient descent generalizes to ReLU neural networks. It would be interesting to investigate precisely how this convergence depends on the batch functional dimension of a parameter at initialization, and on the probability distribution from which the batch is drawn. 

Section \ref{s:continuity} investigates the continuity of functional dimension on the open, full-measure set of parameters where functional dimension is defined.  Section \ref{s:fibers} formally defines the symmetry group and fibers of the realization map.  It gives examples of fibers that are disconnected, fibers on which functional dimension is non-constant, and fibers upon which the symmetry group does not act transitively. 

\subsection{Related work} The notion of functional dimension we define and study here has appeared in the literature - in a supporting role - either prior to or concurrently (and independently) to the appearance of this work, largely as a tool for exploring questions of {\em identifiability} of a parameter from its function realization in neural network classes. The literature on this topic is vast and growing, so we apologize that our list of references is very likely incomplete. The notion of a {\em neuromanifold} - the image of the realization map for parameterized function classes used in learning - goes back at least to \cite{Amari1}, and one can regard the functional dimension of a parameter $\theta$ as the local dimension of the neuromanifold at $\theta$. In \cite{TragerKohnBruna} and \cite{KohnMontufarTrager}, the authors use algebraic geometry to study the neuromanifold and fibers of deep {\em linear} feedforward and convolutional neural network classes. In \cite{PetersenVoigt}, the authors study the topology of the neuromanifold for certain analytic activation functions (not including ReLU). In \cite{VenturiBB19}, the authors study the effects of parameter space symmetries on the optimization landscape for shallow (single hidden layer) networks with a variety of smooth activation functions, and along the way they define a global - not local - notion of complexity for these function classes that they call the intrinsic dimension. Work on these questions for ReLU network classes is more recent. In \cite{PhuongLampert, RolnickKording}, the authors tackle the question of identifiability and reverse-engineering a parameter from the data of the functions defined by the intermediate neurons in a ReLU network, and in \cite{StockGribonval} and \cite{BP1}, the authors define explicit imbeddings of the neuromanifold (relative to a batch of input points) into certain high-dimensional Euclidean spaces and give necessary and sufficient geometric conditions on the imbeddings for a parameter to be {\em locally} identifiable - that is, recoverable up to the positive scaling symmetry that appears in \cite{PhuongLampert, RolnickKording}. The geometric conditions appearing in \cite{StockGribonval} and \cite{BP1} implicitly involve batch functional dimension on a suitable batch.  Finally, we mention that in the years since our paper first appeared in 2022 and this 2025 revision in preparation for publication, a significant community at the intersection of mathematics and deep learning theory has formed around studying parameter-space symmetries, see for example \cite{GodfreyKvinge, ZhaoGWYD23} and \cite{ZhaoWaltersYu_survey} for a survey. This work most naturally fits into that intellectual community. Note that ReLU network symmetries are significantly more challenging to study formally than symmetries in architectures with smooth activation functions, since the complicated locus of non-differentiability needs to be incorporated into the story. The definitions we give here can be applied to a variety of activation functions, including other piecewise-linear activations like Leaky ReLU. However, the parameter-space symmetries for those activation functions will be significantly different, and so our results will not be applicable off-the-shelf for those. On the other hand, the techniques we develop should carry over nicely.

\tableofcontents

\subsection*{Acknowledgments}
The first two authors would like to thank Julian Asilis and Caleb Miller, who did some beautiful preliminary experimental investigations into functional dimension under the direction of our colleague Jean-Baptiste Tristan. The accompanying conversations in our weekly group meetings helped shape several of the ideas found here. We would also like to thank David Rolnick for interesting conversations and insights, and the anonymous referees for a number of excellent suggestions.


\section{Setup and background} \label{s:setup}
 
 \subsection{Feedforward ReLU neural networks, associated spaces, and the realization map}
 The \emph{rectified linear unit} or \emph{ramp} function is the function $\Relu:\mathbb{R} \to \mathbb{R}$ defined by $\Relu(x) = \max\{0,x\}$.  For any $n \in \mathbb{N}$, we denote by $\sigma$ the map  $\sigma:\mathbb{R}^n \to \mathbb{R}^n$ that applies $\Relu$ to each coordinate.  

  \begin{definition}\label{d:neuralnetwork} A \emph{feedforward ReLU neural network} of architecture \[(n_0,n_1,\ldots,n_m) \in \mathbb{N}^{m+1}\] is a finite, ordered collection of affine maps $A^{1},\ldots,A^{m}$ such that  $$A^{i}:\mathbb{R}^{n_{i-1}} \to \mathbb{R}^{n_{i}}$$ for  each $i = 1,\dots,m$.
   These affine maps determine a formal composition of maps  
  $$\bar{\rho}(\theta) \coloneqq \sigma \circ A^{m} \circ \sigma \circ A^{m-1} \circ \ldots \circ \sigma \circ A^{1},$$ which we call the associated \emph{marked neural network function}  and also determine a function  $F:\mathbb{R}^{n_0} \rightarrow \mathbb{R}^{n_m}$,  which we call the associated \emph{unmarked neural network function}, given by 
   $$F(x) \coloneqq  \sigma \circ A^{m} \circ \sigma \circ A^{m-1} \circ \ldots \circ \sigma \circ A^{1}(x).$$
\end{definition}

\begin{remark}
The word ``marked'' in ``marked neural network function'' indicates that $\bar{\rho}(\theta)$ retains the data of how the function is expressed as a sequence of nested ReLUs and affine maps involving the parameter coordinates. In contrast, the unmarked neural network function is simply a function from $\mathbb{R}^{n_0} \to \mathbb{R}^{n_m}$. \end{remark}

An affine map $A^{i}:\mathbb{R}^{n_{i-1}} \to \mathbb{R}^{n_{i}}$ is determined by a unique  $n_i \times (n_{i-1}+1)$ matrix $\left[A^{i}\right]$ that acts on the left:\begin{equation} \label{eq:matrixeq}
\left[A^{i}\right][ x  , 1] ^T = A^{i}(x)
\end{equation}
 for all $x \in \mathbb{R}^{n_{i-1}}$. (Here $[x , 1]$ denotes the length-$(n_{i-1}+1)$ vector formed by adding an additional final coordinate with the value $1$ to $x$.)  There is therefore a natural map from $\mathbb{R}^{\sum_{i=1}^m n_i(n_{i-1}+1)}$ to the set of neural networks of architecture $(n_0,\ldots,n_m)$; this realization map is the map that forms a neural network by using the coordinates of the point in $\mathbb{R}^{\sum_{i=1}^m n_i(n_{i-1}+1)}$ to determine affine maps $A^{1},\ldots,A^{m}$.

\begin{definition} For any network architecture  $(n_0,\dots,n_m)$, define the \emph{dimension of the parameter space} of neural networks of architecture $(n_0,\dots,n_m)$ to be the quantity 
$$D(n_0,\dots,n_m) \coloneqq \sum_{i=1}^m n_i(n_{i-1}+1)$$ and define the \emph{parameter space} of neural networks of architecture $(n_0,\ldots,n_m)$ to be the Euclidean space 
 $$\mathcal{P}_{n_0,\ldots,n_m} \coloneqq \mathbb{R}^{D(n_0,\ldots,n_m)}.$$ 
\end{definition}

\begin{definition} \ 
 \begin{enumerate} 
  \item 
  We denote by $\bar{\rho}$ the \emph{marked  realization map} that associates to a point in $\mathcal{P}_{n_0,\ldots,n_m}$ the corresponding marked neural network function.  
\item 
 We denote by $\rho$ the \emph{(unmarked) realization map}
 \[\rho:\mathcal{P}_{n_0,\ldots,n_m} \to C(\mathbb{R}^{n_0}, \mathbb{R}^{n_m})\] 
that sends a point $\theta \in \mathcal{P}_{n_0,\ldots,n_m}$ to the unmarked neural network map $F:\mathbb{R}^{n_0} \to \mathbb{R}^{n_m}$ associated to the parameter $\theta$.
\end{enumerate}
\end{definition}

\begin{definition} \label{def:moduliSpace}
For any network architecture $(n_0,\ldots,n_m)$, define the \emph{moduli space of neural networks of architecture $(n_0,\ldots,n_m)$} to be the set  
$$\mathcal{M}_{n_0,\ldots,n_m} \subset C(\mathbb{R}^{n_0}, \mathbb{R}^{n_m})$$ that is the image of the (unmarked)
realization map, i.e. 
$$\mathcal{M}_{n_0,\ldots,n_m} := \{\rho(\theta) : \theta \in \mathcal{P}_{n_0,\ldots,n_m}\}.$$
\end{definition}

\begin{remark}
Note that the sum of two functions in  $\mathcal{M}_{n_0,\ldots,n_m}$ is not necessarily in $\mathcal{M}_{n_0,\ldots,n_m}$.  Thus, moduli space is not a vector space.  The set  $\mathcal{M}_{n_0,\ldots,n_m}$ is, however, closed under scaling by nonnegative constants. 

\end{remark}

\subsection{Background on general polyhedral complexes}

 We briefly recall some relevant background about convex polytopes, polyhedral sets, and polyhedral complexes, referring the reader to \cite{Grunbaum, Grunert} for a more thorough treatment.

A {\em polyhedral set $\mathcal{P}$ in $\mathbb{R}^n$} is an intersection of finitely many closed affine half spaces $H_1^+, \ldots,  H_m^+ \subseteq \mathbb{R}^n.$ A {\em convex polytope} in $\mathbb{R}^n$ is a bounded polyhedral set. Note that a polyhedral set is an intersection of convex sets, and hence convex. 
A hyperplane $H$ in $\mathbb{R}^n$ is a {\em cutting hyperplane} of a polyhedral set $\mathcal{P}$ if $H \cap \textrm{interior}(\mathcal{P}) \neq \emptyset$; 
$H$ is a {\em supporting hyperplane} of $\mathcal{P}$ if $H$ does not cut $\mathcal{P}$ and $H \cap \mathcal{P} \neq \emptyset$. For any set $S \subset \mathbb{R}^n$, the {\em affine hull} of $S$, denoted $\mbox{aff}(S)$, is the intersection of all affine-linear subspaces of $\mathbb{R}^n$ containing $S$.  
  The \emph{dimension} $\textrm{dim}(\mathcal{P})$ of a polyhedral set $\mathcal{P}$ is that of its affine hull.

For a polyhedral set $\mathcal{P}$,  a subset $\mathcal{F} \subseteq \mathcal{P}$ is a {\em face} of $\mathcal{P}$ if either $\mathcal{F} = \emptyset$, $\mathcal{F} = \mathcal{P}$, or $\mathcal{F} = H \cap \mathcal{P}$ for some supporting hyperplane of $\mathcal{P}$.  $\emptyset$ and $\mathcal{P}$ are called the {\em improper} faces of $\mathcal{P}$, while all other faces are called {\em proper}.
 A {\em $k$--face} of $\mathcal{P}$ is a face of $\mathcal{P}$ that has dimension $k$. 
A {\em facet} of $\mathcal{P}$ is a $(\textrm{dim}(\mathcal{P})-1)$--face of $\mathcal{P}$.
 A {\em vertex} of $\mathcal{P}$ is a $0$--face of $\mathcal{P}$. 


 A {\em polyhedral complex} $\mathcal{C}$ of dimension $d$ ($\leq n$) in $\mathbb{R}^n$ is a {\em finite}  collection
 of polyhedral sets in $\mathbb{R}^n$ of dimension at most $d$ called the \emph{cells} of $\mathcal{C}$, such that i)  If $P \in \mathcal{C}$, then every face of $P$ is in $\mathcal{C}$, and ii)  if $P, Q \in \mathcal{C}$, then $P \cap Q$ is a single mutual face of $P$ and $Q$.  The \emph{domain} or \emph{underlying set} $|\mathcal{C}|$ of a polyhedral complex $\mathcal{C}$ is the union of its cells. If $|\mathcal{C} | = \mathbb{R}^n$, we call $\mathcal{C}$ a {\em polyhedral decomposition of $\mathbb{R}^n$}.

\subsection{The canonical polyhedral complex, generic and transversal neural networks}

 The \emph{affine solution set arrangement} associated to a layer map $\sigma \circ A :\mathbb{R}^i \to \mathbb{R}^j$ of a feedforward ReLU neural network is the finite ordered set $\{S_1,\ldots,S_j\}$, where each $S_k$, $1 \leq k \leq j$,  is the solution set 
$$S_k := \{\vec{x} \in \mathbb{R}^i :   [A]_k (\vec{x} | 1)^T= 0\},$$
where $[A]_k$ is the $k^{\textrm{th}}$ row of the matrix $[A]$.  Such an affine solution set arrangement $S = \{S_1, \ldots , S_j\}$ is said to be \emph{generic} if for all subsets $$\{S_{i_1} , \ldots , S_{i_p}\} \subseteq S,$$ it is the case that $S_{i_1} \cap \ldots \cap S_{i_p}$ is an affine-linear subspace of $\mathbb{R}^i$ of dimension $i - p$, where a negative-dimensional intersection is understood to be empty.  In the particular case that every solution set $S_k$, $1 \leq k \leq j$, is a hyperplane (i.e. has codimension $1$) in $\mathbb{R}^i$ we call $S = \{S_1, \ldots , S_j\}$ the \emph{hyperplane arrangement} associated to that layer map.  A hyperplane arrangement in $\mathbb{R}^i$ induces a polyhedral decomposition of $\mathbb{R}^i$.  

For a marked feedforward ReLU neural network map
\begin{equation} \label{eq:FdefNotation}
\bar{\rho}(s) = F: \mathbb{R}^{n_0}  \xrightarrow{F^{1}= \sigma \circ A^{1}} \mathbb{R}^{n_1}  \xrightarrow{F^{2}= \sigma \circ A^{2}} \ldots \xrightarrow{F^{m} = \sigma \circ A^{m}} \mathbb{R}^{n_m} 
\end{equation} the \emph{canonical polyhedral complex} $\mathcal{C}(\theta)$ (alternatively $\mathcal{C}(F)$)  is a polyhedral decomposition of $\mathbb{R}^{n_0}$ defined as follows.  For $i \in \{1,\ldots,m\}$, denote by $R^{i}$ the polyhedral complex on $\mathbb{R}^{n_{i-1}}$ induced by the hyperplane arrangement associated to the $i^{\textrm{th}}$ layer map, $F^{i}$.  Inductively define polyhedral complexes $\mathcal{C}(F^{1}),\ldots,\mathcal{C}(F^{m} \circ \dots \circ F^{1})$ on $\mathbb{R}^{n_0}$ as follows: Set $\mathcal{C}(F^{1}):= R^{1}$ and for $i = 2,\ldots,m$, set
\begin{multline*}
\mathcal{C}(F^{i} \circ \ldots \circ F^{1})
:= \\
 \left \{S \cap (F^{i-1} \circ \ldots \circ F^{1})^{-1}(Y) \mid S \in \mathcal{C}(F^{i-1} \circ \ldots \circ F^{1}), Y \in R^{i} \right \}.
 \end{multline*}
Set $\mathcal{C}(\theta) := \mathcal{C}(F^{m} \circ \ldots \circ F^{1})$. See \cite{Grunert, GrigsbyLindsey, Masden} to understand why this forms a polyhedral complex.

A layer map of a marked neural network is said to be \emph{generic} if the corresponding affine solution set arrangement is generic.  A parameter of the corresponding marked neural network whose layer maps are all generic is said to be \emph{generic}.

Let $\mathcal{C}$ be a polyhedral complex embedded in $\mathbb{R}^{n_0}$ and let $F:|\mathcal{C}| \to \mathbb{R}$ be a map that is affine-linear on cells of $\mathcal{C}$.  A point $x \in |\mathcal{C}|$ is said to have an \emph{$F$-nonconstant cellular neighborhood} (with respect to $\mathcal{C}$) if $F$ is nonconstant on each cell of $C \in \mathcal{C}$ that contains $x$.  A threshold $t \in \mathbb{R}$ is \emph{transversal} for the function $F$ and $\mathcal{C}$ if each point $p \in F^{-1}(\{t\})$ has an $F$-nonconstant cellular neighborhood. A threshold $t \in \mathbb{R}$ is a \emph{transversal threshold for a marked neural network with one-dimensional output}  $\bar{\rho}(\theta) = F:\mathbb{R}^{n_0} \to \mathbb{R}$
if $t$ is a transversal threshold for $F$ and its canonical polyhedral complex $\mathcal{C}(F)$. The $(i,j)th$ \emph{node map} of a parameter $s$ or marked neural network $F$ as in \eqref{eq:FdefNotation} is the map $$F_{i,j} \coloneqq \pi_j \circ F^{i} \circ \ldots \circ F^{1},$$ where $\pi_j$ denotes projection to the $j$th coordinate.  
A parameter $\theta$ (or marked neural network $\bar{\rho}(\theta)$)  is called \emph{transversal} if, for each $i \in \{1,\ldots,m\}$ and each $j \in \{1,\ldots,n_i\}$, $t=0$ is a transversal threshold for the node map $F_{i,j}: \mathbb{R}^{n_0} \to \mathbb{R}$ and the polyhedral complex $\mathcal{C}(F_{i,j})$. 

It was shown in \cite{GrigsbyLindsey} that the subset of any parameter space $\mathcal{P}_{n_0,\ldots,n_m}$ consisting of all generic and transversal parameters has full Lebesgue measure.  Moreover, \cite{GrigsbyLindsey} proved that if $\theta \in \mathcal{P}_{n_0, \ldots, n_m}$ is a generic and transversal parameter and $F = \bar{\rho}(\theta)$ is its marked realization map, then the $n_0$-cells of the canonical polyhedral complex, $\mathcal{C}(F)$, are the closures of the activation regions (defined in \S \ref{ss:ternarylabeling}), and the $(n_0-1)$-skeleton of $\mathcal{C}(F)$ is  the union of the zero sets of the node maps, \[\bigcup_{i,j} F^{-1}_{i,j}\{0\},\] which -- following \cite{HaninRolnick} -- we refer to as the \emph{bent hyperplane arrangement} associated to $F$. 

 \subsection{Ternary labeling} \label{ss:ternarylabeling}

The \emph{ternary labeling} of a point $x \in \mathbb{R}^{n_0}$ relative to the canonical polyhedral complex $\mathcal{C}(\bar{\rho}(\theta))$ 
is the sequence of ternary tuples 
\begin{equation} \label{eq:theta}
s_x := \left(s^{1}_x, \ldots, s^{m}_x\right) \in \{-1,0,1\}^{n_1 +\ldots + n_m}
\end{equation}
indicating the sign of the pre-activation output of each neuron of $\bar{\rho}(\theta)$ at $x$.

Explicitly, letting $F = \bar{\rho}(\theta)$ be as in \eqref{eq:FdefNotation}, for any input vector $x = x^{0} \in \mathbb{R}^{n_0}$, we will denote by 
\begin{equation} \label{eq:xell}
x^{\ell}:= (F^{\ell} \circ F^{\ell-1} \circ \ldots \circ F^{1})(x) \in \mathbb{R}^{n_\ell}
\end{equation}
the output of the first $\ell$ layer maps. Letting 
\begin{equation} \label{eq:zell}
y^{\ell}:= (A^{\ell} \circ F^{\ell-1} \circ \ldots \circ F^{1})(x) \in \mathbb{R}^{n_\ell}
\end{equation}
denote the pre-activation output of the first $\ell$ layer maps, the components of 
$$s^{\ell}_x \coloneqq \left(s^{\ell}_{x,1}, \,\, \ldots \,\,, s^{\ell}_{x,n_\ell}\right)$$
 are defined by $s^{\ell}_{x,i} = \mbox{sgn}(y_i^{\ell})$ (using the convention $\textrm{sgn}(0) = 0$).

An \emph{activation region} (for a parameter $\theta \in \mathcal{P}_{n_0,\ldots,n_m}$) is a maximal connected component of the set of input vectors $x \in \mathbb{R}^{n_0}$ at which the ternary labeling is constant and has no $0$s.   For generic, transversal networks, it follows from \cite{GrigsbyLindsey} that the activation regions of $\mathcal{C}(\bar{\rho}(\theta))$ are precisely the interiors of the $n_0$--cells of $\mathcal{C}(\bar{\rho}(\theta))$. Moreover, for all $\bar{\rho}(\theta)$ it follows immediately from the definitions that the ternary labeling is constant on the interior of each cell of $\mathcal{C}(\bar{\rho}(\theta))$, inducing a ternary labeling on the cells of $\mathcal{C}(\bar{\rho}(\theta))$, cf. \cite{Masden}.  If $s^{\ell}_{x,i} \leq 0$, we say that the $i$th neuron in the $\ell$th layer is \emph{off} or \emph{turned off} at $x$.

\begin{definition}
A neuron of $\bar{\rho}(\theta)$ is \emph{stably unactivated} if there exists an open neighborhood $U$ of $\theta$ in parameter space such that for every parameter $u \in U$ and every point $x \in \mathbb{R}^{n_0}$, that neuron of the marked function $\bar{\rho}(u)$ outputs a nonpositive number when $\bar{\rho}(u)$ is evaluated at $x$.  
\end{definition}

The machine learning literature sometimes refers to {\em stably unactivated} neurons as {\em dead}. The following notation will be useful for us. Let $A^{\ell} \in M_{n_{\ell} \times (n_{\ell -1 }+1)}$ be the matrix associated to the  affine part of the $\ell$th layer map $F^{\ell}: \mathbb{R}^{n_{\ell-1}} \to \mathbb{R}^{n_\ell}$. For $x=x^{0} \in \mathbb{R}^{n_0}$, let $s_{x} = (s_x^{1}, \ldots, s_x^{m})$ be the ternary labeling as in \eqref{eq:theta}.

\begin{itemize} 
	\item For any point $y \in \mathbb{R}^n$, denote by $\widehat{y}$ the \emph{augmented vector} $[y,1]^T \in \mathbb{R}^{n+1}$.
	\item Let $A^{\ell}_{s_x} \in M_{n_{\ell} \times (n_{\ell -1}+1)}$ denote the matrix obtained from $A^{\ell}$ by replacing the $i$th row with $\vec{0} \in \mathbb{R}^{n_{\ell -1} +1}$ if $s_{x,i}^{\ell} \leq 0$.
	\item For any matrix $B \in M_{m \times (n+1)}$ and $e_{n+1}^T = [0 \cdots 0 \,\, 1]$ the (transpose of the) $(n+1)$st coordinate vector in  $\mathbb{R}^{n+1}$, let $\widehat{B} \in M_{(m+1 )\times (n+1)}$ denote the \emph{augmented matrix} 
	$$\widehat{B} \coloneqq \left(\begin{array}{c} B\\ \tiny{e_{n+1}^T}\end{array}\right) \in M_{(m+1) \times (n+1)}.$$ 
	\item For any matrix $B$, let $B_{ij}$ denote the entry in the $i$th row and the $j$th column.
\end{itemize}

The following lemma is immediate.
\begin{lemma} \label{lem:terntuplecomp} Let \[F = F^{m} \circ \ldots \circ F^{1}: \mathbb{R}^{n_0} \rightarrow \mathbb{R}^{n_m}\] be a ReLU neural network, where $F^{i}$ has associated affine-linear map represented by the matrix $A^{i} \in M_{n_{i} \times (n_{i-1}+1)}$, and let $x = x^{0} \in \mathbb{R}^{n_0}$ be an input vector in the interior of a cell $C$ with associated ternary tuple $s_C = \left(s_x^{1}, \ldots, s_x^{m}\right)$. Then for each $\ell$, we have
 \[\widehat{x}^{\ell} =  \widehat{A}_{s_x^{\ell}}^{\ell}  \ldots \widehat{A}^{1}_{s_x^{1}} \widehat{x}^{0},\]
 where $x^{\ell}$ is as defined in \eqref{eq:xell}. 
\end{lemma}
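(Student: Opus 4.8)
The plan is to proceed by induction on $\ell$, unwinding the definitions of the augmented vectors and the "zeroed-out" matrices $A^{\ell}_{\theta_x^{\ell}}$, and tracking how the ReLU activation at each neuron is encoded by the ternary labeling. The base case $\ell = 1$ amounts to checking that $\widehat{x}^{1} = \widehat{A}^{1}_{\theta_x^{1}} \widehat{x}^{0}$. Here $x^{1} = \sigma(A^{1}(x^{0}))$, and since $x^{0}$ lies in the interior of the cell $C$, each preactivation coordinate $y^{1}_i = [A^{1}]_i \widehat{x}^{0}$ has sign $\theta^{1}_{x,i}$; thus $\sigma$ acts on the $i$th coordinate either as the identity (when $\theta^{1}_{x,i} = 1$) or as the zero map (when $\theta^{1}_{x,i} \leq 0$), which is exactly the effect of replacing the $i$th row of $A^{1}$ by $\vec{0}$. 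Appending the constant final coordinate $1$ corresponds to the extra row $e_{n_0+1}^T$ in the augmentation $\widehat{A}^{1}_{\theta_x^{1}}$, giving $\widehat{x}^{1} = \widehat{A}^{1}_{\theta_x^{1}} \widehat{x}^{0}$.

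For the inductive step, assume $\widehat{x}^{\ell - 1} = \widehat{A}^{\ell-1}_{\theta_x^{\ell-1}} \cdots \widehat{A}^{1}_{\theta_x^{1}} \widehat{x}^{0}$. We have $x^{\ell} = \sigma\bigl(A^{\ell}(x^{\ell-1})\bigr) = \sigma\bigl([A^{\ell}]\,\widehat{x}^{\ell-1}\bigr)$, so it suffices to show $\widehat{x}^{\ell} = \widehat{A}^{\ell}_{\theta_x^{\ell}}\,\widehat{x}^{\ell-1}$ and then substitute the inductive hypothesis. The key point is that the $i$th coordinate of the preactivation $y^{\ell} = A^{\ell}(x^{\ell-1})$ equals $[A^{\ell}]_i\,\widehat{x}^{\ell-1}$, and by definition of the ternary labeling its sign is $\theta^{\ell}_{x,i}$; hence $\sigma$ either preserves that coordinate or kills it precisely according to whether $\theta^{\ell}_{x,i} = 1$ or $\theta^{\ell}_{x,i} \leq 0$, which is the same as left-multiplying $\widehat{x}^{\ell-1}$ by $A^{\ell}$ with those rows zeroed, i.e. by $A^{\ell}_{\theta_x^{\ell}}$. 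The augmentation row $e_{n_{\ell-1}+1}^T$ again accounts for the final coordinate $1$ in $\widehat{x}^{\ell}$, using that the last entry of $\widehat{x}^{\ell-1}$ is $1$.

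The only subtlety worth flagging—though the lemma sidesteps it by hypothesis—is that these identities rely on $x$ lying in the \emph{interior} of the cell $C$, so that the sign of every preactivation $y^{\ell}_i$ at $x$ genuinely agrees with $\theta_{C}$ and there is no ambiguity at $0$; this is where the fact (recalled before Definition \ref{d:neuralnetwork}'s surrounding discussion, via \cite{GrigsbyLindsey, Masden}) that the ternary labeling is constant on cell interiors is used. Since the statement is asserted to be "immediate," I expect no real obstacle: the entire argument is a bookkeeping exercise matching the recursive definition of $\widehat{x}^{\ell}$ against the matrix products, and the mild care needed is only in confirming that the ReLU's behavior coordinatewise is faithfully recorded by row-zeroing via $\theta_x^{\ell}$.
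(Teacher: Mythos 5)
Your proof is correct: the paper states this lemma without proof (it is declared ``immediate''), and your induction on $\ell$ is exactly the definition-unwinding argument that the paper has in mind, including the correct observation that row-zeroing via $\theta_x^{\ell}$ reproduces the coordinatewise action of $\sigma$ on the preactivations and that the augmentation row accounts for the trailing $1$.
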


 The following lemma will be useful for understanding the partial derivatives of the function $F$ with respect to the parameters. 

\begin{lemma} \label{lem:neuronoff} Suppose that the $i$th neuron in the $\ell$th layer is turned off for a data point $x \in \mathbb{R}^{n_0}$ (i.e. ${s}^{\ell}_{x,i} \leq 0$ at $x$). Then, when the relevant partial derivatives of $F$ are well-defined, we have: 
\begin{enumerate}
	\item $\frac{\partial F}{\partial A_{ij}^{\ell}} = 0$ at $x$ for all $j$, and
	\item $\frac{\partial F}{\partial  A_{ki}^{\ell+1}} = 0$ at $x$ for all $k$.
\end{enumerate}
\end{lemma}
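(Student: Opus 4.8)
The plan is to deduce both identities from the single elementary fact that, near the given parameter, changing the indicated entry leaves the value $F(x)$ unchanged. I fix the input $x = x^{0}$ and regard $F(x)$ as a function of the parameter; the phrase ``when the relevant partial derivatives of $F$ are well-defined'' lets me assume that $F(x)$ is differentiable in the relevant coordinate direction at the given parameter, so it suffices to exhibit a (possibly one-sided) interval of perturbations of that coordinate along which $F(x)$ is constant. The mechanism in both cases is already visible in Lemma~\ref{lem:terntuplecomp}: as long as the ternary label $\theta_{x}$ at $x$ is unchanged, $\widehat{x}^{m} = \widehat{A}^{m}_{\theta_{x}^{m}} \cdots \widehat{A}^{1}_{\theta_{x}^{1}} \widehat{x}^{0}$; for an off neuron the $i$th row of $\widehat{A}^{\ell}_{\theta_{x}^{\ell}}$ is zero, so the entry $A^{\ell}_{ij}$ never enters this product, while the entry $A^{\ell+1}_{ki}$ -- if it survives the row-zeroing -- is multiplied only by the $i$th coordinate $x^{\ell}_{i} = \Relu(y^{\ell}_{i}) = 0$ of $\widehat{x}^{\ell}$. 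The remaining work is to make this precise while attending to differentiability.

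For part (i), I would perturb only $A^{\ell}_{ij}$, replacing it by $A^{\ell}_{ij} + t$ with all other parameters fixed. Since the output $x^{\ell-1}$ of the first $\ell-1$ layers does not depend on $A^{\ell}$, it is independent of $t$, and the pre-activation $y^{\ell}_{i} = [A^{\ell}]_{i}\widehat{x}^{\ell-1}$ is affine in $t$ with coefficient $\widehat{x}^{\ell-1}_{j}$ and with $y^{\ell}_{i}|_{t=0} \le 0$ by the hypothesis $\theta^{\ell}_{x,i} \le 0$. Hence there is an interval $I \ni 0$ on which $y^{\ell}_{i} \le 0$ (an entire neighborhood of $0$ when $\widehat{x}^{\ell-1}_{j} = 0$, a one-sided neighborhood otherwise). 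On $I$ the $i$th coordinate of $x^{\ell}$ equals $\Relu(y^{\ell}_{i}) = 0$ identically, and every other coordinate of $x^{\ell}$ is independent of $t$ because changing $A^{\ell}_{ij}$ affects only row $i$ of $A^{\ell}$; therefore $x^{\ell}$, and with it $F(x)$, is constant on $I$. Differentiability at $t = 0$ then forces $\partial F / \partial A^{\ell}_{ij}$ at $x$ to equal the one-sided derivative from within $I$, which is $0$.

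For part (ii), I would perturb $A^{\ell+1}_{ki}$ to $A^{\ell+1}_{ki} + t$. This does not touch layers $1, \dots, \ell$, so $x^{\ell}$ is literally independent of $t$; and $\theta^{\ell}_{x,i} \le 0$ means $y^{\ell}_{i} \le 0$, whence the $i$th coordinate of the layer-$\ell$ output is $x^{\ell}_{i} = \Relu(y^{\ell}_{i}) = 0$. The only occurrence of $A^{\ell+1}_{ki}$ in the computation of $F(x)$ is in the $k$th pre-activation of layer $\ell+1$, where it is multiplied by exactly $x^{\ell}_{i} = 0$; hence $y^{\ell+1}_{k}$, then $x^{\ell+1}$, then $F(x)$, are all unchanged for every $t$, so $\partial F / \partial A^{\ell+1}_{ki} = 0$ at $x$ wherever it is defined.

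I do not anticipate a substantive obstacle; the only delicate point is the bookkeeping around differentiability in part (i). When $\theta^{\ell}_{x,i} = 0$ and $\widehat{x}^{\ell-1}_{j} \neq 0$, the perturbation turns neuron $(\ell,i)$ on for $t$ of one sign, so $F(x)$ need not be locally constant on a full two-sided neighborhood and the partial derivative can genuinely fail to exist there; this is precisely the situation excluded by the hypothesis ``when well-defined,'' and the clean way to organize the argument is to extract the one-sided interval $I$ first and only then invoke differentiability to conclude that the (two-sided) derivative equals the one-sided value $0$.
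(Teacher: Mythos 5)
Your proof is correct and follows essentially the same route as the paper's: the zeroed $i$th output coordinate of layer $\ell$ removes any dependence of $F(x)$ on row $i$ of $A^{\ell}$ and column $i$ of $A^{\ell+1}$. You are in fact more careful than the paper's two-sentence argument about the boundary case $\theta^{\ell}_{x,i}=0$ in part (i), where the constancy is only one-sided and the stated differentiability hypothesis is genuinely needed to conclude the two-sided derivative vanishes.
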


\begin{proof}  The $i$th row of $A^{\ell}_{s_x^{\ell}}$, and hence the $i$th element of the output of the node map $F_\ell$, is $0$ at $x$. This implies that the value of $F$ on a point in this region has no dependence on any parameters in the $i$th row of $A^{\ell}$ or the $i$th column of $A^{\ell+1}$, and hence, the partial derivatives of the components of $F$  with respect to these parameters  are $0$ when they are well-defined.

\end{proof}

In Section \ref{sec:smoothpoints}, we give sufficient conditions ensuring that the partial derivatives in Lemma \ref{lem:neuronoff} are well-defined at $x$.

\subsection{Rank of a smooth map}
We briefly recall the definition of the rank of a smooth map and establish notation.  
For a differentiable map $f:\mathbb{R}^n \to \mathbb{R}^m$, the \emph{Jacobian matrix} of $f$ at a point $p \in \mathbb{R}^n$ is the $m \times n$ matrix of partial derivatives
$$\mathbf{J}(f) \vert_p = \left[ \frac{\partial f_i}{\partial x_j}(p) \right] $$
where $f_i$ denotes the $i$th coordinate function of $f$.  
If $F:M \to N$ is a smooth map between smooth manifolds,  the \emph{rank} of $F$ at a point $p \in M$, which we denote $\textrm{rank }F\vert_p$, is the rank of the linear map $F_*:T_pM \to T_{F(p)}N$, which coincides with the rank of the Jacobian matrix $\mathbf{J}F\vert_p$ with respect to any smooth coordinate chart.  
We will denote the rank in various equivalent ways, depending on what we want to emphasize:
$$\textrm{rank } F \vert_p = \textrm{rank }(\mathbf{J}F) \vert_p = \textrm{rank }\mathbf{J}F(p).$$

\section{Smoothness of the parameterized family $\mathcal{F}_{n_0,\ldots,n_m}$} \label{sec:smoothpoints}

Our definition of the functional dimension of a parameter $\theta$ (as well as the definition of the neural tangent kernel) involves taking partial derivatives of $\rho(\theta)(x)$, for fixed $x$ in the domain of $\rho(\theta)$, with respect to the coordinates of $\theta$.  Since the ReLU activation function is not everywhere differentiable, we must preface an investigation of such derivatives with a careful treatment of the locus of differentiability (with respect to both coordinates of $\theta$ and $x$) of $\rho(\theta)(x)$. 

 \begin{definition}
 The \emph{parameterized family} of neural networks of architecture $(n_0,\ldots,n_m)$ is the map $\mathcal{F}_{n_0,\ldots,n_m}: \mathcal{P}_{n_0,\ldots,n_m} \times \mathbb{R}^{n_0} \to \mathbb{R}^{n_m}$ defined by 
 $$\mathcal{F}_{n_0,\ldots,n_m}(\theta,x) = \rho(\theta)(x).$$
 \end{definition}

\subsection{The parameterized family is finitely piecewise polynomial}

This subsection defines the term finitely piecewise polynomial and proves (Theorem \ref{t:Fpiecewisepolynomial}) that 
$\mathcal{F}_{n_0,\ldots,n_m}$ is finitely piecewise polynomial (which is used in the proof of Theorem \ref{t:ordinaryparamsfull}). 

 \begin{definition}
 Define a \emph{piece} of $\mathbb{R}^n$ to be a subset of $\mathbb{R}^n$ that can be written as the closure of a nonempty, open, connected subset of $\mathbb{R}^n$.  
 \end{definition}

 \begin{definition}
 Let $k \in \mathbb{N}$.  A function $f:\mathbb{R}^{n} \to \mathbb{R}^{m}$ is said to be \emph{finitely piecewise polynomial} if there exist finitely many pieces 
 $X_1,\ldots,X_k$ in $\mathbb{R}^n$ such that $\mathbb{R}^n = \bigcup_{i=1}^k X_i$ and for each $1 \leq i \leq k$ and each $1 \leq j \leq m$, the $j^{\textrm{th}}$ coordinate function of the restriction of $f$ to $X_i$,  $\pi_j(f\vert_{X_i})$, is a polynomial.  
  \end{definition}
 
 \begin{lemma} \label{l:sufficientconditionnondegenerate}
Let $\theta \in \mathcal{P}_{n_0,\ldots,n_m}$ and let $x \in \mathbb{R}^{n_0}$.  Assume that there are no $0$'s in the ternary labeling at $x$ associated to $\bar{\rho}(\theta)$.  
Then there is an open neighborhood of $(\theta,x)$ in $\mathcal{P}_{n_0,\ldots,n_m} \times \mathbb{R}^{n_0}$ on which (each coordinate function of) $\mathcal{F}_{n_0,\ldots,n_m}$ is a polynomial.  
 \end{lemma}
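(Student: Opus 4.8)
The plan is to show that, under the hypothesis that the ternary labeling at $x$ has no zeros, the composition formula of Lemma~\ref{lem:terntuplecomp} expresses $\mathcal{F}_{n_0,\ldots,n_m}$ locally as a fixed polynomial in the coordinates of $s$ and $x$. The key point is that the ternary labeling is \emph{locally constant} in a neighborhood of $(s,x)$, so that the "active" matrix pattern $\theta_x$ used in the composition does not change, and the ReLU's non-smoothness is never encountered.

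First I would unwind the definitions: writing $F = \bar\rho(s)$ as in \eqref{eq:FdefNotation}, the pre-activation outputs $y^{\ell}$ of \eqref{eq:zell} are, for $x$ in the interior of its activation region, obtained by composing the affine maps $A^1,\ldots,A^\ell$ with intervening coordinatewise ReLUs; by Lemma~\ref{lem:terntuplecomp} the augmented post-activation vector satisfies $\widehat{x}^{\ell} = \widehat{A}^{\ell}_{\theta_x^{\ell}} \cdots \widehat{A}^1_{\theta_x^1}\,\widehat{x}^0$, which is literally a product of matrices whose entries are either entries of the parameter $s$ (those rows corresponding to $\theta^{\ell}_{x,i}=1$), zeros, or the constant $1$. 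Hence, \emph{once $\theta_x$ is fixed}, $\mathcal{F}_{n_0,\ldots,n_m}(s,x) = \pi\text{-th block of }\widehat{x}^m$ is a polynomial in the entries of $s$ and of $x$ (in fact multilinear in the $A^{\ell}$ blocks and linear in $x$).

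Second, and this is the substantive step, I would show that the particular ternary tuple $\theta_x$ (as a pattern of $\pm 1$'s, with no $0$'s by hypothesis) persists on an open neighborhood of $(s,x)$ in $\mathcal{P}_{n_0,\ldots,n_m}\times\mathbb{R}^{n_0}$. This is an induction on the layer index $\ell$. At $\ell=1$: each entry of $y^1 = A^1 \widehat{x}^0$ is a continuous (indeed polynomial) function of $(s,x)$, and by hypothesis each such entry is nonzero at $(s,x)$, so each retains its sign on some neighborhood; thus $\theta^1$ is locally constant and $x^1 = \sigma(y^1)$ equals the polynomial expression $A^1_{\theta_x^1}\widehat{x}^0$ there. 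Inductively, if $\theta^1,\ldots,\theta^{\ell-1}$ are locally constant and $x^{\ell-1}$ agrees with the polynomial map $\widehat{A}^{\ell-1}_{\theta_x^{\ell-1}}\cdots\widehat{A}^1_{\theta_x^1}\widehat{x}^0$ on a neighborhood $V_{\ell-1}$ of $(s,x)$, then on $V_{\ell-1}$ the pre-activation $y^\ell = A^{\ell}\widehat{x}^{\ell-1}$ is a polynomial in $(s,x)$; by hypothesis each of its components is nonzero at $(s,x)$, so each retains its sign on a smaller neighborhood $V_\ell$, giving $\theta^\ell$ locally constant and $x^\ell = \sigma(y^\ell)$ polynomial on $V_\ell$. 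Intersecting the finitely many neighborhoods $V_1,\ldots,V_m$ produces the desired open set on which every coordinate function of $\mathcal{F}_{n_0,\ldots,n_m}$ is a polynomial.

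The main obstacle — really the only place any care is needed — is the bookkeeping in the inductive step: one must track that "locally constant ternary labeling" genuinely forces $\sigma$ to act as a fixed coordinate projection (the $i$th coordinate passes through unchanged when $\theta^\ell_{x,i}=1$ and is sent to $0$ when $\theta^\ell_{x,i}=-1$), so that $x^\ell$ coincides \emph{as a function} with the polynomial $A^{\ell}_{\theta_x^\ell}\widehat{x}^{\ell-1}$ on the neighborhood, not merely pointwise at $(s,x)$. This is exactly the content encoded by the $A^{\ell}_{\theta_x^\ell}$ notation and Lemma~\ref{lem:terntuplecomp}, so once the local constancy of the sign pattern is in hand the conclusion is immediate. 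I do not expect to need genericity or transversality of $s$ here: only the no-zeros hypothesis on the ternary labeling at the single point $x$ is used, and it is used precisely to guarantee strict inequalities that are stable under perturbation.
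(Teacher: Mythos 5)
Your proposal is correct and follows essentially the same route as the paper's proof, which simply cites Lemma \ref{lem:terntuplecomp} and the fact that compositions of affine maps are polynomial in the matrix entries and input coordinates. The only difference is that you make explicit the local constancy of the ternary labeling (via the layer-by-layer sign-persistence induction), which the paper's one-line argument leaves implicit; this is the right justification for why the formula of Lemma \ref{lem:terntuplecomp} holds on an open neighborhood rather than only at the point $(s,x)$.
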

 
 \begin{proof}
  This follows immediately from Lemma \ref{lem:terntuplecomp}, since the coordinates of a composition of linear transformations are polynomial in the entries of the matrices and the input coordinates.
 \end{proof}

 \begin{theorem} \label{t:Fpiecewisepolynomial}
 For any architecture $(n_0,\ldots,n_m)$, the parametrized family  $\mathcal{F}_{n_0,\ldots,n_m}$ is continuous and finitely piecewise polynomial. 
 
 Furthermore, we may take the pieces to be the closures of the connected components of  $(\mathcal{P}_{n_0,\ldots,n_m} \times \mathbb{R}^{n_0}) \setminus V$, where 
 $V \subset \mathcal{P}_{n_0,\ldots,n_m} \times \mathbb{R}^{n_0}$ is the set consisting of all points $(\theta,x)$ where  the ternary labeling at $x$ associated to $\bar{\rho}(\theta)$ contains at least one $0$. 
 \end{theorem}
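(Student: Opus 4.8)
The plan is to split the statement into the (immediate) continuity assertion and the finitely-piecewise-polynomial assertion, and to prove the latter by showing that the ternary labeling is exactly the bookkeeping that detects the local failure of polynomiality, so that the closures of the connected components of the complement of $V$ are the natural pieces.

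Continuity of $\mathcal{F}_{n_0,\ldots,n_m}$ is clear: $\mathcal{F}_{n_0,\ldots,n_m}(s,x)$ is produced from $(s,x)$ by alternately applying maps given by an affine-linear map whose matrix entries are among the coordinates of $s$ (polynomial in those coordinates and in the input, hence continuous) and the coordinatewise ramp map $\sigma$ (continuous); a composition of continuous maps is continuous. The same remark shows that for each $\ell,i$ the preactivation function $y^{\ell}_{i}:\mathcal{P}_{n_0,\ldots,n_m}\times\mathbb{R}^{n_0}\to\mathbb{R}$, $(s,x)\mapsto$ the $i$th coordinate of $y^{\ell}$ as in \eqref{eq:zell}, is continuous. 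Hence $V=\bigcup_{\ell,i}(y^{\ell}_{i})^{-1}(0)$ is closed, so its complement $W:=(\mathcal{P}_{n_0,\ldots,n_m}\times\mathbb{R}^{n_0})\setminus V$ is open and its connected components are open.

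Next I would show that on each connected component $U$ of $W$ the map $\mathcal{F}_{n_0,\ldots,n_m}$ agrees with a single polynomial map. On $W$ every $y^{\ell}_{i}$ is nonzero, so each $(s,x)\mapsto\mathrm{sgn}\big(y^{\ell}_{i}(s,x)\big)$ is a continuous map $W\to\{-1,1\}$, hence constant on the connected set $U$; thus the ternary labeling $(s,x)\mapsto\theta_{x}$ of $\bar\rho(s)$ equals on $U$ some fixed $\theta\in\{-1,1\}^{n_1+\cdots+n_m}$ with no zeros. For such $(s,x)$ the ramp map $\sigma$ acts on the $\ell$th preactivation by zeroing precisely the coordinates $i$ with $\theta^{\ell}_{i}=-1$, so the computation in Lemma \ref{lem:terntuplecomp} applies verbatim and gives $\widehat{x}^{m}=\widehat{A}^{m}_{\theta^{m}}\cdots\widehat{A}^{1}_{\theta^{1}}\,\widehat{x}^{0}$ on $U$, whence $\mathcal{F}_{n_0,\ldots,n_m}(s,x)=x^{m}$ is the projection of that product onto its first $n_m$ coordinates. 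Since $\theta$ is fixed, every $\widehat{A}^{\ell}_{\theta^{\ell}}$ has entries that are coordinates of $s$ or constants, so this product is a polynomial map $P_{\theta}$ in the coordinates of $(s,x)$; by continuity of $\mathcal{F}_{n_0,\ldots,n_m}$ and $P_{\theta}$ together with the density of $U$ in $\overline U$, in fact $\mathcal{F}_{n_0,\ldots,n_m}=P_{\theta}$ on $\overline U$. (Alternatively one obtains local polynomiality from Lemma \ref{l:sufficientconditionnondegenerate} and then promotes it to all of $U$ by connectedness.) Thus each $\overline U$ is a piece on which every coordinate function of $\mathcal{F}_{n_0,\ldots,n_m}$ is a polynomial.

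Finally one must check that finitely many pieces suffice and that they cover, i.e. that $W$ has finitely many connected components and that $V$ has empty interior. For finiteness I would note that $\mathcal{F}_{n_0,\ldots,n_m}$, and each $y^{\ell}_{i}$, is semialgebraic: its graph is built by finitely many compositions of polynomial maps and of the graph of $\max(0,\cdot)$ (a union of two rays, hence semialgebraic), and semialgebraic sets are stable under composition and projection by Tarski--Seidenberg; hence $V$ is semialgebraic, $W$ is semialgebraic, and a semialgebraic set has only finitely many connected components. (If an explicit bound on the number of pieces is wanted, one instead applies Milnor--Thom bounds layer by layer to the complements of the zero loci of the polynomials $\prod_i y^{\ell}_i$ on the regions where the earlier layers have fixed sign pattern.) For the interior of $V$, observe that $y^{\ell}_{i}$, as a function of $(s,x)$, equals a term not involving the bias coordinate $A^{\ell}_{i,\,n_{\ell-1}+1}$ plus $A^{\ell}_{i,\,n_{\ell-1}+1}$ itself, so $y^{\ell}_{i}$ vanishes on no open set; each $(y^{\ell}_{i})^{-1}(0)$ is therefore closed and nowhere dense, and hence so is the finite union $V$. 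So $W$ is dense, and with finitely many components $U_{1},\ldots,U_{k}$ we get $\bigcup_j \overline{U_j}=\overline W=\mathcal{P}_{n_0,\ldots,n_m}\times\mathbb{R}^{n_0}$, which completes the argument. The only step going beyond the definitions and Lemma \ref{lem:terntuplecomp} is the finiteness of the number of components of $W$; I expect this to be the crux, with semialgebraicity of $\mathcal{F}_{n_0,\ldots,n_m}$ the cleanest tool (or a Milnor--Thom count if quantitative control of the number of pieces is desired), while the conceptual core --- that the ternary labeling is constant on the components of $W$ and pins down the relevant polynomial --- is short.
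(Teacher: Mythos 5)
Your proof is correct, and it differs from the paper's in two substantive ways. First, the paper handles the crux step---that $V$ is closed and its complement has only finitely many connected components---by citing the proof of Theorem 4 of \cite{GrigsbyLindsey}, which shows $V$ is the vanishing set of a polynomial; you instead give a self-contained argument via semialgebraicity of the preactivation functions and Tarski--Seidenberg (finitely many components of a semialgebraic set). This is a legitimate and arguably more transparent route, since the individual preactivations $y^{\ell}_i$ are only piecewise polynomial in $(s,x)$ and the semialgebraic framework absorbs that cleanly; the cost is importing a (standard) tool the paper does not use. Second, for polynomiality on each component the paper argues locally (Lemma \ref{l:sufficientconditionnondegenerate} gives a polynomial near each point) and then globalizes with the identity theorem for polynomials agreeing on open sets, whereas you observe directly that the ternary labeling, being a continuous $\{-1,1\}$-valued function on the open connected component, is constant there, so Lemma \ref{lem:terntuplecomp} produces one explicit polynomial $P_{\theta}$ for the whole component; the two mechanisms are equivalent but yours names the polynomial. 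You also explicitly verify that $V$ is nowhere dense (via the additive bias coordinate) so that the closures of the components actually cover $\mathcal{P}_{n_0,\ldots,n_m}\times\mathbb{R}^{n_0}$, a point the paper leaves implicit but which is genuinely needed for the definition of ``finitely piecewise polynomial.'' No gaps; both arguments are sound.
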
  

 \begin{proof}
 By the proof of \cite[Theorem 4]{GrigsbyLindsey}, the set $V$  is the vanishing set of a polynomial, and the complement of $V$ consists of finitely many connected components.  Let $\mathring{P}$ denote one of these (open) connected components. 
 
  Lemma \ref{l:sufficientconditionnondegenerate} guarantees that for every $(\theta,x) \in \mathring{P}$, there is an open neighborhood $U_\theta$ of $(\theta,x)$ in $\mathcal{P}_{n_0,\ldots,n_m} \times \mathbb{R}^{n_0}$ such that the restriction of  $\mathcal{F}_{n_0,\ldots,n_m}$ to $U_\theta$ is a polynomial.  If two polynomials agree on an open set, they are the same polynomial (by, for example, the Identity Theorem in complex analysis for analytic functions); thus all of the maps $\mathcal{F}_{n_0,\ldots,n_m}\vert_{U_\theta}$ are restrictions of a unique polynomial on $\mathring{P}$.  Hence,  the restriction of $\mathcal{F}_{n_0,\ldots,n_m}$ to the open set $ \mathring{P}$ is a polynomial.  It is a standard fact that the restriction of a polynomial on an open subset of Euclidean space admits a unique continuous extension to the boundary of the subset (stemming from the fact that a polynomial is locally uniformly continuous) -- and that extension is the polynomial (restricted to the closure of the subset).  Since $\mathcal{F}_{n_0,\ldots,n_m}$ is continuous, its restriction to the closure of $\mathring{P}$ is therefore a polynomial.  
\end{proof}

 \begin{corollary} \label{cor:NonDiffInBHA}
 A sufficient but not necessary condition for $(\theta,x)$ to be a smooth point for $\mathcal{F}_{n_0,\ldots,n_m}$ is that  the ternary labeling at $x$ has no $0$s with respect to $\bar{\rho}(\theta)$. 
  \end{corollary}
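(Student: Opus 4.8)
The plan is to prove the two halves of the statement separately: sufficiency follows in one line from the machinery already assembled, while the failure of necessity requires producing an explicit counterexample.

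For sufficiency, suppose the ternary labeling at $x$ relative to $\bar\rho(s)$ contains no $0$'s. Then Lemma \ref{l:sufficientconditionnondegenerate} supplies an open neighborhood of $(s,x)$ in $\mathcal{P}_{n_0,\ldots,n_m}\times\mathbb{R}^{n_0}$ on which each coordinate function of $\mathcal{F}_{n_0,\ldots,n_m}$ is a polynomial, hence $C^{\infty}$; so $(s,x)$ is a smooth point. (Equivalently, Theorem \ref{t:Fpiecewisepolynomial} places $(s,x)$ in the interior of a piece on which $\mathcal{F}_{n_0,\ldots,n_m}$ is a polynomial.) I expect this step to be entirely routine.

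For the failure of necessity, the governing idea I would use is that the kink contributed by a neuron whose ternary label is $0$ at $(s,x)$ is invisible to $\mathcal{F}_{n_0,\ldots,n_m}$ if some downstream neuron is turned off throughout an entire neighborhood of $(s,x)$ in $\mathcal{P}_{n_0,\ldots,n_m}\times\mathbb{R}^{n_0}$ -- a neuron with strictly negative bias whose incoming signal is forced to be small serves as such a shield. Concretely, I would take the architecture $(1,1,1)$, with realization map $\rho(a,b,c,d)(x)=\sigma\bigl(c\,\sigma(ax+b)+d\bigr)$, and consider the parameter $s=(1,0,c,d)$ with $d<0$ (and $c$ arbitrary) together with the point $x=0$. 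The pre-activation of the hidden neuron at $x=0$ is $1\cdot 0 + 0 = 0$, so the ternary labeling at $x=0$ has a $0$; yet for $(a',b',c',d',x')$ in a sufficiently small ball around $(1,0,c,d,0)$ one has $|a'x'+b'|$ small, hence $\sigma(a'x'+b')$ small and nonnegative, hence $c'\sigma(a'x'+b')+d'$ close to $d<0$ and in particular negative, hence $\mathcal{F}_{1,1,1}(a',b',c',d',x')=\sigma\bigl(c'\sigma(a'x'+b')+d'\bigr)=0$ on that ball. Since $\mathcal{F}_{1,1,1}$ is identically $0$ near $(s,0)$ it is $C^{\infty}$ there, so $(s,0)$ is a smooth point despite having a $0$ in its ternary labeling, which is the desired counterexample.

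The only genuinely non-mechanical part of the argument is identifying this configuration for the non-necessity half -- the realization that an unavoidable hidden-layer ReLU kink can be placed behind a stably turned-off output neuron so that it never propagates to the function. Once that is seen, the remaining verifications (polynomial implies smooth; the single inequality forcing the downstream pre-activation to be negative) present no obstacle.
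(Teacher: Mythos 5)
Your proposal is correct and follows essentially the same route as the paper: sufficiency via Lemma \ref{l:sufficientconditionnondegenerate} (equivalently Theorem \ref{t:Fpiecewisepolynomial}), and non-necessity via the architecture $(1,1,1)$ with a hidden-neuron kink at $x=0$ shielded by a stably unactivated output neuron. The paper uses the specific parameter $(1,0,1,-1)$ where you allow general $c$ and $d<0$, but the counterexample and the underlying mechanism are identical.
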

 
 \begin{proof}
Sufficiency of the condition is an immediate consequence of Theorem \ref{t:Fpiecewisepolynomial}. 
We give an example that shows that the condition is not necessary.  Consider the architecture $(1,1,1)$, which is parametrized by the components of $A^{1}, A^{2}$:
\begin{eqnarray*}
	A^{1} &=& \left[ \begin{array}{cc} w^{1} & b^{1}\end{array}\right]\\
	A^{2} &=& \left[\begin{array}{cc} w^{2} & b^{2}\end{array}\right].
\end{eqnarray*}
Let $\theta = \left(w^{1},b^{1},w^{2},b^{2}\right) = (1,0,1,-1)$. A straightforward calculation shows us that there are $3$ top-dimensional cells of the associated polyhedral decomposition of the domain, $\mathbb{R}$: \[\{(-\infty,0],[0,1],[1,\infty)\}.\] 
Direct calculation shows that $\rho(\theta)= 0$ when restricted to both adjacent cells $(-\infty,0]$ and $[0,1]$:
$$\rho(\theta)(x) = 
\begin{cases} 
0&  \textrm{ if } x \leq 1 \\ 
x-1 &  \textrm{ if } x \geq 1
\end{cases}$$
 Although the ternary labeling for $x=0$ with respect to $\theta = (1,0,1,-1)$ is $\left(s_x^{1}, s_x^{2}\right) = (0,-1)$, $(x,\theta)$ is a smooth point for $\mathcal{F}_{n_0,\ldots,n_m}$ since the output neuron is stably unactivated at $x = 0$, so $\rho(\theta)$ is locally constant (and hence smooth) with respect to both $x$ and the coordinates of $\theta$. 
 \end{proof}   

 \begin{remark}
Corollary \ref{cor:NonDiffInBHA} implies the fact, well-known to the experts, that the locus of non-differentiability/non-linearity of $F={\rho}(\theta)$ is contained in the union of the cells $C$ of $\mathcal{C}(F)$ for which the ternary label $s_C$ contains at least one $0$. Hanin-Rolnick,  in \cite{HaninRolnick}, call this latter locus the {\em bent hyperplane arrangement}, and \cite{TropGeometry} call this latter locus the {\em tropical hypersurface} of the corresponding tropical rational function. Also well-known--and illustrated by the example above--is the fact that the locus of non-differentiability is often a proper subset of the bent hyperplane arrangement. 
 \end{remark}
 
 \begin{remark}
 Theorem \ref{t:Fpiecewisepolynomial} seems also to be well-known to the experts, although we haven't found it stated formally in the literature. Note that each polynomial in the decomposition is {\em multi-affine-linear}. That is, it is affine-linear when all but one variable is held constant.
 \end{remark}

 \subsection{Parametrically smooth points} \label{ss:parametricallysmoothpoints}
 
 This subsection defines parametrically smooth points, ordinary parameters, and proves that the ordinary parameters form an open, dense, full-measure subset of parameter space (Theorem \ref{t:ordinaryparamsfull}).

      \begin{definition} \label{def:parametricallysmooth}
  Let $(n_0,\ldots,n_m)$ be an architecture and $\theta \in \mathcal{P}_{n_0,\ldots,n_m}$.  
  \begin{enumerate}
  \item A point $x \in \mathbb{R}^{n_0}$ is \emph{parametrically smooth for $\theta$} if $(\theta,x)$ is a smooth point for $\mathcal{F}_{n_0,\ldots,n_m}$. Note that {\em smooth} means $C^\infty$. See Remark \ref{rmk:Cinfty}.
  

  \item   A finite set $X \subset \mathbb{R}^{n_0}$ is  \emph{parametrically smooth for $\theta$} if every point $x \in X$ is parametrically smooth  for  $\theta$.
  \end{enumerate}
 
 \end{definition}

      \begin{remark} \label{r:nonordinary}
We give a simple example of a parameter that does not have any parametrically smooth points.  
Consider the architecture $(1,1)$ with the parametrized family of maps given by
$$\rho(a,b)(x) = \textrm{ReLU}(ax+b.)$$  We claim that the parameter $\theta=(0,0)$ admits no parametrically smooth points.  

Suppose $x > 0$.  Then 
$$\lim_{\epsilon \to 0^+} \frac{\rho(0+\epsilon,0)(x)}{\epsilon}  = \lim_{\epsilon \to 0^+} \frac{ \epsilon x}{\epsilon} = x > 0 $$ 
but 
$$\lim_{\epsilon \to 0^-} \frac{\rho(0+\epsilon,0)(x)}{\epsilon} = \lim_{\epsilon \to 0^-} \frac{0}{\epsilon} = 0, $$
so the derivative $ \left. \frac{\partial \rho(a,b)(x)}{\partial a} \right \vert_{(a,b) = s}$ does not exist.  

If $x < 0$,  a similar computation shows the derivative $ \left. \frac{\partial \rho(a,b)(x)}{\partial a} \right \vert_{(a,b)=s}$ does not exist.

Suppose $x = 0$.  Then 
$$\lim_{\epsilon \to 0^+} \frac{\rho(0,0+\epsilon)(x)}{\epsilon} = \lim_{\epsilon \to 0^+} \frac{\epsilon}{\epsilon} = 1$$
but 
$$\lim_{\epsilon \to 0^-} \frac{\rho(0,0+\epsilon)(x)}{\epsilon} =  \lim_{\epsilon \to 0^-} \frac{0}{\epsilon} = 0,$$
so the derivative $\left. \frac{\partial \rho(a,b)(x)}{\partial b} \right \vert_{(a,b) = s}$ does not exist. 

Thus, no point $x \in \mathbb{R}$ is parametrically smooth for $\theta$. 
\end{remark}

 Fortunately, parameters $\theta \in \mathcal{P}_{n_0,\ldots,n_m}$ that have no parametrically smooth points  are rare, as the next two lemmas show:
 
 \begin{lemma}\label{l:ordinaryparamsdense}
Fix any architecture $n_0,\ldots,n_m$.  Then for Lebesgue-almost every parameter $\theta \in \mathcal{P}_{n_0,\ldots,n_m}$, Lebesgue-almost every point $x \in \mathbb{R}^{n_0}$ is parametrically smooth for $\theta$.  
\end{lemma}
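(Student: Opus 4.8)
The plan is to leverage Theorem \ref{t:Fpiecewisepolynomial}, which tells us that $\mathcal{F}_{n_0,\ldots,n_m}$ is finitely piecewise polynomial with pieces the closures of the connected components of $(\mathcal{P}_{n_0,\ldots,n_m} \times \mathbb{R}^{n_0}) \setminus V$, where $V$ is the vanishing set of a single (nonzero) polynomial $q$ on $\mathcal{P}_{n_0,\ldots,n_m} \times \mathbb{R}^{n_0} = \mathbb{R}^{D + n_0}$. By Corollary \ref{cor:NonDiffInBHA}, every point $(s,x) \notin V$ is a smooth point for $\mathcal{F}_{n_0,\ldots,n_m}$, hence $x$ is parametrically smooth for $s$. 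So it suffices to show that for Lebesgue-almost every $s \in \mathcal{P}_{n_0,\ldots,n_m}$, the slice $V_s := \{x \in \mathbb{R}^{n_0} : (s,x) \in V\}$ has Lebesgue measure zero in $\mathbb{R}^{n_0}$.

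First I would record the basic fact that the zero set of a nonzero polynomial on Euclidean space has Lebesgue measure zero (standard: it is a proper real-analytic subvariety, or one can induct on dimension). Apply this to $q$ on $\mathbb{R}^{D+n_0}$ to conclude $V$ has measure zero in $\mathcal{P}_{n_0,\ldots,n_m} \times \mathbb{R}^{n_0}$. Then invoke Fubini's theorem (or Tonelli, since the indicator of $V$ is nonnegative and measurable): $0 = \mathrm{Leb}_{D+n_0}(V) = \int_{\mathcal{P}_{n_0,\ldots,n_m}} \mathrm{Leb}_{n_0}(V_s)\, ds$, so the nonnegative integrand $\mathrm{Leb}_{n_0}(V_s)$ must vanish for Lebesgue-almost every $s$. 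For such $s$, the complement $\mathbb{R}^{n_0} \setminus V_s$ has full measure, and every $x$ in that complement satisfies $(s,x) \notin V$, hence is parametrically smooth for $s$ by Corollary \ref{cor:NonDiffInBHA}. This is exactly the conclusion.

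There is a small subtlety to handle: $V$ is the vanishing set of a polynomial only if that polynomial is not identically zero. I would note that if the polynomial cutting out $V$ were identically zero then $V = \mathbb{R}^{D+n_0}$; but this cannot happen because, e.g., a generic parameter $s$ has points $x$ with no zeros in the ternary labeling (indeed the activation regions are open and nonempty for generic $s$), so $V \neq \mathbb{R}^{D+n_0}$. Alternatively, one can simply cite that the proof of \cite[Theorem 4]{GrigsbyLindsey}, as used in the proof of Theorem \ref{t:Fpiecewisepolynomial}, exhibits $V$ as the vanishing set of a \emph{nonzero} polynomial. Either way this is a one-line remark.

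I do not expect any serious obstacle here; the argument is essentially "measure-zero variety plus Fubini." The only thing requiring a modicum of care is bookkeeping: making sure the polynomial whose zero set is $V$ is genuinely nonzero (handled above), and making sure the measurability hypotheses for Fubini/Tonelli are met (they are, since $V$ is a closed set, hence Borel, hence its indicator is measurable on the product). The remaining content — that points off $V$ are parametrically smooth — is already packaged in Corollary \ref{cor:NonDiffInBHA}, so nothing new is needed there.
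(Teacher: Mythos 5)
Your proposal is correct and follows essentially the same route as the paper: the paper's proof also observes that $\mathcal{F}_{n_0,\ldots,n_m}$ is smooth off the vanishing set of a polynomial (citing \cite[Theorem 4]{GrigsbyLindsey} directly rather than routing through Theorem \ref{t:Fpiecewisepolynomial} and Corollary \ref{cor:NonDiffInBHA}), and then applies the same Fubini-type argument, stated there in contrapositive form. Your added remarks on the nonvanishing of the polynomial and on measurability are harmless extra care, not a different argument.
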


\begin{proof}
This is an immediate consequence of \cite[Theorem 4]{GrigsbyLindsey}, which asserts that the parametrized family $\mathcal{F}_{n_0,\ldots,n_m}$ is smooth on the complement of a set $Z \subset \mathcal{P}_{n_0,\ldots,n_m} \times \mathbb{R}^{n_0}$ that is the vanishing set of a polynomial, and hence is a null set for Lebesgue measure. 

If there were a positive measure set of parameters for each of which a positive measure set of points was non-parametrically smooth, it would follow that $\mathcal{F}_{n_0,\ldots,n_m}$ would fail to be smooth on a set of positive measure.  
\end{proof}

\begin{definition} \label{def:ordinary}
For any architecture $n_0,\ldots,n_m$, a parameter $\theta \in \mathcal{P}_{n_0,\ldots,n_m}$ is \emph{ordinary} if there exists at least one point $x \in \mathbb{R}^{n_0}$ that is parametrically smooth for $\theta$. 
\end{definition}

\begin{lemma} \label{l:generictransversalordinary}
Every generic, transversal parameter is ordinary. Furthermore, if $x$ is a point in the interior of a top-dimensional cell of the canonical polyhedral complex $\mathcal{C}(\theta)$, where $\theta$ is a generic, transversal parameter, then $x$ is a parametrically smooth point for $\theta$.  
\end{lemma}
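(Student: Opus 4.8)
The plan is to reduce everything to Corollary~\ref{cor:NonDiffInBHA}: it suffices to produce a point $x$ whose ternary labeling with respect to $\bar\rho(s)$ contains no zeros, since such a point is automatically parametrically smooth for $s$, which in turn forces $s$ to be ordinary. So the first step is to recall from \cite{GrigsbyLindsey} the structure theory invoked just above the statement: for a generic, transversal parameter $s$, the $n_0$-cells of the canonical polyhedral complex $\mathcal{C}(s)$ are precisely the closures of the activation regions, and the activation regions are exactly the maximal connected components of the set where the ternary labeling is constant with no zeros. Since $\mathcal{C}(s)$ is a polyhedral decomposition of $\mathbb{R}^{n_0}$, it has at least one $n_0$-cell $C$, and the interior of $C$ is nonempty and open.

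The second step is the core observation: on the interior of $C$, the ternary labeling is constant (this holds for all $\bar\rho(s)$, as noted in \S\ref{ss:ternarylabeling}), and because $C$ is the closure of an activation region, that constant ternary label has no zeros. Pick any $x$ in the interior of $C$. Then the ternary labeling at $x$ has no zeros, so by Corollary~\ref{cor:NonDiffInBHA}, $(s,x)$ is a smooth point for $\mathcal{F}_{n_0,\ldots,n_m}$; equivalently, by Definition~\ref{def:parametricallysmooth}, $x$ is parametrically smooth for $s$. This proves the ``furthermore'' clause. The first clause of the statement is then immediate from Definition~\ref{def:ordinary}, since we have exhibited at least one parametrically smooth point.

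I should double-check one subtlety: the phrase ``top-dimensional cell of $\mathcal{C}(s)$'' means an $n_0$-dimensional cell, and I need such a cell to actually exist and to have the property that its interior lies in an activation region. Existence is clear since $|\mathcal{C}(s)| = \mathbb{R}^{n_0}$ and $\mathcal{C}(s)$ is a finite collection of polyhedral sets of dimension at most $n_0$, so some cell must have dimension exactly $n_0$. The identification of $n_0$-cells with closures of activation regions is exactly the cited result of \cite{GrigsbyLindsey} for generic, transversal parameters, so no zeros appear in the label on the interior. I do not anticipate a genuine obstacle here; the only thing requiring care is making sure the quoted facts from \cite{GrigsbyLindsey} are applied to the \emph{interior} of the cell (where the ternary label is honestly constant) rather than to the closed cell (where boundary points may pick up zeros), but this matches exactly how the statement is phrased.
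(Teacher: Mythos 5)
Your argument is correct and follows essentially the same route as the paper: both reduce to the fact that a point in the interior of a top-dimensional cell of $\mathcal{C}(s)$ has a ternary label with no zeros (a consequence of the structure theory from \cite{GrigsbyLindsey} for generic, transversal parameters) and then apply the no-zeros-implies-smooth criterion. The only cosmetic difference is that the paper phrases the first step via the bent hyperplane arrangement coinciding with the $(n_0-1)$-skeleton and cites Lemma \ref{l:sufficientconditionnondegenerate} directly, whereas you phrase it via activation regions and cite Corollary \ref{cor:NonDiffInBHA}; these are equivalent.
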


\begin{proof}
Since $\theta$ is generic and transversal, the bent hyperplane arrangement is codimension $1$ in $\mathbb{R}^{n_0}$ and coincides with the $(n_0-1)$-skeleton of $\mathcal{C}(\theta)$  (\cite{GrigsbyLindsey}). Thus, if $C$ is top-dimensional, $x \in \textrm{interior}(C)$ implies $x$ is not in the bent hyperplane arrangement for $\theta$, meaning the ternary labeling of $x$ with respect to every neuron of $\bar{\rho}(\theta)$ is nonzero.  Lemma \ref{l:sufficientconditionnondegenerate} implies that $x$ is a smooth point for $\theta$.  \end{proof}

We remark that the proof above shows that every generic, transversal parameter $\theta$ is extraordinary; the parametrically smooth points for $\theta$ comprise a full measure subset of the domain.

\begin{lemma} \label{l:opennbhd1}
Let $(n_0,\ldots,n_m)$ be an architecture and fix a parameter $\theta \in \mathcal{P}_{n_0,\ldots,n_m}$. Then, $x \in \mathbb{R}^{n_0}$ is a parametrically smooth point for $\theta$ if and only if $(\theta,x)$ is in the interior of a maximal piece on which $\mathcal{F}_{n_0,\ldots,n_m}$ is polynomial.  
\end{lemma}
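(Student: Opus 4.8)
The plan is to read everything off the structure theorem for $\mathcal{F}_{n_0,\ldots,n_m}$ proved in Theorem~\ref{t:Fpiecewisepolynomial}. Write $M := \mathcal{P}_{n_0,\ldots,n_m}\times\mathbb{R}^{n_0}$, let $\{\overline{\mathring P_\alpha}\}_{\alpha}$ be the finite set of pieces it supplies — the closures of the connected components $\mathring P_\alpha$ of $M\setminus V$ — and let $p_\alpha$ be the polynomial with $\mathcal{F}_{n_0,\ldots,n_m}|_{\overline{\mathring P_\alpha}}=p_\alpha$. Since the $\overline{\mathring P_\alpha}$ cover $M$, the union $\bigcup_\alpha\mathring P_\alpha=M\setminus V$ is dense, so $V$ is nowhere dense. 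The reverse implication is then immediate and does not use maximality: if $(s,x)\in\textrm{int}(X)$ for a piece $X$ with $\mathcal{F}_{n_0,\ldots,n_m}|_X$ a polynomial $q$, then $\mathcal{F}_{n_0,\ldots,n_m}$ agrees with $q$ on the open set $\textrm{int}(X)\ni(s,x)$, hence is smooth there, so $x$ is parametrically smooth for $s$.

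The forward implication rests on a local rigidity statement that I expect to be the main obstacle: \emph{if $\mathcal{F}_{n_0,\ldots,n_m}$ is $C^\infty$ on a ball $B\ni(s,x)$, then it equals a single polynomial on some (perhaps smaller) ball around $(s,x)$.} To prove this I would first shrink $B$ so that it meets only the finitely many pieces whose closures contain $(s,x)$, and so that these pieces still cover $B$ (the open sets $B\cap\mathring P_\alpha$ have dense union since $V$ is nowhere dense). For each such $\alpha$, the point $(s,x)$ lies in the closure of the nonempty \emph{open} set $B\cap\mathring P_\alpha$, on whose closure $\mathcal{F}_{n_0,\ldots,n_m}$ equals $p_\alpha$; hence for any direction $v$ in the nonempty open cone of directions pointing from $(s,x)$ into $\mathring P_\alpha$, the one-sided higher directional derivatives of $\mathcal{F}_{n_0,\ldots,n_m}$ at $(s,x)$ — which, since $\mathcal{F}_{n_0,\ldots,n_m}$ is $C^\infty$, equal its genuine symmetric $k$-linear derivatives evaluated on the diagonal — agree with those of $p_\alpha$. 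As a symmetric multilinear form is determined by its diagonal values on a nonempty open cone, every derivative of $\mathcal{F}_{n_0,\ldots,n_m}$ at $(s,x)$ matches that of $p_\alpha$; since $\mathcal{F}_{n_0,\ldots,n_m}$ has a single Taylor jet at $(s,x)$ and a polynomial is determined by its jet at a point, all the $p_\alpha$ meeting $B$ equal one polynomial $q$, whence $\mathcal{F}_{n_0,\ldots,n_m}\equiv q$ on $B$. Pinning down this ``open cone suffices'' step at all orders is the technical heart of the argument.

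Granting this, the set $\Sigma$ of smooth points of $\mathcal{F}_{n_0,\ldots,n_m}$ is exactly the open set where $\mathcal{F}_{n_0,\ldots,n_m}$ is locally polynomial. Let $\Sigma_0$ be the connected component of $\Sigma$ containing $(s,x)$. A clopen argument on $\Sigma_0$ — the locus where $\mathcal{F}_{n_0,\ldots,n_m}$ locally equals a fixed polynomial $q$ is open, and is also closed in $\Sigma_0$, since two polynomials agreeing on a nonempty open set coincide — shows $\mathcal{F}_{n_0,\ldots,n_m}=q$ on all of $\Sigma_0$. I would then take $X:=\overline{\Sigma_0}$; by continuity of $\mathcal{F}_{n_0,\ldots,n_m}$ (Theorem~\ref{t:Fpiecewisepolynomial}) we get $\mathcal{F}_{n_0,\ldots,n_m}|_X=q|_X$, so $X$ is a piece on which $\mathcal{F}_{n_0,\ldots,n_m}$ is polynomial, with $(s,x)\in\Sigma_0\subseteq\textrm{int}(X)$. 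For maximality, if $Y\supseteq X$ is another piece with $\mathcal{F}_{n_0,\ldots,n_m}|_Y$ polynomial, then $\mathcal{F}_{n_0,\ldots,n_m}|_Y=q$ (agreement on the nonempty open set $\textrm{int}(X)$), so $\textrm{int}(Y)\subseteq\Sigma$; writing $Y=\overline{\mathring Y}$ for a nonempty open connected $\mathring Y$, we have $\mathring Y\subseteq\Sigma$, and $\mathring Y$ meets $\Sigma_0$ because $\Sigma_0$ is a nonempty open subset of $Y=\overline{\mathring Y}$ and any such subset must meet $\mathring Y$ (the boundary of an open set has empty interior). Hence the connected set $\mathring Y$ lies in $\Sigma_0$, so $Y=\overline{\mathring Y}\subseteq\overline{\Sigma_0}=X$, giving $Y=X$.
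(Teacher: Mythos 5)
Your overall strategy is sound and, in one important respect, more complete than the paper's own proof. The paper argues the second direction by contradiction and very briefly: if $(s,x)$ is not interior to any polynomial piece, it lies on the common boundary of two regions carrying different polynomials; continuity rules out a difference only in the constant term, so the polynomials differ in some non-constant term, and the paper simply asserts that the corresponding derivative of $\mathcal{F}_{n_0,\ldots,n_m}$ then fails to exist at $(s,x)$. It never addresses why the piece containing $(s,x)$ in its interior can be taken \emph{maximal}. You argue the forward direction directly, and your construction of the maximal piece as $\overline{\Sigma_0}$ -- the clopen argument on the component of the smooth locus, together with the observation that the boundary of an open set has empty interior -- is correct and supplies a point the paper's proof silently omits.

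The one step that would fail as written is the assertion that the directions pointing from $(s,x)$ into $\mathring P_\alpha$ form a \emph{nonempty} open cone. A connected component of the complement of an algebraic hypersurface can have a closure point from which no ray enters the component: for $V=\{y(y-x^2)=0\}$ in $\mathbb{R}^2$, the component $\{x>0,\ 0<y<x^2\}$ has the origin in its closure, yet any ray $t\mapsto t(a,b)$ with $b>0$ violates $tb<t^2a^2$ for small $t$, so no ray enters. The cone can therefore be empty and the polarization argument has nothing to evaluate on. Fortunately you do not need it: under your standing hypothesis that $\mathcal{F}_{n_0,\ldots,n_m}$ is $C^\infty$ on the ball $B$, all of its partial derivatives of all orders are continuous on $B$ and agree with the corresponding derivatives of $p_\alpha$ on the open set $B\cap\mathring P_\alpha$, whose closure contains $(s,x)$; passing to the limit identifies the entire Taylor jet of $\mathcal{F}_{n_0,\ldots,n_m}$ at $(s,x)$ with that of $p_\alpha$ for each relevant $\alpha$, and a polynomial is determined by its jet at a single point. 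This replaces the ``technical heart'' you flagged with a one-line continuity argument. (A residual ambiguity, shared with the paper: both proofs read ``smooth point'' as ``$C^\infty$ on a neighborhood of $(s,x)$''; if one insists on mere pointwise existence of all partials, additional care is needed, but the paper's own proof is no more careful on this score.)
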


\begin{proof}
By Theorem \ref{t:Fpiecewisepolynomial}, $\mathcal{F}_{n_0,\ldots,n_m}$ is finitely piecewise polynomial.  If $(\theta,x)$ is in the interior of a piece on which $\mathcal{F}_{n_0,\ldots,n_m}$ is a polynomial, we are done, because polynomials are smooth everywhere.  So suppose $(\theta,x)$ is not in the interior of a piece on which $\mathcal{F}_{n_0,\ldots,n_m}$ is a polynomial.  Then, without loss of generality, we may assume $(\theta,x)$ is on the boundary of two closed regions such that the restriction of $\mathcal{F}_{n_0,\ldots,n_m}$ to each of the two regions is a polynomial, and these polynomials are different.  Because $\mathcal{F}_{n_0,\ldots,n_m}$ is continuous, these two polynomials cannot differ only in their constant terms (while agreeing at all higher order terms).  Hence, these two polynomials differ at some non-constant term.  Consequently, the derivative of $\mathcal{F}$ corresponding to this term is not defined at $(\theta,x)$, i.e. $x$ is not a parametrically smooth point for $\theta$, a contradiction.  
\end{proof}

\begin{remark} \label{rmk:Cinfty}
Lemma \ref{l:opennbhd1} requires $\mathcal{F}$ to be \emph{smooth} ($C^{\infty}$), and not just continuously differentiable ($C^1$), at $(\theta,x)$.  This is why, even though $\mathcal{F}$ being $C^1$ at $(\theta,z)$ is enough to define $\mathbf{J}E_z(\theta)$ (defined in \eqref{d:evaluationMapPtDef}), we require in the definitions of stochastic, batch and full functional dimension that the points $z \in Z$ satisfy the stronger property of being  smooth ($C^{\infty}$) for $\mathcal{F}$.  
\end{remark}

The following is an immediate corollary of Lemma \ref{l:opennbhd1}.

\begin{corollary} \label{c:ordinaryparamsopen}
For any architecture $(n_0,\ldots,n_m)$, the set 
$$\left\{(\theta,x) \in \mathcal{P}_{n_0,\ldots,n_m} \times \mathbb{R}^{n_0} \mid \mathcal{F}_{n_0,\ldots,n_m} \textrm{ is smooth at } (\theta,x)\right\}$$ 
is an open set.  
\end{corollary}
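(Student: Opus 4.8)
The plan is to read this off directly from Lemma \ref{l:opennbhd1}. That lemma gives an exact description of the relevant set: a pair $(s,x)$ is a smooth point for $\mathcal{F}_{n_0,\ldots,n_m}$ --- equivalently, $x$ is parametrically smooth for $s$ --- precisely when $(s,x)$ lies in the \emph{interior} of some maximal piece of $\mathcal{P}_{n_0,\ldots,n_m} \times \mathbb{R}^{n_0}$ on which $\mathcal{F}_{n_0,\ldots,n_m}$ restricts to a polynomial. Consequently the set
\[
\left\{(s,x) \in \mathcal{P}_{n_0,\ldots,n_m} \times \mathbb{R}^{n_0} \;:\; \mathcal{F}_{n_0,\ldots,n_m} \text{ is smooth at } (s,x)\right\}
\]
is exactly the union, taken over all maximal polynomial pieces $P$, of the interiors $\operatorname{int}(P)$.

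The proof then finishes in one line: each $\operatorname{int}(P)$ is open, and an arbitrary union of open sets is open. (Theorem \ref{t:Fpiecewisepolynomial} even guarantees that there are only finitely many such pieces, so that the union is finite, but finiteness plays no role in the argument.)

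I do not expect any real obstacle here; the only subtlety is the one already dispatched inside Lemma \ref{l:opennbhd1}. Namely, merely belonging to a polynomial piece does not by itself force smoothness: at a point on the common boundary of two pieces carrying different polynomials, continuity of $\mathcal{F}_{n_0,\ldots,n_m}$ forces those polynomials to differ in a nonconstant term, which obstructs differentiability there. Passing to the interiors of the pieces is exactly what excises this failure locus, and that same passage to interiors is what produces an open set --- so the corollary is genuinely immediate once Lemma \ref{l:opennbhd1} is available.
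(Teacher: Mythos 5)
Your proof is correct and follows exactly the paper's route: the paper derives this corollary immediately from Lemma \ref{l:opennbhd1}, and your argument simply spells out that immediacy by observing that the smooth locus is the union of the interiors of the maximal polynomial pieces, hence open.
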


\begin{theorem} \label{t:ordinaryparamsfull}
Fix any architecture $(n_0,\ldots,n_m)$.  The set of ordinary parameters in $\mathcal{P}_{n_0,\ldots,n_m}$ is open, dense, and has full (Lebesgue) measure. 
\end{theorem}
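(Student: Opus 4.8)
The plan is to assemble the three properties from results already in hand, since almost all of the real work has been front-loaded into Lemma \ref{l:ordinaryparamsdense}, Theorem \ref{t:Fpiecewisepolynomial}, and Corollary \ref{c:ordinaryparamsopen}. First I would establish full measure. Lemma \ref{l:ordinaryparamsdense} says that for Lebesgue-almost every $s \in \mathcal{P}_{n_0,\ldots,n_m}$, the set of $x \in \mathbb{R}^{n_0}$ that are parametrically smooth for $s$ has full Lebesgue measure in $\mathbb{R}^{n_0}$; since $\mathbb{R}^{n_0}$ is not a null set, that set of $x$ is in particular nonempty, so such an $s$ is ordinary by Definition \ref{def:ordinary}. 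Hence the set of ordinary parameters contains a full-measure subset of $\mathcal{P}_{n_0,\ldots,n_m}$, and therefore has full Lebesgue measure. Density is then immediate: the complement of the set of ordinary parameters is contained in a Lebesgue-null set, hence has empty interior, so the ordinary parameters are dense.

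For openness I would use Corollary \ref{c:ordinaryparamsopen}. Let $s_0$ be an ordinary parameter, and choose $x_0 \in \mathbb{R}^{n_0}$ parametrically smooth for $s_0$, i.e.\ such that $(s_0,x_0)$ is a smooth point for $\mathcal{F}_{n_0,\ldots,n_m}$. By Corollary \ref{c:ordinaryparamsopen}, the set of smooth points of $\mathcal{F}_{n_0,\ldots,n_m}$ is open in $\mathcal{P}_{n_0,\ldots,n_m} \times \mathbb{R}^{n_0}$, so there exist open sets $U \ni s_0$ in $\mathcal{P}_{n_0,\ldots,n_m}$ and $W \ni x_0$ in $\mathbb{R}^{n_0}$ with $U \times W$ contained in the smooth locus. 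Then for every $s \in U$, the point $x_0$ (indeed every point of $W$) is parametrically smooth for $s$, so every $s \in U$ is ordinary. Thus $U$ is a neighborhood of $s_0$ consisting entirely of ordinary parameters, and the set of ordinary parameters is open.

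I do not expect a substantive obstacle here; the only point requiring a moment's care is the quantifier bookkeeping in the full-measure step — translating the ``almost every $s$, almost every $x$'' statement of Lemma \ref{l:ordinaryparamsdense} into ``almost every $s$ admits \emph{some} parametrically smooth $x$'' — which works precisely because a full-measure subset of $\mathbb{R}^{n_0}$ is nonempty. One should also note that openness genuinely relies on the $C^\infty$ (not merely $C^1$) formulation of ``smooth point'' used in Corollary \ref{c:ordinaryparamsopen}, which in turn rests on the finite piecewise-polynomial structure of $\mathcal{F}_{n_0,\ldots,n_m}$ from Theorem \ref{t:Fpiecewisepolynomial}.
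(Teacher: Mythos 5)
Your proof is correct and follows exactly the route the paper takes: the paper's own proof is the one-line statement that the theorem ``follows immediately from Corollary \ref{c:ordinaryparamsopen} and Lemma \ref{l:ordinaryparamsdense},'' and your argument simply spells out the quantifier bookkeeping and the product-neighborhood step that the authors left implicit. No gaps.
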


\begin{proof}
This follows immediately from corollary  \ref{c:ordinaryparamsopen} and Lemma \ref{l:ordinaryparamsdense}.
\end{proof}

\section{Definitions and examples of functional dimension}

The (full) functional dimension of a parameter $s$ is a nonnegative integer that measures the number of linearly-independent ways the associated neural network map $\rho(\theta)$ can be varied by perturbing $\theta$.  Equivalently, we wish to measure the maximal possible dimension of a submanifold $S$ of $\mathcal{P}_{n_0,\ldots,n_m}$ passing through $\theta$ such that the restriction of $\rho$ to $S$ is locally injective at $s$.  

An easy way to see that (assuming the network has more than one layer) this number is always less than the dimension of $\mathcal{P}_{n_0,\dots,n_m}$ is the well-known (cf. \cite{RolnickKording}) fact that $\rho$ has a scaling invariance -- meaning that multiplying all parameters of a neuron by a nonzero constant $c$ while simultaneously multiplying all the weights of the next layer map associated to that neuron by $1/c$ does not change the function.  Consequently, there is an (at least) $1$-dimensional set $V \subset \mathcal{P}_{n_0,\ldots,n_m}$ passing through $\theta$ consisting of parameters which all determine the function $\rho(\theta)$.  

To assess how perturbing $\theta$, say changing  the $i$th coordinate of $\theta$, impacts the function $\rho(\theta)$, we will consider, for any point $z \in \mathbb{R}^{n_0}$, the directional derivative $D_{\theta_i}\rho(\theta)(z) = \frac{\partial}{\partial \theta_i} \rho(\theta)(z)$.  For example, if $v$ is a vector that is tangent at $\theta$ to the submanifold $V$ consisting of scaling invariant parameters, then the directional derivative $D_v \rho(\theta)(z) = \vec{0}$ for every point $z \in \mathbb{R}^{n_0}$.  In general, given an arbitrary perturbation direction $v \in \mathcal{P}_{n_0,\ldots,n_m}$, the value of  $D_v \rho(\theta)(z)$ may depend on the point $z$ -- so we may wish to consider the set of derivatives $\{D_v \rho(\theta)(z_j)\}_{j=1}^N$ for some collection of points $z_1,\ldots,z_N$ in $\mathbb{R}^{n_0}$.  The notions of stochastic, batch, and (full) functional dimension defined below correspond to using a single point $z$, a finite set of points $z_i$, or the supremum of all such finite sets to measure the number of linearly independent ways $\rho(\theta)$ can be changed by perturbing $\theta$.

\subsection{Stochastic functional dimension}
Fix an architecture $(n_0,\ldots,n_m)$ and a point $z \in \mathbb{R}^{n_0}$ in the domain.  
Let $\rho_i$ denote the $i$th coordinate function of the output of the unmarked realization map $\rho$, and define the \emph{evaluation map} at $z$, \[E_z : \mathcal{P}_{n_0,\ldots,n_m} \to \mathbb{R}^{n_m},\] by 
\begin{equation} \label{d:evaluationMapPtDef}
E_z(\theta) = \left (\rho_1(\theta)(z),\ldots, \rho_{n_m}(\theta)(z)\right).
\end{equation}
Informally, $E_z(\theta)$ records the coordinates of $\rho(\theta)$ when evaluated at $z$. Note that $z$ is fixed in advance, and $E_z$ is viewed as a function from parameter space to output space.

Now suppose $\theta \in \mathcal{P}_{n_0,\ldots,n_m}$ is an ordinary parameter, and choose $z \in \mathbb{R}^{n_0}$ to be a parametrically smooth point for $\theta$. Then the Jacobian matrix of $E_z$ evaluated at $\theta$, \[{\bf J}E_z \vert_\theta = \left. \left[\frac{\partial(E_z)_i}{\partial{\theta_j}}\right] \right \vert_\theta,\] is the $n_m \times D$ matrix whose entry in the $i$th row and $j$th column records the partial derivative of the $i$th coordinate of $E_z$ at $\theta$ with respect to the $j$th parameter ($j$th coordinate of $\theta$). 

\begin{definition} \label{d:localfundim}  Let $\theta \in \mathcal{P}_{n_0,\ldots,n_m}$ be an ordinary parameter and let $z \in \mathbb{R}^{n_0}$ be a parametrically smooth point for $\theta$.  The {\em stochastic functional dimension} of the parameter $\theta$ for a point $z$ is the rank of $E_z$ (equivalently of ${\bf J}E_z$) at $\theta$:
$$\textrm{dim}_{st.fun}(\theta,z) \coloneqq \textrm{rank }{\bf J}E_z \vert_\theta.$$
\end{definition}

\begin{remark}
Since the rank of a matrix is bounded above by the number of columns or rows, stochastic functional dimension is bounded above by $n_m$. \end{remark}

\begin{example} For the architecture $(1,2)$, the parameterized family is the map $\mathcal{F}_{1,2}:\mathcal{P}_{1,2} \times \mathbb{R}^1 \to \mathbb{R}^2$ given by 
$$\mathcal{F}_{1,2} ((a,b,c,d),z) = \left( \sigma(ax+b), \sigma(cx+d) \right).$$ We consider $\theta = (1,0,1,-1)$ and various values of $z$.  

\emph{Case $z < 0$:} There is a neighborhood $U \subset \mathcal{P}$ of $\theta$ on which $\rho(u,z) = (0,0)$ for all $u \in U$.  Hence, 
$$\textrm{dim}_{st.fun}(\theta,z) = \textrm{rank} \left( \begin{matrix} 0 & 0 & 0 & 0 \\ 0 & 0 & 0 & 0 \end{matrix} \right) = 0.$$

\emph{Case $0 < z < 1$:} There is a neighborhood $U \subset \mathcal{P}$ of $\theta$ on which 
$$\rho((a_u,b_u,c_u,d_u),z) = (a_uz+b_u,0)$$ for all $u=(a_u,b_u,c_u,d_u) \in U$. 
Hence, 
$$\textrm{dim}_{st.fun}(\theta,z) = \textrm{rank} \left( \begin{matrix} z & 1 & 0 & 0 \\ 0 & 0 & 0 & 0 \end{matrix} \right) = 1.$$

\emph{Case $1 < z$:}  There is a neighborhood $U \subset \mathcal{P}$ of $\theta$ on which 
$$\rho((a_u,b_u,c_u,d_u),z) = (a_uz+b_u,c_uz+d_u)$$ for all $u=(a_u,b_u,c_u,d_u) \in U$. 
Hence, 
$$\textrm{dim}_{st.fun}(\theta,z) = \textrm{rank} \left( \begin{matrix} z & 1 & 0 & 0 \\ 0 & 0 & z & 1 \end{matrix} \right) = 2.$$
\end{example}

\begin{remark} \label{r:sfdintepretation}
 In stochastic gradient descent (SGD), the data set is partitioned randomly into smaller subsets, called mini-batches, and each calculation of the gradient of the loss function is computed utilizing the data in a mini-batch. In the extreme case that a mini-batch consists of a single sample point $z$, the corresponding gradient update in parameter space is limited to the tangent directions that change the associated function at $z$.  Informally, the {\em stochastic functional dimension of $s$ at $z$} therefore measures the number of linearly independent directions in parameter space that can be chosen to perturb $\rho(\theta)$ in order to impact its value at $z$. \end{remark}
 
\begin{lemma} Let $z \in \mathbb{R}^{n_0}$, and suppose $\theta \in \mathcal{P}_{n_0,\ldots,n_m} \cong \mathbb{R}^D$ is an ordinary parameter for which $k^{(i)}$ neurons in the $i$th layer are switched off at $z$, and $z$ is parametrically smooth for $\theta$. Then,  setting $n_i' := n_i - k^{(i)}$, the stochastic functional dimension of $\rho(\theta)$ at $z$ is at most
 \[D(n_0',n_1', \ldots, n_m') = \sum_{i=1}^{m} (n_{i-1}'+1)n_{i}'.\]
\end{lemma}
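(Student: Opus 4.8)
The plan is to observe that the stochastic functional dimension equals $\textrm{rank}\,{\bf J}E_z\vert_s$, to bound this rank by the number of columns of ${\bf J}E_z\vert_s$ that are not identically zero, and to count those columns using Lemma~\ref{lem:neuronoff}.

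First I would record that, since $z$ is parametrically smooth for $s$, the map $\mathcal{F}_{n_0,\ldots,n_m}$ is smooth at $(s,z)$; hence $E_z$ is smooth at $s$, the Jacobian ${\bf J}E_z\vert_s$ exists, and every partial derivative $\left.\partial E_z/\partial A^{\ell}_{ij}\right\vert_s$ is well-defined, so the conclusions of Lemma~\ref{lem:neuronoff} apply verbatim. Index the $D$ columns of ${\bf J}E_z\vert_s$ by the parameter coordinates $A^{\ell}_{ij}$, with $1\le\ell\le m$, $1\le i\le n_\ell$, and $1\le j\le n_{\ell-1}+1$. Since the rank of a matrix is at most the number of its nonzero columns, it suffices to bound the number of coordinates $A^{\ell}_{ij}$ for which the corresponding column of ${\bf J}E_z\vert_s$ is not the zero vector.

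Next I would invoke Lemma~\ref{lem:neuronoff}. For $1\le\ell\le m$, call a neuron of layer $\ell$ \emph{on at $z$} if its ternary label at $z$ is positive and \emph{off at $z$} otherwise; by hypothesis exactly $\sigma^{(\ell)}$ are off, so $n_\ell'=n_\ell-\sigma^{(\ell)}$ are on, and it is convenient to set $n_0':=n_0$ (the input coordinates carry no ReLU and are never off). Part (1) of Lemma~\ref{lem:neuronoff} says that if neuron $i$ of layer $\ell$ is off, then the entire $i$th row of $A^{\ell}$ contributes only zero columns. Part (2), applied to layer $\ell-1$, says that if neuron $j$ of layer $\ell-1$ is off, then $\partial F/\partial A^{\ell}_{ij}=0$ for every $i$, so the column of $A^{\ell}_{ij}$ vanishes. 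Consequently the column of $A^{\ell}_{ij}$ can fail to vanish only when neuron $i$ of layer $\ell$ is on \emph{and} either $j=n_{\ell-1}+1$ (the bias slot, which is not the index of any previous-layer neuron and so is untouched by part (2)) or $j\le n_{\ell-1}$ with neuron $j$ of layer $\ell-1$ on. For fixed $\ell$ the number of such pairs $(i,j)$ is at most $n_\ell'\,(n_{\ell-1}'+1)$.

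Summing over $\ell$ then gives
$$\textrm{dim}_{st.fun}(s,z)=\textrm{rank}\,{\bf J}E_z\vert_s\ \le\ \sum_{\ell=1}^{m}n_\ell'\,(n_{\ell-1}'+1)=\sum_{i=1}^{m}(n_{i-1}'+1)\,n_i',$$
which is the asserted bound. I do not expect a genuine obstacle in this argument; the only subtleties are bookkeeping ones, namely keeping straight that an off neuron eliminates its \emph{incoming} weights and bias (part (1)) as well as its \emph{outgoing} weights (part (2)), that the bias slots of on neurons are never eliminated by the previous-layer mechanism, and that the edge layers behave correctly (for $\ell=1$ there are no off neurons in the previous layer, and for $\ell=m$ part (2) is vacuous because there is no layer $m+1$).
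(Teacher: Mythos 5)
Your proposal is correct and follows essentially the same route as the paper's proof: both apply Lemma \ref{lem:neuronoff} to conclude that the columns of ${\bf J}E_z\vert_s$ corresponding to the incoming weights of off neurons and the outgoing weights into off neurons vanish, and then bound the rank by the count of remaining columns. Your write-up is somewhat more explicit about the column bookkeeping (bias slots, edge layers) than the paper's, but the argument is the same.
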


\begin{proof}  Informally, when viewed from $z$, the neural network has architecture $(n_0', n_1', \ldots, n_{m}')$ because of the neurons switched off at $z$. 

More formally, since $s$ is ordinary and $z$ is parametrically smooth for $s$, an application of Lemma \ref{lem:neuronoff} to the $k^{(i)}$ neurons switched off in the $i$th layer at $z$ tells us that the partial derivatives of the function $F = \rho(s)$ with respect to the parameters in the corresponding $k^{(i)}$ rows of $\widehat{A}^{\ell}$ and $k^{(i)}$ columns of $\widehat{A}^{\ell + 1}$ are $0$. This is true for each layer $i$. It follows that ${\bf J}E_z (s)$ has at most \[D(n_0',n_1', \ldots, n_m') = \sum_{i=0}^{m-1} (n_i'+1)n_{i+1}'\] nonzero columns.

\end{proof}

\subsection{Batch functional dimension}
\begin{definition} \label{def:evaluationmap}
Let $Z = \{z_1, \ldots, z_k\}$ be an ordered set of $k < \infty$ points in $\mathbb{R}^{n_0}$ and define the \emph{evaluation map} at $Z$ to be the map  $E_Z:  \mathcal{P}_{n_0,\ldots,n_m} \to \mathbb{R}^{k \cdot n_m}$ by:
\begin{equation} \label{d:evaluationMapDef}
E_Z(\theta) = \left (\rho_1(\theta)(z_1),\ldots, \rho_{n_m}(\theta)(z_1),  \ldots ,  \rho_1(\theta)(z_k),\ldots, \rho_{n_m}(\theta)(z_k)\right).
\end{equation}
\end{definition}

\begin{remark}
It is standard in the machine learning literature to organize a set, $Z=\{z_1, \ldots, z_k\}$ of $k$ fixed data points in $\mathbb{R}^{n_0}$ as the rows of a design matrix \[M_Z = \left[\begin{array}{c}z_1\\ \vdots \\ z_k\end{array}\right].\] For any given parameter $\theta \in \mathbb{R}^{D}$, the image of $M_Z$ under $\rho(\theta)$ is then most naturally a $k \times n_m$ matrix \[\rho(\theta)\left(M_Z\right) := \left[\begin{array}{c} \rho(\theta)(z_1)\\ \vdots \\ \rho(\theta)(z_k)\end{array}\right].\] $E_Z(\theta)$ is the associated unrolled vector in $\mathbb{R}^{k\cdot n_m}$.
\end{remark}

\begin{definition} \label{def:batchfundim} Let $\theta \in \mathcal{P}_{n_0,\ldots,n_m}$ be an ordinary parameter.   The {\em batch functional dimension} of $\theta$ for a batch $Z \subset \mathbb{R}^{n_0}$ of parametrically smooth points for $\theta$ is 
$$\textrm{dim}_{\textrm{ba.fun}} \coloneqq \textrm{rank }{\bf J}E_Z \vert_\theta.$$
\end{definition}

\begin{remark}
If $Z=\{z_1,\ldots,z_k\}$ is a set of $k$ parametrically smooth points for $\theta \in \mathcal{P}_{n_0,\ldots,n_m}$, then $ {\bf J}E_Z(\theta)$ is a  $kn_m \times D_{n_0,\ldots,n_m}$ matrix, formed by stacking the $k$  different $n_m \times D$ matrices $JE_{z_i}(\theta)$: 
\[{\bf J}E_Z(\theta) = \begin{bmatrix}
{\bf J}E_{z_1}(\theta)\\
\vdots\\
{\bf J}E_{z_k}(\theta)\end{bmatrix}\]

\end{remark}

\begin{remark} The batch functional dimension of $\theta$ may be thought of as the number of linearly independent directions in parameter space in which we can perturb $\rho(\theta)$ in order to impact the value of the function $F$ on the set (batch) $Z$  (compare Remark \ref{r:sfdintepretation}).
\end{remark}

 \begin{example}Consider the architecture $(1,1)$. $\mathcal{P}_{1,1} = \mathbb{R}^2$.  For any parameter  $\theta = (w^{1}, b^{1}) \in \mathcal{P}_{1,1}$, $\rho(\theta) :\mathbb{R}^1 \to \mathbb{R}^1$ is defined by 
 \begin{eqnarray*} x &\mapsto& \sigma(w^{1}x+b^{1}).\end{eqnarray*} 
 Let $Z = (z_1,\ldots,z_k)$ be a batch of $k$ fixed points in $\mathbb{R}^1$.  Then, for arbitrary $\theta = (w^{1},b^{1}) \in \mathcal{P}_{1,1}$ and $Z$ parametrically smooth (which ensures partial derivatives exist), we have: 
 
\[{\bf J}E_Z(\theta) = \begin{bmatrix}
{\bf J}E_{z_1}(\theta)\\
\vdots\\
{\bf J}E_{z_k}(\theta)\end{bmatrix}\]
 where ${\bf J}E_{z_i}(\theta) =  [z_i  \,\,\, 1]$ if the ternary label $s_{z_i}^{1}$ at $z_i$ is $1$ (i.e., if the lone neuron in layer $1$ is switched on at $z_i$) and $=[0 \,\,\, 0]$ if the ternary label $s_{z_i}^{1}$ is $-1$ or $0$ (i.e., if the lone neuron in layer $1$ is switched off at $z_i$).

The rank of $\boldsymbol{J}E_Z(\theta)$ can therefore assume the values $0$, $1$, or $2$, depending on the set $Z$ and parameters $(w^{1}, b^{1})$. Indeed, the rank of $\boldsymbol{J}E_Z(\theta)$: 
\[
\begin{cases}
	=0 & \mbox{ if the ternary label $s^{1}_{z_i}$ associated to $s$ is $\leq 0$ for every points $z_i$ in $Z$},\\
	=1 & \mbox{ if $s^{1}_{z_i} \leq 0$ for all but one of the points $z_i$ in $Z$, and}\\ 
	=2 & \mbox{ if $s^{1}_{z_i} = 1$ for at least two distinct points $z_i$ of $Z$.}
\end{cases}
\]
 \end{example}

 A natural question is how the selection of the points in the batch $Z$ affects the batch functional dimension. We give a partial answer to this question in Proposition \ref{p:reformulateoriginaldef}. For most parameters $\theta$, if $Z$ contains $n_0+1$ points arranged in a geometric simplex in a top-dimensional cell of $\mathcal{C}(\theta)$, adding more points to $Z$ in that cell will not increase the batch functional dimension.

\subsection{Functional dimension} \label{ss:globalfunctionaldimension}

We are now ready to define the (full) functional dimension of an ordinary parameter $\theta \in \mathcal{P}_{n_0,\ldots,n_m}$.

 \begin{definition}\label{def:functionaldimension} Fix an architecture $(n_0,\ldots,n_m)$. For any ordinary parameter $\theta \in \mathcal{P}_{n_0,\ldots,n_m}$, define the (full) \emph{functional dimension} at $\theta$ to be

 $$\textrm{dim}_{\textrm{fun}}(\theta) := \sup_{Z \subset \mathbb{R}^{n_0} \textrm{ is finite and parametrically smooth for }\theta} \textrm{rank } \boldsymbol{J}E_Z\vert_\theta.$$
 \end{definition}

\begin{remark} The assumption in Definition \ref{def:functionaldimension} that $\theta$ is ordinary ensures that the supremum is taken over a nonempty set. 
\end{remark}

\begin{remark}
Since the matrix  $\boldsymbol{J}E_Z\vert_\theta$ has $D(n_0,\ldots,n_m)$ columns, and the rank of a matrix is bounded above by its number of columns, it would be equivalent to take the supremum in Definition \ref{def:functionaldimension} over only subsets $Z \subset \mathbb{R}^{n_0}$  that are parametrically smooth for $\theta$ and satisfy $|Z| \leq D(n_0,\ldots,n_m)$.    For the same reason, $ \textrm{dim}_{\textrm{fun}}(\theta) \leq D(n_0,\ldots,n_m)$ for all $\theta \in \mathcal{P}_{n_0,\ldots,n_m}$. 
\end{remark}

\begin{definition} 
Fix an architecture $(n_0,\ldots,n_m)$. 
Define the \emph{functional dimension of the parameter space} $\mathcal{P}_{n_0,\ldots,n_m}$ to be 
$$\textrm{dim}_{\textrm{fun}} (\mathcal{P}_{n_0,\ldots,n_m}) : = \sup_{\textrm{ordinary } \theta \in \mathcal{P}_{n_0,\ldots,n_m}} \textrm{dim}_{\textrm{fun}}(\theta).$$
\end{definition}

\begin{remark}
It is natural to ask which sets $Z \subset \mathbb{R}^{n_0}$ attain the functional dimension.  For a generic, transversal, combinatorially stable parameter $\theta$ (Definition \ref{def:combinatoriallystable}), Proposition  \ref{p:reformulateoriginaldef} shows that functional dimension is achieved on decisive sets (Definition \ref{d:decisive}).  A decisive set for a parameter $\theta$ consists of $k(n_0+1)$ points, where $k$ is the number of top-dimensional cells of the canonical polyhedral complex $\mathcal{C}(\theta)$.  A full measure set of parameters is generic and transversal, and we conjecture that combinatorially stable is also a full measure condition. 
\end{remark}

 \subsection{An illustrative example of functional dimension} \label{s:illustrativeexample}
 We give an example in which the dimension of the parameter space $\mathcal{P}$ is $7$, but the functional dimension at a specific point in $\mathcal{P}$ is $5$, which Theorem \ref{t:upperbound} guarantees is the maximum possible functional dimension.

Consider the architecture $(1,2,1)$.  If $\theta \in \mathcal{P}_{1,2,1}$ has coordinates $\theta=(w^{1}_{11},b^{1}_{1},w^{1}_{21},b^{1}_{2},w^{2}_{11},w^{2}_{12},b^{2}_1)$, then the associated matrices for the affine linear transformations are

  $$A^{1} = \begin{bmatrix} w^{1}_{11} & b^{1}_{1} \\ w^{1}_{21} & b^{1}_{2} \end{bmatrix}, \quad A^{2} = \begin{bmatrix} w^{2}_{11} & w^{2}_{12} & b^{2}_1 \end{bmatrix},$$ and $\bar{\rho}(\theta):\mathbb{R} \to \mathbb{R}$ is the function 
 \begin{eqnarray*}
 x &\mapsto & \sigma(w^{2}_{11}\sigma(w^{1}_{11}x+b^{1}_1) + w^{2}_{12}\sigma(w^{1}_{21}x+b^{1}_{2})+b^{2}_1)\\
 &=& A^{2}_{s^{2}_x}\widehat{A^{1}_{s^{1}_x}}\widehat{x}
 \end{eqnarray*}

Let $Z=(z_1,\ldots,z_k)$ be an ordered list of $k < \infty$ points in $\mathbb{R}^{1}$.   Then the general form of $\boldsymbol{J}E_Z$ is

   $$ \boldsymbol{J}E_Z(s) = \begin{bmatrix}   
 \frac{\partial}{\partial w_1^{1}}  \left(A^{2}_{s^{2}_{z_1}}\widehat{A^{1}_{s^{1}_{z_1}}}(\widehat{z}_1)\right) & \dots &   \frac{\partial}{\partial b^{2}_1}  \left(A^{2}_{s^{2}_{z_1}}\widehat{A^{1}_{s^{1}_{z_1}}}(\widehat{z}_1)\right)\\ 
  \vdots & \ddots &  \vdots \\
 \frac{\partial}{\partial w_1^{1}}  \left(A^{2}_{s^{2}_{z_k}}\widehat{A^{1}_{s^{1}_{z_k}}}(\widehat{z}_k)\right) & \dots & 
 \frac{\partial}{\partial b_1^{2}}  \left(A^{2}_{s^{2}_{z_k}}\widehat{A^{1}_{s^{1}_{z_k}}}(\widehat{z}_k)\right) 
 \end{bmatrix}$$
 
Consider the specific parameter point $\theta_0 = (2,-5,-1,4,1,1,1 )$. 
\begin{figure}
  \centering
    \includegraphics[width=0.5\textwidth]{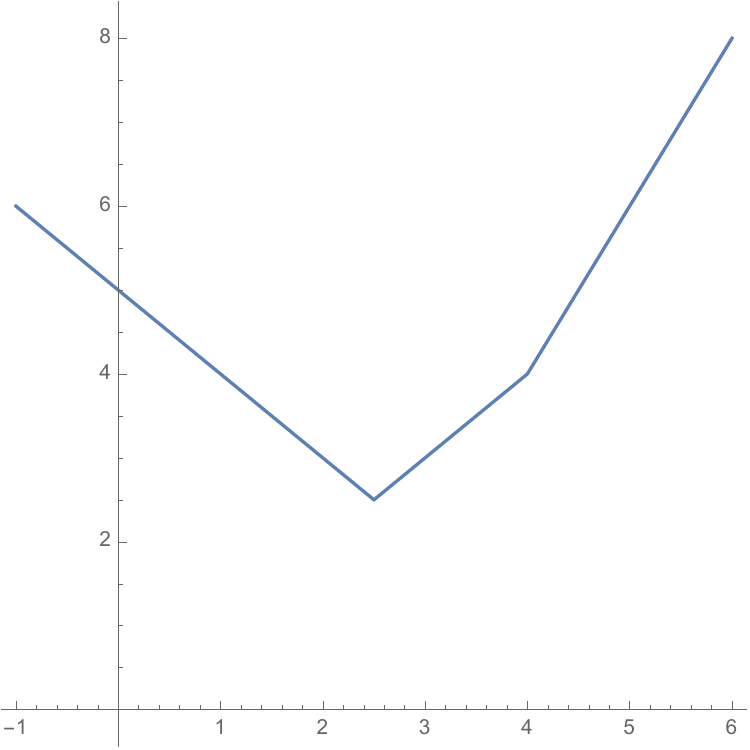}
      \caption{For $\theta_0 = (2,-5,-1,4,1,1,1)$, the function $\rho(\theta_0)$ has the form $\rho(\theta_0)(x) = \sigma(\sigma(2x-5) + \sigma(-x+4)+1)$.}
      \label{f:illustrativeexample}
\end{figure}
The function $\bar{\rho}(\theta_0)$ is piecewise linear, with $3$ pieces, and the points of nonlinearity of $\bar{\rho}(\theta_0)$ are $x=2.5$ and $x=4$.  (See Figure \ref{f:illustrativeexample}.)  The functions that describe the 3 pieces of $\bar{\rho}(\theta_0)$ are as follows:

\begin{itemize}
\item Left piece: 
		\begin{eqnarray*} 
			x &\mapsto&  A^{2}_{s^{2}_x}\widehat{A^{1}_{s^{1}_x}}\widehat{x}\\
			&=& \left[\begin{array}{ccc} w^{2}_{11} & w^{2}_{12} & b^{2}_1\end{array}\right]\left[\begin{array}{cc} 0 & 0\\w^{1}_{21} & b^{1}_2\\0 & 1\end{array}\right]\left[\begin{array}{c}x\\1\end{array}\right]\\
			&=& w^{2}_{12}w^{1}_{21}x+w^{2}_{12}b^{1}_2+b^{2}_1
		\end{eqnarray*}
\item Middle piece: 
		\begin{eqnarray*} 
			x &\mapsto&  \left[\begin{array}{ccc} w^{2}_{11} & w^{2}_{12} & b^{2}_1\end{array}\right]\left[\begin{array}{cc} w^{1}_{11} & b^{1}_1\\w^{1}_{21} & b^{1}_2\\0 & 1\end{array}\right]\left[\begin{array}{c}x\\1\end{array}\right]\\
			&=& w^{2}_{11}w^{1}_{11}x+w^{2}_{11}b^{1}_1+w^{2}_{12}w^{1}_{21}x+w^{2}_{12}b^{1}_2+b^{2}_1
		\end{eqnarray*}

\item Right piece: 
		\begin{eqnarray*} 
			x &\mapsto&  \left[\begin{array}{ccc} w^{2}_{11} & w^{2}_{12} & b^{2}_1\end{array}\right]\left[\begin{array}{cc} w^{1}_{11} & b^{1}_1\\0 & 0\\0 & 1\end{array}\right]\left[\begin{array}{c}x\\1\end{array}\right]\\
			&=& w^{2}_{11}w^{1}_{11}x+w^{2}_{11}b^{1}_1+b^{2}_1
		\end{eqnarray*}
\end{itemize}

For reference, here are the ternary labelings on the $1$--cells associated to the three pieces:

\begin{itemize}
	\item Left piece: $s = (s^{1},s^{2}) = ((-1,1),(1))$,
	\item Middle piece: $s = ((1,1),(1))$,
	\item Right piece: $s = ((1,-1),(1))$
\end{itemize}

 Thus, for a generic point $x$ in each of the three pieces, the $1$-row matrix $\boldsymbol{J}E_{\{x\}}$ has the forms:

 \begin{align*}
\left[\begin{array}{ccccccc} 0 & 0 & w^{2}_{12}x & w^{2}_{12} & 0 & w_{21}^{1}x+b_2^{1} & 1\end{array}\right]  & \qquad \textrm{ if } x \textrm{ is in the left piece,} \\
\left[\begin{array}{ccccccc} w^{2}_{11}x & w^{2}_{11} & w^{2}_{12}x & w^{2}_{12} & w^{1}_{11}x+b^{1}_1 & w^{1}_{21}x+b^{1}_2 & 1\end{array}\right] &  \qquad \textrm{ if } x \textrm{ is in the middle piece,} \\
\left[\begin{array}{ccccccc} w^{2}_{11}x & w^{2}_{11} & 0 & 0 & w^{1}_{11}x+b^{1}_1 & 0 & 1\end{array}\right]& \qquad  \textrm{ if } x \textrm{ is in the right piece}. \\
 \end{align*}

  Now suppose $Z$ is the set consisting of parametrically smooth (for $\theta_0$) points $x_1,\ldots,x_\ell$ in the domain of the left piece, $y_1,\ldots,y_m$ in the  domain of the middle piece, and $z_1,\ldots,z_n$ in the domain of the right piece.  Then $\textrm{rank } \boldsymbol{J}E_Z \vert_{\theta_0} = $

  $$\textrm{rank}\begin{bmatrix}
 0 & 0 & w^{2}_{12}x_1 &w^{2}_{12}&  0 & w^{1}_{21}x_1+b^{1}_2 & 1 \\
 \vdots & & & & & & \vdots \\
  0 & 0 & w^{2}_{12}x_\ell &w^{2}_{12}&  0 & w^{1}_{21}x_\ell+b^{1}_2 & 1 \\
  w^{2}_{11}y_1 &  w^{2}_{11} & w^{2}_{12}y_1 & w^{2}_{12} & w^{1}_{11}y_1+b^{1}_1 & w^{1}_{21}y_1 + b^{1}_2 & 1 \\
   \vdots & & & & & & \vdots \\ 
     w^{2}_{11}y_m & w^{2}_{11} & w^{2}_{12}y_m & w^{2}_{12} & w^{1}_{11}y_m+b^{1}_1 & w^{1}_{21}y_m + b^{1}_2& 1 \\
     w^{2}_{11}z_1 & w^{2}_{11} & 0 & 0 & w^{1}_{11}z_1+b^{1}_1 & 0 & 1\\
       \vdots & & & & & & \vdots \\      
        w^{2}_{11}z_n & w^{2}_{11} & 0 & 0 & w^{1}_{11}z_n+b^{1}_1 & 0 & 1\\
  \end{bmatrix}$$

We know from linear algebra that rank = row rank = column rank, and rank is invariant under elementary row operations and elementary column operations.  Applying some elementary column operations to the matrix above results in 
 
  $$\textrm{rank}\begin{bmatrix}
 0 & 0 & x_1 &1&  0 & 0 & 1 \\
 \vdots & & & & & & \vdots \\
  0 & 0 & x_\ell &1&  0 & 0 & 1 \\
  y_1 & 1 & y_1 & 1& 0 &0& 1 \\
   \vdots & & & & & & \vdots \\ 
     y_m & 1 & y_m & 1 & 0 & 0& 1 \\
     z_1 & 1 & 0 & 0 & 0  & 0 & 1\\
       \vdots & & & & & & \vdots \\      
        z_n & 1 & 0 & 0 &0 & 0 & 1\\
  \end{bmatrix}
  =\textrm{rank}
  \begin{bmatrix}
 0 & 0 & x_1 &1&   1 \\
 \vdots & & & & \vdots \\
  0 & 0 & x_\ell &1&   1 \\
  y_1 & 1 & y_1 & 1&  1 \\
   \vdots & & & &  \vdots \\ 
     y_m & 1 & y_m & 1 & 1 \\
     z_1 & 1 & 0 & 0 &  1\\
       \vdots & & & &  \vdots \\      
        z_n & 1 & 0 & 0  & 1\\
  \end{bmatrix}
  $$
  
Thus, as the reader can verify using the rightmost matrix above, \[\textrm{dim}_{\textrm{fun}}(\theta_0) = 5.\]

 \bigskip 
The parameter $\theta_0$ was chosen to be a combinatorially stable parameter (Definition \ref{def:combinatoriallystable}).  The above result, $\textrm{dim}_{\textrm{fun}}(\theta_0) = 5$, implies that there are exactly $5$ linearly-independent degrees of freedom available for varying parameters near $\theta_0$ while remaining in the class of functions realizable by networks of this architecture (see \S \ref{s:interpretation}). For example, one could perturb the slope of the left piece, the $x$- and $y$-coordinates of the left bend, the $x$-coordinate of the right bend, and the slope of the right piece. 

Note that a quick glance at Figure \ref{f:illustrativeexample} seems to indicate that there are 6 independent degrees of freedom, rather than 5.  The explanation from our analysis is that the slopes of two of the lines determine the slope of the third. 

\section{Combinatorial stability and functional dimension} \label{s:interpretation}

\subsection{Combinatorial stability}

\begin{definition}[Definition 1.26 of \cite{Grunert}]
Two polyhedral complexes $K$ and $L$ are \emph{combinatorially equivalent} if there is a bijection $\phi:K \to L$ between them that is a poset isomorphism with respect to the poset structure given by the face relation and maintains the dimensions of the cells.  
\end{definition}

Recall that the \emph{relative interior} of subset $S$ of $\mathbb{R}^n$ is the interior of $S$ when $S$ is viewed as a subset of the affine hull of $S$. If $S$ is a single point, we interpret its affine hull to be just that point, so that the relative interior of a $0$-cell is itself. 

\begin{definition}
Let $\phi:K \to L$ be a combinatorial equivalence of polyhedral complexes, and let $s_K$ (resp. $s_L$) denote the  ternary labeling functions of points in $K$ (resp. $L$).  We say that $\phi$ \emph{respects the ternary labelings} if
for every cell $C_K \in K$  and corresponding cell $C_L = \phi(C_K) \in L$ and every pair of points $x_K$ in the relative interior of $C_K$ and $x_L$ in the relative interior of $C_L$, $s_K(x_k) = s_L(x_L)$.
\end{definition}

\begin{definition} \label{def:combinatoriallystable}
A parameter $\theta \in \mathcal{P}_{n_0,\ldots,n_m}$ is  \emph{combinatorially stable} if there exists an open neighborhood $U$ of $\theta$ and a continuous map 
$\Phi: U \times \mathbb{R}^{n_0}  \to \mathbb{R}^{n_0}$ such that for every $u \in U$, the restriction $\Phi_u:= \Phi( u,\cdot) : \mathbb{R}^{n_0} \to \mathbb{R}^{n_0}$ is a homeomorphism that induces a combinatorial equivalence between $\mathcal{C}(\theta)$ and $\mathcal{C}(u)$ that respects the ternary labelings. 
\end{definition}

 One might wonder whether combinatorially stable is implied by generic and transversal, or vice versa.  As we show in the remarks below, neither implication is true.
 
 \begin{remark}
Generic and transversal does not imply combinatorially stable.  For a generic and transversal parameter, it is possible to have two bent hyperplanes (coming from different layers) in $\mathbb{R}^{n_0}$ that have parallel, unbounded pieces that do not intersect; then arbitrarily small perturbations of the slopes of these unbounded pieces would cause them to intersect, a change in combinatorial structure. 

For example, consider the marked network map $\mathbb{R}^2 \to \mathbb{R}^2 \to \mathbb{R}^1$ in which the first layer map is $\sigma \circ Id$ and the second layer map is $(x,y) \mapsto \sigma(x-1)$.  The bent hyperplanes from the first layer map are the standard axes in $\mathbb{R}^2$, and the bent hyperplane from the second layer map is the line $x=1$.  If the second layer map was perturbed to become $(x,y) \mapsto \sigma (x-1+\epsilon y)$, for $\epsilon > 0$, the associated bent hyperplane would intersect the $y$-axis at some positive $y$-value.  
 \end{remark}

  \begin{remark}
  Combinatorially stable does not imply generic.  Hyperplanes that do not intersect the positive orthant in hidden layers have no impact on combinatorial stability, and so can form a nongeneric arrangement.

 For example, consider the marked network map $\mathbb{R}^2 \to \mathbb{R}^2 \to \mathbb{R}^2$  in which the first layer map is $\sigma \circ Id$ and the second layer map is $(x,y) \mapsto \sigma((y+x+1, y+x-1))$.   The hyperplane arrangement associated to the second layer map is not generic (since the two hyperplanes are parallel).  However, the parameter is combinatorially stable.  
 
 This illustrates the fact that ``generic'' is a descriptor of parameters or marked network functions, not of unmarked network functions. 
  \end{remark}

\begin{lemma}  \label{l:5.6}
Let $\theta \in \mathcal{P}_{n_0,\ldots,n_m}$ be a parameter that is generic, transversal, and combinatorially stable.  
Let $U$ be an open neighborhood of $\theta$ as in Definition \ref{def:combinatoriallystable}.  Then for every $u \in U$, the union (in $\mathbb{R}^{n_0}$) of the bent hyperplanes associated to $\bar{\rho}(u)$ is the $(n_0-1)$-skeleton of $\mathcal{C}(u)$. 
\end{lemma}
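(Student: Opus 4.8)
The plan is to push the identity that already holds at $s$ forward along the homeomorphisms $\Phi_u$ supplied by combinatorial stability. Since $s$ is generic and transversal, \cite{GrigsbyLindsey} gives that the union of the bent hyperplanes associated to $\bar{\rho}(s)$ is exactly the $(n_0-1)$-skeleton of $\mathcal{C}(s)$. So, fixing $u\in U$, it will be enough to check that the homeomorphism $\Phi_u\colon\mathbb{R}^{n_0}\to\mathbb{R}^{n_0}$ of Definition \ref{def:combinatoriallystable} carries (i) the union of the bent hyperplanes of $\bar{\rho}(s)$ onto the union of the bent hyperplanes of $\bar{\rho}(u)$, and (ii) the $(n_0-1)$-skeleton of $\mathcal{C}(s)$ onto the $(n_0-1)$-skeleton of $\mathcal{C}(u)$; pushing the equality at $s$ forward by $\Phi_u$ then yields the equality at $u$.

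The observation I would build (i) on is that, for \emph{every} parameter $v$, the union of the bent hyperplanes of $\bar{\rho}(v)$ equals the union of those cells $C$ of $\mathcal{C}(v)$ whose ternary label $\theta_C$ contains at least one $0$ --- this is the description of the bent hyperplane arrangement following \cite{HaninRolnick} recorded in the remark after Corollary \ref{cor:NonDiffInBHA}, and it uses neither genericity nor transversality of $v$. Given this, I would read off the two conclusions directly from the two defining properties of $\Phi_u$ in Definition \ref{def:combinatoriallystable}. Because $\Phi_u$ sends each cell $C$ of $\mathcal{C}(s)$ onto the cell $\phi(C)$ of $\mathcal{C}(u)$ under the induced combinatorial equivalence $\phi$, and $\phi$ maintains dimensions, $\Phi_u$ carries the union of all cells of $\mathcal{C}(s)$ of dimension $\le n_0-1$ bijectively onto the union of all cells of $\mathcal{C}(u)$ of dimension $\le n_0-1$ --- that is, the $(n_0-1)$-skeleton of $\mathcal{C}(s)$ onto that of $\mathcal{C}(u)$, giving (ii). Because $\phi$ also respects the ternary labelings, $\theta_{\phi(C)}=\theta_C$ for every cell $C$ of $\mathcal{C}(s)$, so $\Phi_u$ carries the union of cells $C$ of $\mathcal{C}(s)$ with $0\in\theta_C$ onto the union of cells $C'$ of $\mathcal{C}(u)$ with $0\in\theta_{C'}$; by the observation above this is exactly (i). Combining (i), (ii) and the equality at $s$ from \cite{GrigsbyLindsey} then proves the lemma.

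The step I expect to need the most care is the observation in the second paragraph: one must confirm that the identification of the bent hyperplane arrangement of $\bar{\rho}(v)$ with the union of cells $C$ having $0\in\theta_C$ really is valid for a general, not necessarily generic or transversal, parameter $v$ (it is, since it comes straight from the definitions of the node maps and the ternary labeling, without appeal to \cite{GrigsbyLindsey}), and that it is compatible with the fact that $\Phi_u$ only respects the ternary labeling on relative interiors of cells (it is, since the induced label $\theta_C$ on a cell is by definition its label on the relative interior, and $\Phi_u$ maps cells bijectively to cells). Everything after that is routine bookkeeping with the poset isomorphism $\phi$; in particular, one never needs to know in advance whether $u$ itself is generic or transversal.
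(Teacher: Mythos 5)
Your proof is correct and uses essentially the same mechanism as the paper: the equality at $s$ from \cite{GrigsbyLindsey} is transported to $u$ via the ternary-label-respecting combinatorial equivalence of Definition \ref{def:combinatoriallystable}. The only cosmetic difference is organizational --- the paper notes that the inclusion of the $(n_0-1)$-skeleton in the bent hyperplane arrangement holds for every parameter and only transfers the absence of $0$-labels on top-dimensional cells, whereas you push both sets forward wholesale under $\Phi_u$; the justifications you flag (the labeling characterization of the bent hyperplane arrangement for arbitrary parameters, and its compatibility with labels being defined on relative interiors) are exactly the right points to check and are handled correctly.
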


We note that if the set of generic and transversal parameters was proven to be open, the lemma above would follow immediately.  We conjecture this is true, but since what follows does not require it, for the sake of brevity we prove Lemma \ref{l:5.6} directly. 

\begin{proof}
It suffices to prove that no top-dimensional cell of $\mathcal{C}(u)$ is contained in the union of the bent hyperplanes of $u$.  
It is immediate that for every parameter $u \in \mathcal{P}_{n_0,\ldots,n_m}$, the $(n_0-1)$-skeleton of $\mathcal{C}(u)$ is contained in the union of the bent hyperplanes associated to $u$. By \cite{GrigsbyLindsey}, because $\theta$ is generic and transversal parameter, the $(n_0-1)$-skeleton of $\mathcal{C}(\theta)$ is precisely the union of the bent hyperplanes in $\mathbb{R}^{n_0}$ associated to $\theta$, i.e. the set of all points $x \in \mathbb{R}^{n_0}$ such that that ternary coding of $x$ for $\theta$ has at least one $0$ entry.  Consequently, the ternary coding for $\theta$ of any point in the interior of a top-dimensional cell has no $0$ entries.   By assumption, the combinatorial equivalence $\Phi(\cdot, u)$ induces a bijection between the sets of top-dimensional cells of $\mathcal{C}(\theta)$ and $\mathcal{C}(u)$, and this combinatorial equivalence respects the ternary labelings.  Consequently, points in the interior of top-dimensional cells of $\mathcal{C}(u)$ have ternary labelings with respect to $u$ that have no $0$ entries.  Consequently, no top-dimensional cell is contained in the union of the bent hyperplanes of $u$.  
\end{proof}

\subsection{Functional dimension of combinatorially stable parameters}

In this section, we present an alternative characterization of functional dimension for generic, transversal, combinatorially stable parameters.  Namely, for such a parameter $\theta$, we record the value and total derivative of the function at one point in each top-dimensional cell of $\mathcal{C}(\theta)$; functional dimension is the degrees of freedom we have to vary this data (Theorem \ref{t:equivalentdef}). 

This alternative definition is somewhat more intuitive, but has the disadvantage that it is only defined for \emph{combinatorially stable} ordinary parameters.  We conjecture that a full measure (with respect to Lebesgue measure) set of parameters is combinatorially stable, but a proof of this conjecture is beyond the scope of this paper.

\begin{definition} \label{d:decisive} 
 We call a set $Z \subset \mathbb{R}^{n_0}$ \emph{decisive} for $\theta \in \mathcal{P}_{n_0,\ldots,n_m}$ if for every top-dimensional polyhedron $C \in \mathcal{C}(\theta)$, $Z$ contains precisely $n_0+1$ points in the interior of $C$ that form an $n_0$-dimensional geometric simplex (i.e. they determine $n_0$ linearly independent vectors).  
\end{definition}

\begin{remark}
Note that if $\mathcal{C}(\theta)$ has $k$ top-dimensional cells, then a decisive set for $\theta$ consists of $(n_0+1)k$ points in $\mathbb{R}^{n_0}$.  Proposition \ref{p:reformulateoriginaldef} will then imply that for a combinatorially stable parameter $\theta$,  $(n_0+1)k$ is an upper bound on the functional dimension. Since it is difficult in practice to compute the value of $k$, we cannot say exactly how the upper bound, $k(n_0+1)$, on the functional dimension compares to the upper bound in Theorem \ref{t:upperbound}, but we expect that it is typically much worse. The importance of Definition \ref{d:decisive} and Proposition \ref{p:reformulateoriginaldef} is to give a concrete theoretical procedure for computing the full functional dimension of a combinatorially stable parameter.
\end{remark}

Lemma \ref{l:generictransversalordinary}  immediately implies the following result.

\begin{corollary}
If $\theta$ is generic and transversal, a decisive set $Z$ for $\theta$ consists of parametrically smooth points for $\theta$.
 \end{corollary}

 \begin{definition} \label{d:cellularmembership}
  Let $\theta \in \mathcal{P}_{n_0,\ldots,n_m}$ be a combinatorially stable parameter and let $X$ be a set of points in $\mathbb{R}^{n_0}$.  We say that a neighborhood $U \subset \mathcal{P}_{n_0,\ldots,n_m}$ of $\theta$ \emph{preserves cellular membership of points in $X$} if for every point $x \in X$, the set of ternary labelings of all cells of $\mathcal{C}(u)$ that contain $x$  does not depend on $u \in U$.  
 \end{definition}
 
 \begin{lemma} \label{l:preservingcellularnbhd}
  Let $\theta$ be a generic, transversal, and combinatorially stable parameter.  Let $X$ be a finite set of points in $\mathbb{R}^{n_0}$ that is contained in the union of the interiors of the top-dimensional cells of $\mathcal{C}(s)$.  Then there exists an open neighborhood $U \subset \mathcal{P}_{n_0,\ldots,n_m}$ of $\theta$ that preserves cellular membership of points in $X$.  
 \end{lemma}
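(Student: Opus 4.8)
The plan is to exploit both the combinatorial stability of $s$ and the fact that $X$ lies in the open set $\bigcup_C \mathrm{interior}(C)$, where $C$ ranges over top-dimensional cells of $\mathcal{C}(s)$. First I would invoke Definition \ref{def:combinatoriallystable} to obtain an open neighborhood $U_0$ of $s$ together with the continuous family of homeomorphisms $\Phi_u\colon\mathbb{R}^{n_0}\to\mathbb{R}^{n_0}$ inducing ternary-label-respecting combinatorial equivalences between $\mathcal{C}(s)$ and $\mathcal{C}(u)$. The key point is that for each $x\in X$, the set of cells of $\mathcal{C}(s)$ containing $x$ is exactly $\{C_x\}$ for a single top-dimensional cell $C_x$ (since $x$ is in the interior of $C_x$, it lies in no proper face of $C_x$, and it lies in no other top-dimensional cell because two distinct cells meet only in a common face), and by Lemma \ref{l:generictransversalordinary} (or Lemma \ref{l:5.6}) the ternary label $\theta_{C_x}$ has no $0$ entries. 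So I need a neighborhood on which, for each $x\in X$, $x$ stays in the interior of the cell of $\mathcal{C}(u)$ carrying the label $\theta_{C_x}$, and in no other cell of $\mathcal{C}(u)$.

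Next I would argue that for a parameter $u$ near $s$, the interiors of the top-dimensional cells of $\mathcal{C}(u)$ coincide with the activation regions, i.e. the connected components of the locus where the ternary labeling with respect to $\bar\rho(u)$ has no $0$ entries; this is exactly the content of Lemma \ref{l:5.6} together with the \cite{GrigsbyLindsey} description of activation regions. Hence it suffices to show: there is an open neighborhood $U\subseteq U_0$ of $s$ such that for every $x\in X$ and every $u\in U$, the ternary labeling of $x$ with respect to $\bar\rho(u)$ has no $0$ entries and equals $\theta_{C_x}$. The ternary labeling at $x$ is $\bigl(\mathrm{sgn}(y^{\ell}_i)\bigr)_{\ell,i}$ where the pre-activations $y^{\ell}_i$ are, by Lemma \ref{lem:terntuplecomp}, continuous (indeed piecewise polynomial) functions of $(u,x)$. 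For the fixed point $x$ and the parameter $s$, each such pre-activation $y^{\ell}_i(s,x)$ is nonzero; by continuity of $(u,x')\mapsto y^{\ell}_i(u,x')$ at $(s,x)$ there is a product neighborhood on which its sign is locally constant. Intersecting over the finitely many neurons $(\ell,i)$ and over the finitely many points $x\in X$ yields an open neighborhood $U$ of $s$ on which every pre-activation at every $x\in X$ has the same (nonzero) sign as at $s$. On this $U$, for each $x\in X$ the ternary labeling with respect to $\bar\rho(u)$ is exactly $\theta_{C_x}$, has no zeros, so $x$ lies in the interior of the unique top-dimensional cell of $\mathcal{C}(u)$ with that label and in no other cell of $\mathcal{C}(u)$. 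This is precisely the statement that $U$ preserves cellular membership of points in $X$.

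The main obstacle is the bookkeeping needed to pass cleanly between three descriptions of the relevant region — "interior of a top-dimensional cell of $\mathcal{C}(u)$", "activation region of $\bar\rho(u)$", and "locus where all pre-activations are nonzero" — and to be sure that no \emph{new} cell of $\mathcal{C}(u)$ sneaks in to contain $x$ as $u$ varies. The combinatorial stability hypothesis is what rules this out in principle, but the honest route is the direct sign-stability argument above: since membership of $x$ in a cell is detected by the signs of finitely many continuous functions of $(u,x)$, and all those signs are nonzero and locally constant near $(s,x)$, cellular membership cannot change on a small enough $U$. One should also note that when $x\in X$ happens to lie on the boundary between cells of $\mathcal{C}(s)$ the lemma's hypothesis excludes this, so no subtlety there; the hypothesis "$X$ is contained in the union of interiors of top-dimensional cells" is used exactly to guarantee all pre-activations at each $x$ are nonzero at $s$, which is what makes the signs locally constant.
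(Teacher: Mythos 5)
Your proposal is correct and follows essentially the same route as the paper: the paper's proof simply cites the argument in Lemma \ref{l:sufficientconditionnondegenerate} showing that for $x$ in the interior of a top-dimensional cell the ternary labeling of $x$ is locally constant in the parameter, and concludes by finiteness of $X$ — which is exactly your sign-stability argument for the pre-activations. The extra framing via combinatorial stability and $\Phi_u$ in your first paragraph is not needed (and the paper does not use it); your ``honest route'' in the second paragraph is the whole proof.
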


  \begin{proof}
 The proof of Lemma \ref{l:sufficientconditionnondegenerate} shows that for any point $x$ in the interior of a top-dimensional cell of $\mathcal{C}(\theta)$, there exists a neighborhood $U_2$ of $\theta$ such that the ternary labeling of $x$ with respect to $u \in U_2$ is constant (does not vary with $u$).  
 The result follows immediately.  
 \end{proof}
 
 The following Lemma is an immediate consequence of Lemma \ref{l:preservingcellularnbhd}.
 
 \begin{lemma} \label{l:decisivestable}
 Let $\theta$ be a generic, transversal, and combinatorially stable parameter.  Let $Z$ be a decisive set for $\theta$.  Then there exists an open neighborhood $V \subset \mathcal{P}$ of $\theta$ such that 
 \begin{enumerate}
 \item $V$ preserves cellular membership of points in $Z$,  
 \item $Z$ is a decisive set for $v$ for all $v \in V$.  
 \end{enumerate}
 \end{lemma}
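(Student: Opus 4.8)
The plan is to set $V$ to be the intersection of the neighborhood produced by Lemma \ref{l:preservingcellularnbhd} applied with $X = Z$ and the combinatorial-stability neighborhood $U$ of $s$ from Definition \ref{def:combinatoriallystable}; both are open, so $V$ is an open neighborhood of $s$. Conclusion (1) is then immediate from Lemma \ref{l:preservingcellularnbhd}. For conclusion (2), I would first note that the geometric-simplex requirement in Definition \ref{d:decisive} depends only on the points of $Z$ themselves — it asks that the $n_0+1$ points in a block span an $n_0$-dimensional simplex — so that aspect transfers verbatim from $s$ to any $v$; the real content is to show that for each $v \in V$ the points of $Z$ distribute among the top-dimensional cells of $\mathcal{C}(v)$ exactly $n_0+1$ per cell, with each block of $Z$ landing inside a single cell.

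The key steps, in order, are: (i) For $x \in Z$, since $x$ lies in the interior of a top-dimensional cell of $\mathcal{C}(s)$, the only cell of $\mathcal{C}(s)$ containing $x$ is that cell, and its ternary label has no $0$'s; preservation of cellular membership then forces every cell of $\mathcal{C}(v)$ containing $x$ to carry this same zero-free label, so in particular the unique cell of $\mathcal{C}(v)$ whose relative interior contains $x$ has a zero-free label. (ii) Invoke Lemma \ref{l:5.6}: since $v \in U$, the union of the bent hyperplanes of $\bar{\rho}(v)$ is the $(n_0-1)$-skeleton of $\mathcal{C}(v)$, equivalently a point has a zero-free ternary label with respect to $v$ precisely when it lies in the interior of a top-dimensional cell of $\mathcal{C}(v)$; hence every point of $Z$ lies in the interior of a top-dimensional cell of $\mathcal{C}(v)$. (iii) Record the elementary fact that, for any parameter $u$, the set of points of $\mathbb{R}^{n_0}$ carrying a prescribed zero-free ternary labeling is open and convex — it is cut out, layer by layer, by strict affine inequalities, using that on the region carved out by earlier layers the relevant node maps are affine — hence connected; combined with (ii) this shows that for $v \in V$ the top-dimensional cells of $\mathcal{C}(v)$ are in bijection with the zero-free ternary labels that occur, and likewise for $s$. (iv) Combinatorial stability supplies a dimension- and ternary-label-preserving bijection between the cells of $\mathcal{C}(s)$ and those of $\mathcal{C}(v)$, so the two complexes have the same number $k$ of top-dimensional cells and the same collection of zero-free labels. (v) Conclude: each block of $n_0+1$ points of $Z$ sitting in the interior of a top-dimensional cell $C$ of $\mathcal{C}(s)$ has the constant label $\theta_C$ with respect to $v$, hence by (iii) sits in the interior of the unique top-dimensional cell of $\mathcal{C}(v)$ carrying that label; distinct blocks have distinct labels $\theta_C$, so they land in distinct cells; counting the $(n_0+1)k$ points of $Z$ against the $k$ top-dimensional cells of $\mathcal{C}(v)$ forces exactly $n_0+1$ points in the interior of each, and these form an $n_0$-simplex by the previous paragraph. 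Thus $Z$ is decisive for every $v \in V$.

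The main obstacle is step (iii) together with the matching in step (v): the homeomorphism furnished by combinatorial stability need not fix the points of $Z$, so one cannot simply transport cells of $\mathcal{C}(s)$ along it to locate the points of $Z$ in $\mathcal{C}(v)$, and one must instead identify the cells by their zero-free ternary labels — which forces one to know that these labels separate the top-dimensional cells of $\mathcal{C}(v)$. Once the convexity observation of step (iii) is in hand, everything else is bookkeeping with Lemma \ref{l:preservingcellularnbhd}, Lemma \ref{l:5.6}, and Definition \ref{def:combinatoriallystable}.
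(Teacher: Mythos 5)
Your proof is correct, but it is worth noting that the paper does not actually write out an argument for this lemma: it simply asserts that the statement is ``an immediate consequence of Lemma \ref{l:preservingcellularnbhd},'' which really only covers your conclusion (1). Your treatment of conclusion (2) supplies genuine content that the paper leaves implicit. The two load-bearing observations you add are (a) that the set of points carrying a prescribed zero-free ternary label is convex (being cut out layer by layer by strict affine inequalities), hence connected, so that for any parameter $v$ in the combinatorial-stability neighborhood the zero-free labels are in bijection with the interiors of the top-dimensional cells of $\mathcal{C}(v)$ once Lemma \ref{l:5.6} identifies the complement of those interiors with the bent hyperplane arrangement; and (b) that one must locate the points of $Z$ inside $\mathcal{C}(v)$ by their labels rather than by transporting cells along the homeomorphism $\Phi_v$, since that homeomorphism need not fix $Z$. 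The counting step at the end (equal numbers of blocks and of top-dimensional cells, distinct blocks landing in distinct cells) then closes the argument cleanly. In short, your route is the natural completion of the paper's one-line justification, and I see no gap in it; the only cosmetic issue is that conclusion (2) in the statement should read ``for all $v \in V$'' rather than ``for all $v \in \mathcal{P}$,'' which you have correctly interpreted.
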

 
\begin{definition} \label{d:simplexevaluationmap}
 Let $\theta \in \mathcal{P}_{n_0,\ldots,n_m}$ be a generic, transversal, and combinatorially stable parameter.
Let $Z$ be a decisive set for $\theta$ and let $V$ be as in Lemma \ref{l:decisivestable}. 
We define the \emph{simplex evaluation map} 
$$SE_{\theta,Z} : V \to \mathbb{R}^{k(n_0+1)n_m}$$
by $$SE_{\theta,Z}(v) = E_Z(v).$$
\end{definition} 

\begin{remark}
The map $E_Z$ is as defined in Definition \ref{def:evaluationmap}, but we note that the $k$ has a different meaning there.  
\end{remark}

A parametrically smooth point $x$ for a parameter $\theta$ does not necessarily lie in the interior of a top-dimensional cell of $\mathcal{C}(\theta)$, even in the case that $s$ is generic and transversal.  This is related to the fact that the locus of nondifferentiability of $\rho(\theta)$ may be a proper subset of the bent hyperplane arrangement.  See the proof of Corollary \ref{cor:NonDiffInBHA} for an example.  The following Lemma will be used in the proof of Proposition \ref{p:reformulateoriginaldef}.

\begin{lemma} \label{l:dimattainedbyinteriorpoints}
Let $\theta \in \mathcal{P}_{n_0,\ldots,n_m}$ be an ordinary parameter. Then functional dimension for $\theta$ can be realized on a set $Z$ of parametrically smooth points contained in the union of the interiors of the top-dimensional cells of $\mathcal{C}(\theta)$.  
\end{lemma}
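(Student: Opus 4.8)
The plan is to take a finite parametrically smooth set realizing the functional dimension and then slide each of its points slightly so that it comes to lie in the interior of a top-dimensional cell, while arguing that a sufficiently small slide cannot lower the rank of the associated evaluation Jacobian. To begin, since $\textrm{dim}_{\textrm{fun}}(s)$ is by definition a supremum of non-negative integers, each bounded above by the number of columns $D(n_0,\ldots,n_m)$ of the matrices $\mathbf{J}E_Z\vert_s$, this supremum is attained: there is a finite set $Z_0 = \{z_1,\ldots,z_k\} \subset \mathbb{R}^{n_0}$, parametrically smooth for $s$, with $\textrm{rank}\,\mathbf{J}E_{Z_0}\vert_s = \textrm{dim}_{\textrm{fun}}(s) =: d$. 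For each $j$, Lemma \ref{l:opennbhd1} puts $(s,z_j)$ in the interior of a maximal piece $P_j$ on which $\mathcal{F}_{n_0,\ldots,n_m}$ agrees with a polynomial $Q_j$ in the coordinates of $(s,x)$; choose an open ball $B_j \subset \mathbb{R}^{n_0}$ about $z_j$ small enough that $\{s\} \times B_j \subset \textrm{interior}(P_j)$. Then every point of $B_j$ is parametrically smooth for $s$, and for $w \in B_j$ the row block $\mathbf{J}E_w\vert_s$ — the partial derivatives in the $s$-coordinates of $Q_j(\,\cdot\,,w)$, evaluated at $s$ — has entries that are polynomial functions of $w$. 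Consequently the stacked matrix $\Psi(w_1,\ldots,w_k) := \mathbf{J}E_{\{w_1,\ldots,w_k\}}\vert_s$ depends polynomially, in particular continuously, on $(w_1,\ldots,w_k) \in B_1 \times \cdots \times B_k$.

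Next I would pin down the rank and then push the points into top-dimensional cells. The locus of real $kn_m \times D$ matrices of rank at least $d$ is open (it is the set where some $d \times d$ minor is nonzero) and contains $\Psi(z_1,\ldots,z_k)$, so $\textrm{rank}\,\Psi \geq d$ on an open neighborhood $W$ of $(z_1,\ldots,z_k)$ in $B_1 \times \cdots \times B_k$; on the other hand any $k$-tuple in $B_1 \times \cdots \times B_k$ is a parametrically smooth set for $s$, so $\textrm{rank}\,\Psi \leq \textrm{dim}_{\textrm{fun}}(s) = d$ there, whence $\textrm{rank}\,\Psi \equiv d$ on $W$. Since $\mathcal{C}(s)$ is a polyhedral decomposition of $\mathbb{R}^{n_0}$ with finitely many cells, the union $T$ of the interiors of its $n_0$-dimensional cells is the complement of a finite union of polyhedra of dimension at most $n_0-1$, hence open and dense in $\mathbb{R}^{n_0}$; thus $T \times \cdots \times T$ is open and dense in $(\mathbb{R}^{n_0})^k$ and meets $W$ in a nonempty open set, which therefore contains a tuple $(w_1,\ldots,w_k)$ with the $w_j$ pairwise distinct. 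Setting $Z := \{w_1,\ldots,w_k\}$ finishes the argument: $Z$ consists of parametrically smooth points lying in interiors of top-dimensional cells of $\mathcal{C}(s)$, and $\textrm{rank}\,\mathbf{J}E_Z\vert_s = d = \textrm{dim}_{\textrm{fun}}(s)$.

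The main obstacle — really the only step that is not bookkeeping — is the rank argument: one needs simultaneously that moving an evaluation point a little keeps it parametrically smooth (so it still contributes a genuine block to an evaluation Jacobian) and that the rank of the resulting polynomially-varying matrix cannot drop locally, so that the universal upper bound $\textrm{dim}_{\textrm{fun}}(s)$ pins it to be constant near $Z_0$. Once Lemma \ref{l:opennbhd1} and the finitely-piecewise-polynomial structure of Theorem \ref{t:Fpiecewisepolynomial} are in hand this is short; attainment of the supremum, polynomial dependence on the evaluation point, and density of the top-cell interiors are all routine.
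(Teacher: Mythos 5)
Your proof is correct and takes essentially the same approach as the paper: both arguments exploit the local polynomial dependence of the rows of $\mathbf{J}E_Z\vert_s$ on the evaluation points (via Lemma \ref{l:opennbhd1} and Theorem \ref{t:Fpiecewisepolynomial}) together with the maximality of the rank to show that a small perturbation of the evaluation points into the dense union of top-cell interiors cannot change the rank. The only organizational difference is that you perturb all points simultaneously using lower semicontinuity of rank on an open set, whereas the paper replaces one point at a time with a case split on whether its row is linearly independent of the remaining rows.
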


\begin{proof}
Suppose $dim_{fun}(\theta)$ is attained on a set $Z\subset \mathbb{R}^{n_0}$ of parametrically smooth points for $\theta$ that contains a point $z$ that is in the $(n_0-1)$-skeleton of $\mathcal{C}(\theta)$.  We will argue that we can replace $z$ in $Z$ with a point $z'$ that is not in the $(n_0-1)$-skeleton of $\mathcal{C}(\theta)$, without changing the rank of $\mathbf{J}E_Z(\theta)$.

The restriction of $\bar{\rho}(\theta)$ to any cell of $\mathcal{C}(\theta)$ is an algebraic expression in the variables $\theta_1,\ldots,\theta_D$ and $x_1,\ldots,x_{n_0}$.
The $D$ entries of the row of $\mathbf{J}E_Z(s)$ corresponding to the point $z$ are the partial derivatives of this algebraic equation evaluated at $x=z$ with respect to each of the parameters $\theta_1,\ldots,\theta_D$.  Consequently, the entries of this row are also algebraic expressions in the variables $\theta_1,\ldots,\theta_D$ and $z_1,\ldots,z_{n_0}$.  If this row is not linearly independent from the other rows of the matrix, then (since $Z$ maximizes the possible rank of this matrix), we may freely perturb $z$ without changing the rank of the matrix.  Thus, we may assume without loss of generality that this row is linearly independent from the other rows.  Note that a point in Euclidean space that is not contained in a proper linear subspace is a positive distance from the subspace; consequently, sufficiently small perturbations of the point will remain outside of the linear subspace.  In our situation, this means that any sufficiently small perturbation of the row associated to the point $z$ will remain linearly independent from the other row vectors.  Consequently, if we replace $z$ with $z'=z+\epsilon$ for any sufficiently small perturbation $\epsilon$, the rank of the matrix will not change.  

\end{proof}

Since the supremum in the definition of (full) functional definition is not ideal from the point of view of computation, it is desirable to characterize the circumstances under which functional dimension equals batch functional dimension for some chosen batch.  Proposition \ref{p:reformulateoriginaldef} guarantees that for generic, transversal, combinatorially stable parameters, functional dimension equals batch functional dimension for any batch $Z$ that contains a decisive set.  The idea behind the proof is that the gradient at any point $x$ in a top-dimensional cell of the canonical polyhedral complex will always be in the linear span of the gradients at points defining a top-dimensional simplex in that cell.

\begin{proposition} \label{p:reformulateoriginaldef}
For a generic, transversal, combinatorially stable parameter, functional dimension is attained on any decisive set.  

That is, letting all notation be as in Definition \ref{d:simplexevaluationmap}, the functional dimension of $\theta$ is the rank of the map $SE_{\theta,Z}: V \to \textrm{SE}_{\theta,Z}(V)$ at the point $\theta$, i.e. 
$$\textrm{dim}_{\textrm{fun}}(\theta) = \textrm{rank }\textrm{SE}_{\theta,Z}\vert_\theta.$$
\end{proposition}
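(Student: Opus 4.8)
The plan is to collapse the supremum in Definition \ref{def:functionaldimension} onto a single decisive set by exploiting the fact that, within each top-dimensional cell of $\mathcal{C}(s)$, the Jacobian $\mathbf{J}E_x|_s$ depends affine-linearly on the evaluation point $x$.

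First I would apply Lemma \ref{l:dimattainedbyinteriorpoints} to fix a finite set $W$ of parametrically smooth points for $s$, contained in the union of the interiors of the top-dimensional cells of $\mathcal{C}(s)$, with $\textrm{rank }\mathbf{J}E_W|_s = \textrm{dim}_{\textrm{fun}}(s)$. Write $W = \bigsqcup_C W_C$ as the disjoint union over top-dimensional cells $C$ of the sets $W_C := W \cap \textrm{int}(C)$. Since the points of any decisive set $Z$ for a generic, transversal parameter lie in interiors of top-dimensional cells, Lemma \ref{l:generictransversalordinary} shows $Z$ consists of parametrically smooth points for $s$; hence $\textrm{rank }\mathbf{J}E_Z|_s$ is one of the ranks appearing in the supremum and $\textrm{rank }\mathbf{J}E_Z|_s \le \textrm{dim}_{\textrm{fun}}(s)$ automatically. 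It therefore suffices to establish the reverse inequality, and for this it is enough to show that the row space of $\mathbf{J}E_W|_s$ is contained in the row space of $\mathbf{J}E_Z|_s$.

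This I would prove cell by cell. Fix a top-dimensional cell $C$ with ternary label $\theta_C = (\theta_C^1,\ldots,\theta_C^m)$. For $x \in \textrm{int}(C)$, Lemma \ref{lem:terntuplecomp} — together with the persistence of the ternary labeling of $x$ under small perturbations of $s$, as in the proof of Lemma \ref{l:sufficientconditionnondegenerate} — gives $\rho(s')(x) = M(s')\widehat{x}$ for $s'$ in a neighborhood of $s$, where $M(s') := A^m_{\theta_C^m}\widehat{A^{m-1}_{\theta_C^{m-1}}}\cdots\widehat{A^1_{\theta_C^1}}$ (with $A^\ell$ the $\ell$-th layer matrix of $\bar\rho(s')$) is an $n_m \times (n_0+1)$ matrix that is polynomial in $s'$ and independent of $x$. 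Differentiating in $s'$ at $s$ yields $\mathbf{J}E_x|_s = \sum_{k=1}^{n_0} x_k B_k + B_0$ for fixed matrices $B_0,B_1,\ldots,B_{n_0} \in M_{n_m\times D}$ depending only on $s$ and $C$. If $Z_C = \{p_0,\ldots,p_{n_0}\}$ denotes the $n_0$-simplex that $Z$ places in $\textrm{int}(C)$, then affine independence of the $p_i$ lets us write any $x \in \mathbb{R}^{n_0}$ as $x = \sum_i \lambda_i p_i$ with $\sum_i \lambda_i = 1$, and hence
$$\mathbf{J}E_x|_s = \sum_i \lambda_i\Big(\sum_k (p_i)_k B_k + B_0\Big) = \sum_i \lambda_i\,\mathbf{J}E_{p_i}|_s.$$
In particular, for every $x \in W_C$ the rows of $\mathbf{J}E_x|_s$ lie in the span of the rows of $\mathbf{J}E_{Z_C}|_s$. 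Ranging over all top-dimensional cells (those with $W_C = \emptyset$ only contributing additional rows to $\mathbf{J}E_Z|_s$), we conclude that the row space of $\mathbf{J}E_W|_s$ is contained in that of $\mathbf{J}E_Z|_s$, so $\textrm{rank }\mathbf{J}E_Z|_s \ge \textrm{rank }\mathbf{J}E_W|_s = \textrm{dim}_{\textrm{fun}}(s)$, and equality holds throughout.

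To finish, I would identify this rank with that of the simplex evaluation map. By Lemma \ref{l:decisivestable} there is a neighborhood $V$ of $s$ on which $Z$ remains decisive for every parameter and on which $SE_{s,Z} = E_Z$; since the rank of a smooth map at a point depends only on its germ there, $\textrm{rank }SE_{s,Z}|_s = \textrm{rank }\mathbf{J}E_Z|_s = \textrm{dim}_{\textrm{fun}}(s)$. I expect the main obstacle to be the bookkeeping of the third paragraph: verifying that the local expression $M(s')$ is genuinely independent of $x \in \textrm{int}(C)$ and valid for all $s'$ in one fixed neighborhood of $s$ (where generic-transversality, and for the statement about $V$ also combinatorial stability, enter), and matching the cells occupied by $W$ with those of the decisive set $Z$. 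Once this is in place, the affine-combination identity is immediate.
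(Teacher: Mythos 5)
Your proposal is correct and follows essentially the same route as the paper's proof: both rest on Lemma \ref{l:dimattainedbyinteriorpoints} plus the observation that $\mathbf{J}E_x\vert_s$ depends affine-linearly on $x$ within a top-dimensional cell, so that the simplex vertices of the decisive set affinely span the relevant rows. The only (cosmetic) difference is that you phrase the key step as a direct barycentric identity $\mathbf{J}E_x\vert_s = \sum_i \lambda_i\,\mathbf{J}E_{p_i}\vert_s$ and a row-space containment, whereas the paper augments $Z$ by the extra points and removes them one at a time via the equivalent difference formula $\mathbf{J}E_{z^*}(s) - \mathbf{J}E_{z^1}(s) = \sum_j c^j(\mathbf{J}E_{z^j}(s) - \mathbf{J}E_{z^1}(s))$.
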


\begin{proof}
By Lemma \ref{l:dimattainedbyinteriorpoints} $\dim_{fun}(\theta)$ is attained on some set $Z'$ such that every point of $Z'$ lies in the interior of a top-dimensional cell $C$ of the canonical polyhedral complex $\mathcal{C}(\theta)$.  Thus, the set $Z^* \coloneqq Z \cup Z'$
\begin{enumerate}
\item contains the decisive set $Z$,
\item is a set on which $\textrm{dim}_{fun}(\theta)$ is achieved, 
\item is contained in the union of the interiors of the top-dimensional cells of $\mathcal{C}(\theta)$.  
\end{enumerate}
Without loss of generality (by decreasing the size of $V$ and appealing to Lemma \ref{l:preservingcellularnbhd}), we may assume that $V$ preserves cellular membership of points of $Z^*$.  Thus $$\textrm{dim}_{\textrm{fun}}(\theta) = \textrm{rank } \mathbf{J}E_{Z^*}\vert_\theta = \textrm{rank }\textrm{SE}_{\theta,Z^*} \vert_\theta.$$

First, consider the case that $n_m = 1$. We will show that for $z \in Z^* \setminus Z$ in cell $C \in \mathcal{C}(\theta)$, the row vector  $\mathbf{J}E_{z^*}(\theta)$ is in the linear span of the set of row vectors $\{\mathbf{J}E_{z^1}(\theta), \ldots, \mathbf{J}E_{z^{n_0 +1}}(\theta)\}$, where $z^1,\ldots,z^{n_0+1}$ are the elements of the decisive set $Z$ that are in $C$.   
Denote the coordinates of the parameter $\theta\in \mathcal{P}_{n_0,\ldots,n_m=1}$ by  $\theta=(\theta_1,\ldots,\theta_D)$ and the coordinates of an arbitrary point $x^i \in \mathbb{R}^{n_0}$ by $x=(x_1^i,\ldots,x_{n_0}^i)$.  
 For $x \in C$, $\rho(\theta)(x)$ can be written as
\begin{equation}\label{eq:algform} 
\rho(\theta)(x) = S(\theta) + \sum_{i=1}^{n_0} \theta_1^{\tau_{i,1}}\theta_2^{\tau_{i,2}}\dots \theta_D^{\tau_{i,D}} \tau_ix_i,
\end{equation}
where $S(\theta)$ is an algebraic expression in the variables $\theta_1,\ldots,\theta_D$ that does not depend on any $x_1,\ldots,x_{n_0}$ and $\tau_i,\tau_{i,j} \in \{0,1\}$ for all $i,j$.   
Then, for $x \in C$, we have
\begin{multline*} \mathbf{J}E_x(\theta)= \left (\frac{\partial}{\partial \theta_1} \rho(\theta)(x),\ldots ,\frac{\partial}{\partial \theta_D} \rho(\theta)(x) \right)
 \\
 = \left (\frac{\partial}{\partial \theta_1}S(\theta) + \sum_{i=1}^{n_0}x_i ( \frac{\partial}{\partial \theta_1}\theta_1^{\tau_{i,1}}\theta_2^{\tau_{i,2}}\dots \theta_D^{\tau_{i,D}} \tau_i ) , \ldots, \right. \\ 
 \left. \frac{\partial}{\partial \theta_D}S(\theta) + \sum_{i=1}^{n_0}x_i ( \frac{\partial}{\partial \theta_D}\theta_1^{\tau_{i,1}}\theta_2^{\tau_{i,2}}\dots \theta_D^{\tau_{i,D}} \tau_i ) \right )
\end{multline*}
Consequently, for any points $x,y \in C$, we have
  \begin{multline} \label{eq:Jdifference}
  \mathbf{J}E_x(\theta) - \mathbf{J}E_{z_1}(\theta) \\
  =  \left (\sum_{i=1}^{n_0}(x_i-y_i) ( \frac{\partial}{\partial \theta_1}\theta_1^{\tau_{i,1}}\theta_2^{\tau_{i,2}}\dots \theta_D^{\tau_{i,D}} \tau_i ) , \ldots, \sum_{i=1}^{n_0}(x_i-y_i) ( \frac{\partial}{\partial \theta_D}\theta_1^{\tau_{i,1}}\theta_2^{\tau_{i,2}}\dots \theta_D^{\tau_{i,D}} \tau_i ) \right ) 
  \end{multline}

So consider any point  $z^* \in Z^* \setminus Z$ in $C$.   Because the points $ \{z^1,\ldots,z^{n_0+1}\}$ are the vertices of a top-dimensional simplex,  there exist real numbers $c^i$ such that 
\[z^* = z^1 + \sum_{i=2}^{n_0+1}c^i(z^i-z^1).\] 
Then applying \eqref{eq:Jdifference} yields 

  \begin{multline}  \label{eq:linearsum}
  \mathbf{J}E_{z^*}(\theta) - \mathbf{J}E_{z^1}(\theta)\\
  =  \left (\sum_{i=1}^{n_0}( {\textstyle \sum_{j=2}^{n_0+1}}c^j(z^j_i-z^1_i)  \left( \frac{\partial}{\partial \theta_1}\theta_1^{\tau_{i,1}}\theta_2^{\tau_{i,2}}\dots \theta_D^{\tau_{i,D}} \tau_i \right)  \right. ,
   \ldots, \\
\left. \sum_{i=1}^{n_0}( {\textstyle \sum_{j=2}^{n_0+1}}c^j(z^j_i-z^1_i)  \left( \frac{\partial}{\partial \theta_D}\theta_1^{\tau_{i,1}}\theta_2^{\tau_{i,2}}\dots \theta_D^{\tau_{i,D}} \tau_i \right) \right) \\
= \sum_{j=2}^{n_0+1} \left (\sum_{i=1}^{n_0} c^j(z^j_i-z^1_i)  \left( \frac{\partial}{\partial \theta_1}\theta_1^{\tau_{i,1}}\theta_2^{\tau_{i,2}}\dots \theta_D^{\tau_{i,D}} \tau_i \right)  \right. ,
   \ldots, \\ 
\left. \sum_{i=1}^{n_0}c^j(z^j_i-z^1_i)  \left( \frac{\partial}{\partial \theta_D}\theta_1^{\tau_{i,1}}\theta_2^{\tau_{i,2}}\dots v_D^{\tau_{i,D}} \tau_i \right) \right) \\
= \sum_{j=2}^{n_0+1} c^j(\mathbf{J}E_{z^j}(\theta) - \mathbf{J}E_{z^1}(\theta)).\\
  \end{multline}

But
 $$\textrm{row rank } \begin{bmatrix} \mathbf{J}E_{z_1}(\theta) \\ \mathbf{J}E_{z_2}(\theta)  \\ \vdots \\ \mathbf{J}E_{z_{n_0+1}}(\theta) \\  \mathbf{J}E_{z*}(\theta) \end{bmatrix} 
 =\textrm{row rank } \begin{bmatrix} \mathbf{J}E_{z_1}(\theta) \\ \mathbf{J}E_{z_2}(\theta) -  \mathbf{J}E_{z_1}(\theta)  \\  \vdots \\ \mathbf{J}E_{z_{n_0+1}}(\theta) - \mathbf{J}E_{z_1}(\theta)\\  \mathbf{J}E_{z*}(\theta) - \mathbf{J}E_{z_1}(\theta) \end{bmatrix} 
   $$
 and equation \eqref{eq:linearsum}  tells us that the last row of the rightmost matrix above is a linear combination of its other rows.  Therefore 
 \begin{equation} \label{eq:onelesspoint}
 \textrm{dim}_{\textrm{fun}}(\theta) = \textrm{rank }  \mathbf{J}E_{Z'}(\theta) =  \textrm{rank }\mathbf{J}E_{Z' \setminus \{z^*\}}(\theta).
 \end{equation}
 Inductively iterating the conclusion of \eqref{eq:onelesspoint} over all the points $z^* \in Z^* \setminus Z$ yields 
$$\textrm{dim}_{\textrm{fun}}(\theta)  = \textrm{row rank} \ \mathbf{J}E_{Z}(\theta).$$ 

The case $n_m > 1$ is similar; apply the argument above to each coordinate function of $\rho$. 
\end{proof}

\subsection{Equivalent definition of functional dimension for combinatorially stable parameters}

Equipped with a constructive (i.e. not containing a supremum) definition of functional dimension for generic, transversal and combinatorially stable parameters (Proposition \ref{p:reformulateoriginaldef}), we next prove this definition coincides with an alternative characterization of functional dimension for such parameters.

 \begin{definition} \label{def:SVmap}
 Let $\theta \in \mathcal{P}_{n_0,\ldots,n_m}$ be a generic, transversal, and combinatorially stable parameter.
    Fix any ordering $C_1,\ldots,C_k$ of the top-dimensional cells of $\mathcal{C}(\theta)$.  For each $1 \leq i \leq k$, choose one point $z_i$ in the interior of $C_i$, and set $Z^1 = \{z_1,\ldots,z_k\}$.  
  Let $V$ be a neighborhood of $\theta$ in $\mathcal{P}$ that preserves cellular membership of points in $Z^1$ (as guaranteed by Lemma \ref{l:preservingcellularnbhd}). 
  We define the \emph{slopes and values map} 
 $$SV_{\theta,Z^1}:V \to \left( \mathbb{R}^{n_m} \times \mathbb{R}^{n_0n_m} \right)^k,$$ as follows.  
For $v \in V$, for each $1 \leq i \leq k$, we define the $i$th element in $\left( \mathbb{R}^{n_m} \times \mathbb{R}^{n_0n_m} \right)^k$ of $SV(v)$ 
to be $$\rho(v)(z_i) \times \boldsymbol{J}(\rho(v))\vert_{z_i}^T,$$ where we interpret $\boldsymbol{J}(\rho(v))\vert_{z_i}^T$ as a vector listing its entries. 
 \end{definition}

\begin{remark}
In the definition above, the vector $\boldsymbol{J}(\rho(v))\vert_{z_i}^T$ amounts to a listing of the $n_0$ directional derivatives (in the directions of the axes in $\mathbb{R}^{n_0}$) of the $\mathbb{R}^{n_m}$-valued function $\rho(v)$ at the point $z_i$. Alternatively, these are the coefficients of the corresponding local multi-affine-linear function.
\end{remark}

The content of Lemma \ref{l:qrealizable} is that, because neural networks maps are continuous and PL, if we know the affine-linear functions that are the restrictions of the neural network map to each top-dimensional cell, we can figure out the loci of nondifferentiability of the neural network map. 

\begin{lemma} \label{l:qrealizable}
Let all notation be as in Definition \ref{def:SVmap}.
Let $q \in \textrm{Image}(SV_{\theta,Z^1})$  Then the unmarked function that realizes $q$ is uniquely determined.  
\end{lemma}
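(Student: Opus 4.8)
The plan is to establish the equivalent assertion that any two parameters $v, v'$ in the domain $V$ of $SV_{s,Z^1}$ with $SV_{s,Z^1}(v) = SV_{s,Z^1}(v') = q$ satisfy $\rho(v) = \rho(v')$; this says exactly that the unmarked function ``realizing $q$'' (that is, $\rho(v)$ for any $v$ with $SV_{s,Z^1}(v) = q$) is well defined. I will assume, as one may, that $V$ was chosen inside the combinatorial-stability neighborhood $U$ of $s$ from Definition \ref{def:combinatoriallystable}, so that for every $u \in V$ the complex $\mathcal{C}(u)$ is combinatorially equivalent to $\mathcal{C}(s)$ by a homeomorphism respecting ternary labelings, and by Lemma \ref{l:5.6} the $(n_0-1)$-skeleton of $\mathcal{C}(u)$ is precisely the bent hyperplane arrangement of $u$.

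The first step is to see that $q$ already pins down, for each ternary label $\theta$ occurring on a top-dimensional cell, a single affine map $\ell_\theta \colon \mathbb{R}^{n_0} \to \mathbb{R}^{n_m}$. Lemma \ref{lem:terntuplecomp} shows that for $u \in V$ the restriction of $\rho(u)$ to the interior of any top-dimensional cell of $\mathcal{C}(u)$ with ternary label $\theta = (\theta^1,\dots,\theta^m)$ is the affine map $\ell_\theta^u(x) = M_\theta^u\,\widehat{x}$, where $M_\theta^u$ is formed by the first $n_m$ rows of the product $\widehat{A^m_{\theta^m}}\cdots\widehat{A^1_{\theta^1}}$ of the augmented, ``switched-off'' affine matrices of $u$; in particular $\ell_\theta^u$ depends only on $\theta$ and $u$, not on the choice of $\theta$-labeled cell. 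Since $V$ preserves cellular membership of $Z^1$ and $s$ is generic and transversal, for each $i$ and each $u \in V$ the point $z_i$ lies in the interior of a top-dimensional cell of $\mathcal{C}(u)$ whose ternary label equals the label $\theta_{C_i}$ of the cell $C_i \ni z_i$ of $\mathcal{C}(s)$, and the labels $\theta_{C_1},\dots,\theta_{C_k}$ exhaust all top-dimensional labels of $\mathcal{C}(s)$, hence of $\mathcal{C}(u)$. As an affine map on $\mathbb{R}^{n_0}$ is recovered from its value and total derivative at one point, the $i$th block $\bigl(\rho(u)(z_i),\,\boldsymbol{J}(\rho(u))|_{z_i}\bigr)$ of $SV_{s,Z^1}(u)$ determines $\ell_{\theta_{C_i}}^u$. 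Applying this to $u \in \{v, v'\}$ and using $SV_{s,Z^1}(v) = SV_{s,Z^1}(v') = q$ gives $\ell_\theta^v = \ell_\theta^{v'}$ for every top-dimensional label $\theta$; call this common affine map $\ell_\theta$.

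The second step is to recover the function $\rho(v)$ itself from $\{\ell_\theta\}$ together with the combinatorial type of $\mathcal{C}(s)$, which is fixed once and for all since $s$ and $Z^1$ are. The closed top-dimensional cells of $\mathcal{C}(v)$ cover $\mathbb{R}^{n_0}$ and $\rho(v)$ equals $\ell_\theta$ on the $\theta$-labeled cell, so $\rho(v)$ is, near every point, one of the $\ell_\theta$. If two top-dimensional cells with labels $\theta,\theta'$ are combinatorially adjacent --- an adjacency visible already in $\mathcal{C}(s)$ --- they share a facet, and continuity of $\rho(v)$ forces $\ell_\theta = \ell_{\theta'}$ along that facet; when $\ell_\theta \ne \ell_{\theta'}$ this confines the facet to the hyperplane $H_{\theta,\theta'} := \{x : \ell_\theta(x) = \ell_{\theta'}(x)\}$. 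Thus the non-differentiability locus of $\rho(v)$ lies in the finite hyperplane union $\bigcup H_{\theta,\theta'}$, which is read off from $q$ --- this is the precise content of the informal claim preceding the statement that ``knowing the affine pieces lets one find the loci of non-differentiability.'' Its complement is a finite union of open regions, $\rho(v)$ is a single $\ell_\theta$ on each, and the correct $\ell_\theta$ on the region of a given point $x$ is identified by joining $x$ to a nearby $z_i$ by a path in general position and propagating the value across the crossed hyperplanes using continuity and the fixed combinatorial adjacency. Since every step of this procedure uses only $q$ and the fixed data $(s, Z^1)$, it yields the same function whether one starts from $v$ or from $v'$; hence $\rho(v) = \rho(v')$.

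The main obstacle will be making the reconstruction step fully rigorous: the linearity regions of a PL function need not be convex, and a hyperplane $H_{\theta,\theta'}$ extends far beyond the facet it carries, so one must check that ``carry the current label, updating it at each hyperplane crossing'' is path-independent and really returns $\rho(v)(x)$. I expect the cleanest route is to work with the difference $g := \rho(v) - \rho(v')$ directly: it is continuous and PL, on the common refinement of $\mathcal{C}(v)$ and $\mathcal{C}(v')$ it is a locally constant selection from the fixed finite set of affine maps $\{\ell_\theta - \ell_{\theta'}\}$, and it vanishes \emph{identically} on a neighborhood of each $z_i$, because $z_i$ lies in identically-labeled top-dimensional cells of $\mathcal{C}(v)$ and of $\mathcal{C}(v')$ on which both functions equal $\ell_{\theta_{C_i}}$. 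One then wants a connectedness argument --- propagating the germ ``$g \equiv 0$'' outward from the neighborhoods of the $z_i$, using the continuous isotopy $\Phi$ from combinatorial stability to control how $\mathcal{C}(v)$ and $\mathcal{C}(v')$ differ --- to force $g \equiv 0$ on all of $\mathbb{R}^{n_0}$.
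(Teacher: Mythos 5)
Your proof follows essentially the same route as the paper's: the value and total derivative of $\rho(v)$ at the single marked point in each top-dimensional cell pin down the affine restriction of the function to that cell, and then continuity together with the fixed combinatorial structure of $\mathcal{C}(s)$ recovers the non-top-dimensional cells as intersection loci of the adjacent affine pieces, hence the whole function. The propagation/path-independence obstacle you flag in your final paragraph is a legitimate point of rigor, but the paper's own proof does not address it either -- it simply asserts that the lower-dimensional cells ``can be determined by solving for the intersection loci'' of the affine pieces on their top-dimensional cofaces -- so your first two paragraphs already match the paper's level of detail.
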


\begin{proof}
Let $f$ be a function that realizes $q$.  At one fixed point $z_C$ in each top-dimensional cell $C$, $q$ provides the value of $f(z_C)$ and the total derivative $\boldsymbol{J}(f)\vert_{z_C}$. Since $f$ is linear on cells of $\mathcal{C}(\theta)$, the restriction of $f$ to each top-dimensional cell is therefore determined.  
Implicit in the definition of the map $SV_{\theta,Z^1}$ is a recording of the stable combinatorial structure of $\mathcal{C}(\theta)$ -- including, in particular, the list of top-dimensional cells that are cofaces of any non-top-dimensional cell.  Consequently, because $f$ is continuous by assumption, the set of points that comprise any non-top-dimensional cell $D$ can be determined by solving for the intersection loci of the affine-linear functions that are the restrictions of $f$ to the top-dimensional cells that are co-faces of $D$.  
\end{proof}

Lemma \ref{l:qrealizable} justifies the following definition.

\begin{definition} \label{def:commutingdiagram}
Let $\theta$ be a generic, transversal, combinatorially stable parameter, let $Z^1$ be as in Definition \ref{def:SVmap}, let $Z$ be a decisive set for $\theta$, and let $V$ be a neighborhood of $\theta$ in $\mathcal{P}$ on which cellular membership of points in $Z^1$ and $Z$ is preserved. 
Define the map $\Phi:\textrm{SV}_{\theta,Z^1}(V)  \to \textrm{SE}_{\theta,Z}(V)$ so that the following diagram commutes: 
\begin{equation} \label{commutativediagram} \begin{tikzcd}[column sep=small]
V \arrow[rr] \arrow[dr] &  &  \textrm{SE}_{\theta,Z}(V)\\
&\textrm{SV}_{\theta,Z^1}(V) \arrow[ur,"\Phi" '] & 
\end{tikzcd}\end{equation}

\end{definition}

\begin{remark}
The map $\Phi$ depends on $Z^1$, $Z$, and the cellular membership of points in these sets to top-dimensional cells of $\mathcal{C}(\theta)$.  While the notation $\Phi_{\theta,Z^1,Z}$ might be more accurate, we omit the subscripts to lighten notation. 
\end{remark}

Equipped with Proposition \ref{p:reformulateoriginaldef}, which says that functional dimension is the rank of the simplex evaluation map -- which is the map at the top of the commutative diagram \eqref{commutativediagram} -- our next goal is to use this commutative diagram to show that functional dimension is also the rank of the SV (slopes and values) map.  Our strategy to do this is to argue that the map $\Phi$ has full rank -- but in order to discuss the rank of $\Phi$, we need $\Phi$ to be a differentiable map between smooth manifolds. To this end, since the subset $\textrm{SV}_{\theta,Z^1}(V) \left( \mathbb{R}^{n_m} \times \mathbb{R}^{n_0n_m} \right)^k $ may not be a manifold, we will construct an extension $\widetilde{\Phi}$ of $\Phi$ defined on all of $\left( \mathbb{R}^{n_m} \times \mathbb{R}^{n_0n_m} \right)^k$.

\begin{equation} \label{generalizedcommutativediagram}
\begin{tikzcd}[column sep=small]
V \arrow[rr,"\textrm{SE}_{\theta,Z}"] \arrow[dr, "\textrm{SV}_{\theta,Z^1}" '] &  &  \left( (\mathbb{R}^{n_m})^{n_0 + 1}\right)^k\\
& \left( \mathbb{R}^{n_m} \times \mathbb{R}^{n_0n_m} \right)^k \arrow[ur,"\widetilde{\Phi}" '] & 
\end{tikzcd}
\end{equation} 

Here, $k$ is the number of top-dimensional cells in $\mathcal{C}(\theta)$ (where $\theta$ is assumed to be a generic, transversal, combinatorially stable parameter), as in Definitions \ref{d:simplexevaluationmap} and \ref{def:SVmap}.

Note that the maps $\Phi:\textrm{SV}_{\theta, Z^1}(V) \to \textrm{SE}_{\theta,Z}(V)$ and $\Phi^{-1}$ can be defined regardless of whether or not the data that is the input of the map is realizable by a neural network function.  Specifically, for each top-dimensional cell, $\Phi$ acts by computing, based on the values assigned to points forming a geometric simplex, the affine-linear function that realizes those values; conversely, for each top-dimensional cell, $\Phi^{-1}$ acts by taking the total derivative and value at a prescribed point in the cell and computing the values of the corresponding affine-linear function at the vertices of a simplex.  These computations can be done regardless of whether the input point is realizable, and hence there is a \emph{natural extension $\widetilde{\Phi}$} of $\Phi$ (resp. $\widetilde{\Phi}^{-1}$ of $\Phi^{-1}$) to all of $\left( \mathbb{R}^{n_m} \times \mathbb{R}^{n_0n_m} \right)^k$ (resp. $\left( (\mathbb{R}^{n_m})^{n_0 + 1}\right)^k$).  Furthermore, with $\widetilde{\Phi}$ defined in this way, commutativity as in the diagram \eqref{generalizedcommutativediagram} holds, and the coordinate functions of $\widetilde{\Phi}$ and $\widetilde{\Phi}^{-1}$ can be written as polynomials (over $\mathbb{R}$) in terms of the coordinates of the inputs.  Consequently, we have the following characterization: 

\begin{lemma} \label{l:extendingPhi}
The maps $\widetilde{\Phi}$ and $\widetilde{\Phi}^{-1}$ are smooth maps between differentiable manifolds, and are extensions of $\Phi$ and $\Phi^{-1}$, respectively.  
\end{lemma}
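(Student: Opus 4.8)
The plan is to write down the reconstruction formulas defining $\widetilde{\Phi}$ and $\widetilde{\Phi}^{-1}$ explicitly, block by block over the top-dimensional cells of $\mathcal{C}(s)$, and to observe that once the geometric data (the decisive simplices coming from $Z$ and the sample points $Z^1$) is fixed, each coordinate of these maps is a polynomial — in fact affine-linear — function of the input coordinates. Since the source and target of each map are Euclidean spaces, hence smooth manifolds, a polynomial map between them is automatically $C^\infty$, so smoothness follows at once. The extension claim and the fact that $\widetilde{\Phi}$ and $\widetilde{\Phi}^{-1}$ are mutual inverses both reduce to the uniqueness of the affine-linear interpolant through a nondegenerate simplex.

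Concretely, fix an ordering $C_1,\ldots,C_k$ of the top-dimensional cells of $\mathcal{C}(s)$; for each $j$ let $z_j \in Z^1$ be the sample point in $C_j$ and let $v_{j,0},\ldots,v_{j,n_0} \in Z$ be the vertices of the decisive simplex in $C_j$. These are fixed points of $\mathbb{R}^{n_0}$, independent of the inputs of $\widetilde{\Phi}$ and $\widetilde{\Phi}^{-1}$. I would define $\widetilde{\Phi}$ as follows: the $j$th input block, a pair $(w, M) \in \mathbb{R}^{n_m} \times M_{n_m \times n_0}$, determines the affine map $L_j(x) = M(x - z_j) + w$, and the $j$th output block is $(L_j(v_{j,0}),\ldots,L_j(v_{j,n_0}))$, whose $i$th entry $M(v_{j,i} - z_j) + w$ is affine-linear in the entries of $(w,M)$. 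Dually, for $\widetilde{\Phi}^{-1}$, the $j$th input block $(w_0,\ldots,w_{n_0}) \in (\mathbb{R}^{n_m})^{n_0+1}$ determines the unique pair $(M,c)$ with $M v_{j,i} + c = w_i$ for all $i$; because $v_{j,0},\ldots,v_{j,n_0}$ form a nondegenerate $n_0$-simplex (Definition \ref{d:decisive}), the coefficient matrix of this linear system — built solely from the fixed points $v_{j,i}$ — is invertible, so $(M,c)$, and hence the $j$th output block $(M z_j + c,\, M)$, depends affine-linearly on $(w_0,\ldots,w_{n_0})$. Concatenating over $j$ shows both maps are polynomial, hence smooth.

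To finish, I would check the two remaining assertions. Mutual inversion: composing the two reconstructions in either order recovers, block by block, the same affine-linear map $L_j$ — it is determined equally well by its value and total derivative at $z_j$ or by its values at the affinely independent points $v_{j,i}$ — so $\widetilde{\Phi}\circ\widetilde{\Phi}^{-1}$ and $\widetilde{\Phi}^{-1}\circ\widetilde{\Phi}$ are the identity on the respective Euclidean spaces. Extension: for $v \in V$, since $V$ preserves cellular membership of the points of $Z^1$ and $Z$, the restriction $\rho(v)|_{C_j}$ is affine-linear and is recorded faithfully by the value and Jacobian of $\rho(v)$ at $z_j$ — i.e. by the $j$th block of $SV_{s,Z^1}(v)$ — so feeding $SV_{s,Z^1}(v)$ into the block formula for $\widetilde{\Phi}$ returns the values of $\rho(v)$ at $v_{j,0},\ldots,v_{j,n_0}$, which are the corresponding entries of $E_Z(v) = SE_{s,Z}(v)$; thus $\widetilde{\Phi}|_{SV_{s,Z^1}(V)} = \Phi$, and symmetrically $\widetilde{\Phi}^{-1}|_{SE_{s,Z}(V)} = \Phi^{-1}$, which also re-confirms commutativity of \eqref{generalizedcommutativediagram}.

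I do not expect a substantive obstacle here; the only points requiring care are the bookkeeping — matching blocks to cells and keeping the geometric data $z_j, v_{j,i}$ fixed while the algebraic data varies — and invoking nondegeneracy of the decisive simplices so that the interpolation system is genuinely invertible.
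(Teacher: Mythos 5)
Your proposal is correct and follows essentially the same route as the paper, which states the lemma as a consequence of the preceding discussion: both define $\widetilde{\Phi}$ and $\widetilde{\Phi}^{-1}$ cell-by-cell via affine-linear interpolation/evaluation over the fixed simplices and sample points, observe that the coordinate functions are polynomial (indeed affine-linear) in the inputs and hence smooth, and derive the extension property from the fact that $\rho(v)$ is affine-linear on each cell for $v$ in the cellular-membership-preserving neighborhood $V$. Your version simply makes explicit the reconstruction formulas and the invertibility of the interpolation system (via nondegeneracy of the decisive simplices) that the paper leaves implicit.
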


Theorem \ref{t:equivalentdef} may be taken as the definition of functional dimension at generic, transversal, combinatorially stable parameters $\theta$.    It says that for each top-dimensional cell $C$ of $\mathcal{C}(\theta)$, one records
\begin{enumerate}
 \item the value of the function $\rho(\theta)$ at some chosen point $z_C$ in the interior of $C$, and 
 \item  the total derivative of the (affine-linear) function $\rho(\theta)$ at $z_C$.  
 \end{enumerate} 
 The functional dimension at $\theta$ is then the number of degrees of freedom you can achieve in this data by perturbing the parameter $\theta$.

\begin{theorem} \label{t:equivalentdef}
Let $\theta$ be a generic, transversal, combinatorially stable parameter, let $Z^1$ be as in Definition \ref{def:SVmap}, let $Z$ be a decisive set for $\theta$, and let $V$ be a neighborhood of $\theta$ in $\mathcal{P}$ on which cellular membership of points in $Z^1$ and $Z$ is preserved. Then 
$$\textrm{dim}_{\textrm{fun}} (\theta) = \textrm{rank } \textrm{SV}_{\theta,Z^1} \vert_{\theta}.$$

\end{theorem}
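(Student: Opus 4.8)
The plan is to reduce the statement to the chain rule, using the two results already in hand: Proposition \ref{p:reformulateoriginaldef}, which gives $\textrm{dim}_{\textrm{fun}}(s) = \textrm{rank }\mathbf{J}\textrm{SE}_{s,Z}\vert_s$, and Lemma \ref{l:extendingPhi}, which says $\widetilde{\Phi}$ is a diffeomorphism. Before carrying this out, I would pin down the reading of the right-hand side, since this is the one delicate point. Because $\widetilde{\Phi}$ is a diffeomorphism between two copies of $\mathbb{R}^{k(n_0+1)n_m}$, its ambient Jacobian $\mathbf{J}\widetilde{\Phi}\vert_{\textrm{SV}_{s,Z^1}(s)}$ has full rank at every point; so $\textrm{rank }\widetilde{\Phi}\vert_{\textrm{SV}_{s,Z^1}(s)}$ cannot denote that ambient rank, for that would force the absurd equality $\textrm{dim}_{\textrm{fun}}(s) = k(n_0+1)n_m$. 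Instead, $\widetilde{\Phi}\vert_{\textrm{SV}_{s,Z^1}(s)}$ is the restriction of $\widetilde{\Phi}$ to the image of the slopes-and-values map, and its rank at the point $\textrm{SV}_{s,Z^1}(s)$ is computed through the parametrization of that image by $\textrm{SV}_{s,Z^1}$ itself; equivalently, it is the rank of the linear map $\mathbf{J}\widetilde{\Phi}\vert_{\textrm{SV}_{s,Z^1}(s)}$ restricted to the subspace $\textrm{Image}(\mathbf{J}\textrm{SV}_{s,Z^1}\vert_s)$.

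With this reading, the argument is short. First I would invoke the commutative diagram \eqref{generalizedcommutativediagram}, which gives $\textrm{SE}_{s,Z} = \widetilde{\Phi}\circ\textrm{SV}_{s,Z^1}$ on $V$, and differentiate at $s$ to obtain $\mathbf{J}\textrm{SE}_{s,Z}\vert_s = \mathbf{J}\widetilde{\Phi}\vert_{\textrm{SV}_{s,Z^1}(s)}\cdot\mathbf{J}\textrm{SV}_{s,Z^1}\vert_s$. Since $\widetilde{\Phi}$ is a diffeomorphism (Lemma \ref{l:extendingPhi}), the factor $\mathbf{J}\widetilde{\Phi}\vert_{\textrm{SV}_{s,Z^1}(s)}$ is an invertible linear map; left-multiplication by an invertible matrix preserves rank and carries $\textrm{Image}(\mathbf{J}\textrm{SV}_{s,Z^1}\vert_s)$ isomorphically onto $\textrm{Image}(\mathbf{J}\textrm{SE}_{s,Z}\vert_s)$. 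Hence the restriction of $\mathbf{J}\widetilde{\Phi}\vert_{\textrm{SV}_{s,Z^1}(s)}$ to $\textrm{Image}(\mathbf{J}\textrm{SV}_{s,Z^1}\vert_s)$ is injective, so its rank equals $\dim\textrm{Image}(\mathbf{J}\textrm{SV}_{s,Z^1}\vert_s) = \textrm{rank }\mathbf{J}\textrm{SV}_{s,Z^1}\vert_s$, and this in turn equals $\textrm{rank }\mathbf{J}\textrm{SE}_{s,Z}\vert_s$.

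Combining with Proposition \ref{p:reformulateoriginaldef} then yields
$$\textrm{dim}_{\textrm{fun}}(s) = \textrm{rank }\mathbf{J}\textrm{SE}_{s,Z}\vert_s = \textrm{rank }\widetilde{\Phi}\vert_{\textrm{SV}_{s,Z^1}(s)},$$
as claimed. I would also remark that the hypotheses on $s$ (generic, transversal, combinatorially stable) and on $V$ (preserving cellular membership of $Z^1$ and $Z$) are precisely what is needed for $\textrm{SE}_{s,Z}$ and $\textrm{SV}_{s,Z^1}$ to be defined and smooth on $V$ and for Proposition \ref{p:reformulateoriginaldef} and Lemma \ref{l:extendingPhi} to apply; no further genericity input enters the rank computation itself.

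I expect the main obstacle to be expository rather than computational: making precise, and defending, the interpretation of $\textrm{rank }\widetilde{\Phi}\vert_{\textrm{SV}_{s,Z^1}(s)}$ as the rank of $\widetilde{\Phi}$ along the image of the SV map, rather than its (always full) ambient Jacobian rank. The genuinely delicate point is that the image $\textrm{SV}_{s,Z^1}(V)$ need not be a smoothly embedded submanifold — this was the very reason $\widetilde{\Phi}$ was introduced as an extension — so I would phrase the restricted rank through the linear subspace $\textrm{Image}(\mathbf{J}\textrm{SV}_{s,Z^1}\vert_s)$ together with the invertibility of $\mathbf{J}\widetilde{\Phi}$, which sidesteps any need for manifold structure on that image. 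Everything downstream is then elementary linear algebra.
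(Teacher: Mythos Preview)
Your proof is correct and follows essentially the same route as the paper's: both invoke the commutative diagram $\textrm{SE}_{s,Z} = \widetilde{\Phi}\circ\textrm{SV}_{s,Z^1}$, use that $\widetilde{\Phi}$ is a diffeomorphism (Lemma \ref{l:extendingPhi}) so that composition with it preserves rank, and conclude via Proposition \ref{p:reformulateoriginaldef}. Your careful reading of $\textrm{rank }\widetilde{\Phi}\vert_{\textrm{SV}_{s,Z^1}(s)}$ is in fact more precise than the paper's own treatment---the paper's proof ends by writing $\textrm{rank }\textrm{SV}_{s,Z^1}\vert_{\textrm{SV}_{s,Z^1}(s)}$, confirming that the intended content is the rank of the $\textrm{SV}$ map at $s$, exactly as you deduced.
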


\begin{proof}[Proof of Theorem \ref{t:equivalentdef}]
 We have the functional equation $$\textrm{SE}_{\theta,Z} =  \widetilde{\Phi} \circ \textrm{SV}_{\theta,Z^1}$$ of Definition  \ref{def:commutingdiagram}, and 
$$\textrm{dim}_{\textrm{fun}}(\theta) = \textrm{rank } \textrm{SE}_{\theta,Z} \vert_s$$ by Proposition \ref{p:reformulateoriginaldef}. 
The inverse map $\widetilde{\Phi}^{-1}$ exists and is smooth by Lemma \ref{l:extendingPhi}.  This implies that $\widetilde{\Phi}$ has full rank everywhere on its domain, and hence 
$$\textrm{rank } \textrm{SE}_{\theta,Z} \vert_\theta = \textrm{rank } \textrm{SV}_{\theta,Z^1} \vert_{\theta}.$$ 
\end{proof}

\begin{example}
We make Theorem \ref{t:equivalentdef} more transparent by applying it to compute the functional dimension for the example from \S \ref{s:illustrativeexample} at the parameter  $\theta_0 = (2,-5,-1,4,1,1,1)$.

First, we will explain why the parameter $\theta_0 = (2,-5,-1,4,1,1,1)$ is \emph{combinatorially stable}.  The canonical polyhedral complex $\mathcal{C}(\theta_0)$ consists of three top-dimensional cells -- the intervals $(-\infty, 2.5]$, $[2.5,4]$ and $[4, \infty)$ -- and the lower-dimensional faces of these cells.  Following \S \ref{s:illustrativeexample}, we call these top-dimensional cells the ``left piece,'' ``middle piece'' and ``right piece'' of $\mathcal{C}(\theta_0)$.  
For any sufficiently small perturbation $\theta_0' = \theta_0 + \epsilon$ of the parameter, the canonical polyhedral complex $\mathcal{C}(\theta_0')$ still  has precisely three top-dimensional cells (intervals) -- which we will call the ``left/right/middle piece'' of $\mathcal{C}(\theta_0')$ -- and their lower-dimensional faces.  Moreover, the activation pattern on the left (resp. middle or right) piece of $\mathcal{C}(\theta_0')$ is the same as the activation pattern on the left (resp. middle or right) piece of $\mathcal{C}(\theta_0)$.  

Next, we select a set $Z^1 = \{z_L, z_M, z_R\}$ by picking one point in the interior of each top-dimensional cell of $\mathcal{C}(\theta_0)$.  Specifically, we pick $z_L = 1$, $z_M = 3$, $z_R = 6$.  Note that for every parameter $\theta_0'$ sufficiently close to $\theta_0$, the point $z_L = 1$ (resp. $z_M =3$, $z_R = 6$) remains in the left  (resp. middle, right) piece of $\mathcal{C}(\theta_0')$.  That is, any sufficiently small neighborhood of $\theta_0$ \emph{preserves the cellular membership} (Def. \ref{d:cellularmembership}) of the set $Z^1$. 

Fix a small neighborhood $V$ of $\theta_0$ that preserves cellular membership of $Z^1$. We now compute the map $SV_{\theta_0,Z^1}\colon V\to(\mathbb{R}^1\times\mathbb{R}^1)^3$. In \S \ref{s:illustrativeexample}, for any choice of parameter 
$$\theta_0' = (w_{11}^1, w_{21}^1, b_1^1, b_2^1, w^2_{11}, w^2_{12}, b^2_1) \in V$$ 
 we found the expression for the function $\rho(\theta_0')$ on each of the three pieces of $\mathcal{C}(\theta_0')$:  
\begin{align*}
\textrm{Left piece: } &  z \mapsto w^{2}_{12}w^{1}_{21}z+w^{2}_{12}b^{1}_2+b^{2}_1, \\
\textrm{ Middle piece: } &  z \mapsto (w^{2}_{11}w^{1}_{11} +w^{2}_{12}w^{1}_{21})z +w^{2}_{11}b^{1}_1 +w^{2}_{12}b^{1}_2+b^{2}_1, \\
\textrm{Right piece: } & z \mapsto w^{2}_{11}w^{1}_{11}z+w^{2}_{11}b^{1}_1+b^{2}_1. \\
\end{align*} 
Hence, the slopes and values for each input $z_i$ are 
\begin{align*}
z_L = 1 &: &  \textrm{ slope }  & w^{2}_{12}w^{1}_{21},   & \textrm{ value }& w^{2}_{12}w^{1}_{21}+w^{2}_{12}b^{1}_2+b^{2}_1, \\
 z_M = 3 &: &  \textrm{ slope }  & w^{2}_{11}w^{1}_{11} +w^{2}_{12}w^{1}_{21}, & \textrm{ value }  &3(w^{2}_{11}w^{1}_{11} +w^{2}_{12}w^{1}_{21}) +w^{2}_{11}b^{1}_1 +w^{2}_{12}b^{1}_2+b^{2}_1 \\
 z_R = 6 &: & \textrm{ slope } &  w^{2}_{11}w^{1}_{11},   & \textrm{ value } & 6w^{2}_{11}w^{1}_{11}+w^{2}_{11}b^{1}_1+b^{2}_1. \\
\end{align*}
Thus, the map $SV_{\theta_0,Z^1}:V \to (\mathbb{R}^1 \times \mathbb{R}^1)^3 \cong \mathbb{R}^6$ is given by 

\begin{multline*}
 (w_{11}^1, w_{21}^1, b_1^1, b_2^1, w^2_{11}, w^2_{12}, b^2_1) \mapsto \\ 
\big( w^{2}_{12}w^{1}_{21},  \, w^{2}_{12}w^{1}_{21}+w^{2}_{12}b^{1}_2+b^{2}_1, \\
 w^{2}_{11}w^{1}_{11} +w^{2}_{12}w^{1}_{21}, \, 3(w^{2}_{11}w^{1}_{11} +w^{2}_{12}w^{1}_{21}) +w^{2}_{11}b^{1}_1 +w^{2}_{12}b^{1}_2+b^{2}_1, \\
w^{2}_{11}w^{1}_{11},  \, 6w^{2}_{11}w^{1}_{11}+w^{2}_{11}b^{1}_1+b^{2}_1 \big). 
\end{multline*}

We will now compute $\textrm{rank } \textrm{SV}_{\theta_0,Z^1} \vert_{\theta_0}$.

\[
\begingroup
\setlength{\arraycolsep}{3pt}
\begin{aligned}
\mathrm{rank}\,\mathrm{SV}_{\theta_0,Z^1}\big|_{\theta_0}
&=
\mathrm{rank}
\left.
\begin{bmatrix}
0 & w^{2}_{12} & 0 & 0 & 0 & w^{1}_{21} & 0\\
0 & w^{2}_{12} & 0 & w^{2}_{12} & 0 & w^{1}_{21}+b^{1}_{2} & 1\\
w^{2}_{11} & w^{2}_{12} & 0 & 0 & w^{1}_{11} & w^{1}_{21} & 0\\
3w^{2}_{11} & 3w^{2}_{12} & w^{2}_{11} & w^{2}_{12} &
  3w^{1}_{11}+b^{1}_{1} & 3w^{1}_{21}+b^{1}_{2} & 1\\
w^{2}_{11} & 0 & 0 & 0 & w^{1}_{11} & 0 & 0\\
6w^{2}_{11} & 0 & w^{2}_{11} & 0 & 6w^{1}_{11}+b^{1}_{1} & 0 & 1
\end{bmatrix}
\right|_{s_0} \\[6pt]
&=
\mathrm{rank}
\begin{bmatrix}
0 & 1 & 0 & 0 & 0 & -5 & 0\\
0 & 1 & 0 & 1 & 0 & -1 & 1\\
1 & 1 & 0 & 0 & 2 & -5 & 0\\
3 & 3 & 1 & 1 & 5 & -11 & 1\\
1 & 0 & 0 & 0 & 2 & 0 & 0\\
6 & 0 & 1 & 0 & 11 & 0 & 1
\end{bmatrix}
= 5.
\end{aligned}
\endgroup
\]
This agrees with the computation in §\ref{s:illustrativeexample}, where $\dim_{\mathrm{fun}}(\theta_0)=5$ was obtained using the original definition, confirming that the alternative definition (Theorem \ref{t:equivalentdef}) yields the same value at $\theta_0$. 
\end{example}


\section{Functional dimension and the Neural Tangent Kernel} \label{s:neuraltangentkernel}

\subsection{Neural Tangent Kernel}
The Neural Tangent Kernel (NTK) of a parameterized function space $\mathcal{F} = \{f: \mathbb{R}^{n_0} \rightarrow \mathbb{R}^{n_m}\}$ was defined in \cite{JacotGabrielHongler} in order to investigate the relationship between ordinary gradient descent of an empirical cost function on parameter space and kernel gradient descent of the cost function on function space. In the time since the paper appeared, the NTK has become a central object of study for those interested in understanding the convergence and generalization properties of parameterized function classes such as feedforward neural networks.

Note that for the NTK to be well-defined, one needs to restrict the functions in $\mathcal{F}$ to be continuously differentiable ($C^1$) with respect to the parameters. Since the ReLU activation function introduces non-differentiable points, we must exercise care when defining the NTK for the class of ReLU neural network functions.

\begin{definition} \label{d:ntk}
Fix an architecture  $(n_0,\ldots,n_m)$, and let 
 $$\mathcal{W} = \left \{(\theta, x,y) \in  \mathcal{P}_{n_0,\ldots,n_m}  \times \mathbb{R}^{n_0} \times \mathbb{R}^{n_0} \,\,\vert\,\,  \frac{\partial \rho(\theta)(x)}{\partial s_i} \textrm{ and }   \frac{\partial \rho(\theta)(y)}{\partial \theta_i} \textrm{ exist  for all } i   \right \}.$$
The \emph{Neural Tangent Kernel} is the map 
$$\textrm{NTK}: \mathcal{W}  \to \{(n_m \times n_m) \textrm{ real-valued matrices}\}$$ 
where the $(k,l)$th entry of $\textrm{NTK}(\theta,x,y)$ is 
$$\textrm{NTK}(\theta,x,y)_{k,l} := \sum_{i=1}^{D_{n_0,\ldots,n_m}} \frac{\partial \rho_k(\theta)(x)}{\partial \theta_i} \frac{\partial \rho_l(\theta)(y)}{\partial \theta_i}$$
where $\rho_k(\theta)$ denotes the $k$th coordinate function of $\rho(\theta)$. 
\end{definition}

Recall that for $z \in \mathbb{R}^{n_0}$ we defined the map $E_z:\mathcal{P}_{n_0,\ldots,n_m} \to \mathbb{R}^{n_m}$ by $E_z(\theta) = (\rho_1(\theta)(z),\ldots,\rho_{n_m}(\theta)(z))$, and--for a parametrically smooth point $z$--the Jacobian matrix of $E_z$ evaluated at $\theta$,
 \[{\bf J}E_z (\theta) = \left[\frac{\partial(E_z(\theta))_i}{\partial{\theta_j}}\right] = \left[  \frac{\partial \rho_i(\theta)(z) }{\partial \theta_j} \right]\] 
 is the $n_m \times D$ matrix whose entry in the $i$th row and $j$th column records the partial derivative of the $i$th coordinate of $E_z(\theta) \in \mathbb{R}^{n_m}$ with respect to the $j$th parameter.  Then for $(\theta,x,y) \in \mathcal{W}$, it is immediate that 
 \begin{equation} \label{eq:NTKJacobian}
 \textrm{NTK}(\theta,x,y) = {\bf J}E_x (\theta) \cdot ( {\bf J}E_y (\theta))^T.
 \end{equation}

\begin{remark} Note that as defined above NTK is bilinear with respect to the latter two inputs, and $\textrm{NTK}(\theta,x,y) = (\textrm{NTK}(\theta,y,x))^T$. Moreover, it follows immediately from the definitions that if $\mathcal{S}_\theta \subseteq \mathbb{R}^{n_0}$ is the subset of parametrically smooth points for a fixed parameter $\theta$, then $ \{\theta\} \times \mathcal{S}_\theta \times \mathcal{S}_\theta$ is contained in $\mathcal{W}$. Recalling (cf. \cite{JacotGabrielHongler}) that an $n_m$--dimensional kernel $K$ on a subset $\mathcal{S} \subseteq \mathbb{R}^{n_0}$ is a bilinear map \[K: \mathcal{S} \times \mathcal{S} \rightarrow \mathbb{R}^{n_m \times n_m},\] each ReLU neural network function $\rho(\theta)$ endows the subset $\mathcal{S}_\theta \subseteq \mathbb{R}^{n_0}$ with an $n_m$--dimensional kernel.
\end{remark}

What is the full set of points in the domain of the NTK --  the set $\mathcal{W}$ of Definition \ref{d:ntk}?  

\begin{lemma}
For any architecture, the set $\mathcal{W}$ that is the domain of the NTK has full measure (with respect to Lebesgue measure on $ \mathcal{P} \times \mathbb{R}^{n_0} \times \mathbb{R}^{n_0} $). 
\end{lemma}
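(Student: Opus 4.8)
The plan is to reduce this to the known fact that the non-smooth locus of $\mathcal{F}_{n_0,\ldots,n_m}$ inside $\mathcal{P}\times\mathbb{R}^{n_0}$ is contained in the zero set of a polynomial, and then to bootstrap this one-copy statement up to the two-copy product $\mathcal{P}\times\mathbb{R}^{n_0}\times\mathbb{R}^{n_0}$ by an elementary product-measure argument. The point is that the $x$- and $y$-variables enter the definition of $\mathcal{W}$ only through the two \emph{separate} requirements that the $s_i$-partials of $\rho(s)(x)$ all exist and that the $s_i$-partials of $\rho(s)(y)$ all exist; so the complement of $\mathcal{W}$ is a union of two sets, each of which is (a coordinate-permuted copy of) a null set crossed with $\mathbb{R}^{n_0}$.

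In detail, I would first set
$$B := \left\{(s,x)\in\mathcal{P}\times\mathbb{R}^{n_0}\ \Big|\ \tfrac{\partial \rho(s)(x)}{\partial s_i}\ \text{exists for every } i\right\}.$$
By Corollary \ref{cor:NonDiffInBHA}, any $(s,x)$ for which the ternary labeling of $x$ with respect to $\bar\rho(s)$ has no $0$ entries is a smooth point of $\mathcal{F}_{n_0,\ldots,n_m}$, hence lies in $B$; and by Theorem \ref{t:Fpiecewisepolynomial} the set $V\subseteq\mathcal{P}\times\mathbb{R}^{n_0}$ of points whose labeling \emph{does} contain a $0$ is the vanishing set of a polynomial, so that, exactly as in the proof of Lemma \ref{l:ordinaryparamsdense}, $V$ is a Lebesgue-null set. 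Hence $B^c\subseteq V$ is null. Writing $\pi_{12}$ (resp. $\pi_{13}$) for the projection of $\mathcal{P}\times\mathbb{R}^{n_0}\times\mathbb{R}^{n_0}$ onto its $(s,x)$- (resp. $(s,y)$-) coordinates, unwinding Definition \ref{d:ntk} gives $\mathcal{W}=\pi_{12}^{-1}(B)\cap\pi_{13}^{-1}(B)$, so that
$$\mathcal{W}^c\ \subseteq\ \pi_{12}^{-1}(V)\ \cup\ \pi_{13}^{-1}(V).$$
Now $\pi_{12}^{-1}(V)=V\times\mathbb{R}^{n_0}$ (the extra factor being the $y$-coordinate), which is the countable union $\bigcup_{k\in\mathbb{N}} V\times[-k,k]^{n_0}$ of sets of measure $\lambda(V)\cdot(2k)^{n_0}=0$, hence null; and $\pi_{13}^{-1}(V)$ is the image of $V\times\mathbb{R}^{n_0}$ under the measure-preserving permutation swapping the $x$- and $y$-blocks of coordinates, hence also null. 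Therefore $\mathcal{W}^c$ is contained in a union of two null sets, so $\mathcal{W}$ has full measure (and, by completeness of Lebesgue measure, is itself measurable).

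I do not expect a genuine obstacle here: the entire content lies in the already-cited fact that the non-differentiability locus in $\mathcal{P}\times\mathbb{R}^{n_0}$ sits inside a polynomial zero set, after which the proof is bookkeeping with Fubini. The one place to be slightly careful is that $B^c$ is only known to be \emph{contained in} $V$, not equal to it, so it is cleanest to argue throughout with the containment $\mathcal{W}^c\subseteq\pi_{12}^{-1}(V)\cup\pi_{13}^{-1}(V)$ rather than with equalities; this also sidesteps any measurability question about $B$ or $\mathcal{W}$ themselves. One could equally well invoke \cite[Theorem 4]{GrigsbyLindsey} directly, exactly as in the proof of Lemma \ref{l:ordinaryparamsdense}, in place of the appeal to Corollary \ref{cor:NonDiffInBHA} and Theorem \ref{t:Fpiecewisepolynomial}.
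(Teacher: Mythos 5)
Your proof is correct and follows essentially the same route as the paper's: both reduce the claim to the fact (from Theorem \ref{t:Fpiecewisepolynomial} / \cite[Theorem 4]{GrigsbyLindsey}) that the non-smooth locus of $\mathcal{F}_{n_0,\ldots,n_m}$ in $\mathcal{P}\times\mathbb{R}^{n_0}$ lies in a polynomial zero set and is therefore null, and then pass to the product $\mathcal{P}\times\mathbb{R}^{n_0}\times\mathbb{R}^{n_0}$. You simply make explicit the Fubini-type bookkeeping that the paper's one-line proof leaves implicit.
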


\begin{proof}
This is a consequence of the fact that the parameterized family is finitely piecewise polynomial (Theorem \ref{t:Fpiecewisepolynomial}). In particular, $\mathcal{W}$ contains the set of points $(\theta,x,y)$ such that $\theta$ is an ordinary parameter and $x,y$ are parametrically smooth points for $\theta$, and this set has full measure in $\mathbb{R}^{n_0} \times \mathbb{R}^{n_0} \times \mathcal{P}$.
\end{proof}

We will now briefly review the connection between the NTK and parametric gradient descent. For more details, see \cite[Sec. 3]{JacotGabrielHongler}.
Given input-output data pairs $\{x_i,y_i\}_{i=1}^N \in \mathbb{R}^{n_0} \times \mathbb{R}^{n_m}$, and a pointwise cost function $c:\mathbb{R}^{n_m} \to \mathbb{R}$ (which we will assume is differentiable at all points where we evaluate it), gradient flow decreases the total cost function $C:\mathcal{P}_{n_0,\ldots,n_m} \to \mathbb{R}$, 
$$C(\theta) \coloneqq \sum_{i=1}^N c \left(\rho(\theta)(x_i) - y_i\right).$$
Consider a fixed starting parameter $\theta_0$ for gradient flow.  Write $u_i^\theta = \rho(\theta)(x_i)$ for a generic $\theta$ and $u_i^0 = \rho(s_0)(x_i)$.  Applying the chain rule gives
\begin{multline} \label{eq:Cderiv}
 \frac{\partial C(\theta)}{\partial \theta} \Big \vert_{\theta=\theta_0} = \sum_{i=1}^N \frac{ \partial c(u_i^\theta - y_i)}{ \partial u^\theta_i} \Big \vert_{u_i^\theta= u_i^0}  \frac{\partial (u_i^\theta - y_i)}{\partial s}\Big \vert_{\theta=\theta_0} \\
 =  \sum_{i=1}^N \frac{ \partial c(u_i^\theta - y_i)}{ \partial u^\theta_i} \Big \vert_{u_i^\theta= u_i^0}  \frac{\partial u_i^\theta }{\partial \theta}\Big \vert_{\theta=\theta_0}
\end{multline} 
Here, we use the notation $\frac{\partial}{\partial}$ for the total derivative or Jacobian matrix; this notation emphasizes which variables are dependent and independent.  The left side of \eqref{eq:Cderiv} is a $1 \times D$ row vector; the left term inside the sum on the right is a  $1 \times n_m$ row vector and the right term inside the sum on the right is a $n_m \times D$ matrix. 
Since gradient flow lines for $\theta=\theta_t$ follow the negative gradient of $C$, we have (letting $t_0$ be such that $\theta_{t_0} = \theta_0$)
\begin{multline} \label{eq:partialst}
\frac{\partial \theta_t}{\partial t} \Big \vert_{t=t_0} = -  \left( \frac{\partial C(\theta)}{\partial \theta} \Big \vert_{\theta=\theta_0} \right)^T =
-   \sum_{i=1}^N  \left(  \frac{ \partial c(u_i^\theta - y_i)}{ \partial u^\theta_i} \Big \vert_{u_i^\theta= u_i^0}  \frac{\partial u_i^\theta}{\partial \theta}\Big \vert_{\theta=\theta_0} \right)^T \\
= - \sum_{i=1}^N  \left(
 \frac{\partial u_i^\theta}{\partial \theta}\Big \vert_{\theta=\theta_0}
   \right)^T \left(
    \frac{ \partial c(u_i^\theta - y_i)}{ \partial u^\theta_i} \Big \vert_{u_i^\theta= u_i^0} 
   \right)^T
\end{multline}
Hence for any $z \in \mathbb{R}^{n_0}$, 
\begin{multline} \label{eq:NTKconnectionToGD}
\frac{\partial \rho(\theta_t)(z)} {\partial t} \Big \vert_{t=t_0}= \frac{\partial \rho(\theta)(z)}{\partial \theta} \Big \vert_{\theta = \theta_0}  \frac{\partial \theta_t}{\partial t} \Big \vert_{t = t_0}\\
 =  -  \sum_{i=1}^N  \frac{\partial \rho(\theta)(z)}{\partial \theta} \Big \vert_{\theta= \theta_0}\left( \frac{\partial u_i^\theta}{\partial \theta}\Big \vert_{\theta=\theta_0} \right)^T \left( \frac{ \partial c(u_i^\theta - y_i)}{ \partial u^\theta_i} \Big \vert_{u_i^\theta= u_i^0} \right)^T \\
 = -  \sum_{i=1}^N  \frac{\partial \rho(\theta)(z)}{\partial \theta} \Big \vert_{\theta = \theta_0} \left( \frac{\partial \rho(\theta)(x_i)}{\partial \theta}\Big \vert_{\theta=\theta_0}\right)^T \left( \frac{ \partial c(u_i^\theta - y_i)}{ \partial u^\theta_i} \Big \vert_{u_i^\theta= u_i^0}\right)^T  \\ 
 = - \sum_{i=1}^N \textrm{NTK}(\theta_0,z,x_i) \left( \frac{ \partial c(u_i^\theta - y_i)}{ \partial u^\theta_i} \Big \vert_{u_i^\theta= u_i^0}\right)^T
\end{multline}

Equation \eqref{eq:NTKconnectionToGD} shows that the derivative of $\rho(\theta)(z)$ with respect to time during gradient descent is a weighted sum of the neural tangent kernels $\textrm{NTK}(\theta,z,x_i)$, where $x_i$ ranges over the input points of the sample data set.

\bigskip 

Equations \eqref{eq:NTKJacobian} and \eqref{eq:NTKconnectionToGD} suggest that the number of linearly independent ways the restriction of a neural network function $\rho(\theta)$ to a set $Z$ can change under perturbations of $\theta$ is determined by the collection of matrices $\{ \textrm{NTK}(\theta,z_i,z_j) \mid z_i,z_j \in Z\}$.  This motivates the following definition.

\begin{definition} \label{def:batchNTK}
 Let $\theta \in \mathcal{P}_{n_0,\ldots,n_m}$ be an ordinary parameter.  Let $Z = \{z_1,\ldots,z_N\}$ be a finite set of parametrically smooth points for $\theta$.  We define the \emph{batch neural tangent kernel} for the set $Z$ and parameter $\theta$, denoted $\mathcal{K}_Z(\theta)$, 
 to be the  $Nn_m \times Nn_m$ matrix that is a block matrix consisting of $N \times N$ blocks each of size $n_m \times n_m$, and whose $(i,j)^{\textrm{th}}$ $n_m \times n_m$ block  is $\textrm{NTK}(\theta,z_i,z_j)$. \end{definition}

The following interpretation of the batch neural tangent kernel is immediate from the definitions, as the reader may verify.

\begin{lemma} \label{l:ntkmatrix} Let $\theta \in \mathcal{P}_{n_0,\ldots,n_m}$ be an ordinary parameter and let $Z = \{z_1,\ldots,z_N\}$ be a finite set of parametrically smooth points for $\theta$.  Then 
$$\mathcal{K}_Z(\theta)  =\mathbf{J}E_Z(\theta)   \mathbf{J}E_Z(\theta)^T.$$
\end{lemma}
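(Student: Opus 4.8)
The plan is to verify the claimed identity block by block, reducing everything to the single-point formula \eqref{eq:NTKJacobian} together with the definition of the batch evaluation map. Since $Z = \{z_1,\ldots,z_k\}$ consists of parametrically smooth points for the ordinary parameter $s$, each Jacobian $\mathbf{J}E_{z_i}(s)$ is a well-defined $n_m \times D$ matrix, and (as recorded in the remark following Definition \ref{def:batchfundim}) the batch Jacobian is the vertical stack
$$\mathbf{J}E_Z(s) = \begin{bmatrix} \mathbf{J}E_{z_1}(s)\\ \vdots \\ \mathbf{J}E_{z_k}(s)\end{bmatrix},$$
a $kn_m \times D$ matrix partitioned into $k$ horizontal bands of $n_m$ rows each.

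First I would form the transpose $\mathbf{J}E_Z(s)^T = \left[\, \mathbf{J}E_{z_1}(s)^T \ \cdots \ \mathbf{J}E_{z_k}(s)^T \,\right]$, a $D \times kn_m$ matrix partitioned into $k$ vertical bands of $n_m$ columns each. Multiplying, the product $\mathbf{J}E_Z(s)\,\mathbf{J}E_Z(s)^T$ is then naturally a $k \times k$ block matrix whose $(i,j)$th block of size $n_m \times n_m$ is obtained by multiplying the $i$th horizontal band of $\mathbf{J}E_Z(s)$ by the $j$th vertical band of $\mathbf{J}E_Z(s)^T$; that is, the $(i,j)$th block equals $\mathbf{J}E_{z_i}(s)\,(\mathbf{J}E_{z_j}(s))^T$. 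Next I would invoke \eqref{eq:NTKJacobian}, which states $\textrm{NTK}(s,z_i,z_j) = \mathbf{J}E_{z_i}(s)\cdot(\mathbf{J}E_{z_j}(s))^T$ for each pair $(i,j)$ — the relevant partial derivatives all exist because $\{s\}\times\mathcal{S}_s\times\mathcal{S}_s \subseteq \mathcal{W}$ and each $z_i$ is parametrically smooth for $s$. Comparing with Definition \ref{def:batchNTK}, which declares the $(i,j)$th $n_m \times n_m$ block of $\mathcal{K}_Z(s)$ to be exactly $\textrm{NTK}(s,z_i,z_j)$, we conclude that $\mathcal{K}_Z(s)$ and $\mathbf{J}E_Z(s)\,\mathbf{J}E_Z(s)^T$ agree block by block, hence are equal.

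There is essentially no obstacle here: the only thing requiring care is the block-matrix bookkeeping — checking that the partition of $\mathbf{J}E_Z(s)$ into bands of $n_m$ rows, and of its transpose into bands of $n_m$ columns, is compatible with the block structure imposed on $\mathcal{K}_Z(s)$ in Definition \ref{def:batchNTK} — together with the one-time observation at the outset that parametric smoothness of the points of $Z$ is precisely what guarantees that every Jacobian and every NTK appearing in the argument is well-defined. The identity is then purely formal.
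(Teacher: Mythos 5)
Your proof is correct and is exactly the block-matrix verification that the paper leaves to the reader (the paper states the lemma is ``immediate from the definitions'' and gives no written proof). You have simply written out the details: stacking the per-point Jacobians, identifying the $(i,j)$th block of the product with $\mathbf{J}E_{z_i}(s)(\mathbf{J}E_{z_j}(s))^T$, and invoking \eqref{eq:NTKJacobian} together with Definition \ref{def:batchNTK}.
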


\begin{theorem} \label{t:ntkDim}
Let $\theta \in \mathcal{P}_{n_0,\ldots,n_m,1}$ be an ordinary parameter.  
\begin{enumerate}
\item Batch functional dimension coincides with the rank of the batch neural tangent kernel, i.e. if $Z$ is a finite set of parametrically smooth points for $\theta$ then 
$$\textrm{dim}_{\textrm{ba.fun}}(\theta,Z) = \textrm{rank }\mathcal{K}_Z(\theta).$$ 
\item Functional dimension is the sup of the rank of the batch neural tangent kernels, i.e. 
$$\textrm{dim}_{fun}(\theta) = \sup_{Z \subset \mathbb{R}^{n_0} \textrm{ finite and parametrically smooth for }\theta} \textrm{rank} (\mathcal{K}_Z(\theta)).$$
\end{enumerate}
\end{theorem}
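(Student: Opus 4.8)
The plan is to reduce all three statements to a single linear-algebra fact together with the two identities already in hand: equation \eqref{eq:NTKJacobian}, which says $\textrm{NTK}(s,x,y) = \mathbf{J}E_x(s)\,(\mathbf{J}E_y(s))^T$ on the domain $\mathcal{W}$, and Lemma \ref{l:ntkmatrix}, which says $\mathcal{K}_Z(s) = \mathbf{J}E_Z(s)\,\mathbf{J}E_Z(s)^T$. The linear-algebra fact I would isolate first is: for any real matrix $M$, $\textrm{rank}(MM^T) = \textrm{rank}(M)$. This is standard — $M^TMv = 0$ forces $0 = v^TM^TMv = \|Mv\|^2$, so $\ker(M^TM) = \ker M$, whence $\textrm{rank}(M^TM) = \textrm{rank}(M)$ by rank–nullity, and applying this with $M^T$ in place of $M$ gives $\textrm{rank}(MM^T) = \textrm{rank}(M^T) = \textrm{rank}(M)$.

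With that in place the three parts are quick. For part (1): by Definition \ref{d:localfundim}, $\textrm{dim}_{\textrm{st.fun}}(s,z) = \textrm{rank}\,\mathbf{J}E_z\vert_s$; since $s$ is ordinary and $z$ is parametrically smooth for $s$ we have $(s,z,z)\in\mathcal{W}$ (as noted in the remark following Definition \ref{d:ntk}), so \eqref{eq:NTKJacobian} gives $\textrm{NTK}(s,z,z) = \mathbf{J}E_z(s)\,(\mathbf{J}E_z(s))^T$, and the fact above finishes it. For part (2): by Definition \ref{def:batchfundim}, $\textrm{dim}_{\textrm{ba.fun}}(s,Z) = \textrm{rank}\,\mathbf{J}E_Z\vert_s$, and Lemma \ref{l:ntkmatrix} writes $\mathcal{K}_Z(s)$ as $\mathbf{J}E_Z(s)\,\mathbf{J}E_Z(s)^T$, so again the ranks agree. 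For part (3): Definition \ref{def:functionaldimension} expresses $\textrm{dim}_{\textrm{fun}}(s)$ as the supremum of $\textrm{rank}\,\mathbf{J}E_Z\vert_s$ over finite sets $Z$ that are parametrically smooth for $s$ — a nonempty family precisely because $s$ is ordinary — and by part (2) each term of this supremum equals $\textrm{rank}\,\mathcal{K}_Z(s)$, so the two suprema coincide term by term.

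I do not expect a substantive obstacle here; the work is bookkeeping. The one point that needs care is matching up domains: one must invoke the hypothesis that $s$ is ordinary and that the batch consists of parametrically smooth points so that every Jacobian $\mathbf{J}E_z(s)$, $\mathbf{J}E_Z(s)$ in sight is defined and the relevant triples lie in $\mathcal{W}$ — this is exactly what makes \eqref{eq:NTKJacobian} and Lemma \ref{l:ntkmatrix} applicable. The only other thing worth flagging is that the rank identity $\textrm{rank}(MM^T)=\textrm{rank}(M)$ genuinely uses positive-definiteness of the real inner product (it is false over $\mathbb{C}$ without conjugate transpose), so it is worth stating the lemma over $\mathbb{R}$ explicitly rather than quoting it as a formal consequence of $\textrm{rank}(AB)\le\min(\textrm{rank}\,A,\textrm{rank}\,B)$.
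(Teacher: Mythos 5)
Your proposal is correct and follows essentially the same route as the paper: the paper's proof also reduces everything to the identity $\textrm{rank}(MM^T)=\textrm{rank}(M)$ for real matrices, applied via Lemma \ref{l:ntkmatrix} and equation \eqref{eq:NTKJacobian}. Your write-up simply supplies more detail (a proof of the rank identity and the domain checks) than the paper's two-line argument.
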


\begin{proof}
A standard fact from linear algebra is that for any matrix $M$ with real entries, $\textrm{rank}(M) = \textrm{rank}(M M^T)$. Thus, by Lemma \ref{l:ntkmatrix},  $$\textrm{rank}(\mathcal{K}_Z(\theta)) = \textrm{rank } \mathbf{J}E_Z(\theta),$$ and the result follows. 
\end{proof}


Proposition \ref{p:gradientrelationship} shows that the total derivative of the cost function at $s_0$ may be expressed in terms of $\mathbf{J}E_{\{z_1,\ldots,z_N\}}(\theta_0)$, providing a link between functional dimension and gradient descent. 

\begin{proposition}  \label{p:gradientrelationship}
Let $A$ be the $1 \times Nn_m$ row vector formed by concatenating the $N$ $1\times n_m$ row vectors $\frac{ \partial c(u_i^\theta - y_i)}{ \partial u^\theta_i} \Big \vert_{u_i^\theta= u_i^0}$.  Then 
$$ \frac{\partial C(\theta)}{\partial \theta} \Big \vert_{\theta=\theta_0} = A \cdot \mathbf{J}E_{\{z_1,\ldots,z_N\}}(\theta_0)$$
and hence, letting $t_0$ be such that $\theta_{t_0} = \theta_0$, 
$$\frac{\partial \theta_t}{\partial t} \Big \vert_{t=t_0}  = -   \left( \frac{\partial C(\theta)}{\partial \theta} \Big \vert_{\theta=\theta_0} \right)^T  = - \left(A \cdot \mathbf{J}E_{\{z_1,\ldots,z_N\}}(\theta_0)\right)^T.$$

\end{proposition}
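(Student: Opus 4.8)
The plan is to unwind the chain-rule computation already displayed in \eqref{eq:Cderiv}--\eqref{eq:partialst} and repackage it as a single block-matrix product. First I would fix the conventions implicit in the statement: $s_0$ is an ordinary parameter, the batch $\{z_1,\ldots,z_N\}$ consists of the training inputs (so that $z_i = x_i$ in the notation of \eqref{eq:Cderiv}), these points are parametrically smooth for $s_0$, and the pointwise cost $c$ is differentiable at each relevant point; these hypotheses are exactly what guarantee that every partial derivative appearing below exists.

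Next I would observe that, since $u_i^s = \rho(s)(x_i)$, the total derivative $\frac{\partial u_i^s}{\partial s}\big\vert_{s=s_0}$ is by definition the $n_m\times D$ Jacobian $\mathbf{J}E_{z_i}(s_0)$ of the evaluation map from \eqref{d:evaluationMapPtDef}. Substituting this into \eqref{eq:Cderiv} rewrites $\frac{\partial C(s)}{\partial s}\big\vert_{s=s_0}$ as the sum $\sum_{i=1}^N \alpha_i\,\mathbf{J}E_{z_i}(s_0)$, where $\alpha_i$ denotes the $i$th $1\times n_m$ row vector making up $A$. Then, recalling from the remark following Definition \ref{def:batchfundim} that $\mathbf{J}E_{\{z_1,\ldots,z_N\}}(s_0)$ is precisely the vertical stack of the blocks $\mathbf{J}E_{z_i}(s_0)$, a direct block-multiplication check identifies this sum with $A\cdot\mathbf{J}E_{\{z_1,\ldots,z_N\}}(s_0)$, which is the first displayed identity. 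The ``hence'' clause then follows immediately by substituting this identity into \eqref{eq:partialst}, which already records that the gradient flow velocity at $s_0$ equals $-k$ times the transpose of $\frac{\partial C(s)}{\partial s}\big\vert_{s=s_0}$ for a suitable constant $k>0$.

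I do not expect a genuine obstacle here: the entire content is bookkeeping. The one place that needs care is matching the indexing of the concatenated row vector $A$ against the stacked Jacobian blocks so that the block product comes out correctly, together with making explicit that parametric smoothness of each $z_i$ for $s_0$ is exactly the hypothesis that licenses replacing $\frac{\partial u_i^s}{\partial s}$ by $\mathbf{J}E_{z_i}(s_0)$ (and that differentiability of $c$ at the relevant points is what makes each $\alpha_i$ well-defined).
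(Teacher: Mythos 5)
Your proposal is correct and follows essentially the same route as the paper: identify $\frac{\partial u_i^s}{\partial s}\big\vert_{s=s_0}$ with the Jacobian block $\mathbf{J}E_{z_i}(s_0)$, substitute into \eqref{eq:Cderiv}, and read off the result. If anything, you are slightly more explicit than the paper's proof, which stops at the sum $\sum_{i=1}^N \frac{\partial c(u_i^s-y_i)}{\partial u_i^s}\big\vert_{u_i^s=u_i^0}\,\mathbf{J}E_{z_i}(s_0)$ and leaves the block-multiplication identification with $A\cdot\mathbf{J}E_{\{z_1,\ldots,z_N\}}(s_0)$ implicit.
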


\begin{proof}
Notice that 
$$\frac{\partial u_i^\theta }{\partial \theta}\Big \vert_{\theta=\theta_0} = \frac{\partial \rho(\theta)(z_i)}{\partial \theta} \vert_{\theta=\theta_0}$$ is the $n_m \times D$ matrix whose $j$th row is 
$$ \begin{bmatrix}  \frac{\partial \rho_j(\theta_0)(z_i)}{\partial \theta_1}  \quad \cdots \quad  \frac{\partial \rho_j(\theta_0)(z_i)}{\partial \theta_D}  \end{bmatrix},$$
where $\rho_j(\theta)$ denotes the $j$th coordinate function of $\rho(\theta)$.  That is, it is the $j$th row of  $\mathbf{J}E_{z_i}(\theta_0)$.
Hence 
$$ \frac{ \partial c(u_i^\theta - y_i)}{ \partial u^\theta_i} \Big \vert_{u_i^\theta= u_i^0}  \frac{\partial u_i^\theta }{\partial \theta}\Big \vert_{\theta=\theta_0}  = \frac{ \partial c(u_i^\theta - y_i)}{ \partial u^\theta_i} \Big \vert_{u_i^\theta= u_i^0} \mathbf{J}E_{z_i}(\theta_0)$$

So equation \eqref{eq:Cderiv} can be rewritten as 
\begin{equation} \label{eq:Cgrad}
 \frac{\partial C(\theta)}{\partial \theta} \Big \vert_{\theta=\theta_0}  = 
  \sum_{i=1}^N \frac{ \partial c(u_i^\theta - y_i)}{ \partial u^\theta_i} \Big \vert_{u_i^\theta= u_i^0}  \frac{\partial u_i^\theta }{\partial \theta}\Big \vert_{\theta=\theta_0} 
  =  \sum_{i=1}^N \frac{ \partial c(u_i^\theta - y_i)}{ \partial u^\theta_i} \Big \vert_{u_i^\theta= u_i^0}  \mathbf{J}E_{z_i}(\theta_0).
\end{equation} 

\end{proof}

An important consequence of Proposition \ref{p:gradientrelationship} is that flow lines of gradient descent at $\theta_0$ (using batch inputs $\{z_1,\ldots,z_N\}$) are constrained to directions that lie in a space whose dimension is at most $\textrm{rank } \boldsymbol{J}E_{\{z_1,\ldots,z_N \}}(\theta_0)$.

\section{Upper bound on functional dimension} \label{s:upperbound}

The scaling invariance of the realization map $\rho$ yields the following upper bound on functional dimension. 

\begin{theorem} \label{t:upperbound}
  For any ordinary parameter $\theta \in \mathcal{P}_{n_0, \ldots, n_m}$ in any architecture $(n_0,\ldots,n_m)$,
  $$\textrm{dim}_{\textrm{fun}}(\theta) \leq n_m  +\sum_{i=0}^{m-1}n_in_{i+1}.$$
\end{theorem}

\begin{proof} For any parameter $\theta \in \mathcal{P}_{n_0,\ldots,n_m}$, suppose the  associated marked neural network map, $\bar{\rho}(\theta)$, is given by 
 $$\mathbb{R}^{n_0}  \xrightarrow{ \sigma \circ A^{1}} \mathbb{R}^{n_1}  \xrightarrow{\sigma \circ A^{2}}\dots \xrightarrow{ \sigma \circ A^{m}} \mathbb{R}^{n_m}.$$
 For any layer index $0 < i<m$, let $h$ be any linear self-map of $\mathbb{R}^{n_i}$ that is represented by a diagonal matrix with all positive entries.  Note that $h$ commutes with the component-wise ReLU map $\sigma: \mathbb{R}^{n_i}  \to  \mathbb{R}^{n_i}$.  Therefore 
 \begin{equation} \label{eq:scaling}
 \sigma \circ A^{i+1}\circ h^{-1} \circ \sigma \circ h\circ A^{i}=\sigma \circ A^{i+1}\circ \sigma \circ A^{i},
 \end{equation}
 i.e.  replacing $A^{i}$ with $h\circ A^{i}$ and $A^{i+1}$ with $A^{i+1}\circ h^{-1}$ will not change the  unmarked function $\rho(s)$.  Indeed, cf. \cite{ZhaoGWYD23}, there is a global action on $\mathcal{P}_{n_0, \ldots, n_m}$ of the Lie group $D_{n_1} \times \ldots \times D_{n_{m-1}} \subseteq GL_{n_1} \times \ldots \times GL_{n_{m-1}}$  of positive diagonal matrices whose dimensions are equal to the dimensions of the hidden layers. Therefore, parameter space decomposes into orbits for this action, and by the classical orbit-stabilizer theorem for Lie group actions (cf. \cite{Lee}), any stabilizer-free orbit will have dimension equal to the dimension of $D_{n_1} \times \ldots \times D_{n_{m-1}}$, which is $\sum_{i=1}^{m-1}n_i$. Moreover, it is straightforward to check that the parameters with nontrivial stabilizer are contained in the set of non-generic parameters (indeed, they are all {\em degenerate}, in the terminology of \cite[Sec. 2]{GrigsbyLindsey}). This implies that the set of points with trivial stabilizer is full measure. Now for such a point $\theta$ and any batch $Z$ of parametrically smooth points for $\theta$, an application of the rank-nullity theorem for $\boldsymbol{J}E_Z(s)$ viewed as a map from the tangent space of $\theta$ to itself then tells us:
\begin{multline*}
 \textrm{rank}(\boldsymbol{J}E_Z(\theta)) \leq D(n_0,\ldots,n_m) - \sum_{i=0}^{m-1} n_i \\
 = \left(\sum_{i=0}^{m-1} n_{i+1}n_i + n_{i+1} \right) - \left(  \sum_{i=1}^{m-1} n_i \right)  = \left(\sum_{i=0}^{m-1} n_{i+1}{n_i} \right) + n_m.
 \end{multline*}
 
The upper bound on  $\textrm{rank}(\boldsymbol{J}E_Z(\theta))$ above applies also to ordinary parameters with nontrivial stabilizer, since the rank of the derivative of any piecewise polynomial map is lower semi-continuous, and the set of points with trivial stabilizer is full measure.
\end{proof}

\begin{corollary} \label{c:dimdifference}
For any architecture $(n_0,\ldots,n_m)$,
$$\textrm{dim}(\mathcal{P}_{n_0,\ldots,n_m}) - \textrm{dim}_{\textrm{fun}}(\mathcal{P}_{n_0,\ldots,n_m}) \geq n_1 + \ldots + n_{m-1}.$$
\end{corollary}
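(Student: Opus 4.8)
The plan is to obtain this as an immediate arithmetic consequence of Theorem~\ref{t:upperbound} together with a direct expansion of the quantity $D(n_0,\ldots,n_m) = \textrm{dim}(\mathcal{P}_{n_0,\ldots,n_m})$. First I would recall that $\textrm{dim}_{\textrm{fun}}(\mathcal{P}_{n_0,\ldots,n_m})$ is by definition the supremum of $\textrm{dim}_{\textrm{fun}}(s)$ over ordinary parameters $s$, and that the proof of Theorem~\ref{t:upperbound} in fact bounds this functional dimension (not the parametric dimension), giving
$$\textrm{dim}_{\textrm{fun}}(\mathcal{P}_{n_0,\ldots,n_m}) \;\leq\; n_m + \sum_{i=0}^{m-1} n_i n_{i+1}.$$

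Next I would reindex the sum defining the parametric dimension via $i \mapsto i+1$:
$$D(n_0,\ldots,n_m) = \sum_{i=1}^{m} n_i(n_{i-1}+1) = \sum_{i=0}^{m-1} n_{i+1}(n_i + 1) = \left(\sum_{i=0}^{m-1} n_i n_{i+1}\right) + (n_1 + n_2 + \cdots + n_m).$$
Subtracting the displayed upper bound from this, the cross-terms $\sum_{i=0}^{m-1} n_i n_{i+1}$ cancel and the $n_m$ summand cancels, leaving
$$\textrm{dim}(\mathcal{P}_{n_0,\ldots,n_m}) - \textrm{dim}_{\textrm{fun}}(\mathcal{P}_{n_0,\ldots,n_m}) \;\geq\; D(n_0,\ldots,n_m) - \Bigl(n_m + \sum_{i=0}^{m-1} n_i n_{i+1}\Bigr) = (n_1 + \cdots + n_m) - n_m = n_1 + \cdots + n_{m-1},$$
which is the claimed inequality.

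The only place requiring any care — the closest thing to an obstacle in an otherwise purely bookkeeping argument — is getting the reindexing of the defining sum for $D$ straight so that the $n_i n_{i+1}$ cross-terms line up exactly with those appearing in Theorem~\ref{t:upperbound}; once they are matched and the $n_m$ terms cancel, the residue is precisely the sum of the hidden-layer widths $n_1,\ldots,n_{m-1}$, and no further input is needed.
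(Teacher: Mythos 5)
Your proposal is correct and is precisely the argument the paper intends: the corollary is stated without proof as an immediate consequence of Theorem~\ref{t:upperbound}, whose proof already contains the identity $D(n_0,\ldots,n_m) = \sum_{i=0}^{m-1} n_i n_{i+1} + \sum_{i=1}^{m} n_i$, and your reindexing and cancellation reproduce exactly that bookkeeping. You also correctly read the theorem's displayed inequality as a bound on $\textrm{dim}_{\textrm{fun}}(\mathcal{P}_{n_0,\ldots,n_m})$ rather than on the parametric dimension, which is what the theorem's proof actually establishes.
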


\begin{theorem}
If an ordinary parameter $s$ has a stably unactivated neuron, then $\textrm{dim}_{\textrm{fun}}(\theta)$ does not achieve the upper bound of Theorem \ref{t:upperbound}.
\end{theorem}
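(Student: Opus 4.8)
The plan is to show that a stably unactivated neuron lets us realize $\rho$, on a neighborhood of $s$, as a coordinate-padded version of the realization map of a \emph{strictly smaller} architecture, and then to apply Theorem~\ref{t:upperbound} to that smaller architecture and compare the two bounds.

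First I would name the neuron: say it is the $j$th neuron of layer $\ell$, with $1\le\ell\le m$, and let $U$ be the open neighborhood of $s$ furnished by the definition. Since the ReLU output of any neuron is nonnegative, the hypothesis forces the post-activation output of this neuron to be identically $0$ for every $u\in U$. Hence, for $u\in U$, the function $\rho(u)$ is independent of the entries of the $j$th row of $A^{\ell}$ and, when $\ell<m$, of the entries of the $j$th column of $A^{\ell+1}$: deleting this neuron together with its outgoing weights produces the \emph{reduced architecture} $(n_0,\ldots,n_{\ell-1},\,n_{\ell}-1,\,n_{\ell+1},\ldots,n_m)$ (when $\ell=m$ one simply deletes the output coordinate $j$). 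Writing $\mathcal{P}'$ for the reduced parameter space, $\pi:\mathcal{P}_{n_0,\ldots,n_m}\to\mathcal{P}'$ for the coordinate projection onto the surviving parameters, $\rho'$ for the reduced realization map, and $\iota_j$ for the linear inclusion inserting a $0$ in coordinate $j$ (relevant only when $\ell=m$), one gets $\rho(u)=\rho'(\pi(u))$ for $\ell<m$ and $\rho(u)=\iota_j\circ\rho'(\pi(u))$ for $\ell=m$, valid for all $u\in U$.

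Next I would transfer this identity to Jacobians. Fix any finite $Z\subset\mathbb{R}^{n_0}$ that is parametrically smooth for $s$; such a $Z$ exists and is nonempty because $s$ is ordinary. Composing the relation $\rho=\rho'\circ\pi$ (resp.\ $\iota_j\circ\rho'\circ\pi$) on $U$ with the affine section of $\pi$ that fills the deleted coordinates with their values at $s$ shows that $\pi(s)$ is an ordinary parameter of the reduced architecture and that $Z$ is parametrically smooth for $\pi(s)$. By Lemma~\ref{lem:neuronoff} the columns of $\mathbf{J}E_Z(s)$ indexed by the deleted parameters vanish, while the remaining columns coincide with $\mathbf{J}E'_Z(\pi(s))$, the Jacobian of the evaluation map $E'_Z$ of the reduced network at $\pi(s)$ (with, in the $\ell=m$ case, an additional block of identically zero rows). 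Hence $\textrm{rank}\,\mathbf{J}E_Z(s)=\textrm{rank}\,\mathbf{J}E'_Z(\pi(s))$. Taking the supremum over all such $Z$ (Definition~\ref{def:functionaldimension}) and applying Theorem~\ref{t:upperbound} to the reduced architecture yields
$$\textrm{dim}_{\textrm{fun}}(s)\ \le\ n'_m+\sum_{i=0}^{m-1}n'_i n'_{i+1},$$
where $(n'_0,\ldots,n'_m)$ denotes the reduced architecture.

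Finally I would compare the two bounds directly: lowering the single width $n_\ell$ by $1$ changes only the terms of $\sum_i n_i n_{i+1}$ adjacent to layer $\ell$, so
$$\left(n_m+\sum_{i=0}^{m-1}n_i n_{i+1}\right)-\left(n'_m+\sum_{i=0}^{m-1}n'_i n'_{i+1}\right)=\begin{cases} n_{\ell-1}+n_{\ell+1}, & \ell<m,\\ n_{m-1}+1, & \ell=m,\end{cases}$$
which is at least $2$ because every layer width is a positive integer. Therefore $\textrm{dim}_{\textrm{fun}}(s)$ lies strictly below the bound of Theorem~\ref{t:upperbound}. I expect the main obstacle to be the bookkeeping of the third paragraph: verifying cleanly that $\pi(s)$ is genuinely an ordinary parameter of the reduced architecture with $Z$ still parametrically smooth for it, so that the reduced instance of Theorem~\ref{t:upperbound} truly applies, and dispatching the degenerate cases in which the reduced layer has width $0$ --- in those cases $\rho$ is locally constant at $s$, so $\textrm{dim}_{\textrm{fun}}(s)=0$ and the conclusion is immediate.
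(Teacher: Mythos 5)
Your proposal is correct and follows essentially the same route as the paper's proof: delete the stably unactivated neuron, observe that the columns of $\mathbf{J}E_Z(s)$ corresponding to its parameters vanish so the rank equals that of the reduced architecture's Jacobian, and apply Theorem \ref{t:upperbound} to the strictly smaller architecture --- your explicit computation of the gap ($n_{\ell-1}+n_{\ell+1}$ for a hidden neuron, $n_{m-1}+1$ for an output neuron) just makes precise what the paper leaves implicit. One small caveat: in the degenerate case where the reduced layer would have width $0$, the function $\rho(u)$ is constant in $x$ for $u$ near $s$ but still varies with $u$ (e.g.\ through downstream biases), so $\textrm{dim}_{\textrm{fun}}(s)$ need not be $0$; it is, however, at most $n_m$, which still lies strictly below the bound, so your conclusion survives.
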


\begin{proof}
Suppose $\theta \in \mathcal{P}_{n_0,\ldots,n_m}$ has a stably unactivated neuron $N$.  Let $\mathcal{P}_0$ be the network architecture that omits the neuron $N$ (and only that neuron), and let $\theta_0 \in \mathcal{P}_0$ be the parameter that coincides with $\theta$ in all the coordinates of $s$ that do not determine the neuron $N$.  Let $Z \subset \mathbb{R}^{n_0}$ be any finite set of parametrically smooth points for $\theta$. Then $Z$ is also a parametrically smooth set for $s_0$. Because $\theta$ is stably unactivated, the columns of $\boldsymbol{J}E_Z(s)$ that correspond to taking derivatives with respect to parameters that determine $N$ are all $0$.  Also, the other columns of $\boldsymbol{J} (E_Z(\theta))$ coincide with the columns of $\boldsymbol{J}(E_Z(\theta_0))$.  Since padding a matrix with columns of zeroes does not change its rank, we have that $\textrm{rank} \left( \boldsymbol{J}E_Z(\theta) \right)= \textrm{rank} \left( \boldsymbol{J}E_Z(\theta_0)\right)$.   Hence $\textrm{dim}_{\textrm{fun}}(\theta) \leq \textrm{dim}_{\textrm{fun}}(\theta_0)$, and the upper bound from Theorem \ref{t:upperbound} for the reduced complexity architecture $\mathcal{P}_0$ applies.  
\end{proof}

\begin{lemma} \label{lem:lowbdunactive}
Suppose, for any fixed architecture $(n_0,\ldots,n_m)$, the coordinates of a parameter that determines a neural network are selected randomly from a probability distribution on $\mathbb{R}$ that is symmetric about $0$.  Then for any neuron in any layer $i \geq 2$, the probability that the neuron is stably unactivated is at least $$\frac{1}{2^{1+n_{i-1}}}.$$
  
\noindent Consequently, the probability that a randomly selected network of architecture $(n_0,\ldots,n_m)$ has at least one stably unactivated neuron is  at least 
 $$ 1 - \prod_{i=2}^{m} \left(1 - \frac{1}{2^{1+n_{i-1}}} \right)^{n_i}.$$
\end{lemma}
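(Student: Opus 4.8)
The plan is to exhibit, for every neuron in a layer $i\ge 2$, an explicit event on the coordinates of the parameter $s$ that is \emph{sufficient} for that neuron to be stably unactivated, to compute the probability of this event, and then to combine the estimates over all such neurons by an independence argument.

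Fix the $j$th neuron in layer $i$, where $2\le i\le m$. For an input $x\in\mathbb{R}^{n_0}$, write $x^{i-1}=F^{i-1}\circ\cdots\circ F^{1}(x)\in\mathbb{R}^{n_{i-1}}$ for the output of the first $i-1$ layers, so that the pre-activation of our neuron at $x$ equals $\sum_{k=1}^{n_{i-1}}A^{i}_{jk}\,x^{i-1}_k+A^{i}_{j,\,n_{i-1}+1}$. The feature that makes the hypothesis $i\ge 2$ essential is that $x^{i-1}$ is the output of a component-wise ReLU, hence $x^{i-1}\in[0,\infty)^{n_{i-1}}$ for \emph{every} $x$. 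Consequently, if the event
\[
E_{i,j}:=\bigl\{\,A^{i}_{jk}<0\ \text{ for all }k=1,\dots,n_{i-1},\ \text{ and }\ A^{i}_{j,\,n_{i-1}+1}<0\,\bigr\}
\]
occurs, then $\sum_{k}A^{i}_{jk}\,x^{i-1}_k\le 0$ and the pre-activation is $\le A^{i}_{j,\,n_{i-1}+1}<0$ for every $x$, so the neuron outputs $0$. Since $E_{i,j}$ is an open condition on the $n_{i-1}+1$ coordinates of $s$ forming row $j$ of $A^{i}$, there is a neighborhood $U$ of $s$ on which those coordinates stay negative for all $u\in U$; the same estimate then shows the corresponding neuron of $\bar{\rho}(u)$ is nonpositive at every input. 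Hence $E_{i,j}$ implies the $(i,j)$th neuron is stably unactivated.

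Next I would estimate $\Pr(E_{i,j})$. It constrains $n_{i-1}+1$ of the coordinates of $s$, which are independent and each drawn from the given distribution symmetric about $0$; each such coordinate is negative with probability $\tfrac12$ (taking the distribution to have no atom at $0$, as is standard for random initialization), so $\Pr(E_{i,j})=2^{-(1+n_{i-1})}$. Because $E_{i,j}$ is only sufficient, not necessary, for the neuron to be stably unactivated, the probability the $(i,j)$th neuron is stably unactivated is at least $2^{-(1+n_{i-1})}$, which is the first assertion. For the ``consequently'' clause, note that $E_{i,j}$ depends only on row $j$ of $A^{i}$, and as $(i,j)$ ranges over $2\le i\le m$, $1\le j\le n_i$ these coordinate blocks are pairwise disjoint; since distinct coordinates of $s$ are independent, the events $E_{i,j}$ are mutually independent. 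Therefore the probability that no neuron in any layer $i\ge 2$ is stably unactivated is at most
\[
\Pr\Bigl(\bigcap_{i,j}\overline{E_{i,j}}\Bigr)=\prod_{i=2}^{m}\prod_{j=1}^{n_i}\bigl(1-\Pr(E_{i,j})\bigr)=\prod_{i=2}^{m}\bigl(1-2^{-(1+n_{i-1})}\bigr)^{n_i},
\]
and passing to the complement yields the claimed lower bound.

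I do not anticipate a real obstacle here; the heart of the argument is simply that inputs to layers $i\ge 2$ are nonnegative. The only points that need some care are checking that the strict sign condition $E_{i,j}$ is genuinely stable under perturbation — immediate, since a finite set of strict inequalities defines an open set — and the independence bookkeeping, namely verifying that distinct events $E_{i,j}$ are controlled by disjoint blocks of the coordinates of $s$ so that the double product is legitimate.
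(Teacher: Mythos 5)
Your proof is correct and takes essentially the same approach as the paper: the paper's sufficient condition (oriented normal in the all-negative orthant and negative bias) is exactly your event $E_{i,j}$, and both arguments then compute $\Pr(E_{i,j})=2^{-(1+n_{i-1})}$ and use independence across the disjoint parameter blocks to bound the probability that no such event occurs. Your write-up is slightly more careful on two points the paper leaves implicit — that strict inequalities give an open (hence \emph{stable}) condition, and that the product bound is an inequality because $E_{i,j}$ is only sufficient — but the route is the same.
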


\begin{proof}
A sufficient condition for a neuron from layer $i \geq 2$ to be stably unactivated is that, denoting the associated hyperplane in $\mathbb{R}^{n_{i-1}}$ by $H$, the closed positive orthant of $\mathbb{R}^{n_{i-1}}$ is entirely contained in one of the two connected components of $\mathbb{R}^{n_{i-1}} \setminus H$ and the oriented normal vector to $H$ points into the connected component that does not contain the positive orthant.  This is equivalent to the oriented normal vector lying in the all-negative orthant of $\mathbb{R}^{n_{i-1}}$ and the bias of $H$ being negative. There is a $\tfrac{1}{2^{n_{i-1}}}$ chance of the oriented normal lying in the all-negative orthant, and a $\tfrac{1}{2}$ probability that the bias is negative; furthermore, these probabilities are independent.  Hence, the probability that any given neuron from layer $i \geq 2$ is stably unactivated is $\geq \tfrac{1}{2^{1+n_{i-1}}}$, proving the first claim.  To see the second claim, note that the probability that {\em no} neurons in layers $2$ and beyond are stably unactivated is \[\leq \prod_{i=2}^{m} \left(1 - \frac{1}{2^{1+n_{i-1}}} \right)^{n_i},\] so the probability that at least one neuron from layer $2$ and beyond is stably unactivated is \[\geq 1 - \prod_{i=2}^{m} \left(1 - \frac{1}{2^{1+n_{i-1}}} \right)^{n_i},\] as desired.
\end{proof}

\begin{remark}
We note that the lower bound in Lemma \ref{lem:lowbdunactive} is extremely weak unless the width is low or the depth far exceeds the width. For example, when $m = 10$ and $n_i = 10$ for all $i$, the lower bound above is about $0.05$. By the time $m = n_i = 20$, it is less than $0.0002$. However, it is significant that the lower bound is always nonzero. 
\end{remark}


\section{Precomposing with a layer and attaining the upper bound}

The primary goal of this section is to prove that for narrowing networks the upper bound on functional dimension from Theorem \ref{t:upperbound} is tight.  Our proof strategy is to use induction on the number of layers.  Subsection \ref{s:inductive} proves the inductive step (Theorem \ref{t:inductiverevised}) -- that, under certain assumptions, for $\theta \in \mathcal{P}_{n_1,\ldots,n_m}$ and $n_0 > n_1$, we have $\textrm{dim}_{\textrm{fun}}(A,\theta) = n_0n_1+\textrm{dim}_{\textrm{fun}}(\theta)$, where $(A,\theta)$ denotes the parameter in $\mathcal{P}_{n_0,\ldots,n_m}$ corresponding to a marked neural network map whose first layer is given by $A$ and whose later layers are given by $\theta$.  This inductive step is applied in subsection \ref{s:realizingbound} to prove Theorem \ref{t:upperbound}.

\subsection{Effect on functional dimension of precomposing with an additional layer map} \label{s:inductive}

We preface this subsection with a reminder that almost every parameter is ordinary and admits a full-measure set of parametrically smooth points (see \S \ref{ss:parametricallysmoothpoints}). Consequently, while the proofs of many of the results in this subsection contain careful justifications that the parameters (resp. points) under consideration are ordinary (resp. parametrically smooth), when first reading this subsection, the reader may wish to disregard these technical smoothness considerations.  

 \begin{definition}\label{d:dimlocpos} 
For any parameter $\theta \in \mathcal{P}_{n_0,\ldots,n_m}$ that has at least one parametrically smooth point in the open positive orthant $(\mathbb{R}^{> 0})^{n_0}$, define the \emph{functional dimension restricted to the positive orthant} of $\theta$  to be 
  $$\textrm{dim}_{\textrm{fun+}}(\theta) := \sup_{Z \subset (\mathbb{R}^{> 0})^{n_0} \textrm{ is parametrically smooth for }\theta,\newline |Z| \leq D(n_0,\ldots,n_m)} rank (\boldsymbol{J}E_Z(\theta)).$$
 \end{definition}

First, Lemma \ref{l:precompositionbound} shows that precomposing with a narrowing layer from $\mathbb{R}^{n_0}$ to $\mathbb{R}^{n_1}$ can increase functional dimension by at most $n_0n_1$. 

\begin{lemma} \label{l:precompositionbound} Fix any architecture $(n_0,\ldots,n_m)$.  
Let $\theta \in \mathcal{P}_{n_1,\ldots,n_m}$ be a parameter that admits a parametrically smooth point in  the positive orthant $(\mathbb{R}^{>0})^{n_1}$. Let $A:\mathbb{R}^{n_0} \to \mathbb{R}^{n_1}$ be an affine-linear map such that every row of the associated matrix has at least one nonzero entry.  Denote by $(A,\theta)$ the parameter in $ \mathcal{P}_{n_0,\ldots,n_m}$ that corresponds to precomposing with $A$, i.e. $$\bar{\rho}(A,\theta) = \bar{\rho}(\theta) \circ \sigma \circ A.$$
  If $(A,\theta)$ is an ordinary parameter,  then 
\begin{equation} \label{eq:precompineq}
\textrm{dim}_{\textrm{fun}}(A,\theta) \leq n_0n_1 + \textrm{dim}_{\textrm{fun+}}(\theta).
\end{equation}
Furthermore, a necessary condition for equality in \eqref{eq:precompineq} is that $\textrm{dim}_{\textrm{fun+}}(\theta)$ be realized on a parametrically smooth set $Z^* \subset \textrm{Image}(\sigma \circ A)$. 
\end{lemma}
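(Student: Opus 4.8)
The plan is to split the columns of $\mathbf{J}E_Z\vert_{(A,s)}$ into those indexed by the entries of $A$ and those indexed by the coordinates of $s$, to bound the rank of each block, and then to use the first--layer scaling symmetry to absorb an $n_1$--dimensional piece of the $A$--block into the column span of the $s$--block. Write $G:=\bar{\rho}(s):\mathbb{R}^{n_1}\to\mathbb{R}^{n_m}$, so that $\bar{\rho}(A,s)=G\circ\sigma\circ A$, and for a point $z$ that is parametrically smooth for $(A,s)$ put $w:=\sigma(A(z))\in\textrm{Image}(\sigma\circ A)\subseteq (\mathbb{R}^{\ge 0})^{n_1}$. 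Since $w$ does not depend on the coordinates of $s$, the chain rule identifies the block of $\mathbf{J}E_z\vert_{(A,s)}$ on the columns indexed by the coordinates of $s$ with $\mathbf{J}E_w\vert_s$, the Jacobian at $s$ of the evaluation--at--$w$ map for the subarchitecture $(n_1,\ldots,n_m)$. Hence, for a finite set $Z=\{z_1,\ldots,z_k\}$ parametrically smooth for $(A,s)$, with $w_i:=\sigma(A(z_i))$ and $W:=(w_1,\ldots,w_k)$, the rank of $\mathbf{J}E_Z\vert_{(A,s)}$ is $\dim(\textrm{Im}\,T_A+\textrm{Im}\,T_s)$, where $T_s$ is the submatrix on the $s$--columns (which, stacked over $i$, is $\mathbf{J}E_W\vert_s$) and $T_A$ is the submatrix on the $n_1(n_0+1)$ columns indexed by entries of $A$.

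For the scaling symmetry: given $c\in(\mathbb{R}^{>0})^{n_1}$, rescaling row $p$ of $[A]$ by $c_p$ while rescaling column $p$ of $[A^2]$ (the first--layer matrix of $G$) by $1/c_p$ leaves $\bar{\rho}(A,s)$ unchanged, by \eqref{eq:scaling} applied to the first layer. Differentiating this orbit at $c=\mathbf{1}$ produces, for each $\delta c\in\mathbb{R}^{n_1}$, a parameter--space tangent direction $\xi(\delta c)=(\xi_A(\delta c),\xi_s(\delta c))$ along which $\rho$ is constant, so $T_A(\xi_A(\delta c))=-T_s(\xi_s(\delta c))\in\textrm{Im}\,T_s$. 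The linear map $\xi_A$ sends $\delta c$ to the matrix whose $p$--th row is $\delta c_p$ times the $p$--th row of $[A]$, and it is \emph{injective} precisely because every row of $[A]$ has a nonzero entry (our hypothesis); thus $\textrm{Im}\,\xi_A$ is an $n_1$--dimensional subspace of the $A$--tangent space $\mathbb{R}^{n_1(n_0+1)}$ with $T_A(\textrm{Im}\,\xi_A)\subseteq \textrm{Im}\,T_s$. Choosing a complement $V'$ of $\textrm{Im}\,\xi_A$, so $\dim V'=n_0n_1$, we get $\textrm{Im}\,T_A=T_A(V')+T_A(\textrm{Im}\,\xi_A)\subseteq T_A(V')+\textrm{Im}\,T_s$, and hence
$$\textrm{rank}\,\mathbf{J}E_Z\vert_{(A,s)}=\dim\big(\textrm{Im}\,T_A+\textrm{Im}\,T_s\big)=\dim\big(T_A(V')+\textrm{Im}\,T_s\big)\le n_0n_1+\textrm{rank}\,\mathbf{J}E_W\vert_s.$$

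It remains to show $\textrm{rank}\,\mathbf{J}E_W\vert_s\le\textrm{dim}_{\textrm{fun+}}(s)$, and this step carries essentially all of the technical weight. The difficulty is that the $w_i$ lie a priori only in the closed orthant $(\mathbb{R}^{\ge 0})^{n_1}$ and need not be parametrically smooth for $s$, even though the $z_i$ are parametrically smooth for $(A,s)$. The plan to handle this: (i) if a layer--$1$ neuron is off on a whole neighborhood of $z_i$, then by Lemma~\ref{lem:neuronoff} the associated rows of $[A]$ and columns of $[A^2]$ contribute only zero columns in the $z_i$--blocks of both $\mathbf{J}E_Z\vert_{(A,s)}$ and $\mathbf{J}E_W\vert_s$, so the corresponding vanishing coordinate of $w_i$ is harmless; (ii) using that the parametrically smooth locus of $(A,s)$ is open (Corollary~\ref{c:ordinaryparamsopen}) and matrix rank is lower semicontinuous, perturb the $z_i$ — and then perturb the resulting $w_i$ \emph{directly inside} $\mathbb{R}^{n_1}$, not inside $\textrm{Image}(\sigma\circ A)$ — into the open orthant $(\mathbb{R}^{>0})^{n_1}$ at points parametrically smooth for $s$, which is possible because the parametrically smooth points for $s$ are open and dense (Theorem~\ref{t:ordinaryparamsfull}) and accumulate from inside $(\mathbb{R}^{>0})^{n_1}$ onto every point of $(\mathbb{R}^{\ge 0})^{n_1}$, and which by continuity of the Jacobian on the smooth locus and lower semicontinuity of rank does not decrease $\textrm{rank}\,\mathbf{J}E_W\vert_s$. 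Once $W\subset(\mathbb{R}^{>0})^{n_1}$ is finite and parametrically smooth for $s$, we have $\textrm{rank}\,\mathbf{J}E_W\vert_s\le\textrm{dim}_{\textrm{fun+}}(s)$ by Definition~\ref{d:dimlocpos}; combining with the display above and taking the supremum over $Z$ yields \eqref{eq:precompineq}.

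For the necessary condition: since $\textrm{rank}\,\mathbf{J}E_Z\vert_{(A,s)}$ is a bounded nonnegative integer, the supremum defining $\textrm{dim}_{\textrm{fun}}(A,s)$ is attained, and by lower semicontinuity of rank it is attained on an open set of configurations, so we may choose an optimal $Z$ for which moreover each $w_i=\sigma(A(z_i))$ is parametrically smooth for $s$ (as arranged in step (ii), but now keeping $w_i\in\textrm{Image}(\sigma\circ A)$). If equality holds in \eqref{eq:precompineq}, then running the chain of inequalities above for this $Z$ forces $\textrm{rank}\,\mathbf{J}E_W\vert_s=\textrm{dim}_{\textrm{fun+}}(s)$, so $Z^*:=W\subset\textrm{Image}(\sigma\circ A)$ is a parametrically smooth (for $s$) set on which $\textrm{dim}_{\textrm{fun+}}(s)$ is realized, which is the asserted necessary condition. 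The main obstacle I anticipate is the third paragraph: controlling parametric smoothness across the map $z\mapsto\sigma(A(z))$ and handling the $w_i$ that land on the boundary of the positive orthant — in particular ruling out or circumventing coincidental configurations in which the image of $\sigma\circ A$ near some $z_i$ is trapped inside the non-smooth locus of $\bar{\rho}(s)$. The scaling--symmetry collapse responsible for the improvement from $n_1(n_0+1)$ to $n_0n_1$ is the conceptual heart of the argument but is routine once that bookkeeping is in place.
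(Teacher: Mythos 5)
Your argument follows the same route as the paper's proof: split $\mathbf{J}E_Z\vert_{(A,s)}$ into the block of columns indexed by the entries of $A$ and the block indexed by the coordinates of $s$, identify the latter with $\mathbf{J}E_{\sigma\circ A(Z)}\vert_s$, and use the first-layer scaling symmetry to show that only $n_0n_1$ of the $n_1(n_0+1)$ first-layer directions can contribute new rank. The one substantive difference in the main step is a matter of formalism: the paper implements the scaling reduction by an explicit invertible reparameterization $e$ of a neighborhood of $(A,s)$ (normalizing one privileged entry per row of $A$ and recording the scaling factors), after which the $n_1$ scaling columns of $\mathbf{J}(E_Z^0\circ e^{-1})$ vanish identically; you instead differentiate the scaling orbit at the identity to produce an $n_1$-dimensional space of kernel directions, giving $T_A(\mathrm{Im}\,\xi_A)\subseteq\mathrm{Im}\,T_s$. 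These are equivalent, and your observation that $\xi_A$ is injective is precisely where the paper's nonzero-row hypothesis enters. Where you diverge is your third paragraph: the paper simply asserts that the $s$-columns coincide with those of $\mathbf{J}E^1_{\sigma\circ A(Z)}\vert_s$ and concludes with a supremum over parametrically smooth sets $Z^*\subset\mathrm{Image}(\sigma\circ A)$, leaving implicit both that $\sigma\circ A(Z)$ is parametrically smooth for $s$ and that this supremum is bounded by $\textrm{dim}_{\textrm{fun}+}(s)$, whose definition ranges only over the \emph{open} orthant. You are right to flag that this needs an argument, but your patch is not yet watertight: Theorem \ref{t:ordinaryparamsfull} gives density of ordinary parameters in parameter space, not density of parametrically smooth points in $\mathbb{R}^{n_1}$ for a fixed $s$; and even granting such density, when $w_i=\sigma(A(z_i))$ is not itself parametrically smooth for $s$, continuity of the Jacobian on the smooth locus does not by itself imply that $\mathbf{J}E_{w'}\vert_s$ converges to the $s$-block of $\mathbf{J}E_{z_i}\vert_{(A,s)}$ as $w'\to w_i$ through smooth points of the open orthant (the relevant polynomial piece may change). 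So your proposal reproduces the paper's main inequality correctly and is more candid about the boundary-of-orthant and smoothness-transfer issues, but that step remains only sketched in your write-up, as it is in the paper.
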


\begin{proof}
 Assume $(A,\theta)$ is an ordinary parameter. 
By assumption, every row of $A$ has at least one nonzero element,  say $a_{ij_i}$. 
To prove \eqref{eq:precompineq}, we will exploit the scaling invariance used in the proof of Theorem \ref{t:upperbound} -- that if $h$ is any linear self-map of $\mathbb{R}^{n_1}$ represented by a diagonal matrix with all positive entries, then 
\begin{equation} \label{eq:scalinginvariance}
\rho(\theta) \circ \sigma \circ A = \rho(\theta) \circ h^{-1} \circ \sigma \circ h \circ A.
\end{equation}
Define $$H \coloneqq (\mathbb{R}^{>0})^{n_1} \times (\mathbb{R}^{n_0})^{n_1} \times \mathcal{P}_{n_1,\ldots,n_m}.$$ 
$H$ may be thought of as the space of parameters for $\mathcal{P}_{n_0,\ldots,n_m}$ ``up to rescaling parameters from the first layer map,'' together with the scaling factors.  

For nonzero real numbers $x$, define $\textrm{sgn}(x) = 1$ if $x>0$ and $\textrm{sgn}(x) = -1$ if $x < 0$.  
Let $U \subset \mathcal{P}_{n_0,n_1}$ be an open neighborhood of $A$ such that for any $B^{(1)} \in U$, $\textrm{sgn}(b_{ij_i}) = \textrm{sgn}(a_{ij_i})$.  Uniformly rescaling the $i$th row of $B^{(1)}=[b_{ij}]$ so that the rescaled version of $b_{ij_i}$ equals $\textrm{sgn}(a_{ij_i})$ and recording the scaling factors provides a natural encoding
$$e:U \times \mathcal{P}_{n_1,\ldots,n_m} \to H.$$  Namely, if we represent a point $B  \in U \times \mathcal{P}_{n_1,\ldots,n_m}$ as 
$B=(B^{(1)},B^{(2)},\ldots,B^{(m)})$, where $B^{(i)}$ is the matrix that describes the affine part of the $i$th layer map, we define 
$$
e(B) :=
\left( \textrm{diag}( h_B) ,  (h_{B}  B^{(1)})^* , B^{(2)} h_B^{-1}, B^{(3)},\ldots ,B^{(m)} \right)
$$
where $h_B$ is the linear self-map of $\mathbb{R}^{n_1}$ represented by a diagonal matrix whose main-diagonal entries are $|b_{ij_i}|^{-1}$ and $(h_{B} \circ B_1)^*$ is formed from the product matrix $h_{B}  B^{(1)}$ by omitting the $(i,j_i)$th entries (which are $\textrm{sgn}(a_{ij_i})$).

To distinguish evaluation maps on different spaces, we will write $E_Z^1$ for the evaluation map on $\mathcal{P}_{n_1,\ldots,n_m}$ (for $Z \subset \mathbb{R}^{n_1}$), and $E_Z^0$ for the evaluation map on $\mathcal{P}_{n_0,\ldots,n_m}$ (for $Z \subset \mathbb{R}^{n_0}$).
It is easy to see that $e$ is invertible, and hence $e$ and $e^{-1}$ have full rank everywhere.  
Hence,
$$\textrm{rank }(E_Z^0 \circ e^{-1}) \vert_{e(A,\theta)} = \textrm{rank }E_Z^0 \vert_{(A,s)}.$$

Recall that columns of  $\boldsymbol{J}( E_Z^0 \circ e^{-1})$ correspond to derivatives with respect to the coordinates of $ (\mathbb{R}^{>0})^{n_1}\times (\mathbb{R}^{n_0})^{n_1}  \times \mathcal{P}_{n_1,\ldots,n_m}$; we consider the three types of columns (corresponding to parameters in the three factors of the product space) separately. 

\medskip 
\emph{Parameters in $(\mathbb{R}^{>0})^{n_1}$: } Because $ E_Z^0 \circ e^{-1}$ is invariant under changing coordinates in $(\mathbb{R}^{>0})^{n_1}$ (by \eqref{eq:scalinginvariance}), 
 the $n_1$ columns of $\boldsymbol{J}( E_Z^0 \circ e^{-1})$ that correspond to taking derivatives with respect to coordinates of  $(\mathbb{R}^{>0})^{n_1}$ are all $\vec{0}$.  
 
 \medskip
 \emph{Parameters in $(\mathbb{R}^{n_0})^{n_1}$}:
  There are $n_0n_1$ columns that correspond to taking derivatives with respect to coordinates of $(\mathbb{R}^{n_0})^{n_1}$. Each of these columns may contribute at most $1$ to the rank of $\mathbf{J}(E_Z^0 \circ e^{-1})$. 
  
  \medskip 
  \emph{Parameters in $\mathcal{P}_{n_1,\ldots,n_m}$:} Note that the columns of $$\boldsymbol{J}( E_Z^0 \circ e^{-1} ) \vert_{e(A,\theta)}$$ that correspond to taking derivatives with respect to the coordinates of $\theta$  coincide with the columns of  $\boldsymbol{J}E^0_Z \vert_{(A,s)}$ that correspond to taking derivatives with respect to the coordinates of $s$.  Furthermore, these columns  coincide with the columns of $$\boldsymbol{J}E^1_{\sigma \circ A (Z)} \vert_s$$

Therefore, for any set $Z \subset \mathbb{R}^{n_0}$ that is parametrically smooth for $(A,\theta)$, 
 \begin{equation} \label{eq:onedirection}
  \textrm{rank } E_Z^0 \vert_{(A,\theta)} = \textrm{rank }(E_Z^0 \circ e^{-1}) \vert_{e((A,\theta))}  \leq n_0n_1 + \textrm{rank } E_{\sigma \circ A (Z)}^1 \vert_\theta.
  \end{equation}

 Taking the supremum over all finite sets $Z \subset \mathbb{R}^{n_0}$ that are parametrically smooth for $(A,\theta)$ thus yields 
 \begin{multline*} \textrm{dim}_{\textrm{fun}}(A,\theta) \leq n_0n_1 + \\ \textrm{sup} \left\{\textrm{rank }  E_{Z^*}\vert_\theta : Z^* \subset \textrm{Image}(\sigma \circ A) \textrm{ is parametrically smooth for } \theta\right\}. \end{multline*}
\end{proof}

As was shown in Lemma \ref{l:qrealizable}, data from the evaluation map $E_Z$ (for suitable $Z$) can be used to detect the hyperplane $H \subset \mathbb{R}^{n_0}$ that contains a shared codimension 1 face of two top-dimensional cells with different slopes.  Because functional dimension involves parameters  (i.e. marked - as opposed to unmarked - functions), we will need to compute not only the geometric set $H$ but the coefficients of the affine-linear equations for which $H$ is the solution set -- i.e. if $H = \{x \in \mathbb{R}^{n_0} :  A \widehat{x} = \vec{0} \}$ for some matrix $A$ of weights and biases, we will need to express the entries of the matrix $A$ (up to rescaling rows) in terms of $E_Z(\theta)$. Furthermore, we want each entry (up to scaling) of $A$ to be a differentiable (e.g. algebraic) function of the coordinates of the vector $E_Z(\theta)$.

\begin{lemma} \label{l:candetecthyperplanes}
Let $M$ be a polyhedral complex embedded in $\mathbb{R}^d$, $d \geq 1$, and let $F:\mathbb{R}^{d} \to \mathbb{R}^{n_m}$ be a continuous map that is affine-linear on cells of $M$.  Let $X,Y$ be two 
 $d$-dimensional cells of $M$ that share a $(d-1)$-dimensional facet, and denote the hyperplane that contains this shared facet by $H$.  Assume $\boldsymbol{J}F\vert_X \neq \boldsymbol{J}F \vert_Y$.
Then, for any decisive sets $S_X \subset X$ for $F\vert_X$ and $S_Y \subset Y$ for $F\vert_Y$,  $H$ is the solution set to an affine-linear equation, 
$$H = \{\vec{x} : \vec{c} + A \vec{x} = \vec{0} \}$$
where every entry of the matrix $A$ is an affine-linear expression in the variables that are the coordinates of 
$E_{S_X \cup S_Y}(F)$. Furthermore, the matrix $A$ is unique up to rescaling rows by constants. 
\end{lemma}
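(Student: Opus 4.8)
The plan is to work with the single affine-linear function $G := F|_X - F|_Y$, where by $F|_X$ (resp. $F|_Y$) I mean the affine-linear map $\mathbb{R}^d \to \mathbb{R}^{n_m}$ obtained by extending the restriction of $F$ to the $d$-dimensional cell $X$ (resp. $Y$) off of that cell — this extension is unique because the cells are full-dimensional. Write $G(\vec x) = \vec c + A\vec x$ with $A := \boldsymbol{J}F|_X - \boldsymbol{J}F|_Y \in M_{n_m \times d}$ and $\vec c \in \mathbb{R}^{n_m}$ the difference of the constant terms. The proof then has three parts: (i) every entry of $A$ (and of $\vec c$) is an affine-linear function of the coordinates of $E_{S_X \cup S_Y}(F)$; (ii) $H = \{\vec x : \vec c + A\vec x = \vec 0\}$; and (iii) the uniqueness up to rescaling rows.

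For (i): a decisive set $S_X$ for $F|_X$ consists of $d+1$ points $p_0,\ldots,p_d$ in the interior of $X$ forming a $d$-dimensional geometric simplex (cf. Definition \ref{d:decisive}), so the fixed matrix $P_X := [\,p_1-p_0 \mid \cdots \mid p_d-p_0\,]$ is invertible. Writing $F|_X(\vec x) = L_X\vec x + \vec b_X$ and $v_i := F(p_i)$, we have $v_i - v_0 = L_X(p_i - p_0)$, hence $L_X = [\,v_1-v_0 \mid \cdots \mid v_d-v_0\,]\,P_X^{-1}$ and $\vec b_X = v_0 - L_X p_0$. Since $P_X$ and the $p_i$ are fixed, each entry of $L_X$ and $\vec b_X$ is a fixed \emph{linear} — in particular affine-linear — function of the coordinates of $(v_0,\ldots,v_d)$, which constitute exactly the block of $E_{S_X \cup S_Y}(F)$ recording the values of $F$ on $S_X$. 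The same holds for $Y$; subtracting shows every entry of $A = L_X - L_Y$ and $\vec c = \vec b_X - \vec b_Y$ is affine-linear in the coordinates of $E_{S_X \cup S_Y}(F)$.

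For (ii): continuity of $F$ forces the affine extensions $F|_X$ and $F|_Y$ to agree on the shared facet $X \cap Y$ (it lies in the closures of both cells); since $X\cap Y$ is a genuinely $(d-1)$-dimensional polytope, its affine hull is precisely $H$, and two affine-linear maps agreeing on a set agree on its affine hull, so $G \equiv \vec 0$ on $H$ and $H \subseteq G^{-1}(\vec 0)$. Conversely, the hypothesis $\boldsymbol{J}F|_X \neq \boldsymbol{J}F|_Y$ means $A \neq 0$, so $G$ is a nonconstant affine map and $G^{-1}(\vec 0)$ is an affine subspace of dimension at most $d-1$; containing the $(d-1)$-dimensional $H$ forces $G^{-1}(\vec 0) = H$. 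For (iii): if $\vec c\,' + A'\vec x = \vec 0$ also cuts out $H$, then each nonzero row of that system, being a linear equation whose solution set is the hyperplane $H$, has linear part a normal covector to $H$; as all such covectors are proportional, every nonzero row of $A'$ is a scalar multiple of the corresponding nonzero row of $A$, which is the asserted uniqueness up to row rescaling.

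This lemma is essentially bookkeeping, so I do not expect a serious obstacle; the step needing the most care is (i) — verifying that solving a square linear system with a fixed invertible coefficient matrix yields coefficients depending affine-linearly (indeed linearly) on the evaluation data, and correctly identifying the relevant sub-block of $E_{S_X \cup S_Y}(F)$. The only other point worth stating carefully is the passage in (ii) from agreement of $F|_X$ and $F|_Y$ on the shared facet to agreement on all of $H$, which relies on the facet being genuinely $(d-1)$-dimensional, exactly as hypothesized.
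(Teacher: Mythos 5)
Your proof is correct and follows essentially the same route as the paper's: recover the affine-linear extension on each cell from the values of $F$ at the $d+1$ simplex vertices of the decisive set (the paper phrases this via directional derivatives $D_{\overline{z_0z_i}}F(z_0)$, you phrase it as inverting the fixed matrix $P_X$ — the same linear-algebra computation), then realize $H$ as the locus where the two extensions agree. Your write-up is in fact slightly more complete, since you explicitly verify that the agreement locus equals $H$ (using $A\neq 0$ and the $(d-1)$-dimensionality of the shared facet) and you address the uniqueness-up-to-row-rescaling claim, both of which the paper leaves implicit.
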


\begin{proof}
Pick $S_X$ to be a decisive set in $X$ for $F\vert_X$ (meaning the points of $S_X$ are the vertices of a $d$-dimensional simplex in $X$).  Denote the points of $S_X$ by $z_0,\ldots,z_d$.  Then, each of the directional derivatives $D_{\overline{z_0z_i}}F(z_0)$, for $i = 1,\ldots,d$, is the difference between the two coordinates of $E_{S_X}(F)$ corresponding to $z_0$ and $z_i$, scaled by $1/|z_0 -z_i|$.  Since the vectors $\overline{z_0z_i}$ span $\mathbb{R}^d$, each of the partial derivatives in the directions of the standard coordinate axes, $\frac{\partial F}{\partial x_i}$, can be written as a linear combination of the directional derivatives  $D_{\overline{z_0z_i}}F(z_0)$.  Thus, each entry of the matrix $\boldsymbol{J}F\vert_x$ for $x \in X$ is given by a linear combination of the coordinates of the vector $E_{S_X}(F)$.  Similarly, picking $S_Y$ to be a decisive set in $Y$ for $F \vert_Y$, each entry of the matrix $\boldsymbol{J}F\vert_y$ for $y \in Y$ is given by a linear combination of the coordinates of the vector $E_{S_Y}(F)$.

Denote by $\textrm{ex}(F\vert_X):\mathbb{R}^d \to \mathbb{R}^{n_m}$ the affine-linear extension of the restricted map $F\vert_X : X \to \mathbb{R}^{n_m}$ to all of $\mathbb{R}^d$, and define $\textrm{ex}(F\vert_Y)$ similarly.  Then $\textrm{ex}(F\vert_X)$ (resp. $\textrm{ex}(F\vert_Y)$) can be written as $\vec{x} \mapsto \vec{c}_X + \boldsymbol{J}F\vert_X \vec{x} $ for some constant vector $\vec{c}_X$ (resp. $\vec{c}_Y$). 
The hyperplane $H$ containing the shared $(d-1)$-dimensional facet is given by 
 $$H = \{\vec{x} : \textrm{ex}(F\vert_X)\vec{ x}= \textrm{ex}(F\vert_Y)\vec{x} \}= \{ \vec{x} : \vec{c}_X - \vec{c}_Y + (\boldsymbol{J}F\vert_X - \boldsymbol{J}F\vert_Y)\vec{x} = \vec{0} \}.$$
 That is, $H$ is the set of points $(x_1,\ldots,x_d) \in \mathbb{R}^d$  such that for every coordinate $1 \leq i \leq n_m$, some affine-linear expression in the variables that are the coordinates of the vector $E_{S_X \cup S_Y}(F)$ assumes the value $0$. 
   \end{proof}
   
   \begin{remark}
Lemma \ref{l:candetecthyperplanes} may be interpreted as a description of a dual space to the space of functions in a neighborhood of a combinatorially stable 
function $\rho(\theta)$.  Namely, a point in the dual space consists of an assignment of a value in $\mathbb{R}^{n_m}$ and a total derivative (in $\mathbb{R}^{n_m} \times \mathbb{R}^{n_0}$) to each top-dimensional cell of $\mathcal{C}(\theta)$.   
   \end{remark}

Next, Lemma \ref{l:Aassumptions} shows that if various assumptions are satisfied, then there exists a batch $Z \subset \mathbb{R}^{n_0}$ that is suitable for detecting all relevant hyperplanes from the data $E_Z(A,\theta)$.  Note that (by Lemma \ref{l:opennbhd1}) if ternary labeling for $x$ contains no $0$s, then  $x$ is a parametrically smooth point for $\theta$.  

\begin{lemma}  \label{l:Aassumptions}
Fix an ordinary parameter $\theta \in \mathcal{P}_{n_1,\ldots,n_m}$.
 Let $A$ be an affine-linear map $ A: \mathbb{R}^{n_0} \to \mathbb{R}^{n_1}$ (equivalently an $n_1 \times (n_0+1)$ matrix).  For any point $y \in \mathbb{R}^{n_0}$, denote the ternary label of $y$ with respect to the $k$th row of $A$ by $s^A_k(y)$; for any point $y \in \mathbb{R}^{n_1}$, denote the ternary label of $y$ with respect to the $j$th neuron of the $i$th layer of $\bar{\rho}(\theta)$ by $s^i_j(y)$.  
Assume that $\{x \in \mathbb{R}^{n_0} \mid s^A_k(x) = 0\}$ is a hyperplane for all $1 \leq k \leq n_1$.  
Suppose that for each $1 \leq k \leq n_1$, there exists a point $y_k \in \mathbb{R}^{n_0}$ such that 
\begin{enumerate}
\item $s_k^A(y_k) = 0$, 
\item $s_j^A(y_k) \neq 0$ for all $j \neq k$, 
\item the full ternary label of the point $\sigma \circ A(y_k)$ with respect to $s$ has no $0$s,

\item \label{i:clarifyJmeaning} the $k$th column of $\boldsymbol{J}\rho(\theta)\vert_{\sigma \circ A (y_k)}$ is nonzero (as a column vector),
\end{enumerate}
Then there exists a finite set $Z \subset \mathbb{R}^{n_0}$ such that, 

\begin{enumerate}
 \setcounter{enumi}{4}
\item up to scaling rows of $A$ by positive numbers, each entry of $A$ is given by a unique affine-linear combination of the coordinates of the vector $E_Z(A,\theta)$.  
\item the ternary label of every point in $Z$ with respect to every neuron of $(A,\theta)$ is nonzero.  
\end{enumerate}

\noindent If, additionally,  $y_k \in \left(\mathbb{R}^{>0} \right)^{n_0}$ for every $k$, then 
\begin{enumerate} 
 \setcounter{enumi}{6}
\item the set $Z$ can be chosen to be a subset of $(\mathbb{R}^{>0})^{n_0}$. 
\end{enumerate}
\end{lemma}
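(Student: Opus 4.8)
The plan is to recover each row $[A]_k$ of the matrix of $A$ (up to a positive scalar) from the values of $F := \bar\rho(A,s) = \bar\rho(s)\circ\sigma\circ A$ at finitely many points clustered near the $y_k$, by appealing to Lemma \ref{l:candetecthyperplanes}. Write $H_k := \{x \in \mathbb{R}^{n_0} : \theta^A_k(x) = 0\} = \{x : [A]_k\widehat x = 0\}$, a hyperplane by assumption, so $y_k \in H_k$. First I would pin down the local picture of $\mathcal{C}(A,s)$ near $y_k$: by (ii) the first-layer hyperplanes $H_j$ with $j \ne k$ miss $y_k$, and by (iii) every node map $F_{i,j}$ of $(A,s)$ with $i \ge 2$ is nonzero at $y_k$ (its value there equals a node map of $s$ evaluated at $\sigma\circ A(y_k)$), so on a sufficiently small open ball $V$ about $y_k$ the bent hyperplane arrangement of $(A,s)$ coincides with $H_k \cap V$. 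Recalling that the $(n_0-1)$-skeleton of any canonical polyhedral complex is contained in its bent hyperplane arrangement, $V \setminus H_k$ therefore has two components $V^+$ (on which $[A]_k\widehat x > 0$, i.e.\ neuron $k$ of layer $1$ is on) and $V^-$ (on which it is off), each lying in the interior of a top-dimensional cell of $\mathcal{C}(A,s)$ — call them $X_k \supseteq V^+$ and $Y_k \supseteq V^-$, distinct because $H_k$ supports each — and $H_k \cap V$ lies in the relative interior of a facet common to $X_k$ and $Y_k$, with affine hull $H_k$. In particular every point of $V^+ \cup V^-$ has ternary label (for $(A,s)$) free of zeros, hence is parametrically smooth for $(A,s)$.

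Next I would compute how the affine formula of $F$ jumps across $H_k$. Shrinking $V$ so that $\sigma\circ A(V)$ lies in the open region of $\mathbb{R}^{n_1}$ on which $\bar\rho(s)$ is the affine map $v \mapsto \vec c + L_0 v$, where $L_0 := \boldsymbol{J}\rho(s)\vert_{\sigma\circ A(y_k)}$ (legitimate by (iii)), the restriction of $\sigma\circ A$ to $V^+$, resp.\ $V^-$, is an affine map $P^{\textrm{on}}$, resp.\ $P^{\textrm{off}}$, these agreeing in every output coordinate except the $k$th, in which $P^{\textrm{on}}$ returns $[A]_k\widehat x$ and $P^{\textrm{off}}$ returns $0$. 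Hence the affine-linear extensions of $F\vert_{X_k}$ and $F\vert_{Y_k}$ satisfy
$$\textrm{ex}(F\vert_{X_k}) - \textrm{ex}(F\vert_{Y_k}) = L_0\circ(P^{\textrm{on}} - P^{\textrm{off}}) = \big([A]_k\widehat x\big)\,(L_0)_{\cdot k},$$
where $(L_0)_{\cdot k}$ is the $k$th column of $\boldsymbol{J}\rho(s)\vert_{\sigma\circ A(y_k)}$, which is nonzero by (iv). Since $H_k$ is a genuine hyperplane, the linear part of $[A]_k\widehat x$ is a nonzero covector, so $\boldsymbol{J}F\vert_{X_k} \ne \boldsymbol{J}F\vert_{Y_k}$ and Lemma \ref{l:candetecthyperplanes} applies to the pair of cells $X_k, Y_k$ of $\mathcal{C}(A,s)$.

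I would then assemble $Z$. For each $k$ choose decisive sets $S_{X_k} \subset V^+$ and $S_{Y_k} \subset V^-$ (each an $n_0$-simplex, available since $V^\pm$ is open and $n_0$-dimensional) and set $Z := \bigcup_{k=1}^{n_1}(S_{X_k} \cup S_{Y_k})$; this is finite, and its points, lying in the $V^\pm$, have ternary label for $(A,s)$ free of zeros, which is (vi). As established in the proof of Lemma \ref{l:candetecthyperplanes}, the coefficients of $\textrm{ex}(F\vert_{X_k})$ (resp.\ $\textrm{ex}(F\vert_{Y_k})$), viewed as affine functions of $x$, are affine-linear functions of the coordinates of $E_{S_{X_k}}(A,s)$ (resp.\ $E_{S_{Y_k}}(A,s)$), hence of the coordinates of $E_Z(A,s)$; so fixing $\ell$ with $(L_0)_{\ell k} \ne 0$, the $n_0+1$ coefficients of the $\ell$th coordinate of $\textrm{ex}(F\vert_{X_k}) - \textrm{ex}(F\vert_{Y_k})$ — which by the displayed equation form the vector $(L_0)_{\ell k}\,[A]_k$ — are affine-linear functions of the coordinates of $E_Z(A,s)$. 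It remains to fix the sign: since $[A]_k\widehat x > 0$ on $X_k$, the value of that $\ell$th coordinate of $\textrm{ex}(F\vert_{X_k}) - \textrm{ex}(F\vert_{Y_k})$ at any point of $S_{X_k}$ — computable from $E_Z(A,s)$ by affine extrapolation of $F\vert_{Y_k}$ off $S_{Y_k}$ — carries the sign of $(L_0)_{\ell k}$, and which side of $H_k$ is the ``on'' side is read off from the marked network $(A,s)$; multiplying the recovered vector by $\pm 1$ accordingly yields $[A]_k$ up to a positive scalar, affine-linearly in the coordinates of $E_Z(A,s)$, which is (v). For the final clause, if each $y_k \in (\mathbb{R}^{>0})^{n_0}$ one simply chooses the corresponding ball $V$ inside that open set, so that $Z \subset (\mathbb{R}^{>0})^{n_0}$.

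I expect the main obstacle to be the first step: verifying that near $y_k$ the complex $\mathcal{C}(A,s)$ really does consist of exactly two top-dimensional cells glued along a facet inside $H_k$, without a genericity or transversality hypothesis on $(A,s)$. This relies on the two facts flagged above — that the $(n_0-1)$-skeleton of a canonical polyhedral complex lies in the union of its node-map zero sets, and that near $y_k$ this union is just the single hyperplane $H_k$ — together with a short dimension count ruling out lower-dimensional strata in $V$. The sign-recovery in the third paragraph is a minor but genuine point, needed to obtain rescaling by a positive (rather than merely nonzero) number.
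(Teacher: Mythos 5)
Your proposal is correct and follows essentially the same route as the paper's proof: localize near each $y_k$ using (i)--(iii) to get two adjacent top-dimensional cells of $\mathcal{C}(A,s)$ separated by $H_k$ with constant nonzero ternary labels, use the chain rule and condition (iv) to verify the Jacobian jump required by Lemma \ref{l:candetecthyperplanes}, and take $Z$ to be a union of decisive simplices on either side of each $H_k$. Your explicit sign-recovery step (pinning down a \emph{positive} rather than merely nonzero row-scaling factor) is a small detail the paper's write-up glosses over, but it is a refinement of, not a departure from, the same argument.
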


\begin{remark} Note that $\rho(s): \mathbb{R}^{n_1} \to \mathbb{R}^{n_m}$, so $\mathbf{J}\rho(\theta)$ in item \eqref{i:clarifyJmeaning} is a matrix of partial derivatives with respect to the coordinates of $x \in \mathbb{R}^{n_1}$, not $\theta$.  Since $\rho(\theta)$ is affine-linear in $x$ on $C_k$, $\mathbf{J}\rho(\theta)$ is constant on $C_k$. \end{remark}

\begin{proof}
Conditions (i) and (ii) imply that $y_k$ is in the interior of a codimension $1$ cell of the canonical polyhedral decomposition $\mathcal{C}(A,\theta)$ of $\mathbb{R}^{n_0}$, and there is an open neighborhood $U_k$ of $y_k \subset \mathbb{R}^{n_0}$ such that no other hyperplane associated to $A$ intersects $U_k$.  Consequently, the ternary labels $s_j^A$ are constant on $U_k$ for all $j \neq k$. 
  Condition (iii) implies that $\sigma \circ A(y_k)$ is in the interior of some top-dimensional cell $C_k$ of  $\mathcal{C}(\theta)$. By condition (iii), by making $U_k$ small enough, we may assume that $\sigma \circ A (U_k) \subset C_k$, so that the ternary labeling with respect to each neuron of $\bar{\rho}(A,\theta)$ except the one corresponding to the $k$th row of $A$ is constant on $U_k$.   

The set $U_k \setminus \{x \in \mathbb{R}^{n_0} \mid \theta^A_k(x) =0 \}$ consists of two connected components; denote these components by $U_k^+$ and $U_k^-$ according to the sign of $\theta^A_k$ on these sets. Then, the ternary label with respect to every neuron of $(A,\theta)$ is constant and nonzero on each of $U^+_k$ and $U_k^-$.

 Since $\rho(\theta)$ is affine-linear on $C_k$, the restriction of $\rho(\theta) \circ \sigma \circ A$ to each of $U_k^+$, $U_k^-$ is an affine-linear map.  
We wish to use Lemma \ref{l:candetecthyperplanes} to obtain the set $Z$;  in order to satisfy the assumptions of the lemma, we must show that for each $k$
\begin{equation} \label{eq:differentJacs}
\boldsymbol{J}(\rho(\theta) \circ \sigma \circ A) \vert_{U_k^+} \neq \boldsymbol{J}(\rho(\theta) \circ \sigma \circ A) \vert_{U_k^-}.
\end{equation}  

  By the chain rule, for any point $z \in \mathbb{R}^{n_0}$,
\begin{equation} \label{eq:chainrule}
\boldsymbol{J}(\rho(\theta) \circ \sigma \circ A)\vert_z  = \boldsymbol{J} (\rho(\theta)) \vert_{\sigma \circ A (z)}    \boldsymbol{J} (\sigma \circ A) \vert_z.
\end{equation}
Since $\sigma \circ A (U_k) \subset C_k$ and $\rho(\theta)$ is affine-linear on $C_k$, we have that 
$ \boldsymbol{J}(\rho(\theta)) \vert_{\sigma \circ A (z)}$  does not depend on $z \in U_k$.  Set

$$\widetilde{A}_k^+ \coloneqq \boldsymbol{J}(\sigma \circ A \vert_{U_k^+}), \quad \widetilde{A}_k^- \coloneqq \boldsymbol{J}(\sigma \circ A \vert_{U_k^-}).$$
Observe that $\widetilde{A}_k^+$ is formed from the matrix $A$ by omitting the last bias column, and possibly zeroing out some rows (corresponding to neurons that are off on $U_k^+$).  The assumption that each row of $A$ determines a hyperplane implies that the $k$th row of $\widetilde{A}_k^+$ contains a nonzero element.  Also $\widetilde{A}_k^-$ agrees with $\widetilde{A}_k^+$ except in the $k$th row, in which every entry of $\widetilde{A}_k^-$ is $0$. 

Rewriting \eqref{eq:chainrule}, we have 
\begin{align*}  
\boldsymbol{J}(\rho(\theta) \circ \sigma \circ A) \vert_{U_k^+}  & =  \boldsymbol{J}(\rho(\theta)) \vert_{C_k} \cdot \widetilde{A}_k^+ \\
\boldsymbol{J}(\rho(\theta) \circ \sigma \circ A) \vert_{U_k^-} &   =  \boldsymbol{J}(\rho(\theta)) \vert_{C_k} \cdot \widetilde{A}_k^-  \\
\end{align*}
By condition (iv), the $k$th column of $\boldsymbol{J}(\rho(\theta)) \vert_{C_k}$ has a nonzero element.  Since the $k$th row of $\widetilde{A}_k^+$ also has a nonzero element, while the $k$th row of $\widetilde{A}_k^-$ is uniformly $0$, the two matrix products above are not equal, establishing \eqref{eq:differentJacs}.

Now, Lemma \ref{l:candetecthyperplanes} guarantees that for any decisive sets $S_k^+ \subset U_k^+$ and $S_k^- \subset U_k^-$, up to rescaling rows, there is a unique matrix that defines $H$, and its entries are affine-linear expressions in the variables that are the 
the coordinates of the vector $E_{S_k^+ \cup S_k^-}(A,\theta)$.  Set $Z = S_k^+ \cup S_k^-$.  The set $Z$ satisfies conclusions (v) and (vi) by construction.

If  $\cup_k \{y_k\} \subset (\mathbb{R}^{>0})^{n_0}$, then since $Z$ is contained in an arbitrarily small neighborhood of $\cup_k \{y_k\}$, conclusion (vii) follows. 
\end{proof}

Theorem \ref{t:inductiverevised} will serve as the inductive step in a  constructive proof showing that for narrowing architectures, functional dimension attains the upper bound from Theorem \ref{t:UpperBdTightForNarrowingNetworks}.

\begin{theorem} \label{t:inductiverevised}
Fix an ordinary parameter $\theta \in \mathcal{P}_{n_1,\ldots,n_m}$.   Suppose $Z_1 \subset (\mathbb{R}^{>0})^ {n_1}$ is a finite set of points
whose ternary labels with respect to every neuron of $s$ are nonzero and 
$$\textrm{dim}_{fun}(\theta) =  \textrm{rank}   \ E_{Z_1} \vert_\theta.$$
Suppose  $A:\mathbb{R}^{n_0} \to \mathbb{R}^{n_1}$ is a surjective affine-linear map that satisfies all assumptions of Lemma \ref{l:Aassumptions} (including that $\cup_k\{y_k\} \subset (\mathbb{R}^{>0})^{n_0}$).  
Then there exists a finite set $Z \subset (\mathbb{R}^{>0})^{n_0}$ such that 
the ternary label of every point in $Z$ with respect to every neuron of $(A,\theta)$ is nonzero and 
$$\textrm{dim}_{fun}(A,\theta) = \textrm{rank }E_Z\vert_{(A,\theta)} = n_0n_1 + \textrm{dim}_{\textrm{fun}}(\theta).$$
\end{theorem}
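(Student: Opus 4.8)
The plan is to combine the upper bound from Lemma~\ref{l:precompositionbound} with a matching lower bound that is realized on an explicit batch. For the upper bound, note first that since $Z_1\subset(\mathbb{R}^{>0})^{n_1}$ has all nonzero ternary labels, every point of $Z_1$ is parametrically smooth for $s$ (Lemma~\ref{l:sufficientconditionnondegenerate}, cf. Corollary~\ref{cor:NonDiffInBHA}), so $Z_1$ witnesses $\textrm{dim}_{\textrm{fun+}}(s)\ge\textrm{rank}\,\boldsymbol{J}E_{Z_1}\vert_s=\textrm{dim}_{\textrm{fun}}(s)$; since the reverse inequality $\textrm{dim}_{\textrm{fun+}}(s)\le\textrm{dim}_{\textrm{fun}}(s)$ is automatic, we get $\textrm{dim}_{\textrm{fun+}}(s)=\textrm{dim}_{\textrm{fun}}(s)$, and \eqref{eq:precompineq} yields $\textrm{dim}_{\textrm{fun}}(A,s)\le n_0n_1+\textrm{dim}_{\textrm{fun}}(s)$. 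It therefore suffices to produce a finite $Z\subset(\mathbb{R}^{>0})^{n_0}$, every point of which has nonzero ternary label with respect to every neuron of $(A,s)$, with $\textrm{rank}\,\boldsymbol{J}E_Z\vert_{(A,s)}\ge n_0n_1+\textrm{dim}_{\textrm{fun}}(s)$; the theorem then follows, as all of the quantities $\textrm{rank}\,\boldsymbol{J}E_Z\vert_{(A,s)}\le\textrm{dim}_{\textrm{fun}}(A,s)\le n_0n_1+\textrm{dim}_{\textrm{fun}}(s)$ are pinched equal.

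I would take $Z=Z_A\sqcup Z_B$. Here $Z_A\subset(\mathbb{R}^{>0})^{n_0}$ is the set furnished by Lemma~\ref{l:Aassumptions}: up to positive row-rescalings, every entry of $A$ is a unique affine-linear function of the coordinates of $E_{Z_A}(A,s)$, and every point of $Z_A$ has nonzero ternary label with respect to every neuron of $(A,s)$. For $Z_B$ I first fix a set $Z_1^{\ast}$ that realizes $\textrm{dim}_{\textrm{fun}}(s)$, is parametrically smooth for $s$, lies in interiors of top-dimensional cells of $\mathcal{C}(s)$, and is contained in $\sigma\circ A\big((\mathbb{R}^{>0})^{n_0}\big)$ -- the existence of such a set is where surjectivity of $A$ together with $\cup_k\{y_k\}\subset(\mathbb{R}^{>0})^{n_0}$ enters (the latter forces the relevant affine pieces of $\sigma\circ A$ to reach, from inside the positive orthant, a neighborhood of the interior point $\sigma\circ A(y_k)$ of a top-dimensional cell of $\mathcal{C}(s)$) -- and then let $Z_B\subset(\mathbb{R}^{>0})^{n_0}$ be a choice of $\sigma\circ A$-preimages of the points of $Z_1^{\ast}$, chosen off the hyperplanes of $A$. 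After shrinking a neighborhood of $A$ and of $s$, the combinatorial picture near each point of $Z$ is constant in both arguments, every point of $Z$ lies in $(\mathbb{R}^{>0})^{n_0}$ and has nonzero ternary label with respect to every neuron of $(A,s)$, and in particular $Z$ is parametrically smooth for $(A,s)$.

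For the rank estimate I would pass to the coordinates of the encoding $e$ from the proof of Lemma~\ref{l:precompositionbound}, so that, after discarding the $n_1$ identically-zero columns coming from the scaling factors, the columns of $\boldsymbol{J}(E^0_Z\circ e^{-1})\vert_{e(A,s)}$ split into an ``$A$-block'' of $n_0n_1$ columns (derivatives in the $(\mathbb{R}^{n_0})^{n_1}$ directions) and an ``$s$-block'' whose columns coincide with those of $\boldsymbol{J}E^1_{\sigma\circ A(Z)}\vert_s$ (notation as in that proof). Post-compose $E^0_Z\circ e^{-1}$ with the smooth map $\Xi$ that feeds the $E_{Z_A}$-coordinates through the (rational) recovery formula of Lemmas~\ref{l:candetecthyperplanes} and \ref{l:Aassumptions}(v), returning the $n_0n_1$ essential, row-normalized first-layer parameters, and is the identity on the $E_{Z_B}$-coordinates; call the composite $\Psi$, viewed as a function of $(A',s')$ near $(A,s)$. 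The crucial point -- and the reason Lemma~\ref{l:candetecthyperplanes} is phrased for an arbitrary PL map on a fixed polyhedral complex -- is that this recovery reconstructs the first-layer hyperplanes and their rescaled coefficients from the evaluation data \emph{independently of the later-layer function} $\rho(s')$; hence the first output block of $\Psi$ is identically equal to the essential first-layer coordinates, so its $s'$-derivatives vanish, its $(\mathbb{R}^{n_0})^{n_1}$-derivatives form an invertible matrix, and its scaling-derivatives vanish. In the $e$-coordinates $\boldsymbol{J}\Psi\vert_{e(A,s)}$ is therefore block lower triangular, of the shape $\left(\begin{smallmatrix} R & 0\\ \ast & B\end{smallmatrix}\right)$ with $\textrm{rank}(R)=n_0n_1$ and $B=\boldsymbol{J}E^1_{\sigma\circ A(Z_B)}\vert_s=\boldsymbol{J}E^1_{Z_1^{\ast}}\vert_s$ of rank $\textrm{dim}_{\textrm{fun}}(s)$. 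A one-line argument -- take $n_0n_1$ independent columns from the first block, $\textrm{dim}_{\textrm{fun}}(s)$ independent columns from the second, and separate the two sets using the zero block on top -- gives $\textrm{rank}\,\boldsymbol{J}\Psi\vert_{e(A,s)}\ge n_0n_1+\textrm{dim}_{\textrm{fun}}(s)$. Since rank does not increase under post-composition and $e$ is a diffeomorphism, $\textrm{rank}\,\boldsymbol{J}E_Z\vert_{(A,s)}\ge n_0n_1+\textrm{dim}_{\textrm{fun}}(s)$, which together with the upper bound forces $\textrm{dim}_{\textrm{fun}}(A,s)=\textrm{rank}\,\boldsymbol{J}E_Z\vert_{(A,s)}=n_0n_1+\textrm{dim}_{\textrm{fun}}(s)$.

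The delicate step is not the linear algebra but establishing that the ``$A$-recovery'' half and the ``$s$-evaluation'' half of the batch genuinely contribute independent rank. Concretely, one must verify that the reconstruction of the first layer from $E_{Z_A}$ is valid and smooth throughout a neighborhood of $(A,s)$ and does not see the parameters of $s$ -- this is exactly what makes the Jacobian of $\Psi$ block triangular, and it is where the genericity hypotheses bundled into Lemma~\ref{l:Aassumptions} (in particular condition (iv), which keeps the relevant column of $\boldsymbol{J}\rho(s)$ nonzero so the recovered row can be normalized) and the ``dual space'' viewpoint of Lemma~\ref{l:candetecthyperplanes} are essential. One must also check that enlarging the batch by $\sigma\circ A(Z_A)$ cannot push the rank of the $s$-block past $\textrm{dim}_{\textrm{fun}}(s)$ (automatic, since $\sigma\circ A(Z_A)$ consists of parametrically smooth points for $s$). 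A secondary technical nuisance -- already flagged by the ``furthermore'' in Lemma~\ref{l:precompositionbound} -- is the construction of $Z_B\subset(\mathbb{R}^{>0})^{n_0}$ mapping onto a set realizing $\textrm{dim}_{\textrm{fun}}(s)$; this is precisely the point at which surjectivity of $A$ and the placement $\cup_k\{y_k\}\subset(\mathbb{R}^{>0})^{n_0}$ are used.
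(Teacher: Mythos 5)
Your proposal is correct and follows essentially the same route as the paper: the batch is split into a piece that recovers the (row-normalized) first-layer matrix via Lemma \ref{l:Aassumptions}/Lemma \ref{l:candetecthyperplanes} and a piece of $\sigma\circ A$-preimages of a batch realizing $\textrm{dim}_{\textrm{fun}}(s)$, a post-composed recovery map makes the Jacobian block triangular with blocks of rank $n_0n_1$ and $\textrm{dim}_{\textrm{fun}}(s)$, and the chain rule plus the upper bound of Lemma \ref{l:precompositionbound} pinch the equality. Your choice to phrase everything in the $e$-coordinates (so the $A$-block is literally an identity) is only a cosmetic variant of the paper's map $\tau$ and its block $\delta$.
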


\begin{proof}
Because $A$ is surjective and $Z_1 \subset (\mathbb{R}^{>0})^{n_1}$, we may pick a set $Z_0 \subset (\mathbb{R}^{>0})^{n_0}$ that consists of one preimage under $\sigma \circ A$ of each point in $Z_1$.  Then the ternary label of each point in $Z_0$ with respect to each neuron of $A$ is $+1$; moreover, the ternary label of each point of $Z_0$ with respect to every neuron of $(A,\theta)$ is nonzero.

By Lemma \ref{l:Aassumptions}, there exists a finite set $Z_* \subset (\mathbb{R}^{>0})^{n_0}$ such that the ternary label of every point of $Z_*$ with respect to every neuron of $(A,\theta)$ is nonzero, and 
 each element of $A$ (up to scaling rows) is given by an affine-linear function in the variables that are the coordinates of $E_{Z_*}(A,\theta)$. 
 
  We will show that the set $Z = Z_0 \cup Z_*$ satisfies $$\textrm{dim}_{fun}(A,\theta) = \textrm{rank }E_Z\vert_{(A,\theta)} = n_0n_1 + \textrm{dim}_{\textrm{fun}}(\theta).$$
By Lemma \ref{l:precompositionbound}, $\textrm{dim}_{fun}(A,s) \leq n_0n_1 + \textrm{dim}_{\textrm{fun}}(s)$, so it suffices to prove that 
\begin{equation} \label{eq:inductivegoal}
\textrm{rank } E_{Z_0 \cup Z_*} \vert_{(\theta, A)}   \geq \textrm{rank } E_{Z_1}\vert_\theta + n_0n_1.
\end{equation}

Let $U$ be a small open neighborhood of $A$ in $\mathcal{P}_{n_0n_1}$. Without loss of generality (by making $U$ smaller, if necessary), by assumption  \eqref{i:hitsimportantpoints}, we may assume $Z_1$ is in the interior of $\textrm{Image}(\sigma \circ  u)$ for all $u \in U$.   By assumption, each row of $A$ has a nonzero element -- say in the $j_i$th element of row $i$ of $A$.  Again without loss of generality, we may assume that each $u \in U$  has nonzero entries in the $j_i$th element of row $i$, for each $i$.

 For any $u \in U$ and $\theta' \in \mathcal{P}_{n_1,\ldots,n_m}$, the first  $n_m|Z_0|$ coordinates of the evaluation data $E_{Z_0 \cup Z_*}(u,\theta') \in \mathbb{R}^{n_m(|Z_0| + |Z_*|)}$ represent $E_{Z_0}(u,\theta')$ and the last $n_m|Z_*|$ coordinates represent $E_{Z_*}(u,\theta')$. We will define the map 
$$\tau: E_{Z_0 \cup Z_*}(U \times \mathcal{P}_{n_1,\ldots,n_m}) \to \mathbb{R}^{n_m|Z_0|} \times (\mathbb{R}^{n_0})^{n_1}$$
to be the map that ``keeps" the data $E_{Z_0 \cup Z_*}(u,\theta')$, 
computes the entries of $A$ under the choice (i.e. scaling rows of $A$) that the $(i,j_i)$th entry of $A$ equals $1$ for each $i$, and then records the non-$(i,j_i)$-indexed entries of $A$.  Rigorously:
\begin{itemize}
\item  Define $\tau$ to be the identity on the first $n_m|Z_0|$ coordinates --  that is, for $u \in U$ and $\theta' \in \mathcal{P}_{n_1,\ldots,n_m}$, define the  $\mathbb{R}^{n_m|Z_0|}$ coordinate of $\tau(E_{Z_0 \cup Z_*}(u,\theta))$ to be $E_{Z_0}(u,\theta)$.
\item By Lemma \ref{l:candetecthyperplanes}, for each index $(i,j)$, there is a real-valued, affine-linear function $a_{i,j}$ such that, if $A'$ is any matrix such that 
$$\mathbf{J}E_{Z_*}(A,\theta) = \mathbf{J}E_{Z_*}(A',\theta),$$
then for each $i$ the $i$th row of $A'$ is given by 
$$\left [ r_i a_{i,1}(\mathbf{J}E_{Z_*}(A,\theta)), \ldots, r_i a_{i,n_0}(\mathbf{J}E_{Z_*}(A,\theta)), a_{i,n_0+1}(\mathbf{J}E_{Z_*}(A,\theta))
 \right ] $$
for some nonzero row-scaling-factor $r_i \in \mathbb{R}$. Define $B(E_{Z_*}(u,\theta'))$ to be the matrix computed in this way using the row-scaling-factors $r_i = r_{i,j_i}(\mathbf{J}E_{Z_*}(u,\theta'))^{-1}$, so that $j_i$th coordinate of the $i$th row of $B$ equals $1$, for every row index $i$. Define the  $(\mathbb{R}^{n_0})^{n_1}$ coordinate of $\tau: E_{Z_0 \cup Z_*}(u,\theta')$ to be the vector formed by unrolling the matrix $B \left(\mathbf{J}E_{Z_*}(u,\theta')\right)$ and then dropping the coordinates corresponding to all the indices $(i,j_i)$ (i.e. dropping all the entries that we explicitly forced to be $1$ by scaling rows).

\end{itemize}

We now consider the composition $$\tau \circ E_{Z_0 \cup Z_*}:   U \times \mathcal{P}_{n_1,\ldots,n_m}    \to   \mathbb{R}^{n_m|Z_0|} \times (\mathbb{R}^{n_0})^{n_1}.$$
Because all the maps $a_{i,j}$ are affine-linear, and the $a_{i,j_i}$s (the denominators of the scaling factors $r_i$) are nonzero on $U$,  $\tau$ is (at least locally at $(A,\theta)$) a smooth 
 map between differentiable manifolds. We will show that 
$$\textrm{rank } \tau  \circ E_{Z_0 \cup Z_*}\vert_{(\theta , A)} = \textrm{rank } E_{Z_1}\vert_\theta +n_0n_1.$$
After suitably permuting its rows and columns, the matrix $\boldsymbol{J}(\tau \circ E_{Z_0 \cup Z_*})\vert_{(\theta,A)}$ can be written as a block matrix 
 $$
    \left[
    \begin{array}{c|c}
\alpha & \beta \\
\hline
\gamma & \delta
\end{array}
\right]
$$
where 
\begin{itemize}
\item rows of $\alpha$ and $\beta$ correspond to coordinates in  $\mathbb{R}^{n_m|Z_0|}$,
\item rows of $\gamma$ and $\delta$ correspond to coordinates in $(\mathbb{R}^{n_0})^{n_1}$, 
\item columns of $\alpha$ and $ \gamma$ correspond to taking  partial derivatives with respect to parameters in $\mathcal{P}_{n_1,\ldots,n_m}$ (i.e. those that are coordinates of $\theta$), and
\item  columns of $\beta$ and $\delta$ correspond to taking partial derivatives with respect to parameters in $\mathcal{P}_{n_0n_1}$ (coordinates of $u$). 
\end{itemize}
By construction, 
$$\alpha =\boldsymbol{J} E_{Z_0}\vert_{(\theta, A)} = \boldsymbol{J}  E_{Z_1}\vert_s$$
 and 
$\gamma =\boldsymbol{ 0}.$
Since $\gamma = \boldsymbol{0}$ means 
$$\boldsymbol{J} (\tau \circ E_{Z_0 \cup Z_*})\vert_{(\theta, A)}$$
 is an upper triangular block matrix, it follows that

\begin{equation} \label{eq:intermedstep}
\textrm{rank } (\tau \circ E_{Z_0 \cup Z_*}) \vert_{(\theta, A)}= 
\textrm{rank }(\alpha) + \textrm{rank }(\delta) = \textrm{rank } E_{Z_1}\vert_\theta + \textrm{rank}(\delta).\end{equation}

But what is $\delta$? $\delta$ is the derivative of the map which is scaling the entries of $A$ so that the privileged element (the $j_i$-th element of the $i$-th row) has norm $1$.  More precisely,
a row of $\delta$ corresponds to looking at one of the $n_0n_1$-many non-$(i,j_i)$ indices of elements of $u$ -- say $(s,t)$ -- and recording (as the entries of the row vector) the  partial derivatives (with respect to each of the $n_1(n_0+1)$-many entries of $u$) of the map $[u_{i,j}] \mapsto \frac{u_{(s,t)} }{|u_{s,j_s}| }$.  It follows that the rank of $\delta$ is $n_0n_1$. 
Thus, from \eqref{eq:intermedstep}, we have 
\begin{equation}
\label{eq:factorrankachieves}
\textrm{rank }  (\tau  \circ E_{Z_0 \cup Z_*}) \vert_{(\theta , A)} = \textrm{rank } E_{Z_1}\vert_\theta +n_0n_1.
\end{equation}

By the chain rule,
\begin{equation} \label{eq:chainrule} 
\boldsymbol{J}(\tau \circ E_{Z_0 \cup Z_*})\vert_{(\theta, A)} = \boldsymbol{J} \tau \vert_{E_{Z_0 \cup Z_*}(\theta, A)} \boldsymbol{J} E_{Z_0 \cup Z_*}\vert_{(\theta, A)}).  
\end{equation}
Since the rank of a composition of linear maps is at most the minimum of the ranks of its factors,  equations \eqref{eq:factorrankachieves} and \eqref{eq:chainrule} together imply
$$
\textrm{rank } E_{Z_0 \cup Z_*}\vert_{(\theta,A)} \geq  \\
 \textrm{rank } (\tau \circ E_{Z_0 \cup Z_*}) \vert_{(\theta,A)} = \textrm{rank } E_{Z_1}\vert_\theta +n_0n_1.
$$
\end{proof}

\subsection{Tightness of the bound for narrowing architectures} \label{s:realizingbound}

In this section, we use Theorem \ref{t:inductiverevised} to prove that for narrowing networks, the upper bound on functional dimension from Theorem \ref{t:upperbound}  is tight.

\begin{definition} An architecture $(n_0,n_1,\ldots,n_m)$ is \emph{narrowing} if $n_i > n_{i+1}$ for all $i = 0,\ldots,m-1$. 
\end{definition}

Restricting our attention to narrowing architectures enables us to construct layer maps that send $(\mathbb{R}^{>0})^{n_i}$ surjectively onto $\mathbb{R}^{n_{i+1}}$ (Lemma  \ref{l:AExists}). 

 \begin{remark}
An example of a surjective affine map $A:(\mathbb{R}^{>0})^{n_0} \to \mathbb{R}^{n_1}$ is the following.  Let $n_0= 2$, $n_1 = 1$, and let $A$ be the map that is the dot product with the vector $(-1,1)$, i.e. the map that measures signed distance from a point $(x,y) \in (\mathbb{R}^{>0})^2$ to the line $y=x$. 
\end{remark}

The point of Lemma \ref{l:AExists} is to guarantee that a layer map $A$ satisfying the assumptions of the inductive step (Theorem \ref{t:inductiverevised}) exists. 

\begin{lemma} \label{l:AExists}
Fix natural numbers $n_0 > n_1$.  Fix an architecture $n_1,\ldots,n_m$ and an ordinary parameter $\theta \in \mathcal{P}_{n_1,\ldots,n_m}$ such that 
\begin{enumerate}[label=(\alph*)]
\item the ternary label of $\vec{0} \in \mathbb{R}^{n_1}$ with respect to $\theta$ has no $0$s,
\item $\textrm{dim}_{\textrm{fun}}(\theta) = \textrm{dim}_{\textrm{fun,+}}(\theta)$.
\item each column of $\boldsymbol{J} \rho(\theta)\vert_{\vec{0}}$ is nonzero (as a column vector),
\end{enumerate}
Then there exists a surjective affine-linear map $A:\mathbb{R}^{n_0} \to \mathbb{R}^{n_1}$ such that 
\begin{enumerate} 
\item the ternary label of $\vec{0} \in \mathbb{R}^{n_0}$ with respect to $(A,\theta)$ has no $0$s,
\end{enumerate}
 and $A$  satisfies the assumptions of Lemma \ref{l:Aassumptions} (equivalently, of Theorem \ref{t:inductiverevised}), i.e. 
\begin{enumerate}
 \setcounter{enumi}{1}
\item \label{i:nonzerorows} each row of $A$ determines a hyperplane in $\mathbb{R}^{n_0}$.
\item  \label{i:hitsimportantpoints} $(\mathbb{R}^{\geq 0})^{n_1} \subseteq \textrm{Image}(\sigma \circ A)$,
\end{enumerate}
and for each $1 \leq k \leq n_1$, there exists a point $x_k \in (\mathbb{R}^{> 0})^{n_0}$ such that 
\begin{enumerate}
\setcounter{enumi}{3}
\item \label{i:ptexists} $s^A_k(x_k) = 0$, 
\item \label{i:trueface} $s^A_j(x_k) \neq 0$ for all $j \neq k$. 
\item the ternary label of  $\sigma \circ A(x_k) \neq 0$ with respect to $\theta$ has no $0$s. 
\end{enumerate}

 \end{lemma}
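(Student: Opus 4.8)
The plan is to construct $A$ explicitly as a composition of a fixed "nice" surjection with a generic perturbation, and then verify the combinatorial/transversality conditions by a general-position argument. First I would use the narrowing hypothesis $n_0 > n_1$ to produce, via the method in the remark preceding the lemma (signed-distance-type maps), an affine-linear surjection $A_0 : \mathbb{R}^{n_0} \to \mathbb{R}^{n_1}$ whose restriction to the open positive orthant $(\mathbb{R}^{>0})^{n_0}$ is already surjective onto $\mathbb{R}^{n_1}$; concretely one can take the $k$th row of $A_0$ to be (a positive multiple of) $\mathbf{e}_k - \mathbf{e}_{k+1}$ composed with a translation, or any map whose row hyperplanes all genuinely meet the open positive orthant. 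This immediately gives property (3) of the conclusion ($(\mathbb{R}^{\geq 0})^{n_1} \subseteq \textrm{Image}(\sigma\circ A)$, since $A$ restricted to the positive orthant is already onto all of $\mathbb{R}^{n_1}$, and (2) (each row determines a genuine hyperplane, since each row is nonzero by construction).

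Next I would handle the "placement at $\vec 0$" and "column of $\mathbf{J}\rho(s)$" conditions. By hypothesis (a), the ternary label of $\vec 0 \in \mathbb{R}^{n_1}$ with respect to $s$ has no zeros, and by hypothesis (c) every column of $\mathbf{J}\rho(s)\vert_{\vec 0}$ is nonzero; I want to choose a translation/bias vector $b$ so that the affine map $A := A_0 + b$ sends $\vec 0 \in \mathbb{R}^{n_0}$ to a point of $\mathbb{R}^{n_1}$ whose $\sigma\circ A$-image lies in the interior of the same top-dimensional cell $C$ of $\mathcal{C}(s)$ that contains $\vec 0$ — in fact I'd like $\sigma\circ A(\vec 0)$ to still equal (or lie near) $\vec 0$, so the no-zeros ternary label and the nonzero-column property transfer verbatim. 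Since these are open conditions on the image point (by hypothesis (a), $\vec 0$ lies in the interior of a top-dimensional cell, and $\mathbf{J}\rho(s)$ is locally constant there), there is an open set of admissible biases $b$. This gives conclusion (1) (ternary label of $\vec 0 \in \mathbb{R}^{n_0}$ w.r.t. $(A,s)$ has no zeros).

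The substantive step is producing, for each $k$, the witness point $x_k \in (\mathbb{R}^{>0})^{n_0}$ with $\theta^A_k(x_k)=0$, $\theta^A_j(x_k)\neq 0$ for $j\neq k$, and $\sigma\circ A(x_k)$ having no zeros in its ternary label with respect to $s$. The idea is: the $k$th row hyperplane $H_k^A := \{x : [A]_k\widehat x = 0\}$ meets the open positive orthant (by the construction of $A_0$); a generic point $x_k$ of $H_k^A \cap (\mathbb{R}^{>0})^{n_0}$ avoids the other $n_1-1$ row hyperplanes (each $H_j^A \cap H_k^A$ is lower-dimensional for $j\neq k$, using that the rows are in general position — which I can arrange by a further small generic perturbation of $A_0$ preserving surjectivity and the positive-orthant conditions, since those are open). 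This yields conditions (4) and (5). For condition (6), note $\sigma\circ A$ maps $H_k^A \cap (\mathbb{R}^{>0})^{n_0}$ onto a full-dimensional piece of a coordinate hyperplane section of $\mathbb{R}^{n_1}$ intersected with the positive orthant (the $k$th coordinate is $0$, the others range over an open set); I want this image to meet the interior of a top-dimensional cell of $\mathcal{C}(s)$ with no zero ternary labels. Since $\vec 0$ already has a no-zeros label and lies in the closure of the region $\{y_k = 0, y_j > 0\}$, and the locus where $s$ has a zero in its ternary label is a finite union of bent hyperplanes (codimension $\geq 1$ — here I use that $s$ is ordinary, together with \cite{GrigsbyLindsey} when $s$ is additionally generic/transversal, but in general I only need the "zero-label locus is nowhere dense" fact from Theorem \ref{t:Fpiecewisepolynomial} / the proof of \cite[Theorem 4]{GrigsbyLindsey}), a generic choice of $x_k$ (and, if needed, a further small adjustment of the bias in the $k$th row of $A$, which does not affect the already-verified open conditions) places $\sigma\circ A(x_k)$ off that locus.

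The main obstacle I expect is the simultaneous satisfaction of all conditions: perturbing $A$ to fix up condition (6) for one index $k$ could in principle disturb conditions (1)–(5) or condition (6) for another index $j$. The clean way around this is to observe that \emph{every} condition in the conclusion is an open condition on the finite-dimensional parameter $(A, x_1,\ldots,x_{n_1})$ — openness of "no zeros in a ternary label at a given point" follows from Lemma \ref{l:sufficientconditionnondegenerate}, and surjectivity plus the positive-orthant incidence conditions are manifestly open — and that each is satisfiable individually on a dense (indeed generic) set. A finite intersection of dense open sets in the relevant parameter space is nonempty, so a single choice works for all $k$ at once. I would organize the writeup as: (i) fix $A_0$ and establish surjectivity and (2),(3); (ii) perturb to general position and fix the bias to establish (1); (iii) for each $k$, show the set of good witness points is open and dense; (iv) invoke the Baire/finite-intersection argument to conclude.
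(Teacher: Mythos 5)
Your overall strategy (openness plus genericity of each condition, then intersect) is the same as the paper's, and your handling of conditions (ii)--(v) is fine. But there is a genuine gap at condition (vi). You assert that ``a generic choice of $x_k$ \ldots places $\sigma\circ A(x_k)$ off'' the zero-label locus of $s$, justified by the fact that this locus is nowhere dense in $\mathbb{R}^{n_1}$. That dimension count does not apply here: the point $\sigma\circ A(x_k)$ is confined to the codimension-one slice $\{y_k=0\}\cap(\mathbb{R}^{\geq 0})^{n_1}$, and a nowhere dense subset of $\mathbb{R}^{n_1}$ can perfectly well contain relatively open (even unbounded) portions of that slice away from the origin, since the bent hyperplanes of $s$ need not be transverse to it. The only region of the slice where hypothesis (a) guarantees no-zero labels is a neighborhood $V$ of $\vec 0$, so you must actively force $A(x_k)$ into $V$ rather than rely on genericity; your proposed fix of ``a further small adjustment of the bias in the $k$th row'' does not do this, because it only affects the $k$th coordinate of $A(x_k)$, which is $0$ anyway, and leaves the remaining coordinates wherever they were.

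The paper closes exactly this gap with a row-rescaling step that your write-up is missing: after securing conditions (ii)--(v) on a positive-measure set of matrices, one multiplies each row of $A$ by a small positive constant. This leaves every hyperplane $H^A_j$, every sign $\theta^A_j$, and surjectivity unchanged, so (ii)--(v) survive with the same witness points $x_k$; but it contracts each coordinate of $A(x_k)$ and of $A(\vec 0)$ toward $0$, so for small enough constants all the points $A(x_k)$ and $\sigma\circ A(\vec 0)$ land in $V$, which yields (i) and (vi) simultaneously. (An equivalent repair within your framework: use surjectivity of $A$ restricted to $(\mathbb{R}^{>0})^{n_0}$ onto the slice $\{y_k=0\}$ to choose $x_k$ with $A(x_k)$ close to $\vec 0$ and off the other hyperplanes; but then ``near $\vec 0$'' is an open, not dense, condition, so your concluding ``finite intersection of dense open sets'' argument must be replaced by an intersection of nonempty open sets that you exhibit explicitly, which is what the paper does.)
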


 \begin{proof}
 Conditions \eqref{i:nonzerorows} and  \eqref{i:hitsimportantpoints}  hold for Lebesgue almost-every matrix $A$. If we ignore the requirement that the points $x_k$ belong to the positive orthant, conditions \eqref{i:ptexists} and \eqref{i:trueface} also hold for a full measure set of matrices; with the requirement that $x_k$ belong to the positive orthant, conditions \eqref{i:ptexists} and \eqref{i:trueface} are satisfied by a nonempty open set of matrices.  Hence, a positive measure set of matrices satisfies conditions \eqref{i:nonzerorows}-\eqref{i:trueface}.  
If a matrix $A$ satisfies conditions \eqref{i:nonzerorows}-\eqref{i:trueface}, a matrix $A'$ obtained by rescaling a row of $A$ will also satisfy  conditions \eqref{i:nonzerorows}-\eqref{i:trueface}.  Since the ternary label of  $\vec{0} \in \mathbb{R}^{n_1}$ has no $0$s, there is an open neighborhood $V \subset \mathbb{R}^{n_1}$ of $\vec{0}$ on which the ternary labels with respect to $s$ also all have no $0$s.  
We may rescale rows of $A$ so that all the points 
 $A(x_k)$ and $\sigma \circ A(\vec{0})$ are in $V$.  Hence, there exists a positive measure set of matrices $A$ satisfying all six conditions.  \end{proof}

 \begin{theorem} \label{t:UpperBdTightForNarrowingNetworks}
  For any narrowing architecture $(n_0,\ldots,n_m)$, the upper bound from Theorem \ref{t:upperbound} is tight, 
  i.e. $$\textrm{dim}_{\textrm{fun}} \left(\mathcal{P}_{n_0,\ldots,n_m}\right) =  n_m+\sum_{i=0}^{m-1}n_in_{i+1}.$$
\end{theorem}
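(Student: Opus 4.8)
One inequality --- $\textrm{dim}_{\textrm{fun}}(\mathcal{P}_{n_0,\ldots,n_m}) \le n_m + \sum_{i=0}^{m-1} n_i n_{i+1}$ --- holds for every architecture by Theorem \ref{t:upperbound}, so the plan is to exhibit, for each narrowing architecture, an ordinary parameter at which functional dimension equals this quantity. I would prove this by induction on the length of the architecture sequence, using Theorem \ref{t:inductiverevised} as the inductive step and Lemma \ref{l:AExists} to supply the first-layer map $A$ that step requires. Because Lemma \ref{l:AExists} imposes three hypotheses on the parameter being precomposed, the induction must carry a strengthened statement: for every narrowing architecture $(N_0,\ldots,N_k)$ there is an ordinary $s \in \mathcal{P}_{N_0,\ldots,N_k}$ such that (P1) $\textrm{dim}_{\textrm{fun}}(s) = N_k + \sum_{i=0}^{k-1} N_i N_{i+1}$; (P2) this value is attained as $\textrm{rank }\boldsymbol{J}E_Z\vert_s$ for some finite $Z \subset (\mathbb{R}^{>0})^{N_0}$ none of whose points has a $0$ in its ternary label with respect to any neuron of $s$ (so in particular $\textrm{dim}_{\textrm{fun}}(s) = \textrm{dim}_{\textrm{fun+}}(s)$ and $s$ is ordinary); (P3) the ternary label of $\vec{0} \in \mathbb{R}^{N_0}$ with respect to $s$ has no $0$s; and (P4) every column of $\boldsymbol{J}\rho(s)\vert_{\vec{0}}$ (the Jacobian in the input variables) is nonzero. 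Note that (P3), (P2), (P4) for $s$ are exactly hypotheses (a), (b), (c) of Lemma \ref{l:AExists}.

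For the base case ($k=1$, architecture $(N_0,N_1)$), I take $s$ to be the single affine map whose $N_1 \times (N_0+1)$ matrix has all entries strictly positive. Then every neuron is on at every point of the closed positive orthant, which gives (P3), and $\boldsymbol{J}\rho(s)\vert_{\vec{0}}$ is the strictly positive linear part, which gives (P4). Choosing $N_0+1$ affinely independent points $Z = \{z_0,\ldots,z_{N_0}\} \subset (\mathbb{R}^{>0})^{N_0}$, the matrix $\boldsymbol{J}E_Z\vert_s$ is, after reordering rows and columns, block diagonal with $N_1$ blocks, the $j$th block being the $(N_0+1)\times(N_0+1)$ matrix whose rows are the augmented vectors $[z_0,1],\ldots,[z_{N_0},1]$; affine independence makes each block invertible, so $\textrm{rank }\boldsymbol{J}E_Z\vert_s = N_1(N_0+1) = D(N_0,N_1)$, which together with the trivial bound $\textrm{dim}_{\textrm{fun}} \le D$ yields (P1) and (P2).

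For the inductive step, let $(n_0,\ldots,n_L)$ be narrowing; its tail $(n_1,\ldots,n_L)$ is narrowing too, so the inductive hypothesis provides $s \in \mathcal{P}_{n_1,\ldots,n_L}$ satisfying (P1)--(P4). Since (P3), (P2), (P4) for $s$ are the hypotheses of Lemma \ref{l:AExists}, that lemma yields a surjective affine map $A:\mathbb{R}^{n_0}\to\mathbb{R}^{n_1}$ satisfying all hypotheses of Theorem \ref{t:inductiverevised} with $\cup_k\{x_k\}\subset(\mathbb{R}^{>0})^{n_0}$, and such that the ternary label of $\vec{0}\in\mathbb{R}^{n_0}$ with respect to $(A,s)$ has no $0$s; this last fact is (P3) for $(A,s)$ and, by Lemma \ref{l:sufficientconditionnondegenerate}, shows $(A,s)$ is ordinary. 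Applying Theorem \ref{t:inductiverevised} with $Z_1 = Z$ (the set from (P2) for $s$) produces $Z' \subset (\mathbb{R}^{>0})^{n_0}$, all of whose ternary labels are nonzero, with $\textrm{dim}_{\textrm{fun}}(A,s) = \textrm{rank }\boldsymbol{J}E_{Z'}\vert_{(A,s)} = n_0 n_1 + \textrm{dim}_{\textrm{fun}}(s)$; this gives (P2) for $(A,s)$, and combined with (P1) for $s$ it gives $\textrm{dim}_{\textrm{fun}}(A,s) = n_L + \sum_{i=0}^{L-1} n_i n_{i+1}$, i.e. (P1). Since this meets the Theorem \ref{t:upperbound} bound for $\mathcal{P}_{n_0,\ldots,n_L}$, it equals $\textrm{dim}_{\textrm{fun}}(\mathcal{P}_{n_0,\ldots,n_L})$, which is the assertion of the theorem --- provided (P4) for $(A,s)$ is verified so the induction can continue.

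That verification is the main obstacle. By the chain rule $\boldsymbol{J}\rho(A,s)\vert_{\vec{0}} = \boldsymbol{J}\rho(s)\vert_{\sigma\circ A(\vec{0})}\cdot\boldsymbol{J}(\sigma\circ A)\vert_{\vec{0}}$, and this product can have a zero column for a careless choice of $A$; also, Lemma \ref{l:AExists} pins $A$ down only up to a large family. I would handle it by strengthening the selection of $A$: require the biases of $A$ to be small and positive (so every neuron of $A$ is on at $\vec{0}$, and $\sigma\circ A(\vec{0})$ lies in the same top-dimensional cell of $\mathcal{C}(s)$ as $\vec{0}$, whence $\boldsymbol{J}\rho(s)\vert_{\sigma\circ A(\vec{0})} = \boldsymbol{J}\rho(s)\vert_{\vec{0}}$) and require that no column of the linear part of $A$ lie in $\ker\boldsymbol{J}\rho(s)\vert_{\vec{0}}$. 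The first is an open condition and the second excludes only finitely many proper linear subspaces, so a map $A$ meeting the requirements of Lemma \ref{l:AExists} together with these two extra conditions still exists (the proof of Lemma \ref{l:AExists} already operates on a positive-measure family of matrices). With all neurons of $A$ on at $\vec{0}$, the $j$th column of $\boldsymbol{J}\rho(A,s)\vert_{\vec{0}}$ is $\boldsymbol{J}\rho(s)\vert_{\vec{0}}$ applied to the $j$th column of the linear part of $A$, which is nonzero by the genericity condition; this establishes (P4) for $(A,s)$ and closes the induction.
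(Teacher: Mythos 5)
Your proof is correct and follows essentially the same route as the paper: induction on depth with Theorem \ref{t:inductiverevised} and Lemma \ref{l:AExists} supplying the inductive step, and a direct rank computation for a single layer as the base case (the paper instead cites Proposition \ref{p:depth1}, whose polytope construction activates one neuron per region, but your all-positive-entries matrix works equally well). Your explicit tracking of the invariant (P1)--(P4) --- in particular the observation that hypothesis (c) of Lemma \ref{l:AExists}, the nonvanishing of every column of $\boldsymbol{J}\rho(s)\vert_{\vec{0}}$, must be re-established for $(A,s)$ before the induction can continue, together with your genericity argument for arranging this --- addresses a step that the paper's two-sentence proof leaves entirely implicit.
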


We later (Lemma \ref{l:upperboundnottight}) exhibit a specific family of architectures for which the bound is not tight; so far we have not found any other architectures that do not realize the upper bound.

 \begin{proof}[Proof of Theorem \ref{t:UpperBdTightForNarrowingNetworks} ]

The base case of the inductive argument is an architecture consisting of a single layer.  Such architectures are treated in subsection \ref{ss:arcdep1}; in particular, by Proposition \ref{p:depth1}, there exists a parameter $\theta \in \mathcal{P}_{n_{m-1},n_m}$ such that $\textrm{dim}_{fun}(\theta) = n_m(n_{m-1}+1)$ and a set $Z \subset \mathbb{R}^{n_1}$ that admits an open neighborhood of parametrically smooth points for $\theta$ and such that $\textrm{dim}_{\textrm{fun}}(\theta)$ is realized on $Z$.  Furthermore, it is clear from the proof of Proposition \ref{p:depth1} that set $Z$ can be chosen in $(\mathbb{R}^{>0})^{n_1}$ and $\theta$ can be chosen so the ternary label of $\vec{0}$ for $\theta$ has no $0$s.
 
 The result then follows via induction on the number of layers, using Theorem \ref{t:inductiverevised} and Lemma \ref{l:AExists} for the inductive step.   
 \end{proof}

\section{Other architectures} \label{s:otherarchitectures}

\subsection{Architecture $(1,1,\ldots,1)$} 

\begin{theorem} \label{t:nottight} \ 
\begin{enumerate} 
 \item The upper bound on $\textrm{dim}_{\textrm{fun}}(\mathcal{P}_{n_0,\ldots,n_m})$ given by Theorem \ref{t:upperbound} is tight for architectures
  $(1,1)$, $(1,1,1)$ and $(1,1,1,1)$. 
\item Let $(1,\ldots,1)$ be any sequence of at least 5 consecutive $1$s.  Then the upper bound on $\textrm{dim}_{\textrm{fun}}(\mathcal{P}_{1,\ldots,1})$ given by Theorem \ref{t:upperbound} is not tight.  Specifically, 
 $$\textrm{dim}_{\textrm{fun}} (\mathcal{P}_{1,\ldots,1}) = 4.$$
\end{enumerate}
\end{theorem}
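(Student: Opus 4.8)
\textbf{Setup and strategy.} For the architecture $(1,1,\ldots,1)$ with $m$ layers, we have $D = 2m$ and the upper bound of Theorem~\ref{t:upperbound} is $n_m + \sum_{i=0}^{m-1} n_i n_{i+1} = 1 + m$. For part (1) we must exhibit, for $m \in \{1,2,3\}$, an ordinary parameter whose functional dimension equals $m+1$; for part (2) we must show that for $m \geq 4$ every ordinary parameter has functional dimension at most $4$, and that $4$ is attained. The plan is to analyze $\rho(s)$ explicitly: a marked network of this architecture applied to $x \in \mathbb{R}$ produces, on each activation region, an affine function of $x$ whose coefficients are monomials in the weights $w^1,\ldots,w^m$ times partial products, plus bias contributions. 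The key structural fact is that on the (at most two) unbounded activation regions the composite is either identically zero or a single affine map $x \mapsto (w^m \cdots w^1)x + (\text{const})$, and in between there are boundedly many breakpoints. So the image function $\rho(s)$ is a continuous PL function $\mathbb{R} \to \mathbb{R}$ with at most $m$ breakpoints, and I would count the genuine degrees of freedom in such functions near a chosen $s$.

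\textbf{Part (1): tightness for $m=1,2,3$.} For $m=1$, $\rho(w,b)(x) = \sigma(wx+b)$; at a parameter with $w \neq 0$ one reads off a nonzero ramp, $\mathbf{J}E_{\{z_1,z_2\}}$ has the two rows $[z_i\ 1]$ for $z_i$ on the ``on'' side, so rank $2 = m+1$. For $m=2$ this is exactly the content of the illustrative example already computed (architecture $(1,1,1)$ — wait, that was $(1,2,1)$); instead I would directly mimic Corollary~\ref{cor:NonDiffInBHA}'s computation for $(1,1,1)$: pick $s$ generic and transversal so $\rho(s)$ genuinely bends once, giving a function with one breakpoint, two slopes, one height — but the slopes are linked ($\mathrm{slope}_{\mathrm{left}} = 0$ when the first neuron is off on the left), so one gets a PL function with one bend, with the left ray having slope $0$ or the right ray slope $0$; careful choice yields $3$ independent parameters. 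The cleanest route is to just write out $\mathbf{J}E_Z(s)$ on a decisive set (Proposition~\ref{p:reformulateoriginaldef}) for a well-chosen combinatorially stable $s$ and compute the rank by elementary column operations, exactly as in §\ref{s:illustrativeexample}. For $m=3$, architecture $(1,1,1,1)$, I would pick $s$ so that $\rho(s)$ has two genuine bends with three slopes, of which the leftmost (or some) ray has slope zero, and verify rank $= 4$ directly from the $\mathbf{J}E_Z$ matrix on a decisive set.

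\textbf{Part (2): the cap at $4$ for $m \geq 5$, and attainment.} This is the main obstacle. The point is that although the parameter space has dimension $2m$ and a naive count of ``breakpoints + slopes + heights'' grows with the number of bends, the functions realizable by the architecture $(1,\ldots,1)$ with all widths $1$ are very special: composing scalar ReLU ramps, each layer can at most \emph{clip below at a threshold} (when $w^i > 0$) or \emph{reflect-and-clip}. I would prove that any $\rho(s)$ for this architecture is a continuous PL function of the form $x \mapsto c \cdot \sigma(\pm(x - t))$ post-composed with further clippings, and that the space of such functions — near a point where it is locally a manifold — is at most $4$-dimensional: the surviving degrees of freedom are (i) an overall output scale/slope, (ii) one breakpoint location, (iii) a height, and (iv) one more breakpoint, with all further bends forced to coincide or be eliminated. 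Concretely, I would argue: the nonzero-slope rays of $\rho(s)$ all have slope equal to a single monomial $\pm w^m\cdots w^1$ up to the clippings, so only two distinct nonzero slopes can ever appear ($0$ and that monomial's value, on an interval); hence $\rho(s)$ is, on its support, affine with at most two corners, giving at most two breakpoints $t_1 < t_2$, two slope values (one free: the monomial), and one height — minus one relation because continuity ties the middle piece's offset to the corners — netting $\le 4$. Making this rigorous is where the work lies: I would use Lemma~\ref{lem:terntuplecomp} to write $\rho(s)$ on each cell as $A^m_{\theta^m}\widehat{A^{m-1}_{\theta^{m-1}}}\cdots\widehat{A^1_{\theta^1}}\widehat{x}$ and observe that once any $\theta^i = 0$ or $-1$, the output is constant downstream, so the only ``active'' cells are those where a prefix of neurons is on and then everything is off, limiting the combinatorial types sharply; then bound $\mathrm{rank}\,\mathbf{J}E_Z(s)$ by $4$ via column operations as in §\ref{s:illustrativeexample}, uniformly over ordinary $s$. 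Finally, attainment of $4$: exhibit an explicit $s \in \mathcal{P}_{1,\ldots,1}$ (e.g. early layers acting as near-identity ramps arranged so two genuine bends survive, later layers near-identity) and compute $\mathrm{rank}\,\mathbf{J}E_Z(s) = 4$ on a decisive set. I expect the uniform upper bound — showing \emph{no} ordinary parameter beats $4$, not merely the generic one — to be the delicate part, requiring a careful case analysis of which neurons can be simultaneously ``partially on'' across the finitely many activation regions.
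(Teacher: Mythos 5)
Your proposal follows essentially the same route as the paper: part (1) by exhibiting explicit parameters and counting the degrees of freedom of the resulting one-dimensional PL functions (equivalently, computing $\mathrm{rank}\,\mathbf{J}E_Z$ on a decisive set via Theorem \ref{t:equivalentdef}), and part (2) by showing every function realized by a width-one network is constant outside a single interval on which it is affine with slope the monomial $\pm w^m\cdots w^1$, hence has at most four degrees of freedom, with $4$ attained. The ``delicate case analysis'' you anticipate for the uniform upper bound is handled cleanly in the paper by classifying these functions into five types and proving (Lemma \ref{l:typespossible}) that postcomposition with $\sigma\circ A$ preserves the class --- which is exactly the rigorous form of your observation that once any neuron switches off the output is constant downstream, so the ``all neurons on'' locus is a single nested interval and only one nonconstant piece can survive.
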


In this section, we will use the following classifications to describe certain continuous, piecewise affine-linear functions $f:\mathbb{R} \to \mathbb{R}$:

\begin{enumerate}
\item[Type 1:] $f$ is a constant function.
\item[Type 2:] $f$ has two pieces, say $(-\infty,\alpha]$ and $[\alpha,\infty)$. $f$ is constant on  $(-\infty,\alpha]$ and has positive slope on $[\alpha,\infty)$.
\item[Type 3:] $f$ has two pieces, say $(-\infty,\alpha]$ and $[\alpha,\infty)$. $f$ has negative slope on $(-\infty,\alpha]$ and is constant on $[\alpha,\infty)$. 
\item[Type 4:] $f$ has three pieces, say $(-\infty,\alpha]$, $[\alpha,\beta]$, and $[\beta,\infty)$, for $\alpha < \beta$.  $f$ is constant on $(-\infty,\alpha]$, has positive slope on $[\alpha,\beta]$, and is constant on $[\beta,\infty)$. 
\item[Type 5:] $f$ has three pieces, say $(-\infty,\alpha]$, $[\alpha,\beta]$, and $[\beta,\infty)$, for $\alpha < \beta$. $f$ is constant on $(-\infty,\alpha]$, has negative slope on $[\alpha,\beta]$, and is constant on $[\beta,\infty)$. 
\end{enumerate}

To prove that the bound is tight for architectures $(1,1)$, $(1,1,1)$ and $(1,1,1,1)$, we begin by observing that if $f:\mathbb{R} \to \mathbb{R}$ is given by a single layer map (i.e. is of architecture $(1,1)$), then $f$ is necessarily of type $1$, $2$ or $3$.    In particular, if $f$ is given by $$f(x) = \sigma(ax + b),$$ any choice of $a > 0$ and $b > 0$ makes $f$ of type $2$.  Such an $f$ has two degrees of freedom:  the bend point $-\tfrac{b}{a}$ and the slope $a$. The slope $a$ can take on any positive value (given our constraint) and then the bend point $-\tfrac b a$ can take on any negative value by judicious choice of $b$.  By Theorem \ref{t:equivalentdef}, this shows $\textrm{dim}_{\textrm{fun}} (\mathcal{P}_{1,1}) \geq 2$.  Since $2$ is also the upper bound on $\textrm{dim}_{\textrm{fun}} (\mathcal{P}_{1,1})$ by Theorem \ref{t:upperbound}, this proves: 
\begin{lemma} \label{l:11}
$\textrm{dim}_{\textrm{fun}} (\mathcal{P}_{1,1}) = 2$. 
\end{lemma}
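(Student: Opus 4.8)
The plan is to prove $\textrm{dim}_{\textrm{fun}}(\mathcal{P}_{1,1}) = 2$ by establishing the two inequalities separately, both of which are short given the machinery already in place.

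\textbf{Upper bound.} Apply Theorem \ref{t:upperbound} with $m=1$ and $n_0 = n_1 = 1$: it gives $\textrm{dim}_{\textrm{fun}}(\mathcal{P}_{1,1}) \le n_m + \sum_{i=0}^{m-1} n_i n_{i+1} = n_1 + n_0 n_1 = 2$. (Since $D(1,1) = n_1(n_0+1) = 2$, this also follows from the trivial bound $\textrm{dim}_{\textrm{fun}} \le D$.) So nothing beyond quoting the previous theorem is needed here.

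\textbf{Lower bound.} It suffices to exhibit one ordinary parameter $s$ and one finite batch $Z$ of parametrically smooth points for $s$ such that $\textrm{rank}\, \boldsymbol{J}E_Z\vert_s = 2$. I would take $s = (w^1, b^1) = (1,1)$, so that $\rho(s)(x) = \sigma(x+1)$ and the lone neuron has ternary label $+1$ at every $x > -1$; by Lemma \ref{l:sufficientconditionnondegenerate} (equivalently Corollary \ref{cor:NonDiffInBHA}) every such point is parametrically smooth for $s$, and in particular $s$ is ordinary. For $u$ near $s$ and $x$ near such a point, $\rho(u)(x) = w^1 x + b^1$, so $\boldsymbol{J}E_{\{x\}}\vert_s = [\,x\ \ 1\,]$. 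Choosing $Z = \{0,1\}$ gives
$$\boldsymbol{J}E_Z\vert_s = \begin{bmatrix} 0 & 1 \\ 1 & 1 \end{bmatrix},$$
which has nonzero determinant, hence rank $2$. Therefore $\textrm{dim}_{\textrm{fun}}(s) \ge 2$, so $\textrm{dim}_{\textrm{fun}}(\mathcal{P}_{1,1}) \ge 2$. (Alternatively, the lower bound can be routed through Theorem \ref{t:equivalentdef}: the slopes-and-values data of a type-$2$ function is the pair (slope, bend point), and within the family both entries can be perturbed independently — the slope $a$ by varying the weight, and then the bend point $-b/a$ by adjusting the bias — giving two degrees of freedom; this is the argument sketched in the discussion preceding the lemma.)

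Combining the two inequalities yields $\textrm{dim}_{\textrm{fun}}(\mathcal{P}_{1,1}) = 2$. There is no real obstacle here; the only point requiring any attention is verifying that the chosen evaluation points are parametrically smooth for $s$, and that is immediate from the criterion that the ternary label at the point has no zeros.
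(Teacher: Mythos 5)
Your proof is correct, and the upper bound is handled exactly as in the paper (quoting Theorem \ref{t:upperbound}, which for $(1,1)$ coincides with the trivial bound $D(1,1)=2$). Where you differ is the lower bound: the paper argues via Theorem \ref{t:equivalentdef}, observing that a type-$2$ function $\sigma(ax+b)$ with $a,b>0$ carries two independently variable pieces of data (the slope $a$ and the bend point $-b/a$), i.e.\ it counts degrees of freedom in the slopes-and-values map; you instead exhibit a concrete parameter $s=(1,1)$ and batch $Z=\{0,1\}$ and compute $\textrm{rank}\,\boldsymbol{J}E_Z\vert_s = 2$ directly from Definition \ref{def:functionaldimension}. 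Your route is more elementary and arguably more self-contained: it needs only Lemma \ref{l:sufficientconditionnondegenerate} to certify that the points of $Z$ are parametrically smooth, whereas the paper's route implicitly relies on the hypotheses of Theorem \ref{t:equivalentdef} (generic, transversal, combinatorially stable), which are not explicitly verified there. The computation itself is the same one the paper carries out in its worked example for architecture $(1,1)$ in the batch functional dimension section (rank $2$ when the neuron is on at two distinct points), so your argument is fully consistent with the paper's framework; you also correctly note the paper's degrees-of-freedom argument as an alternative. No gaps.
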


Now, consider architecture $(1, 1, 1)$, with the parametrization
$$f(x) = \sigma (c \sigma (ax + b)  + d).$$
We may choose any $a,b > 0$ as above.  Then, choosing any $d > 0$ and $c < 0$ ensures $f$ has type $5$. Furthermore, the graph of $f$ is constantly $d$ from $-\infty$ to $- \tfrac b a$, then a line of slope 
$ca < 0$ from $-\tfrac b a$ to $- \tfrac b a - \tfrac{d}{ca}$, then constantly $0$ from $-\tfrac{b}{a} - \tfrac{d}{ca}$ to $\infty$.
 There are three degrees of freedom for $f$ subject to these constraints: the two bend points and the height $d$, each of which can be realized by judicious choice of $a, b, c, d$.  The first bend point can be any negative number, the second bend point can be any point greater than the first, and the height $d$ can be any positive number.
By Theorems \ref{t:equivalentdef} and \ref{t:upperbound}, this shows:
\begin{lemma} \label{l:111}
$\textrm{dim}_{\textrm{fun}} (\mathcal{P}_{1,1,1}) = 3$. 
\end{lemma}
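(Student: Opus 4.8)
The upper bound from Theorem \ref{t:upperbound} specializes for the architecture $(1,1,1)$ to $\textrm{dim}_{\textrm{fun}}(\mathcal{P}_{1,1,1}) \le n_2 + n_0n_1 + n_1n_2 = 1+1+1 = 3$, so the plan is to exhibit a single parameter $s_0 = (a,b,c,d)$ whose functional dimension equals $3$; since $\textrm{dim}_{\textrm{fun}}(\mathcal{P}_{1,1,1})$ is defined as a supremum over ordinary parameters, producing one such $s_0$ forces equality. The tool for computing $\textrm{dim}_{\textrm{fun}}(s_0)$ will be Theorem \ref{t:equivalentdef}, which (for a generic, transversal, combinatorially stable parameter) identifies functional dimension with the number of independent degrees of freedom in the slopes-and-values data, i.e.\ the value and total derivative of $\rho(s_0)$ recorded at one interior point of each top-dimensional cell of $\mathcal{C}(s_0)$.

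First I would select the parameter. Taking $a,b>0$ makes the inner map $x \mapsto \sigma(ax+b)$ of Type 2, and then choosing $c<0$, $d>0$ makes $f = \rho(s_0)$ of Type 5: constantly $d$ on $(-\infty,\alpha]$, of slope $ca<0$ on $[\alpha,\beta]$, and constantly $0$ on $[\beta,\infty)$, where $\alpha = -b/a$ and $\beta = -b/a - d/(ca) > \alpha$. Such an $s_0$ is generic and transversal, since the two bend points $\alpha < \beta$ are distinct and each node map crosses its zero set with nonzero slope, and it is combinatorially stable because sufficiently small perturbations of $(a,b,c,d)$ preserve the signs of $a,b,c,d$ and the ordering $\alpha < \beta$, hence preserve the ternary labeling of every cell of $\mathcal{C}(s_0)$. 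This is exactly the hypothesis needed to invoke Theorem \ref{t:equivalentdef}.

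Next I would count the degrees of freedom in the slopes-and-values data. The three top-dimensional cells carry, respectively, value $d$ and slope $0$ on the left, a value and slope $ca$ in the middle, and value $0$ and slope $0$ on the right. Continuity of $f$ together with the Type 5 shape forces this data to be determined by the triple $(\alpha,\beta,d)$ — the left bend point $\alpha$ (any negative real), the right bend point $\beta > \alpha$, and the height $d>0$ — and the middle slope $ca = -d/(\beta-\alpha)$ is then a dependent quantity. I would verify that these three geometric quantities can be varied independently by perturbing $(a,b,c,d)$: one can adjust $d$ directly, slide $\alpha$ through $b$, and slide $\beta$ through $c$, so the slopes-and-values map has rank exactly $3$ at $s_0$. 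This is consistent with the one-dimensional scaling symmetry $(a,b,c,d) \mapsto (\lambda a, \lambda b, c/\lambda, d)$ for $\lambda > 0$, which collapses the $4$-dimensional parameter space to $3$ effective directions. Theorem \ref{t:equivalentdef} then yields $\textrm{dim}_{\textrm{fun}}(s_0) = 3$, and combining with the upper bound gives $\textrm{dim}_{\textrm{fun}}(\mathcal{P}_{1,1,1}) = 3$.

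The main obstacle is not the count itself but the bookkeeping that makes it rigorous: confirming that $s_0$ genuinely meets the generic, transversal, and combinatorially stable hypotheses of Theorem \ref{t:equivalentdef}, and then establishing that the three geometric quantities $(\alpha,\beta,d)$ sweep out an open set of values under perturbation of $s_0$, so that the rank of the slopes-and-values map is exactly $3$ rather than smaller. Once combinatorial stability is secured, the independence of the three perturbation directions is a short explicit computation, so the crux of the argument is simply setting up the hypotheses of the equivalent-definition theorem correctly.
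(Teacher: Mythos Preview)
Your proposal is correct and follows essentially the same approach as the paper: the paper also selects $a,b>0$, $c<0$, $d>0$ to obtain a Type 5 function, identifies the three degrees of freedom as the two bend points and the height $d$, and invokes Theorems \ref{t:upperbound} and \ref{t:equivalentdef}. Your version is slightly more careful in explicitly verifying the generic/transversal/combinatorially stable hypotheses, which the paper leaves implicit.
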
 

Now, consider architecture $(1, 1, 1, 1)$, as parametrized by  
$$F(x) = \sigma (e \sigma (c \sigma (ax + b)  + d) +f).$$  For any $a,b,d > 0$ and $c < 0$, the composition up until the final layer map (call this $F_0$) is a map of type $5$, as above.  We will specify $e, f >0$.  Any such map  $F$ is also of type 5.  $F$ has bend points at the same two points as $F_0$ does.  The graph of $F$ is constant on the piece from $-\infty$ to the left-hand (lesser) bend point, and also constant from $\infty$ to the right-hand bend point.  The height of the left segment is $de + f$ and the height of the right segment is $f$.  Thus there are  four degrees of freedom, so Theorem \ref{t:equivalentdef} and \ref{t:upperbound} imply:

\begin{lemma} \label{l:1111}
$\textrm{dim}_{\textrm{fun}} (\mathcal{P}_{1,1,1,1}) = 4$. 
\end{lemma}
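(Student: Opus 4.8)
The upper bound is the easy half: for the architecture $(n_0,n_1,n_2,n_3)=(1,1,1,1)$ Theorem~\ref{t:upperbound} gives $\textrm{dim}_{\textrm{fun}}(\mathcal{P}_{1,1,1,1}) \le n_m + \sum_{i=0}^{m-1} n_i n_{i+1} = 1 + (1+1+1) = 4$, so everything reduces to producing a single parameter at which the functional dimension equals $4$. The plan is to take a concrete parameter $s_0 = (a,b,c,d,e,f)$ with $a,b,d,e,f > 0$ and $c < 0$ (for definiteness $s_0 = (1,1,-1,2,1,1)$), so that, as computed in the discussion preceding the lemma, $\bar\rho(s_0)$ is the Type~5 function that is constant $=ed+f$ on $(-\infty,-b/a]$, has slope $eca<0$ on $[-b/a,\beta]$ with $\beta = -\tfrac{b}{a}-\tfrac{d}{ca}$, and is constant $=f$ on $[\beta,\infty)$.

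First I would check that $s_0$ is generic, transversal, and combinatorially stable. The pre-activation node maps $y^1 = ax+b$ and $y^2 = c\,\sigma(ax+b)+d$ each vanish transversally at a single point ($-b/a$ and $\beta$ respectively), while $y^3 = e\,\sigma(\cdots)+f \ge f > 0$ vanishes nowhere; since $e,f$ stay positive and the argument of the outermost ReLU stays nonnegative under small perturbations of $s_0$, the canonical complex $\mathcal C(s_0)$ retains its three top-dimensional cells with unchanged ternary labels, which furnishes a combinatorial equivalence on a neighborhood of $s_0$ and hence combinatorial stability. This lets me invoke Theorem~\ref{t:equivalentdef}, which identifies $\textrm{dim}_{\textrm{fun}}(s_0)$ with the rank at $s_0$ of the map recording, on each top-dimensional cell of $\mathcal C(s_0)$, the value and total derivative of $\rho(s)$ at a chosen interior point.

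Next I would argue that this rank is exactly $4$. For a Type~5 function the recorded data is equivalent to the $4$-tuple $\big(-\tfrac{b}{a},\ \beta,\ ed+f,\ f\big)$ (left bend point, right bend point, left height, right height), since continuity then forces the middle slope; so it suffices to show the four scalar functions $-\tfrac b a,\ \beta,\ ed+f,\ f$ of $s=(a,b,c,d,e,f)$ have linearly independent gradients at $s_0$. This is a short triangular check: $\nabla f = (0,0,0,0,0,1)$ is the only gradient with a nonzero $f$-component; $\nabla(ed+f) = (0,0,0,e,d,1)$ is the only remaining one with a nonzero $e$-component; $\nabla\beta$ is the only one of $\{-b/a,\ \beta\}$ with a nonzero $c$-component; and $\nabla(-b/a) = (b/a^2,-1/a,0,0,0,0)\neq\vec 0$. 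Hence the rank is $4$, so $\textrm{dim}_{\textrm{fun}}(s_0) = 4$, and with the upper bound we conclude $\textrm{dim}_{\textrm{fun}}(\mathcal{P}_{1,1,1,1}) = 4$. (If one prefers to avoid verifying combinatorial stability, the same lower bound follows directly: choose $Z=\{z_1,z_2,z_3,z_4\}$ with $z_1$ interior to the left cell, $z_2,z_3$ interior to the middle cell, $z_4$ interior to the right cell, all parametrically smooth for $s_0$ by Lemma~\ref{l:generictransversalordinary}, and compute $\mathbf{J}E_Z(s_0)$, using Lemma~\ref{lem:neuronoff} to zero out, in each cell, the columns attached to neurons that are off there; the resulting $4\times 6$ matrix visibly has rank $4$.)

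The only place requiring care is the bookkeeping: verifying the combinatorial-stability hypothesis of Theorem~\ref{t:equivalentdef} for $s_0$ (or, on the direct route, tracking which partial derivatives vanish on which cell). Neither is conceptually difficult, and the structural reason it all goes through is that the outermost neuron is \emph{stably activated} rather than off, so $\mathcal C(s_0)$ has a stable three-cell description and the four geometric quantities above are genuinely independent directions of variation.
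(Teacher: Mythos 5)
Your proposal is correct and takes essentially the same route as the paper: exhibit a Type~5 parameter, identify the two bend points and the two plateau heights $ed+f$ and $f$ as four independent degrees of freedom, and combine Theorem~\ref{t:equivalentdef} with the upper bound of Theorem~\ref{t:upperbound}; you merely make explicit the verification of combinatorial stability and the rank computation that the paper leaves informal. One tiny bookkeeping slip: $\nabla(ed+f)$ also has nonzero $f$-component, so the triangular elimination should isolate $\nabla(ed+f)$ first via the $e$-component and then $\nabla f$ via the $f$-component, after which the argument goes through as you state.
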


\begin{lemma} \label{l:typespossible}

Let $f:\mathbb{R} \to \mathbb{R}$ be a piecewise affine-linear function.  If $f$ is of one of types $1$-$5$, then for any affine-linear map $A:\mathbb{R} \to \mathbb{R}$, the composition $\sigma \circ A \circ f$ is also of one of types $1$-$5$.  More specifically, if $f$ is of type $X$, then the possible types of the composition $\sigma \circ A \circ f$ is as follows:
\begin{center}
 \begin{tabular}{|l | l |} 
 \hline
 Type of $f$ & Possible type(s)  of \  $\sigma \circ A \circ f$  \\
 \hline
1 &  1 \\
2 & 2, 1 or 5 \\
3 & 3,1 or 4 \\
4 & 4,1 or 5 \\ 
5 & 5,1 or 4 \\
 \hline
\end{tabular}
\end{center}

\end{lemma}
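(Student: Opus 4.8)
The plan is to verify the table entry-by-entry by tracking how the affine map $A$ and then $\sigma$ act on the image of $f$ together with the monotonicity structure on each piece. The key observation is that each of types $1$--$5$ is characterized by: (a) the number of affine pieces ($1$, $2$, or $3$), (b) the slopes on these pieces (zero or nonzero, and the sign of the nonzero ones), and (c) which pieces are the unbounded ones. Since composing with an affine map $A(t) = mt + k$ simply multiplies every slope by $m$ and shifts every value by $k$, and since $\sigma$ can only zero out the portion of the graph lying below the $t$-axis, the composition $\sigma \circ A \circ f$ is determined combinatorially by how the horizontal line(s) $\{A \circ f = 0\}$, equivalently $\{f = -k/m\}$, meet the graph of $f$.

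First I would record that $\sigma \circ A \circ f$ is continuous and piecewise affine-linear, and that the breakpoints of $A \circ f$ are exactly those of $f$; $\sigma$ can only introduce new breakpoints where $A \circ f$ crosses $0$, and on each piece $A\circ f$ is affine so it crosses $0$ at most once. Next, for each type $X \in \{1,\ldots,5\}$, I would split into the cases $m > 0$ and $m < 0$ for the slope of $A$ (the case $m = 0$ always yields a constant, i.e. type $1$), and then into subcases according to the location of the threshold value $-k/m$ relative to the finitely many critical values of $f$ (its value on each constant piece, and the interval of values swept out on each sloped piece). In each subcase one reads off directly the number of pieces of the result, the signs of the slopes, and which pieces are unbounded, and matches it to one of types $1$--$5$ or shows it is constant. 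For instance, for $f$ of type $5$ (constant, then negative slope, then constant, with the larger constant value on the left): if $m > 0$, multiplying by $m$ preserves the shape (negative slope stays negative), and then $\sigma$ either does nothing (type $5$), or truncates the right constant piece and part of the descending middle piece when the threshold lies strictly between the two constant values (still type $5$, with a new breakpoint), or zeros out everything below the left value when the threshold is above it (type $1$); if $m < 0$, the middle slope becomes positive and the left/right constants swap order, giving a type-$4$-shaped graph, and $\sigma$ then yields type $4$ or, if it clips enough, type $1$. The analogous bookkeeping handles types $2$, $3$, $4$ and $1$; type $1$ is immediate since a constant composed with affine-then-$\sigma$ is constant.

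The main obstacle is purely organizational: there are five types, two sign cases for $m$, and up to three or four threshold subcases each, so the bulk of the work is a careful but elementary case analysis, and the risk is mislabeling a degenerate configuration (e.g. a threshold passing exactly through a breakpoint value, which merges two pieces, or through the slope-sign boundary). I would handle the degenerate thresholds by noting they are limits of the generic subcases and produce either the same type or a lower-complexity type already on the list (so the "possible types" column remains correct as a list of what can occur). A clean way to organize the argument is to first prove the two structural lemmas "$\sigma \circ A$ applied to a monotone-or-constant affine piece is again monotone-or-constant with a sign determined by $\operatorname{sgn}(m)$ times the original" and "truncation by $\sigma$ only removes an initial or terminal sub-ray of the domain of each maximal monotone run", and then the table entries follow by inspection. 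No deep input is needed; Lemmas \ref{l:11}, \ref{l:111}, \ref{l:1111} are not even required here, though this lemma will feed into the proof of Theorem \ref{t:nottight}(2).
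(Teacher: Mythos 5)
Your proposal is correct: the paper explicitly leaves the proof of this lemma to the reader, and the case analysis you outline (splitting on the sign of the slope of $A$ and on the position of the threshold $-k/m$ relative to the constant values of $f$, then reading off the resulting piece structure) is exactly the intended elementary argument; I checked each row of the table against your scheme and it produces precisely the listed types, with the degenerate threshold positions landing in the same list as you claim. The only wording to tighten is in your type-$5$, $m>0$ subcase: the composition collapses to type $1$ precisely when the \emph{larger} (left) constant value is mapped to a nonpositive number, so the whole function is nonpositive before applying $\sigma$.
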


The proof of Lemma \ref{l:typespossible}  follows by specific analysis of the various types.  

\begin{lemma} \label{l:upperboundnottight}
Let $(1,\ldots,1)$ be a sequence of at least $5$ $1$s. 
Then $$\textrm{dim}_{\textrm{fun}}(\mathcal{P}_{1,\ldots,1}) = 4.$$ 
\end{lemma}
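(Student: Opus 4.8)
The plan is to establish the two inequalities $\textrm{dim}_{\textrm{fun}}(\mathcal{P}_{1,\ldots,1}) \le 4$ and $\textrm{dim}_{\textrm{fun}}(\mathcal{P}_{1,\ldots,1}) \ge 4$ separately.

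For the upper bound, first observe, by induction on the number of layer maps using Lemma \ref{l:11} for the base case and Lemma \ref{l:typespossible} for the inductive step, that every function $\rho(s)$ with $s\in\mathcal{P}_{1,\ldots,1}$ is of one of types $1$--$5$. Each of these types is a family of continuous piecewise affine-linear functions $\mathbb{R}\to\mathbb{R}$ cut out by at most four real parameters: a constant function (type $1$) needs one; a two-piece function (types $2$, $3$) is pinned down by one bend point, one asymptotic value, and one slope; and a three-piece function (types $4$, $5$) is pinned down by two asymptotic values, one slope, and one bend point, the remaining bend point being forced by continuity. Hence the set of all type $1$--$5$ functions is a finite union of images of open subsets of $\mathbb{R}^d$, $d\le 4$, under parametrizations that depend continuously and piecewise-smoothly on the parameters. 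Now fix any ordinary $s$ and any finite parametrically smooth set $Z=\{z_1,\dots,z_k\}$. Then $E_Z$ is smooth near $s$, and $E_Z(\mathcal{P}_{1,\ldots,1})$ is contained in $\{(g(z_1),\dots,g(z_k)) : g \text{ of type } 1\text{--}5\}$, which is the image of a finite union of $\le 4$-dimensional parameter spaces under the piecewise-smooth pointwise-evaluation map $g\mapsto (g(z_1),\dots,g(z_k))$, hence has dimension at most $4$. If $\textrm{rank}\,\boldsymbol{J}E_Z|_s = r$, then $E_Z$ restricted to a suitable $r$-dimensional submanifold through $s$ is an immersion at $s$, so its image is an embedded $r$-manifold contained in a set of dimension $\le 4$, forcing $r\le 4$. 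Taking the supremum over $Z$ gives $\textrm{dim}_{\textrm{fun}}(s)\le 4$ for every ordinary $s$, so $\textrm{dim}_{\textrm{fun}}(\mathcal{P}_{1,\ldots,1})\le 4$. (For generic, transversal, combinatorially stable $s$ this can alternatively be read off from Theorem \ref{t:equivalentdef}: the ``slopes and values'' data records at most three values, and, since for a width-one network a top-dimensional cell carries slope $0$ unless every neuron is on over it while types $1$--$5$ have at most one non-constant piece, at most one nonzero slope, so this data lies in a space of dimension $\le 3+1=4$.)

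For the lower bound we exhibit an explicit ordinary $s$ with $\textrm{dim}_{\textrm{fun}}(s)\ge 4$. Since the architecture has at least five $1$'s it has $m\ge 4$ layer maps. Let $A^1,A^2,A^3$ be the three layer maps of a witness for Lemma \ref{l:1111}, so $F_3:=\sigma\circ A^3\circ\sigma\circ A^2\circ\sigma\circ A^1$ is a type $5$ function; as in the proof of Lemma \ref{l:1111} we may take $A^3(y)=ey+f$ with $e,f>0$, so that $F_3$ has range inside $[f,\,ed+f]\subset(0,\infty)$. Let $A^4=\cdots=A^m$ be the identity $y\mapsto y$. Because $F_3>0$ everywhere, each later layer map acts as the identity, so $\rho(s)=F_3$ for the resulting parameter $s$. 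At any point $z^*$ in the interior of one of the three pieces of $F_3$, the ternary label of $z^*$ with respect to every neuron of $\bar{\rho}(s)$ is nonzero (the first three neurons because $z^*$ avoids the two bend points, the later ones because their pre-activation equals $F_3(z^*)>0$), so by Corollary \ref{cor:NonDiffInBHA} the point $z^*$ is parametrically smooth for $s$; in particular $s$ is ordinary. By Lemma \ref{l:1111} together with Proposition \ref{p:reformulateoriginaldef} there is a decisive finite set $Z=\{z_1,\dots,z_k\}$, one point in each of the three pieces, parametrically smooth for the $(1,1,1,1)$-parameter $(A^1,A^2,A^3)$ with $\textrm{rank}\,\boldsymbol{J}E^{(1,1,1,1)}_Z|_{(A^1,A^2,A^3)}=4$. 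The same points are parametrically smooth for $s$, and for $s'$ near $s$ one has $\rho(s')(z_i)=\big(\prod_{j\ge 4}w_j\big)F_3(z_i;A^1,A^2,A^3)+Q(w_j,b_j : j\ge 4)$, which at $s$ (where $w_j=1,\ b_j=0$ for $j\ge 4$) shows the columns of $\boldsymbol{J}E^{(1,\ldots,1)}_Z|_s$ corresponding to the parameters of $A^1,A^2,A^3$ coincide with $\boldsymbol{J}E^{(1,1,1,1)}_Z|_{(A^1,A^2,A^3)}$. Padding a matrix with extra columns cannot decrease its rank, so $\textrm{rank}\,\boldsymbol{J}E^{(1,\ldots,1)}_Z|_s\ge 4$ and hence $\textrm{dim}_{\textrm{fun}}(s)\ge 4$. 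Combined with the upper bound this gives $\textrm{dim}_{\textrm{fun}}(\mathcal{P}_{1,\ldots,1})=4$.

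The main obstacle is the upper bound --- more precisely, converting the soft statement ``the realizable functions form an at most $4$-dimensional family'' into a bound on $\textrm{rank}\,\boldsymbol{J}E_Z|_s$ valid uniformly over all ordinary $s$, including degenerate ones where bends coincide, slopes vanish, neurons are stably (un)activated, $\rho$ fails to be smooth, and the canonical complex has ``phantom'' cells lying inside a single affine piece of $\rho(s)$. The image-dimension argument above is designed to sidestep the phantom-cell issue: it only uses that $E_Z$ is smooth at $s$ (guaranteed by parametric smoothness of $Z$) and that $E_Z$ lands in a genuinely $\le 4$-dimensional subset of $\mathbb{R}^{kn_m}$, and degenerate $s$ are harmless because they can only shrink that image and hence the rank.
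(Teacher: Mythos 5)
Your proposal is correct and follows essentially the same route as the paper: the upper bound comes from Lemma \ref{l:typespossible} plus the observation that each of types $1$--$5$ carries at most $4$ degrees of freedom, and the lower bound comes from propagating a type-$5$ function with strictly positive image through the remaining layers so that they act as the identity. The only difference is one of rigor --- you convert the informal ``$4$ degrees of freedom'' claim into an honest rank bound via a Hausdorff-dimension/immersion argument, and you make the Jacobian column comparison with the $(1,1,1,1)$ case explicit, both of which the paper leaves to the reader.
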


\begin{proof}
By Lemma \ref{l:typespossible}, any function $f \in \mathcal{M}_{1,\ldots,1}$ is a function of one of types $1$-$5$.   As the reader may verify, any function of type $1$-$5$ has at most $4$ degrees of freedom.  Hence $dim_{\textrm{fun}}(\mathcal{P}_{1,\ldots,1}) \leq 4$. 

To see that  $dim_{\textrm{fun}}(\mathcal{P}_{1,\ldots,1}) \geq 4$, consider a map $F_0$ of type 5 such that $F_0(x) > 0$ for all $x$. Then, whenever $A$ is an affine-linear map $A:\mathbb{R}^1 \to \mathbb{R}^1$ that has a positive slope and positive constant, the composition $\sigma \circ A \circ F_0$ is also a type 5 map with positive image. 
Furthermore, the composition $\sigma \circ A \circ F_0$ has $4$ degrees of freedom (for a similar reason as does the map $F$ in the justification of Lemma \ref{l:1111}).  
\end{proof}


\subsection{Architectures of depth 1} \label{ss:arcdep1}

\begin{proposition} \label{p:depth1}
For any architecture of depth $1$ (i.e. a single layer map), the parameter space and moduli space have the same dimension.  
That is, for any $(n_1,n_2) \in \mathbb{N}^2$, there exists a parameter $\theta \in \mathcal{P}_{n_1,n_2}$ such that $$\textrm{dim}_{\textrm{fun}}(s) =  n_2(n_1+1).$$
Furthermore, there exists $\theta$ as above for which there exists a set $Z \subset \mathbb{R}^{n_1}$ so that the ternary labels of all points in $Z$ for $\theta$ have no $0$s, and 
 $\textrm{dim}_{\textrm{fun}}(\theta)$ is realized on $Z$, i.e. $\textrm{dim}_{\textrm{fun}}(\theta) = \textrm{rank }E_Z \vert_\theta$.
\end{proposition}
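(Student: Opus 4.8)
The plan is to write down an explicit parameter together with an explicit batch realizing the bound. Since the architecture has a single layer, $\mathcal{P}_{n_1,n_2} = \mathbb{R}^{n_2(n_1+1)}$; a parameter $s$ is a single matrix $A \in M_{n_2 \times (n_1+1)}$, and $\rho(s)(x) = \sigma(A\widehat{x})$. The key observation is that if every entry of $A$ is strictly positive, then for every $x$ in the open positive orthant $(\mathbb{R}^{>0})^{n_1}$ the vector $A\widehat{x}$ has all positive entries, so the ternary label of $x$ with respect to $s$ is identically $+1$ (in particular contains no $0$s), $\rho(s)$ coincides with the affine map $x \mapsto A\widehat{x}$ on a neighborhood of such $x$, and by Lemma \ref{l:sufficientconditionnondegenerate} the point $x$ is parametrically smooth for $s$; hence $s$ is ordinary.

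First I would compute $\mathbf{J}E_z|_s$ for $z \in (\mathbb{R}^{>0})^{n_1}$. Here $E_z(s) = A\widehat{z}$, so the partial derivative of the $i$th coordinate of $E_z$ with respect to the parameter $A_{kl}$ equals $\delta_{ik}(\widehat{z})_l$. Reordering the rows of $\mathbf{J}E_z|_s$ by the output coordinate $i$ and the columns by the row index $k$ of $A$, the matrix becomes block diagonal with $n_2$ identical $1 \times (n_1+1)$ blocks, each equal to $\widehat{z}^{\,T}$. Consequently, for a finite ordered set $Z = \{z_1,\ldots,z_k\} \subset (\mathbb{R}^{>0})^{n_1}$, after the same reordering $\mathbf{J}E_Z|_s$ is block diagonal with $n_2$ identical $k \times (n_1+1)$ blocks, each being the matrix whose $j$th row is $\widehat{z_j}^{\,T}$, so that $\textrm{rank }\mathbf{J}E_Z|_s = n_2 \cdot \dim\,\textrm{span}\{\widehat{z_1},\ldots,\widehat{z_k}\}$.

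Next I would take $Z = \{z_0,\ldots,z_{n_1}\} \subset (\mathbb{R}^{>0})^{n_1}$ to be the vertices of an $n_1$-dimensional geometric simplex (a sufficiently small generic simplex lies in the positive orthant). Then $\widehat{z_0},\ldots,\widehat{z_{n_1}}$ are linearly independent in $\mathbb{R}^{n_1+1}$, so $\textrm{rank }\mathbf{J}E_Z|_s = n_2(n_1+1) = D(n_1,n_2)$. Since $\mathbf{J}E_{Z'}|_s$ has only $D(n_1,n_2)$ columns for every $Z'$, one always has $\textrm{dim}_{\textrm{fun}}(s) \leq D(n_1,n_2)$; combined with the rank computation this forces $\textrm{dim}_{\textrm{fun}}(s) = n_2(n_1+1)$, realized on $Z$, whose points all have ternary label with no $0$s. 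This proves both assertions of the proposition.

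Finally, for later use in the inductive proof of Theorem \ref{t:UpperBdTightForNarrowingNetworks}, I would record that the same parameter $s=A$ enjoys the extra properties needed in Lemma \ref{l:AExists}: the ternary label of $\vec{0}\in\mathbb{R}^{n_1}$ with respect to $s$ is the sign vector of the last (bias) column of $A$, hence identically $+1$; and $\mathbf{J}\rho(s)|_{\vec{0}}$ is the linear part of $A$ (its first $n_1$ columns), which has strictly positive entries and hence no zero column. No step here is hard; the only thing requiring mild care is the bookkeeping identifying the rank of the block-structured Jacobian, and noticing that positivity of $A$ simultaneously linearizes $\rho(s)$ near the positive orthant and near $\vec{0}$.
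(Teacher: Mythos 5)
Your proof is correct, and it takes a genuinely different construction from the paper's, though both follow the same broad strategy of exhibiting an explicit $s$ and batch $Z$ and reading off a block-diagonal Jacobian. The paper chooses the $n_2$ rows of $A$ to be outward co-oriented affine hulls of facets of a convex polytope, so that for each neuron $i$ there is an open region $U_i$ on which \emph{only} neuron $i$ is active; it then places one geometric simplex of $n_1+1$ points in each $U_i$, for a batch of $n_2(n_1+1)$ points in total, and the block-diagonality of $\mathbf{J}E_Z\vert_s$ comes from the fact that at a point of $U_i$ all output coordinates except the $i$th have vanishing parameter-derivatives. You instead take $A$ entrywise positive so that \emph{all} neurons are active throughout the positive orthant, and use a single simplex of $n_1+1$ points; block-diagonality then comes purely from the fact that the $i$th output coordinate depends only on the $i$th row of $A$, with each of the $n_2$ diagonal blocks equal to the same full-rank matrix of augmented simplex vertices. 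Your version is more economical ($n_1+1$ points versus $n_2(n_1+1)$) and has the advantage that the auxiliary properties needed for the inductive step (Theorem \ref{t:UpperBdTightForNarrowingNetworks} via Lemma \ref{l:AExists}) --- that $Z$ lies in $(\mathbb{R}^{>0})^{n_1}$, that the ternary label of $\vec{0}$ has no $0$s, and that no column of $\mathbf{J}\rho(s)\vert_{\vec{0}}$ vanishes --- are immediate from positivity of the entries of $A$, whereas with the polytope construction these require positioning the polytope appropriately and are only asserted to be ``clear from the proof.'' The paper's construction, on the other hand, is closer in spirit to the decisive-set machinery of Section \ref{s:interpretation} (one simplex per top-dimensional cell of $\mathcal{C}(s)$). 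Both arguments are complete.
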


\begin{proof}
Fix any convex polytope in $\mathbb{R}^{n_1}$ that has at least $n_2$ codimension 1 faces.  Let $\theta$ be a parameter that determines an arrangement of $n_2$ distinct hyperplanes that are the affine hulls of codimension 1 faces of the polytope, oriented outwards.   Then for each $i$, there exists an open set $U_i \subset \mathbb{R}^n$ so that $U_i$ is contained in the positive side of $H_i$ and in the negative side of $H_j$ for all $j \neq i$; furthermore, the sets $U_i$ can be chosen so that they retain this property for suitably small perturbations of $s$.  It is then clear that all points in $\bigcup_iU_i$ are parametrically smooth for $\theta$.  
For each $i=1,\ldots,m$, let $Z_i$ be a set consisting of $n+1$ points in $U_i$ that form the vertices of a geometric simplex contained in $U_i$.  Let $Z = \bigcup_i Z_i$.  
For any point $x=(x_1,\ldots,x_n) \in \cup_i U_i$, 
$\boldsymbol{J}E_{\{x\}}(\theta)$ is the $m(n+1) \times 1$ row vector 
\begin{multline}
[\delta_1(x)x_1, \ldots, \delta_1(x)x_n, \delta_1(x), \\ 
\delta_2(x)x_1, \ldots, \delta_2(x)x_n, \delta_2(x),  \\
\ldots,\\
 \delta_m(x)x_1,\ldots, \delta_m(x)x_n, \delta_m(x) ]
 \end{multline}
where $\delta_i(x) = 1$ if $x$ is on the positive side of the co-normed hyperplane associated to the $i$th row of $A$, and $\delta_i(x) = 0$ otherwise.  
 Then $\boldsymbol{J}E_Z(\theta)$ is (with a suitable ordering of points in $Z$) a $m(n+1) \times m(n+1)$ matrix with $(n+1) \times (n+1)$ blocks along the main diagonal and  $0$s in all other entries; the $i$th block along the main diagonal is precisely $\boldsymbol{J}E_{Z_i}(f_i)$ for the function $f_i:\mathbb{R}^{n} \to \mathbb{R}$ that is the $i$th coordinate function of $\rho(\theta)$.  Then $$\textrm{rank}(\boldsymbol{J}E_Z(\theta)) = \sum_{i=1}^m \textrm{rank} (\boldsymbol{J}E_{Z_i}(f_i)) = \sum_{i=1}^m (n+1) = m(n+1).$$

\end{proof}

\section{Continuity of functional dimension} \label{s:continuity}

While we have seen that functional dimension is nonconstant on parameter space, we have not thus far addressed to what extent functional dimension  depends continuously on the parameter.  Recall that we only defined functional dimension for ordinary parameters, and that the set of ordinary parameters is open and has full measure.

First, we address continuity of the map $u \mapsto \boldsymbol{J}E_Z(u)$, rather than the rank of this map. 

\begin{lemma} \label{l:opennbhd2}
Fix an ordinary parameter $\theta \in \mathcal{P}_{n_0,\ldots,n_m}$.  Suppose $Z \subset \mathbb{R}^{n_0}$ is a finite set that is parametrically smooth for all parameters $u$ in some open neighborhood $U$ of $\theta$.  Then the map $U \owns u \mapsto \boldsymbol{J}E_Z(u)$ is continuous. \end{lemma}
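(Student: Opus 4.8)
The plan is to reduce the statement to the fact that $\mathcal{F}_{n_0,\ldots,n_m}$ is finitely piecewise polynomial (Theorem \ref{t:Fpiecewisepolynomial}) together with the characterization of parametrically smooth points in terms of interiors of maximal polynomial pieces (Lemma \ref{l:opennbhd1}). Fix $z \in Z$. The row $\boldsymbol{J}E_z(u)$ of $\boldsymbol{J}E_Z(u)$ has entries $\partial \rho_i(u)(z)/\partial s_j$, i.e. the partial derivatives in the $s$-directions of the coordinate functions of the parametrized family $\mathcal{F}_{n_0,\ldots,n_m}$, evaluated at the point $(u,z)$. By hypothesis, $z$ is parametrically smooth for every $u \in U$, so by Lemma \ref{l:opennbhd1} each point $(u,z)$ with $u \in U$ lies in the interior of a maximal piece on which $\mathcal{F}_{n_0,\ldots,n_m}$ agrees with a polynomial.

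First I would argue local constancy of the relevant polynomial piece. Since $U \times \{z\}$ is connected (as $U$ is an open ball, or a connected open neighborhood — shrink $U$ if necessary so that it is connected) and is contained in the union of the (open) interiors of the finitely many maximal polynomial pieces, and distinct maximal pieces have disjoint interiors, connectedness forces $U \times \{z\}$ to lie in the interior of a single maximal piece $\mathring{P}_z$. On $\mathring{P}_z$ the map $\mathcal{F}_{n_0,\ldots,n_m}$ equals a fixed polynomial $p_z$ in the variables $(s,x)$. Hence on $U$, the entries of $\boldsymbol{J}E_z(u)$ are exactly the partial derivatives $\partial p_{z,i}/\partial s_j$ evaluated at $(u,z)$, which are themselves polynomials in $u$ and hence continuous (indeed smooth) in $u \in U$.

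Finally, $\boldsymbol{J}E_Z(u)$ is obtained by stacking the rows $\boldsymbol{J}E_{z}(u)$ over the finitely many $z \in Z$, so each of its entries is a polynomial function of $u$ on $U$; in particular $u \mapsto \boldsymbol{J}E_Z(u)$ is continuous on $U$. (If one does not wish to shrink $U$, one can instead note that the assignment is locally given by a polynomial near each point of $U$, which is enough for continuity on all of $U$.)

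I do not expect a serious obstacle here: the only subtlety is making sure one may pass from "each $(u,z)$ lies in the interior of \emph{some} maximal polynomial piece" to "a single piece works for all $u \in U$," which is exactly where connectedness of $U$ (after possibly shrinking) and the disjointness of interiors of distinct maximal pieces from Theorem \ref{t:Fpiecewisepolynomial} are used. Everything else is the routine observation that partial derivatives of a fixed polynomial depend polynomially — hence continuously — on the point of evaluation.
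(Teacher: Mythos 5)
Your proposal is correct and follows essentially the same route as the paper: both arguments reduce to the fact that parametric smoothness of $z$ for every $u\in U$ forces $u\mapsto\rho(u)(z)$ to agree with a single polynomial on (a connected neighborhood in) $U$, so the entries of each row $\boldsymbol{J}E_z(u)$ are polynomials in $u$ and hence continuous. The paper's proof simply asserts this polynomiality from Theorem \ref{t:Fpiecewisepolynomial}, whereas you supply the (correct) connectedness-plus-disjoint-interiors justification via Lemma \ref{l:opennbhd1}; this is a fill-in of detail rather than a different argument.
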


\begin{proof}
The statement that each $z \in Z$ is parametrically smooth for all $u \in U$ means that the map $u \mapsto \rho(u)(z)$ is a polynomial in the coordinates of the parameter (by Theorem \ref{t:Fpiecewisepolynomial}).  The row of $JE_Z(\cdot)$ corresponding to a point $z \in Z$ consists of the partial derivatives of this polynomial with respect to the various parameter coordinates.  Thus, the entries of $JE_Z(u)$ vary continuously with $u$.

\end{proof}

\begin{theorem} \label{t:identificationspace}
Let $(n_0,\ldots,n_m)$ be an architecture.  For any $k \in \mathbb{N}$, 
$$\left \{\theta \in \mathcal{P}_{n_0,\ldots,n_m} : s \textrm{ is ordinary and }\textrm{dim}_{\textrm{fun}}(\theta) \geq k\right \}$$
is an open subset of $\mathcal{P}_{n_0,\ldots,n_m}$.
\end{theorem}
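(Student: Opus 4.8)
The plan is to reduce the statement to three facts already available in the excerpt: (1) the smooth locus of $\mathcal{F}_{n_0,\ldots,n_m}$ is open (Corollary \ref{c:ordinaryparamsopen}); (2) for a batch $Z$ that stays parametrically smooth on a neighborhood, the map $u \mapsto \boldsymbol{J}E_Z(u)$ is continuous (Lemma \ref{l:opennbhd2}); and (3) matrix rank is lower semicontinuous, since having rank $\geq k$ is equivalent to the nonvanishing of at least one $k \times k$ minor, an open condition. The case $k = 0$ is exactly the assertion that the set of ordinary parameters is open, which is Theorem \ref{t:ordinaryparamsfull}, so I would dispose of it first and then assume $k \geq 1$.

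So fix $k \geq 1$ and let $s \in \mathcal{P}_{n_0,\ldots,n_m}$ be ordinary with $\textrm{dim}_{\textrm{fun}}(s) \geq k$. By Definition \ref{def:functionaldimension}, the supremum defining $\textrm{dim}_{\textrm{fun}}(s)$ is attained (up to $\geq k$) on some finite set $Z \subset \mathbb{R}^{n_0}$ that is parametrically smooth for $s$ and satisfies $\textrm{rank}\,\boldsymbol{J}E_Z\vert_s \geq k$; since $k \geq 1$ this forces $Z \neq \emptyset$. I would then invoke Corollary \ref{c:ordinaryparamsopen}: for each $z \in Z$ the point $(s,z)$ lies in the open set of smooth points of $\mathcal{F}_{n_0,\ldots,n_m}$, so it has an open product neighborhood $U_z \times V_z$ consisting of smooth points; set $U_0 := \bigcap_{z \in Z} U_z$, an open neighborhood of $s$ (finite intersection). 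For every $u \in U_0$ and every $z \in Z$ we have $(u,z) \in U_z \times V_z$, hence $(u,z)$ is smooth for $\mathcal{F}_{n_0,\ldots,n_m}$; that is, $Z$ is parametrically smooth for every $u \in U_0$. In particular every $u \in U_0$ is ordinary.

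Now Lemma \ref{l:opennbhd2} applies on $U_0$: the matrix-valued map $u \mapsto \boldsymbol{J}E_Z(u)$ is continuous there. Composing with the (lower semicontinuous) rank function, the set
\[
U := \{\, u \in U_0 : \textrm{rank}\,\boldsymbol{J}E_Z(u) \geq k \,\}
\]
is open in $U_0$, hence open in $\mathcal{P}_{n_0,\ldots,n_m}$, and it contains $s$. Finally, for any $u \in U$, the set $Z$ is one of the finite parametrically smooth sets over which the supremum in Definition \ref{def:functionaldimension} is taken, so $\textrm{dim}_{\textrm{fun}}(u) \geq \textrm{rank}\,\boldsymbol{J}E_Z(u) \geq k$. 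Thus $U$ is an open neighborhood of $s$ contained in $\{\, s' : s' \textrm{ ordinary and } \textrm{dim}_{\textrm{fun}}(s') \geq k \,\}$, proving the set is open.

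I do not expect a serious obstacle here; the one point that requires care — and the reason the argument works — is the passage from "$Z$ is parametrically smooth for $s$" to "$Z$ is parametrically smooth for all $u$ in a neighborhood of $s$." This relies on the openness of the $C^\infty$-smooth locus (Corollary \ref{c:ordinaryparamsopen}), which in turn is why the definitions of batch and full functional dimension were stated with the $C^\infty$ condition rather than merely $C^1$ (cf. the remark following Lemma \ref{l:opennbhd1}); without it, continuity of $u \mapsto \boldsymbol{J}E_Z(u)$ on a full neighborhood could fail at the boundary of a polynomial piece.
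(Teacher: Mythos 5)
Your proposal is correct and follows essentially the same route as the paper: the paper's proof likewise combines lower semicontinuity of matrix rank with Lemma \ref{l:opennbhd1} (openness of the smooth locus, which you invoke via its Corollary \ref{c:ordinaryparamsopen}) and Lemma \ref{l:opennbhd2}. You have simply written out the details that the paper leaves to the reader, including the correct key observation that parametric smoothness of the witnessing batch $Z$ persists on a neighborhood of $s$.
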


\begin{proof}
 A standard result in linear algebra is that rank is lower-semicontinuous, that is,  for an integer rank $r \geq 0$, and $m, n \in \mathbb{N}$, the set of $(m \times n)$ matrices of rank $\geq r$ is an open subset of $\mathbb{R}^{m \times n}$.  
The result follows immediately by combining this fact with Lemmas \ref{l:opennbhd1} and \ref{l:opennbhd2}.
\end{proof}

\begin{corollary}
Let $(n_0,\ldots,n_m)$ be an architecture.  Let $(\theta_i)_{i \in \mathbb{N}}$ be a sequence of points in $\mathcal{P}_{n_0,\ldots,n_m}$ that converges to a point $\theta_{\infty} \in \mathcal{P}_{n_0,\ldots,n_m}$.  Then $$\limsup_{i \to \infty} \textrm{dim}_{\textrm{fun}}(\theta_i) \geq \textrm{dim}_{\textrm{fun}}(\theta_{\infty}).$$ 
\end{corollary}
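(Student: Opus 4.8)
The plan is to derive this as an immediate consequence of the openness of the superlevel sets of functional dimension established in Theorem \ref{t:identificationspace}, together with the standard fact that the $\limsup$ of a sequence is at least $\ell$ as soon as the sequence eventually lies inside an open set all of whose points have value at least $\ell$. Throughout, I read $\textrm{dim}_{\textrm{loc}}$ as the (full) functional dimension $\textrm{dim}_{\textrm{fun}}$ of Definition \ref{def:functionaldimension}: the subscript ``loc'' merely emphasizes that this is an invariant attached locally to the parameter $s$, and it is precisely the quantity whose superlevel sets are shown to be open in Theorem \ref{t:identificationspace}.

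First I would address well-definedness. The right-hand side $\textrm{dim}_{\textrm{loc}}(s_\infty)$ is defined only when $s_\infty$ is ordinary, so this is an implicit standing hypothesis of the statement, which I assume. By Theorem \ref{t:ordinaryparamsfull} the set of ordinary parameters is open, hence there is an open neighborhood of $s_\infty$ consisting entirely of ordinary parameters. Since $s_i \to s_\infty$, all but finitely many of the $s_i$ lie in this neighborhood and are therefore ordinary; this guarantees that $\textrm{dim}_{\textrm{loc}}(s_i)$ is defined for all large $i$, so that the expression $\limsup_{i\to\infty}\textrm{dim}_{\textrm{loc}}(s_i)$ is meaningful.

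The main step is then a one-line application of Theorem \ref{t:identificationspace}. Set $k := \textrm{dim}_{\textrm{loc}}(s_\infty)$. By Theorem \ref{t:identificationspace}, the set
\[
O_k := \left\{ s \in \mathcal{P}_{n_0,\ldots,n_m} : s \textrm{ is ordinary and } \textrm{dim}_{\textrm{fun}}(s) \geq k \right\}
\]
is open, and by the choice of $k$ it contains $s_\infty$. Because $s_i \to s_\infty$, there exists $N$ such that $s_i \in O_k$ for all $i \geq N$; for every such $i$ we have $\textrm{dim}_{\textrm{loc}}(s_i) \geq k$. Passing to the $\limsup$ over $i$ yields $\limsup_{i\to\infty} \textrm{dim}_{\textrm{loc}}(s_i) \geq k = \textrm{dim}_{\textrm{loc}}(s_\infty)$, which is the assertion.

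There is no genuine obstacle in this argument: the entire analytic content — that functional dimension cannot drop below its limiting value under arbitrarily small perturbations — is already packaged into the lower-semicontinuity (openness) statement of Theorem \ref{t:identificationspace}, which itself rests on Lemmas \ref{l:opennbhd1} and \ref{l:opennbhd2} and the lower-semicontinuity of matrix rank. I would note in passing that the same argument in fact establishes the stronger inequality $\liminf_{i\to\infty}\textrm{dim}_{\textrm{loc}}(s_i) \geq \textrm{dim}_{\textrm{loc}}(s_\infty)$, since the $s_i$ eventually lie in $O_k$ for \emph{all} large $i$, not merely infinitely many. The reverse inequality, however, can genuinely fail — functional dimension may jump upward in the limit, consistent with the inhomogeneity phenomena highlighted in the introduction — so the $\limsup$ cannot be upgraded to an equality and functional dimension is not continuous.
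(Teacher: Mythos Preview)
Your proof is correct and is exactly the intended argument: the paper states this result as an immediate corollary of Theorem \ref{t:identificationspace} without giving a separate proof, and your derivation from the openness of the superlevel sets is precisely what is implicit there. Your remark that the argument in fact yields the stronger $\liminf$ inequality is also correct and worth noting.
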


Theorem \ref{t:identificationspace} tells us that we may think of parameter space as the result of gluing together regions on which 
 functional dimension is constant.  The structure of this decomposition of parameter space into regions of constant functional dimension may have implications for training.


\section{Symmetries and fibers} \label{s:fibers}

\begin{definition} \label{def:fiber}
For any function $f \in \mathcal{M}_{n_0,\ldots,n_m}$, the \emph{fiber} of $f$ in $\mathcal{P}_{n_0,\ldots,n_m}$ is the set 
$\rho^{-1}(f)$. 
\end{definition}

\begin{example}
We will give examples of two points in the fiber 
$$\rho^{-1}(x \mapsto \max\{0,x+1\})$$ in $\mathcal{P}_{1,2,2,1}$.
First, note that for $x \in \mathbb{R}$,
$$\sigma(- \sigma(-x)+1) = \begin{cases} 0 & \textrm{if }x \leq -1 \\
x+1 &  \textrm{if }-1 \leq x \leq 0 \\
1 & \textrm{if } x \geq 0.
\end{cases}$$
Hence, for all $x \in \mathbb{R}$, 

\begin{equation} \label{eq:twoways}
\sigma(\sigma(- \sigma(-x)+1) + \sigma(\sigma(x))) = \sigma(\sigma(\sigma(x+1)))
\end{equation}

The two sides of \eqref{eq:twoways} are the marked realizations of two different parameters in $\mathcal{P}_{1,2,2,1}$, both of which determine the function $x \mapsto \max\{0,x+1\}$. 

\end{example}

\begin{definition}
The \emph{symmetry group} of $\mathcal{P}_{n_0,\ldots,n_m}$ is the group $\textrm{Aut}(\mathcal{P}_{n_0,\ldots,n_m})$ consisting of all homeomorphisms $T$ of $\mathcal{P}_{n_0,\ldots,n_m}$ such that $\rho = \rho \circ T$. 
\end{definition} 

The symmetry group includes \emph{permutations} and \emph{rescalings}.  

\medskip 
\emph{Permutations:}  Consider any fixed architecture $\mathcal{P}_{n_0,\ldots,n_m}$ with $n \geq 2$.  Represent a parameter $\theta \in \mathcal{P}_{n_0,\ldots,n_m}$ as $\theta=(A_1,\ldots,A_m)$ where $A_i$ is the matrix associated to the $i$th layer map of $\bar{\rho}(\theta)$.  For $1 \leq i < m$, and distinct indices $1 \leq j,k \leq n_i$ define the \emph{permutation of the $j$th and $k$th neurons} of $\theta$ to be the map $\tau^i_{j,k} :\mathcal{P}_{n_0,\ldots,n_m} \to \mathcal{P}_{n_0,\ldots,n_m}$ defined by 
$$\tau^i_{j,k}: (A_1,\ldots,A_i, A_{i+1},\ldots A_m) \mapsto (A_1,\ldots,A_i', A_{i+1}',\ldots A_m)$$
where $A_i'$ is the matrix formed from $A_i$ by interchanging the $k$th and $j$th row, and $A_{i+1}'$ is the matrix formed from $A_{i+1}$ by interchanging the $k$th and $j$th columns.  The \emph{permutation symmetry group} of $\mathcal{P}_{n_0,\ldots,n_m}$ is the subgroup of $\textrm{Aut}(\mathcal{P}_{n_0,\ldots,n_m})$ generated by the set of all such maps $\tau^i_{j,k}$; clearly the permutation symmetry group is isomorphic to $S_{n_1} \times \ldots S_{n_{m-1}}$, where $S_i$ denotes the group of permutations of a set of cardinality $i$.

\medskip
\emph{Recalings:} Consider any fixed architecture $\mathcal{P}_{n_0,\ldots,n_m}$ with $n \geq 2$.
As in the proof of Theorem \ref{t:upperbound},  for any $1 \leq i<m$, if $h$ is a linear self-map of $\mathbb{R}^{n_i}$ that is represented by a diagonal matrix with all positive entries, then 
 \begin{equation} 
 \sigma \circ A_{i+1}\circ h^{-1} \circ \sigma \circ h\circ A_i=\sigma \circ A_{i+1}\circ \sigma \circ A_i.
 \end{equation}
 Thus the map $\mathcal{P}_{n_0,\ldots,n_m} \to \mathcal{P}_{n_0,\ldots,n_m}$ given by 
 $$(A_1,\ldots,A_i, A_{i+1}, \ldots, A_m) \mapsto (A_1,\ldots,A_i'', A_{i+1}'', \ldots, A_m)$$
 where $A_i'' \coloneqq h\circ A_i$ and $A_{i+1}'' \coloneqq A_{i+1}\circ h^{-1}$, is in $\textrm{Aut}(\mathcal{P}_{n_0,\ldots,n_m})$.  The \emph{rescalings symmetry group} is the subgroup of $\textrm{Aut}(\mathcal{P}_{n_0,\ldots,n_m})$ generated by all such maps.

\bigskip

Not all points in a fiber look the same from the point of view of what functions can be realized by parameters near them.  

\begin{example} \label{ex:nontransitive}
The realization map $\rho$ on $\mathcal{P}_{1,1} \cong \mathbb{R}^2$ is given by 
$$\rho(a,b):x \mapsto \sigma(ax+b).$$ 
Set $\theta_1 = (0,0)$ and $\theta_2 = (0,-1)$.  Then both $\rho(\theta_1)$ and $\rho(\theta_2)$ are the constant function $0$. 

First, we note that given any real number $\epsilon > 0$ and any open neighborhood $U_1 \subset \mathcal{P}_{1,1}$ of $s_1$, there exists a parameter $u \in U_1$ such that $\rho(u)(x) > 0$ for some $x \in \mathbb{R}^1$ with $|x| < \epsilon$.  For example, if $r>0$ is such that $U_1$ contains an open ball of radius $r$ centered at $s_1$, then the parameter $u=(0,\min\{r/2,\epsilon/2\})$ is in $U_1$ and $\rho(u)(x)$ is the constant function $\min\{r/2,\epsilon/2\}$. 

Next, note that this property does not hold for sufficiently small neighborhoods of $u_2$.  That is, for any fixed real number  $1> \epsilon > 0$, if $U_2 \subset \mathcal{P}_{1,1}$ is a sufficiently small neighborhood of $\theta_2$, every $u \in U_2$ satisfies $\rho(u)(x) = 0$ for all $|x| < \epsilon$.  \end{example}

\begin{proposition}
There exist fibers on which the symmetry group does not act transitively.  
\end{proposition}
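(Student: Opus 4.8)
The plan is to leverage Example \ref{ex:nontransitive} directly, upgrading the asymmetry observed there into a property of a parameter that is manifestly invariant under the entire symmetry group, not merely under the permutation and rescaling symmetries. I would work with the architecture $(1,1)$, where $\mathcal{P}_{1,1}\cong\mathbb{R}^2$ and $\rho(a,b)$ is the function $x\mapsto\sigma(ax+b)$. First I would pin down the fiber over the constant function $0$: the identity $\sigma(ax+b)\equiv 0$ forces $ax+b\le 0$ for all $x$, hence $a=0$ and $b\le 0$, so $\rho^{-1}(0)=\{(0,b):b\le 0\}$, and in particular both $s_1=(0,0)$ and $s_2=(0,-1)$ lie in this fiber.

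Next I would introduce the invariant. For $\epsilon>0$, call a parameter $s$ \emph{$\epsilon$-flat} if it has an open neighborhood $U\subset\mathcal{P}_{1,1}$ such that $\rho(u)(x)=0$ for every $u\in U$ and every $x\in(-\epsilon,\epsilon)$. The key observation is that $\epsilon$-flatness is an orbit invariant: if $T\in\textrm{Aut}(\mathcal{P}_{1,1})$ and $U$ witnesses $\epsilon$-flatness of $s$, then $T(U)$ is an open neighborhood of $T(s)$, and for $v=T(u)\in T(U)$ we have $\rho(v)=\rho(T(u))=\rho(u)$, so $\rho(v)$ vanishes on $(-\epsilon,\epsilon)$ as well; since $\rho\circ T=\rho$ also gives $\rho\circ T^{-1}=\rho$, so that $T^{-1}\in\textrm{Aut}(\mathcal{P}_{1,1})$, the implication runs both ways and the set $\{\epsilon>0 : s\textrm{ is }\epsilon\textrm{-flat}\}$ depends only on the orbit of $s$.

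Finally I would check that $s_1$ and $s_2$ realize different values of this invariant. For $s_2=(0,-1)$, the neighborhood $\{(a,b):|a|<\tfrac12,\ |b+1|<\tfrac12\}$ witnesses $1$-flatness, since there $ax+b<\tfrac12+(-\tfrac12)=0$ whenever $|x|<1$. On the other hand, every neighborhood of $s_1=(0,0)$ contains a point $(0,\delta)$ with $\delta>0$ small, and $\rho(0,\delta)$ is the constant function $\delta>0$, so $s_1$ is not $\epsilon$-flat for any $\epsilon>0$. Hence no symmetry carries $s_1$ to $s_2$, so $\textrm{Aut}(\mathcal{P}_{1,1})$ does not act transitively on the fiber $\rho^{-1}(0)$, which proves the proposition. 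I do not anticipate a genuine obstacle here: essentially all the analytic content is already in Example \ref{ex:nontransitive}, and the only point needing care is choosing a distinguishing property --- $\epsilon$-flatness --- whose invariance under arbitrary homeomorphisms commuting with $\rho$ is immediate from the definition of the symmetry group.
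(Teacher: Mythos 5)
Your proof is correct and follows essentially the same route as the paper: the paper's argument is precisely Example \ref{ex:nontransitive} applied to $s_1=(0,0)$ and $s_2=(0,-1)$ in the fiber $\rho^{-1}(0)\subset\mathcal{P}_{1,1}$, observing that a homeomorphism commuting with $\rho$ cannot carry the locally-vanishing behavior near $s_2$ to $s_1$. Your formalization via the $\epsilon$-flatness invariant just makes explicit the orbit-invariance that the paper leaves implicit.
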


\begin{proof}
Example \ref{ex:nontransitive} gives two parameters $\theta_1$ and $\theta_2$ in the fiber $\rho^{-1}(0)$ in $\mathcal{P}_{1,1}$ such that every neighborhood of $\theta_1$  contains (parameters whose corresponding) functions that cannot be realized by parameters in any sufficiently small neighborhood of $s_2$.  Consequently, no element of $\textrm{Aut}(\mathcal{P}_{1,1})$ can send $\theta_2$ to $\theta_1$. 
\end{proof}

\begin{example} \label{ex:fundimvaries}
We will give an example of two parameters in the fiber $\rho^{-1}(x \mapsto \sigma(x+1) )$ in $\mathcal{P}_{1,2,1}$ that have different functional dimensions.  The realization map is given by 
$$\rho(a,b,c,d,e,f,g) = x \mapsto \sigma \left( e \sigma(ax+b)+f \sigma (cx+d) + g \right).$$

\bigskip

Consider the parameter $\theta_1 = (1,0,-1,0,+1,-1,+1)$,
$$\rho(\theta_1):x \mapsto \sigma( \sigma(x+0) - \sigma(-x+0) +1).$$
Fix a small $\epsilon > 0$.  Then there exists a neighborhood $U$ of $\theta_1$ on which $\rho(u)$ has \emph{at least three} pieces, and satisfies the following:
\begin{itemize}
\item On the unbounded interval $(-\infty,-1-\epsilon)$, $\rho(u)$ is the constant function $0$.
\item On the interval $(-1+\epsilon, -\epsilon)$, $\rho(u)$ is given by 
$$x \mapsto f\sigma(cx+d) +g  \quad (\approx x +1).$$ 
\item On the interval  $(\epsilon, \infty)$, $\rho(u)$ is given by 
$$e(ax+b) + g \quad (\approx x+1).$$
\end{itemize}
By perturbing the parameter, we can independently vary the affine-linear functions that give $\rho(u)$ on the latter two intervals listed above.  Hence $\textrm{dim}_{\textrm{fun}}(\theta_1) \geq 4$. 

\bigskip
Now consider the parameter $\theta_2 = (1,1,-1,-2, 1,-1,0)$,
$$\rho(\theta_2):x \mapsto \sigma( \sigma(x+1) - \sigma(-x-2) +0).$$
Again, fix a small $\epsilon > 0$.  Then there exists a neighborhood $U$ of $\theta_2$ on which the function (ignoring the outer $\sigma$ in $\rho(u)$)
 $$x \mapsto e \sigma(ax+b)+f \sigma (cx+d) + g$$ has three pieces and satisfies the following:
 \begin{itemize}
 \item On the interval $(-\infty, -2-\epsilon)$, the function is given by 
 $$x \mapsto f \sigma (cx+d) + g \quad (\approx x +2).$$
 \item On the interval $(-2+\epsilon, -1-\epsilon)$, the function is given by 
 $$x \mapsto g \quad (\approx 0).$$
 \item On the interval $(-1+\epsilon, \infty)$, the function is given by 
 $$x \mapsto e(ax+b) + g \quad (\approx x+1).$$
 \end{itemize}
From this, we can see that 
\begin{itemize}
\item If $g \leq 0$, $\rho(u)$ has two intervals; $\rho(u)$ is the constant function $0$ on the unbounded interval $(-\infty, \approx -1)$ and is given by $e(ax+b)+g$ on the other unbounded interval $(\approx -1, \infty)$.  
\item If $g > 0$, $\rho(u)$ has four intervals; $\rho(u)$ is the constant function $0$ on the unbounded interval $(-\infty, \approx -2)$, has positive slope on a tiny interval around $-2$, is the constant function $g$ on the interval $(\approx -2, \approx -1)$, and has slope $ea$ on the interval $(\approx -1, \infty)$. 
\end{itemize}
It follows that points in $(-2,-1)$ are not parametrically smooth for $\theta_2$, while points in $(-\infty, -2)$ and $(-1,\infty)$ are, and that $\rho(u)$ is uniformly $0$ on points in $(-\infty, -2)$. Varying $u$ can change the affine-linear function giving $\rho(u)$ on the unbounded interval $(\approx -1,\infty)$. Hence $\textrm{dim}_{\textrm{fun}}(\theta_2) = 2$.  
  \end{example}

Example \ref{ex:fundimvaries} immediately gives us the following proposition. 

\begin{proposition} There exist fibers on which functional dimension is not constant. \end{proposition}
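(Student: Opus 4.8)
The plan is to read the proposition off from Example \ref{ex:fundimvaries}, which does essentially all of the work. That example exhibits two explicit parameters, $s_1 = (1,0,-1,0,1,-1,1)$ and $s_2 = (1,1,-1,-2,1,-1,0)$, in the parameter space $\mathcal{P}_{1,2,1}$, and asserts (i) that both lie in the single fiber $\rho^{-1}(x \mapsto \sigma(x+1))$, and (ii) that $\textrm{dim}_{\textrm{fun}}(s_1) \geq 4$ while $\textrm{dim}_{\textrm{fun}}(s_2) = 2$. Granting these, the conclusion is immediate: since $4 \neq 2$, functional dimension takes at least two distinct values on the fiber $\rho^{-1}(x \mapsto \sigma(x+1))$, which is exactly what the proposition claims. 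So the only content to check is the verification inside Example \ref{ex:fundimvaries}.

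I would organize that verification in three steps. First, confirm that both $s_1$ and $s_2$ are ordinary, so that functional dimension is defined at each: this follows because each is generic and transversal (hence ordinary by Lemma \ref{l:generictransversalordinary}), or more concretely because each admits a point in the interior of a top-dimensional cell of its canonical polyhedral complex on which no neuron vanishes. Second, verify fiber membership: for $s_1$ one uses $\sigma(t) - \sigma(-t) = t$ to get $\rho(s_1)(x) = \sigma(\sigma(x) - \sigma(-x) + 1) = \sigma(x+1)$; for $s_2$ one checks the three cases $x \le -2$, $-2 \le x \le -1$, $x \ge -1$ and finds $\rho(s_2)(x) = \sigma(\sigma(x+1) - \sigma(-x-2)) = \sigma(x+1)$ in each.

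Third, the functional-dimension computations. For the lower bound $\textrm{dim}_{\textrm{fun}}(s_1) \geq 4$ one picks a convenient batch $Z$ of parametrically smooth points distributed among the three linear pieces of $\rho(s_1)$ and checks $\textrm{rank}\,\boldsymbol{J}E_Z(s_1) \geq 4$, equivalently that one can independently perturb the affine-linear functions realized on the two non-constant pieces; Definition \ref{def:functionaldimension} (or Theorem \ref{t:ntkDim}) then applies directly. For $\textrm{dim}_{\textrm{fun}}(s_2) = 2$ one first identifies the parametrically smooth locus for $s_2$ as $(-\infty,-2) \cup (-1,\infty)$, with $\rho(u) \equiv 0$ on the left piece for $u$ near $s_2$ (so the corresponding rows of $\boldsymbol{J}E_Z$ vanish) and $\rho(u)(x) = e(ax+b) + g$ on the right; evaluating at $s_2$ the right-hand rows take the form $[x,\,1,\,0,\,0,\,x+1,\,0,\,1]$ (in the variable order $a,b,c,d,e,f,g$), whose span as $x$ varies over $(-1,\infty)$ has dimension exactly $2$, since the columns indexed by $c,d,e,f$ are $0$ or already lie in the span of the first two. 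Hence $\textrm{rank}\,\boldsymbol{J}E_Z(s_2) = 2$ for every parametrically smooth $Z$, giving $\textrm{dim}_{\textrm{fun}}(s_2) = 2$.

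The one mildly delicate point is the upper bound $\textrm{dim}_{\textrm{fun}}(s_2) \le 2$: it requires being careful about which points of $\mathbb{R}$ are parametrically smooth for $s_2$ (the interval $(-2,-1)$ is excluded because the outer $\sigma$ bends there) and about the fact that the contributions of the off branch and of points where $\rho(s_2)$ is locally constant genuinely add nothing to the rank. But once the smooth locus is pinned down this is just a rank computation with a $7$-column Jacobian, so there is no real obstacle; the substance of the proposition lies entirely in having produced the explicit pair $s_1, s_2$ in Example \ref{ex:fundimvaries}, and the proof is then a one-line appeal to that example.
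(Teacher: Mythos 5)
Your proposal is correct and follows exactly the paper's route: the paper's entire proof is the one-line observation that Example \ref{ex:fundimvaries} (the pair $s_1, s_2$ in $\rho^{-1}(x \mapsto \sigma(x+1)) \subset \mathcal{P}_{1,2,1}$ with $\textrm{dim}_{\textrm{fun}}(s_1) \geq 4$ and $\textrm{dim}_{\textrm{fun}}(s_2) = 2$) immediately gives the proposition. Your additional verification of the example's claims (fiber membership, the Jacobian rows, and the identification of the parametrically smooth locus for $s_2$) is accurate and consistent with the computations sketched in the example itself.
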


Recall that a subset $X$ of a topological space $Y$ is said to be \emph{disconnected} if there exist two disjoint, open subsets $A$ and $B$ in $Y$ such that $A \cap X \neq \emptyset$ and $B \cap X \neq \emptyset$; otherwise $X$ is said to be \emph{connected}.

\begin{proposition}
Disconnected fibers exist.  
\end{proposition}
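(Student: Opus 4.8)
The plan is to produce an explicit function $F$ in some moduli space $\mathcal{M}_{n_0,\ldots,n_m}$ together with a manifest decomposition of $\rho^{-1}(F)$ into pieces lying in disjoint open subsets of parameter space. A convenient choice is the architecture $(1,2,1)$ and the function $F(x) = |x|$; note $F \in \mathcal{M}_{1,2,1}$ since $\sigma(x) + \sigma(-x) = |x| \geq 0$, hence $F(x) = \sigma\big(\sigma(x)+\sigma(-x)\big)$. Writing a parameter as $s = (a,b,c,d,e,f,g)$ so that $\rho(s)(x) = \sigma\big(e\sigma(ax+b) + f\sigma(cx+d) + g\big)$, the first step is to exhibit two parameters in the fiber, for instance $s_1 = (1,0,-1,0,1,1,0)$ and $s_2 = (-1,0,1,0,1,1,0)$; a direct computation gives $\rho(s_i) = \sigma(\sigma(x)+\sigma(-x)) = |x|$ for $i=1,2$, and in fact $s_2 = \tau^1_{1,2}(s_1)$, i.e.\ $s_2$ is obtained from $s_1$ by permuting the two hidden neurons.

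The crux is to show that every $s = (a,b,c,d,e,f,g) \in \rho^{-1}(F)$ satisfies $a \neq 0$ (and, symmetrically, $c \neq 0$). Suppose $a = 0$. Then $e\sigma(ax+b)$ is the constant $e\,\sigma(b)$, so $\rho(s)(x) = \sigma\big(K + f\sigma(cx+d)\big)$ with $K := e\,\sigma(b) + g$ a constant. The inner expression $K + f\sigma(cx+d)$ is constant on at least one unbounded half-line of $\mathbb{R}$ (the side where $cx+d \le 0$, or all of $\mathbb{R}$ if $c=0$), hence so is $\rho(s)$; but $|x|$ is unbounded on both ends of $\mathbb{R}$, a contradiction. (Equivalently, one observes that the preactivation $h_s(x) = e\sigma(ax+b)+f\sigma(cx+d)+g$ must agree with $|x|$ on $\mathbb{R}\setminus\{0\}$, since $\sigma(h_s(x)) = |x| > 0$ there forces $h_s(x) = |x|$, and no function of the form $K + f\sigma(cx+d)$ equals $|x|$.) Consequently $\rho^{-1}(F) \subseteq \{a > 0\} \cup \{a < 0\}$, an inclusion into a union of two disjoint open subsets of $\mathcal{P}_{1,2,1}$, each of which meets the fiber ($s_1$ in the first, $s_2$ in the second). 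Therefore $\rho^{-1}(F)$ is disconnected; put differently, along any path in the fiber the coordinate $a$ is continuous and never vanishes, so it cannot change sign, yet $s_1$ and $s_2$ have opposite signs of $a$.

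I expect no serious computational obstacle here: the only real subtlety is in \emph{choosing} $F$. The point is to pick a function that is ``rigid'' enough that neither hidden neuron can be made to degenerate (be multiplied by a zero weight or have zero linear part) while remaining in the fiber, and such that the two neurons are forced into distinct, non-interchangeable roles — here, one neuron must produce the $x>0$ ramp and the other the $x<0$ ramp of $|x|$, and swapping them would require passing through a parameter where the first weight is zero, which the fiber does not contain. Once $|x| \in \mathcal{M}_{1,2,1}$ is selected, the argument above is the whole proof; the analogous statement $c \neq 0$ is not even needed for the disconnection conclusion, though it clarifies the geometry of the two components.
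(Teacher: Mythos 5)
Your proof is correct, and it uses the same witness as the paper (the function $x \mapsto |x|$ in architecture $(1,2,1)$, Example \ref{ex:disconnectedfiber}), but the disconnection argument itself is leaner. The paper computes the fiber exactly: it argues that the outer ReLU must act as the identity, deduces $b=d=g=0$ and $e,f>0$ from the location of the kink and the value at $0$, and arrives at the two explicit components $\{ea=1,\ fc=-1\}$ and $\{ea=-1,\ fc=1\}$, from which disjointness (and, implicitly, separation by the open sets $\{a>0\}$ and $\{a<0\}$) follows. You instead prove only the single necessary condition $a \neq 0$ on the fiber --- via the observation that $a=0$ forces $\rho(s)$ to be constant on an unbounded half-line, which $|x|$ is not --- and then separate the fiber by the sign of $a$, exhibiting one explicit parameter on each side. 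What your route buys is economy and robustness: you never need to determine the fiber, only to rule out a single degenerate locus, and the same template (find a coordinate that cannot vanish on the fiber but takes both signs there) transfers to other functions and architectures where a full fiber computation would be unpleasant. What the paper's route buys is the complete description of $\rho^{-1}(|x|)$, which is of independent interest in Section \ref{s:fibers}. One cosmetic point: your parenthetical that $s_2 = \tau^1_{1,2}(s_1)$ is a nice touch, as it shows the two components are exchanged by a permutation symmetry even though no path in the fiber connects them.
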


\begin{proof}
Example \ref{ex:disconnectedfiber}.
\end{proof}

\begin{example} \label{ex:disconnectedfiber}
We will show that the fiber of the function $x \mapsto |x|$ in $\mathcal{P}_{1,2,1}$ is disconnected. 

The marked realization map $\bar{\rho}$ on $\mathcal{P}_{1,2,1}$ is the map that sends a point $(a,b,c,d,e,f,g) \in \mathcal{P}_{1,2,1}$ to the function 
\begin{equation} \label{eq:parametrizedmaps} x \mapsto \sigma(e \sigma(ax+b) + f \sigma(cx+d) + g).\end{equation}

Suppose $\rho(a,b,c,d,e,f,g)$ is the function $x \mapsto |x|$.  Then the outermost ReLU in \eqref{eq:parametrizedmaps} must act as the identity, i.e. $\rho(a,b,c,d,e,f,g)$ may be written as
\begin{equation} \label{eq:paramfun2} 
x \mapsto e \sigma(ax+b) + f \sigma(cx+d) + g.
\end{equation}

Since the function in \eqref{eq:paramfun2} is nondifferentiable at the points $-b/a$ and $-d/c$, while $x \mapsto |x|$ has a single point of nondifferentiability at $0$, we must have 
$-\frac{b}{a} = -\frac{d}{c} = 0$, implying $b = d = 0$.  Thus, $\rho(a,b,c,d,e,f,g)$ may be written as 
$$x \mapsto e \sigma(ax) + f \sigma(cx) + g.$$
So we must have $g = 0$, and hence $\rho(a,b,c,d,e,f,g)$ may be written as 
$$x \mapsto e\sigma(ax) +  f \sigma(cx).$$
Therefore, either i) $ea = 1$ and $fc = -1$, or ii) $ea = -1$ and $fc = +1$.  Because the outermost ReLU in \eqref{eq:parametrizedmaps} acts as the identity, we must have the $e> 0$ and $f> 0$. It is also not hard to see that these conditions are sufficient to ensure that $\rho(a,b,c,d,e,f,g) = x \mapsto |x|$. 
So $\rho^{-1}(x \mapsto |x|) = $
\begin{multline}  \label{eq:twosets}
\{(a,b,c,d,e,f,g) \in \mathcal{P}_{1,2,1} \mid  b = d= g = 0, e > 0, f > 0, ea=1, fc=-1 \} \bigcup \\
\{(a,b,c,d,e,f,g) \in \mathcal{P}_{1,2,1} \mid  b = d= g = 0, e > 0, f > 0, ea=-1, fc=1 \}. \\
\end{multline}
The two sets in \eqref{eq:twosets} are clearly nonempty but have empty intersection. 
\end{example}

\bibliographystyle{plain}
\bibliography{dimensionbibliography.bib}

\end{document}